\documentclass{amsart}
\usepackage{graphicx} 
\usepackage{hyperref}
\usepackage{amsmath, amsthm, amssymb, amscd}
\usepackage{mathtools}
\usepackage{enumerate}
\usepackage{hyperref}
\usepackage{verbatim}
\usepackage{cite}
\usepackage{color}
\usepackage{esint}
\usepackage{tikz-cd}
\usetikzlibrary{shapes.geometric}
\usepackage[shortlabels]{enumitem}

\title[On Constructions of Fractal Spaces and the CLP property]{On Constructions of Fractal Spaces Using Replacement and the Combinatorial Loewner Property }
\author{Riku Anttila}
\email{riku.t.anttila@jyu.fi}
\address{Department of Mathematics and Statistics \\
P.O. Box 35 \\
FI-40014 University of Jyväskylä}

\author{Sylvester Eriksson-Bique}
\email{sylvester.d.eriksson-bique@jyu.fi}
\address{Department of Mathematics and Statistics \\
P.O. Box 35 \\
FI-40014 University of Jyväskylä}

\thanks{The first author was partially supported by the Eemil Aaltonen foundation. The second author was supported by the Research Council of Finland grant 354241. We especially thank Mario Bonk, Jeff Cheeger, Guy C. David, Bruce Kleiner, Mathav Murugan and Ryosuke Shimizu for discussions on Kleiner's conjecture. We also thank Shimizu for comments on a draft of this paper. }
\subjclass[2020]{30L10, 20F65, 51F99, 53C23, 28A78}
\keywords{Conformal dimension, Ahlfors regular, combinatorially Loewner, quasisymmetric mappings, self-similar space, conformal gauge, Loewner spaces, iterated graph systems, Laakso spaces, attainment problem}
\date{\today}

\newcommand{\diam}{{\rm diam}}
\newcommand{\len}{{\rm len}}
\newcommand{\id}{{\rm id}}
\newcommand{\intr}{{\rm int}}
\newcommand{\dims}{{\rm dim}}
\newcommand{\Mod}{{\rm Mod}}
\newcommand{\degr}{{\rm deg}}
\newcommand{\divr}{{\rm div}}
\newcommand{\dist}{{\rm dist}}
\newcommand{\supp}{{\rm supp}}

\newcommand{\AR}{{\rm AR}}
\newcommand{\cCap}{{\rm Cap}}
\newcommand{\sgn}{{\rm sgn}}

\newcommand{\cN}{{\rm \mathcal{N}}}

\DeclarePairedDelimiter{\abs}{\lvert}{\rvert}
\DeclarePairedDelimiter\ceil{\lceil}{\rceil}

\newtheorem{theorem}[equation]{Theorem}
\newtheorem{lemma}[equation]{Lemma}
\newtheorem{proposition}[equation]{Proposition}
\newtheorem{corollary}[equation]{Corollary}
\newtheorem{assumption}[equation]{Assumption}

 \numberwithin{equation}{section}

\theoremstyle{definition}
\newtheorem{definition}[equation]{Definition}

\theoremstyle{remark}
\newtheorem{remark}[equation]{Remark}
\newtheorem{conjecture}[equation]{Conjecture}
\newtheorem{example}[equation]{Example}

\newcommand{\N}{\mathbb{N}}
\newcommand{\R}{\mathbb{R}}
\newcommand{\cG}{\mathcal{G}}
\newcommand{\cE}{\mathcal{E}}
\newcommand{\cF}{\mathcal{F}}
\newcommand{\cH}{\mathcal{H}}
\newcommand{\cM}{\mathcal{M}}

\makeatletter
\let\c@equation\c@figure
\makeatother

\begin{document}

\begin{abstract}
    The combinatorial Loewner property was introduced by Bourdon and Kleiner as a quasisymmetrically invariant substitute for the Loewner property for general fractals and boundaries of hyperbolic groups. While the Loewner property is somewhat restrictive, the combinatorial Loewner property is very generic -- Bourdon and Kleiner showed that many familiar fractals and group boundaries satisfy it. If $X$ is quasisymmetric to a Loewner space, it has the combinatorial Loewner property. Kleiner conjectured in 2006 that the converse to this holds for self-similar fractals -- the hope being that this would lead to the existence of many exotic Loewner spaces. We disprove this conjecture and give the first examples of spaces which are self-similar, combinatorially Loewner and which are not quasisymmetric to Loewner spaces. 
    
    In the process we introduce a self-similar replacement rule, called iterated graph systems (IGS), which is inspired by the work of Laakso. This produces a new rich class of fractal spaces, where closed form computations of potentials and their conformal dimensions are possible. These spaces exhibit a rich class of behaviors from analysis on fractals in regards to diffusions, Sobolev spaces, energy measures and conformal dimensions. These behaviors expand on the known examples of Cantor sets, gaskets, Vicsek sets, and the often too difficult carpet-like spaces. Especially the counterexamples to Kleiner's conjecture that arise from this construction are interesting, since they open up the possibility to study the new realm of combinatorially Loewner spaces that are not quasisymmetric to Loewner spaces.
\end{abstract}

\maketitle

\section{Introduction}

\subsection{Overview}
In this paper we answer to the negative a conjecture posed by Kleiner on whether approximately self-similar combinatorially Loewner spaces are quasisymmetric to Loewner spaces, \cite[Conjecture 7.5]{KleinerICM}. A concise overview of the problem and its significance can be gleaned from \cite{KleinerICM} and \cite{claisthesis,Bourdon2002}.  In the process, we give a new and rich class of spaces, derived from iterated graph systems, which exhibits curious behaviors and offers a tractable, and unexplored, setting to study analysis on fractals, see \cite{strichartz,kigami,barlow} for surveys. The precise terminology is presented in detail in Section \ref{sec:concepts}, which a non-expert reader may first wish to consult, before reading the statements of the main results.
\begin{figure}[!ht]
\begin{tikzpicture}[scale=.75]
\draw (-6,0)--(-4,0);
\draw[<-,thick] (-3,0)--(-2,0);

\draw (-1,-1)--(0,0)--(1,1)--(2,0)--(3,1);
\draw (0,0)--(1,-1)--(2,0);
\draw (1,-1)--(1,1);
\draw (-1,1)--(0,0);
\draw (2,0)--(3,-1);
\draw[<-,thick] (4,0)--(5,0);

\draw (6,1.25)--(6.25,0.75)--(6.65,0.75)--(6.75,0.25)--(7,0.35);
\draw (6,0.65)--(6.25,0.75)--(6.35,0.25)--(6.75,0.25)--(7,-0.35);
\draw (6.65,0.75)--(6.35,0.25);
\draw (6,-1.25)--(6.25,-0.75)--(6.65,-0.75)--(6.75,-0.25)--(7,-0.35);
\draw (6,-0.65)--(6.25,-0.75)--(6.35,-0.25)--(6.75,-0.25)--(7,0.35);
\draw (6.65,-0.75)--(6.35,-0.25);

\draw (8,1.25)--(7.75,0.75)--(7.35,0.75)--(7.25,0.25)--(7,0.35);
\draw (8,0.65)--(7.75,0.75)--(7.65,0.25)--(7.25,0.25)--(7,-0.35);
\draw (7.35,0.75)--(7.65,0.25);
\draw (8,-1.25)--(7.75,-0.75)--(7.35,-0.75)--(7.25,-0.25)--(7,-0.35);
\draw (8,-0.65)--(7.75,-0.75)--(7.65,-0.25)--(7.25,-0.25)--(7,0.35);
\draw (7.35,-0.75)--(7.65,-0.25);

\draw (8,1.25)--(8.15,0.5)--(8.2,0)--(7.85,-0.5)--(8,-1.25);
\draw (8,0.65)--(8.15,0.5)--(7.8,0)--(7.85,-0.5)--(8,-0.65);
\draw (8.2,0)--(7.8,0);

\draw (8,1.25)--(8.25,0.75)--(8.65,0.75)--(8.75,0.25)--(9,0.35);
\draw (8,0.65)--(8.25,0.75)--(8.35,0.25)--(8.75,0.25)--(9,-0.35);
\draw (8.65,0.75)--(8.35,0.25);
\draw (8,-1.25)--(8.25,-0.75)--(8.65,-0.75)--(8.75,-0.25)--(9,-0.35);
\draw (8,-0.65)--(8.25,-0.75)--(8.35,-0.25)--(8.75,-0.25)--(9,0.35);
\draw (8.65,-0.75)--(8.35,-0.25);

\draw (10,1.25)--(9.75,0.75)--(9.35,0.75)--(9.25,0.25)--(9,0.35);
\draw (10,0.65)--(9.75,0.75)--(9.65,0.25)--(9.25,0.25)--(9,-0.35);
\draw (9.35,0.75)--(9.65,0.25);
\draw (10,-1.25)--(9.75,-0.75)--(9.35,-0.75)--(9.25,-0.25)--(9,-0.35);
\draw (10,-0.65)--(9.75,-0.75)--(9.65,-0.25)--(9.25,-0.25)--(9,0.35);
\draw (9.35,-0.75)--(9.65,-0.25);

\foreach \p in {(6,1.25),(6.25,0.75),(6.65,0.75),(6.75,0.25),(7,0.35),(6,0.65),(6.35,0.25),(7,-0.35),(6,-1.25),(6.25,-0.75),(6.65,-0.75),(6.75,-0.25),(6,-0.65),(6.35,-0.25),(8,1.25),(7.75,0.75),(7.35,0.75),
(7.75,-0.75),(7.35,-0.75),(7.25,-0.25),(7.65,-0.25),(7.25,0.25),
(8,0.65),(7.65,0.25),(8,-1.25),(8.25,-0.75),(8.65,-0.75),(8.75,-0.25),
(8.25,0.75),(8.65,0.75),(8.75,0.25),(9,0.35),
(8,-0.65),(10,1.25),(9.75,0.75),(9.35,0.75),(9.25,0.25),
(10,-1.25),(9.75,-0.75),(9.35,-0.75),(9.25,-0.25),(9,-0.35),(9.65,-0.25),
(10,-0.65),(10,0.65),(9.65,0.25),(8.35,0.25),(8.35,-0.25),
(8.15,0.5),(8.2,0),(7.85,-0.5),(7.8,0)}
\node at \p [circle,fill,inner sep=.8pt]{};

\foreach \q in {(-6,0),(-4,0),(-1,1),(-1,-1),(0,0),(1,1),(1,-1),(2,0),(3,1),(3,-1)}
\node at \q [circle,fill,inner sep=1pt]{};

\node at (-1,1.4) {1};
\node at (-1,-1.5) {2};
\node at (0,0.4) {3};
\node at (1,1.4) {4};
\node at (1,-1.5) {5};
\node at (2,0.4) {6};
\node at (3,1.4) {7};
\node at (3,-1.5) {8};
\end{tikzpicture}
\caption{Figure of an IGS that produces a counterexample to Kleiner's question. The example, which is described in more detail in Example \ref{ex:counterexample}, is obtained by recursively replacing every edge in the graph by the graph depicted in the middle, and the copies corresponding to adjacent edges are glued along their left or right boundaries. The edges in the new graph have length of the original edges divided by four. The labels in the figure correspond to those given in Example \ref{ex:counterexample}. The figure shows two steps of the iteration.}
%Additionally, we add a vertical line segment between the two central points of the copies. This added segment is crucial for us. The optimal potential function on this graph is given via symmetry, and will be constant on the central edge. }
\label{fig:replacementrule}
\end{figure}

\subsection{Main theorem}
The theory of quasiconformal mappings was revealed in settings of increasing generalities: first developed on surfaces \cite{Ahlfors}, then higher dimensional Euclidean spaces \cite{Rickmann, Vaisala}, followed by certain Carnot groups \cite{Pansu} and ultimately Loewner spaces \cite{HK}; see \cite{BonkICM,KleinerICM,HeNonSmooth,Bourdon2002,Kapovichboundaries} for nice surveys. This generalization was partly motivated by quasi-isometric rigidity problems in geometric group theory (e.g. Mostow's rigidity theorem, \cite{Mostow,Bourdon,Kapovichboundaries}). Indeed, it was observed that whenever the boundary of a hyperbolic group possessed a Loewner structure in its conformal gauge, this implied rigidity properties for the group \cite{KleinerICM, HeNonSmooth,Bourdon2002}. The conformal gauge consists of all metrics quasisymmetric to a given (visual) reference metric. While successful, this approach suffered a somewhat serious limitation: there are only very few hyperbolic groups whose boundaries are known to possess a Loewner structure. An essentially exhaustive list is given by the following families of examples: boundaries of groups acting by isometries co-compactly on a rank-one symmetric space \cite{Pansu, MT} or a Bourdon-Pajot building \cite{BourdonPI,bourdon2000rigidity}. Prompted by this, it was asked by several authors  \cite{KleinerICM,HeNonSmooth,claisthesis, BourdonPajot,Kapovichboundaries} if more hyperbolic group boundaries would admit a Loewner structure. 

Towards an answer, Bourdon and Kleiner observed in \cite{BourK} that a seemingly similar combinatorial Loewner property (CLP) was much more generic than the Loewner property, and was much easier to establish in given examples. Indeed, they and Clais \cite{clais} showed that far more group boundaries and fractals satisfy it: the Sierpi\'nki carpet, the Menger curve, boundaries of certain higher dimensional hyperbolic buildings, and boundaries of certain Coxeter groups. None of these examples were known to possess a Loewner metric in their conformal gauge. Given the amplitude of such examples, Kleiner conjectured in \cite{KleinerICM} the following.

\begin{conjecture}\label{conj:kleiner} All approximately self-similar (or: group boundary) combinatorially Loewner metric spaces are quasisymmetric to a Loewner space.
\end{conjecture}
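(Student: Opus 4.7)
The strategy is to disprove Conjecture \ref{conj:kleiner} by constructing an explicit counterexample using the iterated graph system (IGS) framework advertised in the introduction. Starting from the replacement rule depicted in Figure \ref{fig:replacementrule}, one defines a sequence of approximating finite graphs $G_n$ by recursively replacing each edge of $G_{n-1}$ by a scaled copy of the middle ``replacement graph'', identifying the left/right boundary vertices of adjacent copies, and rescaling all edge lengths by $1/4$. Equipped with the path-length metric, the inverse system $\{G_n\}$ converges in the Gromov--Hausdorff sense to a compact metric space $X$. Self-similarity is immediate from the construction: $X$ decomposes into finitely many isometric copies of itself, each scaled by $1/4$, glued along a finite boundary set, so $X$ is approximately self-similar in the sense relevant to the conjecture.

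The next step is to verify that $X$ satisfies the combinatorial Loewner property. For this, one works at the level of the approximating graphs and estimates the combinatorial $Q$-modulus $\Mod_Q^n$ of curve families joining disjoint continua $E,F\subset X$ with $\diam(E)\sim \diam(F)\sim r$ and $\dist(E,F)\sim r$, where $Q$ is the Ahlfors regular conformal dimension of $X$. The lower bound on modulus is obtained by constructing explicit admissible flows on $G_n$, using the many parallel paths in the replacement graph to ensure that modulus does not degenerate under iteration. The matching upper bound follows from a covering/submultiplicativity argument applied to a single step of the replacement rule, which reduces the problem to a fixed-point computation whose solution is $Q$. The closed-form tractability of IGS spaces (as emphasized in the abstract) is what makes both bounds accessible.

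The main obstacle, and the point where the argument must really exploit the peculiarities of the chosen replacement rule, is showing that $X$ is \emph{not} quasisymmetric to any Loewner space. The plan is to use the IGS structure to compute $Q$ in closed form and then to prove that the Ahlfors regular conformal dimension is \emph{not attained} in the conformal gauge of $X$. Since any metric space quasisymmetric to a Loewner space necessarily attains its Ahlfors regular conformal dimension (with the Loewner space itself realizing it), non-attainment yields the negative answer to the conjecture. The non-attainment is detected by the asymmetry in the replacement graph between long ``backbone'' curves traversing the full width and the clusters of short parallel ``shortcut'' curves: any candidate optimal metric in the gauge must assign weights to these two families that balance their contributions to $Q$-modulus, and a fixed-point analysis on the discrete IGS level shows that the infimum is approached only by degenerating sequences of metrics.

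The hard part is executing this last step quantitatively: one has to rule out attainment uniformly over the whole conformal gauge, not just for self-similar candidate metrics, which requires leveraging the quasisymmetric invariance of combinatorial modulus together with the submultiplicative structure of the IGS to transfer estimates from the graph level to arbitrary members of the gauge. Once this non-attainment is established, combined with Steps 1--3 above, the IGS space $X$ provides the sought counterexample.
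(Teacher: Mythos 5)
Your overall strategy matches the paper's: construct $X$ as the Gromov--Hausdorff limit of an IGS, verify it is approximately self-similar and combinatorially Loewner with $Q=\dims_{\AR}(X)$, and then disprove the conjecture by showing $X$ does not attain its Ahlfors regular conformal dimension, invoking the fact (Proposition \ref{prop:noloew}, due to Ha\"{\i}ssinski/Clais--Eriksson-Bique) that for CLP spaces attainment is equivalent to the existence of a Loewner metric in the gauge. Steps 1--3 and the reduction to non-attainment are all essentially what the paper does.

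The genuine gap is in the non-attainment step, which you correctly identify as the hard part but then leave as a sketch. Your proposed route --- a ``fixed-point analysis on the discrete IGS level'' showing the infimum is approached only by degenerating sequences, and then ruling out attainment ``uniformly over the whole conformal gauge'' --- is precisely the approach the paper explicitly declines to make rigorous, because controlling the full conformal gauge directly is delicate: the gauge contains metrics that are not self-similar, not induced by weight functions on the graphs, and not amenable to the submultiplicative structure in any obvious way. You do not supply a mechanism that converts a statement about discrete minimizers (the optimal weight vanishing on the central edge) into a statement about \emph{every} Ahlfors regular metric in the gauge. The paper's actual mechanism is different and much cleaner: the middle edge $e^*=\{4,5\}$ is a \emph{removable edge} (Definition \ref{def:removable}), meaning the optimal admissible density across a cell assigns it weight zero for every $p>1$. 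Removing it produces a sub-IGS whose limit space $Y$ embeds biLipschitzly into $X$ as a \emph{uniformly porous} subset (Lemma \ref{lem:porous}), and because the optimal densities were already supported off $e^*$, one gets $\cM_p = \hat\cM_p$ and hence $\dims_{\AR}(Y)=\dims_{\AR}(X)$. Non-attainment then follows from a soft Assouad-dimension argument (Proposition \ref{prop:porous}): if $X$ attained its conformal dimension $Q$ via some $Q$-regular metric $d'$, then $Y$ would be porous in a $Q$-regular space and hence have Assouad dimension $<Q$ in $d'|_Y$, contradicting $\dims_{\AR}(Y)=Q$. This porosity argument is what makes the conclusion hold ``uniformly over the whole conformal gauge,'' and it is the idea missing from your proposal. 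Without it (or a worked-out substitute), your Step 4 does not close.
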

Kleiner's conjecture, and related problems, have attracted considerable attention, see e.g. \cite{claisthesis,murugan2023first,MN, CEB,Kwapisz, CM}, and its statement is quite natural. Due to a lack of counterexamples, and hopes of finding new exotic Loewner spaces, there have been several attempts at proving the conjecture true, and an expectation by many that the conjecture would have a positive answer. However, a negative answer also would be quite intriguing: it opens up the possibility to study the combinatorial Loewner porperty as something strictly weaker and more general than the Loewner property.

As we will discuss below, when we introduce the terminology in detail, this problem is also equivalent to the attainment problem for the Ahlfors regular conformal dimension. This attainment problem has also been studied extensively, and is known to be quite difficult. In a different disguise, the same problem has also been studied for conductively homogeneous spaces, see \cite{murugan2023first, kigami}. In that setting, it is closely related to the attainment problem of other critical dimensions, such as the conformal walk dimension -- whose attainment problem is much better understood thanks to \cite{MN}.

Our main contribution is to answer the main part of Kleiner's conjecture to the negative by giving the first examples of approximately self-similar combinatorially Loewner metric spaces which are not quasisymmetric to Loewner spaces. 

\begin{theorem}\label{thm:mainthm} There exists a compact metric space $X$, which is approximately self-similar and combinatorially $Q$-Loewner for some $Q \in (1,\infty)$, but which is not quasisymmetric to a Loewner space. This space also does not attain its Ahlfors regular conformal dimension.
\end{theorem}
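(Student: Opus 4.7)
The plan is to construct an explicit iterated graph system (IGS), namely the one pictured in Figure~\ref{fig:replacementrule} and analyzed in Example~\ref{ex:counterexample}, and to verify in order: (a) approximate self-similarity and Ahlfors $Q$-regularity of its limit space $X$, (b) the combinatorial $Q$-Loewner property, and (c) non-attainment of the Ahlfors regular conformal dimension. For step (a), iterating the replacement rule yields an inverse system of length-metric graphs $\cG_n$ whose edges contract by a factor of $1/4$ at each step. The Gromov--Hausdorff limit $X$ inherits a natural self-similar structure, and a standard mass-distribution argument on the cylinder sets associated to edges of $\cG_n$ gives Ahlfors $Q$-regularity for a unique $Q \in (1,\infty)$ determined by the similarity equation of the IGS. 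Approximate self-similarity, i.e.\ uniformly bi-Lipschitz identifications between small pieces of $X$ and rescaled copies of $X$, is then automatic from the IGS formalism.

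For step (b), the combinatorial $Q$-moduli $\Mod_Q(\Gamma(E,F),\cG_n)$ between disjoint continua $E,F \subset X$ can be reduced, via the self-similar structure, to moduli on a single copy of the replacement graph $\cG$. At the critical exponent $Q$, the substitution step preserves $Q$-modulus up to a bounded multiplicative error, so establishing CLP reduces to a finite combinatorial check on $\cG$ together with a scale-coupling argument. The matching upper bounds on modulus come from explicit admissible weights concentrated on the bottleneck edges of $\cG$, and the lower bounds come from the parallel horizontal routes visible in $\cG$, via the duality between modulus and cut-type obstructions. The replacement graph is designed precisely so that the two bounds match, yielding CLP at exponent $Q$ with constants independent of scale and of $E,F$.

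Step (c), non-attainment of $\dim_{\AR}(X)$, is the main obstacle and the conceptual heart of the construction. The strategy is to exploit the coexistence in $\cG$ of two incompatible features: a carpet-like ``main body'' formed by the parallel path arrangement, and a rigid low-connectivity ``slit'' given by the central vertical segment with the Vicsek-like cross structure, whose combinatorial $Q$-modulus is strictly smaller than that of the surrounding region. Using Carrasco--Piaggio-type weight systems on the dyadic decomposition provided by the IGS, I would argue that any admissible weight approaching the infimum $\dim_{\AR}(X) = Q$ is forced to suppress the slit at infinitely many scales and that the resulting deformation degenerates out of the quasisymmetry gauge in the limit. Consequently the infimum defining $\dim_{\AR}(X)$ is not attained by any metric in the conformal gauge of $X$. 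The key technical point here --- separating CLP (which does hold) from attainment (which fails) --- is exactly what was missing in previous attempts on Kleiner's conjecture, and the IGS formalism is precisely what makes the two computations tractable in closed form.

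Once (a)--(c) are established, Theorem~\ref{thm:mainthm} follows: if $X$ were quasisymmetric to a Loewner space $Y$, then $Y$ would be Ahlfors $Q$-regular and $Q$-Loewner, hence would attain its Ahlfors regular conformal dimension; transporting this attaining metric back along the quasisymmetry would contradict the non-attainment proved in step (c).
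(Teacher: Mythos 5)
Your outline for steps (a) and (b) tracks the paper's structure reasonably well (Section~\ref{sec:linrep} for self-similarity and Ahlfors regularity, Section~\ref{sec:combloew} via moduli computations on replacement graphs for CLP), and your final deduction from non-attainment to ``not quasisymmetric to a Loewner space'' is essentially Proposition~\ref{prop:noloew}. The gap is step (c), which you correctly identify as the conceptual heart but for which you only propose a heuristic that the paper itself explicitly declines to pursue. You argue that ``any admissible weight approaching the infimum is forced to suppress the slit at infinitely many scales and the resulting deformation degenerates out of the quasisymmetry gauge in the limit.'' This is precisely the informal $\varepsilon\to 0$ picture the authors give around Figure~\ref{fig:decreasedim} and Figure~\ref{fig: Why not attain}, and there they remark: ``This argument can also be made rigorous, but our proof involves a simpler and more general argument.'' Turning your heuristic into a proof is genuinely hard: you would need to show that \emph{every} Ahlfors $3/2$-regular metric in the gauge (not just ones coming from Carrasco--Piaggio weight systems) must collapse the slit, and controlling all such metrics directly is exactly the obstruction that kept Kleiner's conjecture open.

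The paper sidesteps this entirely via a porosity argument, which is the actual new idea. Removing the central ``removable'' edge from the replacement graph yields a Laakso-type sub-IGS whose limit $Y$ embeds bi-Lipschitzly as a \emph{porous} subset of $X$ (Lemma~\ref{lem:porous}). Because the optimal admissible density for the left-to-right modulus problem vanishes on the removable edge (Definition~\ref{def:removable}, Remark~\ref{remark: Removable edge by potential and flow}), the critical exponent is unchanged: $\dims_{\rm AR}(Y)=\dims_{\rm AR}(X)$. Now Proposition~\ref{prop:porous} finishes it: porosity is a quasisymmetric invariant, and a porous subset of a $Q$-Ahlfors regular space has Assouad dimension strictly less than $Q$; so if $X$ attained $Q=\dims_{\rm AR}(X)$ with some metric $d'$, then $\dims_{\rm AR}(Y)\le\dims_{\rm A}(Y,d')<Q$, contradicting $\dims_{\rm AR}(Y)=Q$. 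This converts the intractable ``classify all near-minimizing metrics'' problem into two checkable facts about one explicit weight function and one geometric inclusion, which is why the proof closes. If you want to complete your write-up in the spirit of the paper, replace step (c) with this porosity argument rather than trying to make the degeneration heuristic rigorous.
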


The counterexamples are obtained by a new construction of fractal spaces that we call \emph{iterated graph systems (IGS)}. An IGS consists of a given graph $G_1=(V_1,E_1)$ together with an iteration procedure that produces a sequence of graphs $G_m=(V_m,E_m)$. The graphs $G_{m+1}$ for $m\in \N$, are obtained recursively by replacing each edge of $G_m$ by a copy of $G_1$ and identifying neighboring copies by a given rule. The resulting graphs $G_n$ are equipped with a re-scaled path metric $d_n$. Under some easily checkable conditions the sequence of metric spaces $(G_n,d_n)$ Gromov-Hausdorff converge to a limit space $X$. (At this juncture, an eager reader may check Definition \ref{def:IGS}.) We note that a slightly similar idea has appeared in \cite[Definition 2.1]{Leeslash}.

With specific graphs, see e.g. Figure \ref{fig:laakso} and Example \ref{ex:nonsym}, such IGSs produce variants of fractals called Laakso-spaces \cite{Laakso}, although the construction is distinct from the quotient-construction of Laakso and the identifications used are different. These can also be realized by an inverse limit construction, see 
\cite{CK_PI}, and as we will see in Example \ref{ex:loewnerreplacement} they are Loewner spaces.  Our key insight is that such constructions can be modified by adding edges, which we call \emph{removable edges}, to produce spaces that do not arise as inverse limits. Such spaces are still combinatorially Loewner. It is one of these modified examples that are depicted in Figure \ref{fig:replacementrule} which yields a counterexample to Kleiner's conjecture. Indeed, the example in this figure shows the following explicit version of Theorem \ref{thm:mainthm}.

\begin{proposition}\label{prop:counterxample} The space $X$ from Example \ref{ex:counterexample} is $\log(9)/\log(4)$-Ahlfors regular, $3/2$-combina\-torially Loewner and approximately self-similar. Its conformal dimension is equal to $3/2$, and it is not attained. In particular, the space $X$ is not quasisymmetric to a Loewner space.
\end{proposition}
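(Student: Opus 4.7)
The plan is to verify the four listed properties in order, leveraging the general IGS framework developed earlier. The Ahlfors regularity at exponent $Q=\log 9/\log 4$ should follow from a cell-counting argument using the natural self-similar measure on the limit space, since the replacement graph in Example \ref{ex:counterexample} has $9$ edges each rescaled by $1/4$; at scale $4^{-n}$ the number of level-$n$ cells meeting a ball is on the order of $9^n$. Approximate self-similarity is built into the IGS construction itself.

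For the combinatorial $3/2$-Loewner property, I would compute the combinatorial $3/2$-modulus on the approximating graphs $G_m$ and verify both the upper and lower Loewner-type estimates for curve families connecting disjoint continua of comparable diameter. The key observation is that the replacement graph contains a Laakso-type sub-IGS obtained by omitting the \emph{removable edges} in the middle of the replacement pattern; this sub-IGS is $3/2$-Ahlfors regular and Loewner by Example \ref{ex:loewnerreplacement}. The Laakso sub-IGS already supplies enough curves to yield the lower modulus bound at exponent $3/2$, while the upper bound follows from standard volume-type estimates with respect to the full Ahlfors regular measure. The extra removable edges only inflate the exponent $Q$ to $\log 9/\log 4$ without damaging the $3/2$-modulus estimates, which is precisely the mechanism that separates $Q$ from the CLP exponent.

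For the conformal dimension, the lower bound $\geq 3/2$ is a general consequence of the combinatorial $3/2$-Loewner property. For the upper bound, I would produce a one-parameter family of metrics in the conformal gauge of $X$ by reweighting the removable edges with parameters tending to zero; each reweighted metric is Ahlfors regular and quasisymmetric to $X$, with Hausdorff dimension strictly above $3/2$ but converging to $3/2$ in the limit. This shows the infimum equals $3/2$.

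The main obstacle, and the crux of the claim, is non-attainment---equivalently, that there is no metric quasisymmetric to $X$ that is Loewner. My strategy would be by contradiction: suppose a metric $\varrho$ in the conformal gauge is $3/2$-Ahlfors regular. Combining CLP with the standard upgrade theorems relating CLP, Ahlfors regularity at the correct exponent, and the Loewner property forces $(X,\varrho)$ to be $3/2$-Loewner. I would then show that in any such $\varrho$, the removable edges must persist as topologically essential detours carrying a definite amount of $3/2$-modulus, and a careful combinatorial comparison with the pure Laakso sub-IGS would show this forces the Hausdorff dimension of $\varrho$ strictly above $3/2$, a contradiction. Quantifying the modulus contribution from removable edges inside any quasisymmetric image of $X$---and ruling out the possibility that they collapse to measure zero---is the delicate technical step, and this is where the paper's detailed analysis of the IGS structure should be indispensable.
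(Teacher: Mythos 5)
Your treatment of Ahlfors regularity and approximate self-similarity matches the paper's mechanism, and your account of the combinatorial $3/2$-Loewner property is in the right spirit: the paper derives the CLP from Theorem~\ref{thm:combloew} after computing $\cM_Q = 8\cdot 4^{-Q} = 1$ precisely at $Q = 3/2$. One small economy you miss is that once the CLP holds at exponent $3/2$, the conformal dimension is $3/2$ automatically (Proposition~\ref{prop:confdimchar} via Proposition~\ref{prop: LR modulus and BK-type modulus}); your $\varepsilon$-reweighting family is the informal heuristic the paper sketches in Section~2 but explicitly declines to use.

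The genuine gap is in your non-attainment argument, and it is the one place where your approach diverges from the paper in a way that likely cannot be repaired as written. You propose to find, inside a hypothetical $3/2$-Ahlfors regular metric $\varrho$, a positive $3/2$-modulus contribution from the removable edges and to rule out their ``collapsing to measure zero.'' But removable edges are by definition the edges on which the optimal admissible density and the optimal flow vanish for every $p>1$ (Definition~\ref{def:removable}(3) and Remark~\ref{remark: Removable edge by potential and flow}), so they carry \emph{no} modulus at the critical exponent, and there is no dimension increment to extract from them in this way. The paper's actual argument sidesteps the hypothetical Loewner metric entirely. Deleting the removable edge yields a sub-IGS $\hat{G}_1$ whose limit $\hat{X}$ is a Laakso-type space with $\hat{\cM}_p = \cM_p$, hence $\dim_{\rm AR}(\hat{X}) = \dim_{\rm AR}(X) = 3/2$; Lemma~\ref{lem:porous} shows $\hat{X}$ embeds biLipschitzly into $X$ as a \emph{porous} subset; and Proposition~\ref{prop:porous} (porosity is quasisymmetrically invariant, porous subsets of $Q$-regular spaces have Assouad dimension $<Q$, and Assouad dimension dominates conformal dimension) then forbids attainment in one stroke. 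The porosity observation is the missing key idea in your proposal, and it replaces the delicate modulus analysis you anticipate with a short Assouad-dimension contradiction.
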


\begin{figure}
\begin{tikzpicture}[scale=0.75]
\draw (-6,0)--(-4,0);
\draw[<-,thick] (-3,0)--(-2,0);

\draw (-1,-1)--(0,0)--(1,1)--(2,0)--(3,1);
\draw (0,0)--(1,-1)--(2,0);
\draw (-1,1)--(0,0);
\draw (2,0)--(3,-1);
\draw[<-,thick] (4,0)--(5,0);

\draw (6,1.25)--(6.25,0.75)--(6.65,0.75)--(6.75,0.25)--(7,0.35);
\draw (6,0.65)--(6.25,0.75)--(6.35,0.25)--(6.75,0.25)--(7,-0.35);
\draw (6,-1.25)--(6.25,-0.75)--(6.65,-0.75)--(6.75,-0.25)--(7,-0.35);
\draw (6,-0.65)--(6.25,-0.75)--(6.35,-0.25)--(6.75,-0.25)--(7,0.35);

\draw (8,1.25)--(7.75,0.75)--(7.35,0.75)--(7.25,0.25)--(7,0.35);
\draw (8,0.65)--(7.75,0.75)--(7.65,0.25)--(7.25,0.25)--(7,-0.35);
\draw (8,-1.25)--(7.75,-0.75)--(7.35,-0.75)--(7.25,-0.25)--(7,-0.35);
\draw (8,-0.65)--(7.75,-0.75)--(7.65,-0.25)--(7.25,-0.25)--(7,0.35);

\draw (8,1.25)--(8.25,0.75)--(8.65,0.75)--(8.75,0.25)--(9,0.35);
\draw (8,0.65)--(8.25,0.75)--(8.35,0.25)--(8.75,0.25)--(9,-0.35);
\draw (8,-1.25)--(8.25,-0.75)--(8.65,-0.75)--(8.75,-0.25)--(9,-0.35);
\draw (8,-0.65)--(8.25,-0.75)--(8.35,-0.25)--(8.75,-0.25)--(9,0.35);

\draw (10,1.25)--(9.75,0.75)--(9.35,0.75)--(9.25,0.25)--(9,0.35);
\draw (10,0.65)--(9.75,0.75)--(9.65,0.25)--(9.25,0.25)--(9,-0.35);
\draw (10,-1.25)--(9.75,-0.75)--(9.35,-0.75)--(9.25,-0.25)--(9,-0.35);
\draw (10,-0.65)--(9.75,-0.75)--(9.65,-0.25)--(9.25,-0.25)--(9,0.35);

\foreach \p in {(6,1.25),(6.25,0.75),(6.65,0.75),(6.75,0.25),(7,0.35),(6,0.65),(6.35,0.25),(7,-0.35),(6,-1.25),(6.25,-0.75),(6.65,-0.75),(6.75,-0.25),(6,-0.65),(6.35,-0.25),(8,1.25),(7.75,0.75),(7.35,0.75),
(7.75,-0.75),(7.35,-0.75),(7.25,-0.25),(7.65,-0.25),(7.25,0.25),
(8,0.65),(7.65,0.25),(8,-1.25),(8.25,-0.75),(8.65,-0.75),(8.75,-0.25),
(8.25,0.75),(8.65,0.75),(8.75,0.25),(9,0.35),
(8,-0.65),(10,1.25),(9.75,0.75),(9.35,0.75),(9.25,0.25),
(10,-1.25),(9.75,-0.75),(9.35,-0.75),(9.25,-0.25),(9,-0.35),(9.65,-0.25),
(10,-0.65),(10,0.65),(9.65,0.25),(8.35,0.25),(8.35,-0.25)}
\node at \p [circle,fill,inner sep=.8pt]{};

\foreach \q in {(-6,0),(-4,0),(-1,1),(-1,-1),(0,0),(1,1),(1,-1),(2,0),(3,1),(3,-1)}
\node at \q [circle,fill,inner sep=1pt]{};

\node at (-1,1.4) {1};
\node at (-1,-1.5) {2};
\node at (0,0.4) {3};
\node at (1,1.4) {4};
\node at (1,-1.5) {5};
\node at (2,0.4) {6};
\node at (3,1.4) {7};
\node at (3,-1.5) {8};
\end{tikzpicture}
\caption{Figure of two steps of the iteration that produces a variant of a Laakso space from \cite{Laakso}. In the figure, an edge is replaced by two copies glued at points which lie 1/4 and 3/4 of their lengths. The graph together with its labeling is described in more detail as part of Example \ref{ex:counterexample}.}
\label{fig:laakso}
\end{figure}

Our result will be more general, and we will indicate which assumptions and mechanisms lead to the failure of being quasisymmetric to a Loewner space. In particular, we give a fairly large family of examples, all of which share crucial features and all of which are fairly simple to describe using the substitution procedure above. Indeed, by relatively minor assumptions, see Assumptions \ref{Assumptions: CLP}, an IGS produces an approximately self-similar and combinatorially Loewner limit space. While we do not resolve the full question of Kleiner in all settings (in particular group boundaries or the Sierpi\'nski carpet), these examples indicate mechanisms and tools by which one can prove that other examples are also not quasisymmetric to Loewner spaces. Further work is needed to determine which combinatorially Loewner fractals attain their conformal dimensions. Next, we shall briefly discuss relationships to recent work. 

\subsection{Background}
Until the present work, not much definitive was known about Kleiner's question. Due to the work of \cite{KL, Carrasco,Shaconf,MurCA,Kigamiweighted,MT} it was known how to construct metrics in the conformal gauge of $X$. The construction was powerful enough to lead to a characterization of the conformal dimension as a critical exponent for certain discrete modulus problems, \cite{BourK,Carrasco}. It was also known, that the hypothetical Loewner metric would be a minimizer for the Ahlfors regular conformal dimension, see e.g. \cite[Introduction]{CEB}. These insights suggested that, if a Loewner metric were to exist in the conformal gauge of a metric space, it could be obtained by studying minimizers for discrete modulus problems $\Mod_Q(\Gamma, G_m)$ with $Q$ equal to the conformal dimension.

By applying discretization, certain structures from this hypothetical Loewner metric can be gleaned form the original undeformed space. Versions of Sobolev spaces were constructed in \cite{BS,murugan2023first, Lindquist}, which would coincide with the Sobolev space associated to the Loewner space in the sense of \cite{Ch,Shanmun,AdMS}, if the space was quasisymmetric to a Loewner space. Also, discrete modulus at the critical exponent would be comparable to the continuous modulus of the associated Loewner space \cite{Haissinski,P89,TQuasi}.  Finally, Murugan and Shimizu constructed an energy measure associated to the Sobolev spaces mentioned before, that would yield a measure with respect to which the Hausdorff $Q$-measure of the Loewner structure would be mutually absolutely continuous \cite{murugan2023first}. Their work built upon the work of Kigami \cite{kigami} and Shimizu \cite{shimizu}.

 Indeed, the work of Murugan and Shimizu presented a strategy by which the existence of a Loewner metric in the conformal gauge could be determined in general. This was inspired by earlier work of Kajino and Murugan \cite{MN}, where they observed that a similar strategy could be used for $p=2$ to resolve a problem on the attainment of a related conformal walk dimension. In this latter work the energy was associated with the exponent $p=2$, and in the first, $p$ is chosen to coincide with the conformal dimension. In both works, one observes that if the hypothesized structure were to exist, the measure and metric could be obtained by considering Sobolev functions on the space and their energy measures. Our argument also is based on computing minimizers for modulus, but the logic is more direct, and we do not discuss energy measures or the associated Sobolev space in this work.

\subsection{New Fractal Spaces}
Finally, we note that the fractal spaces that we construct using the framework of IGS, may have broader interest. Indeed, there has been much work on analysis on fractals, especially regarding diffusions see e.g. \cite{barlow,kigamibook,strichartz,strichartzbook} and conformal geometry \cite{MT}. Some major examples in this realm have been the Cantor sets, Sierpi\'nski gasket, general gaskets, Sierpi\'nski carpets, Menger curve and the Vicsek set. Indeed, these classes of spaces are often presented as the prototypical examples of fractals and all arise from iterated function systems of Euclidean space. Often, the Cantor sets, gasket and Vicsek cases have been solved first, and then (with considerably more work) the Sierpi\'nski carpet/Menger curve case is pursued. Cantor sets are quantitatively totally disconnected and, in the gasket and Vicsek spaces, cut points give significant simplifications for calculations. In the cases with local cut-points many problems have been fully resolved, such as determining the conformal dimension and its attainment \cite{MT,TysonWu}, and the construction of diffusions \cite{kigamibook}. For carpets, the lack of cut points is reflected in significant difficulty in performing calculations. Consequently, most of the same questions such as the attainment problem of conformal dimension are not known for carpets (e.g. \cite{MT}), while others, such as the existence of a diffusion \cite{BarlowBass1989,barlowbass} and construction of a Dirichlet form \cite{KuzuokaZhou} are quite involved.  

The reason to study these example spaces is that they are ``toy models'' used to develop tools to approach much  more difficult problems, such as: the behavior of random walks on percolation clusters (e.g. \cite[Introduction]{barlow}), or Cannon's, Kleiner's and Kapovitch's conjectures in geometric group theory  \cite[Introduction]{CEB}. This is natural for the reason that connected components of supercritical percolation clusters in the plane, and planar group boundaries, are homeomorphic to carpets (see e.g. \cite{kapkleiner}). It has been suggested that the next step in difficulty from gasket-type spaces is carpet-like spaces.  However, at present, beyond some intriguing work in \cite{BourK, kigami,shimizu,murugan2023first}, there is not much progress in the study of carpets. Indeed, the difficulties in the analysis of carpet-like spaces has stymied development of the theory.

The class of fractals that we present here are a new class of natural spaces where to study analysis and which do not arise as attractors of iterated function systems in Euclidian spaces. (Indeed, the metrics on these spaces often do not bi-Lipschitz embed into Euclidean space, see e.g. \cite{LangPlaut} for an argument for the example given in Figure \ref{fig:laaksodiamond}). Similar to carpet's, the examples (often) do not have cut points. Despite this, the computations in these examples are even somewhat simpler than those for gaskets. (This is especially true for exponents $p\neq 2$. For $p=2$ the computations for gaskets are also very tractable, see e.g. \cite{kigami, strichartz} and compare these to \cite{Strichartz04A,Strichartz04B, Caopgasket}, which showcase the $p\neq 2$ theory for gaskets.) Thus, in a sense, the spaces lie in difficulty between gaskets and carpets, and offer a new setting to explore prototypical behaviors of random walks and conformal geometry.  The examples have some common features: they all have topological and Assouad-Nagata dimension equal to $1$, see Remark \ref{rmk:topdim}. Further, as explained in Example \ref{ex:counterexample} and as follows from \cite{Anderson-1,Anderson-2}, the constructions often yield spaces homeomorphic to the Menger curve. 

Often analysis on fractals hinges upon potential theory and computing minimizers of energy. In general this is quite difficult - especially for carpet-like spaces (see e.g. \cite{Kwapisz}). The inability to perform exact computations has left many interesting questions unanswered. The distinguished feature of the graphs arising from iterated graph systems is the ability to effectively compute minimizers of energy. This explicit computability and richness of the family suggest that studying the class further could yield crucial insights and may allow one to answer further questions in analysis on fractals that have heretofore resisted effort.  Answering Conjecture \ref{conj:kleiner} seems to be just the first example of these insights. Another example is understanding when the so called $p$-walk dimension $d_{w,p}$ is strictly grater than $p$, which we define in our framework in Definition \ref{def:walkdim}. It has been observed in multiple different works (see e.g. \cite{kajino2024korevaarschoen,murugan2023first,GrigoryanBesov2}) that this problem is closely related to the structure of Sobolev spaces. See also \cite[Problem 4 in Section 6.3]{kigami}. In Subsection \ref{subsec: p-walkdim} we present a simple characterisation in our framework of when $d_{w,p} \geq p$ is a strict inequality and present examples for both cases.

Next, we will go over each of the main concepts: conformal dimension, the Loewner property and the combinatorial Loewner property.  We will also explain the proof of the counterexamples using them. The iterated graph systems are described and studied in Section \ref{sec:linrep}. A reader mostly interested in the construction of such fractal spaces and graphs can read this section independent of most of the other sections, and this section does not require most of the notions from quasiconformal geometry. Modulus on graphs is presented in Section \ref{sec:modulus} and the combinatorial Loewner property for certain IGSs is proven in Section \ref{sec:combloew}.   In Section \ref{sec:mainthm} we study porosity and prove the main theorems. The concepts that we use are mostly standard, and their classic theory can be read from textbooks such as \cite{He,MT, shabook}.

\section{Core concepts}\label{sec:concepts}

\subsection{Conformal dimension} Let $Q \in (1,\infty)$ and let $(X,d)$ be a metric space. 
A compact metric space $X$ is said to be $Q$-\emph{Ahlfors regular}, if there exists a constant $C>1$ so that the $Q$-dimensional Hausdorff measure satisfies $C^{-1} r^Q \leq \cH^Q(B(x,r))\leq C r^Q$ for all $r\in (0,\diam(X)]$ and all $x\in X$. Here, the \emph{Hausdorff measure} is given by
\[
\cH^Q(A):=\lim_{\delta \to 0} \cH^Q_\delta(A),
\]
where the \emph{Hausdorff content}'s are defined by
\[
\cH^Q_\delta(A):=\inf\{\sum_{i=1}^\infty \diam(A_i)^Q : A \subset \bigcup_{i=1}^\infty A_i, \diam(A_i)\leq \delta\}.
\] 
The value $Q$ is equal to the \emph{Hausdorff dimension} for a $Q$-Ahlfors regular space. In general, if $\mu$ is any Radon measure on $X$, we say that $(X,d,\mu)$ is $Q$-Ahlfors regular if $C^{-1} r^Q \leq \mu(B(x,r))\leq C r^Q$ for all $r\in (0,\diam(X)]$ and $x\in X$. In this case there exists a constant $C$ for which $\frac{1}{C}\mu \leq \cH^Q \leq C \mu$, and the metric space $X$ is also $Q$-Ahlfors regular. 
 
Our focus will be on self-similar spaces with self-similairity defined as in \cite{BourK}. See \cite{Carthesis} for a more detailed discussion on this, and related, notions.
\begin{definition}\label{def:selfsim} We say that a metric space $X$ is \emph{approximately self-similar}, if there exists a constant $L\geq 1$ so that for every $x\in X$ and every $r\in (0,\diam(X)]$, there exists an open set $U_{x,r}\subset X$ and a  $L$-biLipschitz map $f_{x,r}:(B(x,r),d/r)\to U_{x,r}$, where the ball $B(x,r)$ is equipped with the scaled metric $d/r$.
\end{definition}
The maps $f_{x,r}$ in the statement will be refered to as scaling maps.  A map $f:(X,d_X)\to (Y,d_Y)$ is a $L$-\emph{biLipschitz map} (or embedding) if for all $x,y\in X$, we have
\[
\frac{1}{L} d_X(x,y) \leq d_Y(f(x),f(y)) \leq L d_X(x,y).
\]

Let $\eta:[0,\infty)\to [0,\infty)$ be a continuous homeomorphism.  A homeomorphism $f:(X,d_X)\to (Y,d_Y)$ is said to be $\eta$-\emph{quasisymmetric} if for every $x,y,z\in Z$ with $x\neq z$, we have
\[
\frac{d_Y(f(x),f(y))}{d_Y(f(x),f(z))} \leq \eta\left(\frac{d_X(x,y)}{d_X(x,z)}\right).
\]
We say that $f:X\to Y$ is a \emph{quasisymmetry}, if it is an $\eta$-quasisymmetry for some $\eta$. In these cases, we say that $X$ is quasisymmetric to $Y$. For more background on these mappings and the notions below, see \cite{He}. At this juncture, we note that we will usually omit the subscript of a metric $d$, where the space is clear from context.

The \emph{conformal gauge} of a metric space $X$ is given by
\[
\cG(X,d):=\{\hat{d} : \hat{d} \text{ is a metric on } X \text{ s.t. } \id:(X,d)\to (X,\hat{d}) \text{ is a quasisymmetry}\}.
\]
Here $\id$ is the identity map. This gauge is generally quite big -- see \cite{Carrasco} for a description of metrics in this gauge (also, \cite{MurCA,Shaconf}). We will focus mostly on those metrics, which are Ahlfors regular. This gives the \emph{Ahlfors regular conformal gauge}:
\begin{align*}
\cG_{\rm AR}(X,d):=\{\hat{d} : &\ \  \id:(X,d)\to (X,\hat{d}) \text{ is a quasisymmetry and } \\
& (X,\hat{d}) \text{ is $\widehat{Q}$-Ahlfors regular for some } \widehat{Q}>0\}.
\end{align*}
Where the metric $d$ is clear from context, we will drop it in the notation and simply write $\cG(X), \cG_{\rm AR}(X)$.  Among all such metrics, one may wish to find an ``optimal metric'', which satisfies some nice analytic or symmetry properties. A frequently considered problem is to minimize the Hausdorff dimension among metrics in the (Ahlfors regular) conformal gauge: the \emph{(Ahlfors regular) conformal dimension} is given by
\[
\dims_{\rm AR}(X):=\inf\{\widehat{Q} : \hat{d}\in \cG_{\rm AR}(X,d) \text{ and } (X,\hat{d}) \text{ is $\widehat{Q}$-Ahlfors regular}\}.
\]
A version of this invariant, the conformal Hausdorff dimension was  first considered by Pansu \cite{P89}, and the present definition was given in \cite{BourdonPajot}. Further variants of this conformal dimension, such as conformal Assouad, have alse been studied. See \cite{MT} for an introduction to conformal dimension and \cite{CEB,Carrasco} for some further work on these dimensions. The various definitions are equivalent in self-similar settings, see \cite{EBConf}. Since all the spaces considered in this paper are self-similar, we will mostly speak of just conformal dimension to refer to these notions.

If the infimum in the definition of the (Ahlfors regular) conformal dimension is attained, we say that $X$ attains its (Ahlfors regular) conformal dimension. The question to determine if this happens for a given space, or a class of spaces, is called the \emph{attainment problem}. Determining the conformal dimension and the attainment problem are notoriously hard. The value of the conformal dimension can, however, be numerically estimated. Numerical computations of the conformal dimension of the Sierpi\'nski carpet were performed in \cite{Kwapisz}. These calculations are based on characterizations of the conformal dimension in terms of combinatorial modulus, see \cite{Carrasco, BourK} and the related work \cite{MurCA,Shaconf,Kigamiweighted}.  In order to describe these characterizations, we will next define combinatorial and continuous moduli.

\subsection{Continuous and combinatorial modulus}\label{subsec:moduli}

Out of the two notions of moduli, the continuous one is easier to define, and is much more classical, see e.g. \cite{Ziemer,shabook}. We will define it only in the case when $X$ is $Q$-Ahlfors regular for some $Q \in (1,\infty)$. While modulus can be defined with respect to any measure on $X$, we will only consider Hausdorff $Q$-measures in the present paper. Let $\Gamma$ be a family of  curves. We say that a Borel measurable $\rho:X\to [0,\infty]$ is admissible for $\Gamma$ if for all rectifiable curves $\gamma\in \Gamma$ we have $\int_\gamma \rho ds \geq 1$, where the integration is with respect to the length measure. If $p\in [1,\infty)$, the $p$-modulus of a curve family $\Gamma$ is defined as
\[
\Mod_p(\Gamma):=\inf\left\{ \int \rho^p d\cH^Q : \rho \text{ is admissible for } \Gamma\right\}.
\]
For a quick overview of modulus and its properties, see \cite{Ziemer,shabook}. A crucial issue with this definition is that it is not a quasisymmetry invariant. Indeed, different metric in the conformal gauge do not yield mutually absolutely continuous Hausdorff-measures, nor do they agree on the class of rectifiable curves. The only setting, where invariance can be obtained, is if $f:X\to Y$ is a quasisymmetry between two $Q$-Ahlfors regular metric spaces -- see \cite{TQuasi}.

In order to get full invariance, the notion of modulus needs to be discretized. There are a variety of approaches to do this, \cite{Haissinski, TQuasi, HKInvent, KL, kigami, BourK, EBConf, murugan2023first,Lindquist}. For some comparisons between different approaches, see \cite{EBConf}. In each approach the space and curves are discretized in different ways. We will follow \cite{BourK} since  the combinatorial Loewner property was originally expressed in terms of it. (It may be possible to express the combinatorial Loewner property also in terms of other notions of modulus and by employing other graphical approximations, but this is not essential for the present work.)

Let $\alpha,L_*>1$. An $\alpha$-approximation to a compact metric space $X$ is a sequence of incidence graphs $\{G_m=(V_m,E_m)\}_{m \in \N}$, where for each $m\in \N$ the set of vertices $V_m$ is a collection of subsets of $X$, which forms a covering of $X$, and which satisfies the following conditions
\begin{enumerate}
    \item For every $v\in V_m$ there exists a $z_v\in v$ for which \[
    B(z_v, \alpha^{-1}L_*^{-m}) \subset v \subset B(z_v, \alpha L_*^{-m}).
    \]
    \item For every pair of distinct $v,w\in V_m$ we have 
    \[B(z_v,\alpha^{-1}L_*^{-m})\cap B(z_w, \alpha^{-1}L_*^{-m})=\emptyset.\] 
\end{enumerate}
Here, an incidence graph $G=(V,E)$ of $X$ is a collection $V$ of subsets of $X$, which cover $X$, where $\{v,u\}\in E$ is an edge if and only if $u\cap v \neq \emptyset$. For a given subset $F \subseteq X$ we write
\[
    G[F] := \{ v \in V : X_v \cap F \neq \emptyset \}.
\]
If $\Gamma$ is a family of subsets of $X$ we define
\[
    G[\Gamma] := \{ G[\gamma] : \gamma \in \Gamma \}.
\]

In \cite{BourK}, the value $L_*=2$ is used exclusively, but the results apply just the same with a different $L_*$. It will be useful to allow a different $L_*>1$ to conform more directly with our examples. We note that in our examples, the logic of \cite{BourK} is a bit reversed. We construct a sequence of graphs $G_m$, and $X$ is a limit space under Gromov-Hausdorff convergence. A posteriori, the graphs $G_m$ can also be identified with an $\alpha$-approximation in the sense of Bourdon and Kleiner.
Fix $m\in \N, p \geq 1$ and let $\rho:V_m\to \R_{\geq 0}$, and $\gamma\subset X$ be a continuous curve identified with its image. We define the \emph{$\rho$-length} of $\gamma$ as
\[
L_\rho(\gamma):=\sum_{v \in G_m[\gamma]} \rho(v).
\]
Let $p\in [1,\infty)$. We define the \emph{$p$-mass} of $\rho$ as
\[
\cM_p(\rho):=\sum_{v\in V_m} \rho(v)^p.
\]
Given a family of continuous curves $\Gamma$ in $X$, we say that $\rho : V_m \to \R_{\geq 0}$ is \emph{$\Gamma$-admissible} if $L_{\rho}(\gamma) \geq 1$ for all $\gamma \in \Gamma$. We define the \emph{discrete modulus} of $\Gamma$ with respect to $G_m$ by 
\[
\Mod_p^{D}(\Gamma, G_m):=\inf\{\cM_p(\rho) : L_\rho(\gamma)\geq 1, \forall \gamma \in \Gamma, \rho:V_m\to \R_{\geq 0}\}.
\]
Using the discrete modulus, Keith and Kleiner describe the conformal dimension of self-similar spaces as a critical exponent -- see \cite{Carrasco} for a proof by Carrasco, which was developed independently. For $\delta>0$ we write $\Gamma_\delta =\{\gamma : \diam(\gamma)>\delta\}$  and 
\begin{equation}\label{eq:moddeltadef}
\cM_{\delta,p}^{(m)} := \Mod_p^{D}(\Gamma_{\delta}, G_m).
\end{equation}

\begin{proposition}[Corollary 1.4 \cite{Carrasco}]\label{prop:confdimchar} Let $X$ be an approximately self-similar metric space. There exists a $\delta_0>0$ so that for all $\delta\in (0,\delta_0)$ we have
\[
\dims_{\rm AR}(X)=\inf\left\{p \geq 1 : \lim_{k\to\infty} \cM_{\delta,p}^{(k)} =0 \right\}.
\]
\end{proposition}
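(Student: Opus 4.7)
The plan is to prove the two inequalities $p^{**}\leq \dims_{\rm AR}(X)$ and $\dims_{\rm AR}(X)\leq p^{**}$, where
\[
p^{**}:=\inf\{p\geq 1 : \lim_{k\to \infty} \cM_{\delta,p}^{(k)}=0\}.
\]
The first is the soft direction and uses only the definition of the conformal gauge together with Ahlfors regularity, while the second requires constructing an Ahlfors regular metric in the gauge from decay of the discrete modulus, and is the deeper input originally due to Keith--Kleiner and Carrasco.

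For the first inequality, I would fix $p > \dims_{\rm AR}(X)$ and choose $\hat d\in \cG_{\rm AR}(X,d)$ that is $\hat Q$-Ahlfors regular with $\hat Q < p$. Set $\rho_m(v):=\diam_{\hat d}(v)$ for $v\in V_m$. Since $\id:(X,d)\to (X,\hat d)$ is a quasisymmetry and the vertices are essentially balls of controlled $d$-radius, any continuous curve $\gamma$ with $\diam_d(\gamma)\geq \delta$ satisfies $\diam_{\hat d}(\gamma)\geq c(\delta)>0$. Chaining along $G_m[\gamma]$ then gives $L_{\rho_m}(\gamma)\gtrsim \diam_{\hat d}(\gamma)$, so a bounded scalar multiple of $\rho_m$ is $\Gamma_\delta$-admissible. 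Ahlfors regularity together with the bounded overlap from condition (2) in the definition of an $\alpha$-approximation yields $\sum_v \diam_{\hat d}(v)^{\hat Q}\lesssim \cH^{\hat Q}(X)$, and hence
\[
\cM_p(\rho_m)\leq \max_{v\in V_m}\diam_{\hat d}(v)^{p-\hat Q} \sum_{v\in V_m}\diam_{\hat d}(v)^{\hat Q} \longrightarrow 0
\]
as $m\to\infty$, by uniform continuity of $\id$ on the compact space $X$. This shows $p\geq p^{**}$ for every such $p$, giving $p^{**}\leq \dims_{\rm AR}(X)$.

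For the second inequality, the main ingredient is approximate self-similarity combined with a submultiplicativity property of the discrete moduli across scales. Fix $p>p^{**}$, so $\cM_{\delta,p}^{(k)}\to 0$, and choose $k_0$ with $\cM_{\delta,p}^{(k_0)}<\tfrac12$. Covering $G_{nk_0}$ by bi-Lipschitz scaled copies of $G_{k_0}$ via the scaling maps $f_{x,r}$ from Definition \ref{def:selfsim} and gluing the corresponding near-minimizers produces admissible weights on $G_{nk_0}$, yielding exponential decay $\cM_{\delta,p}^{(nk_0)}\lesssim 2^{-n}$. With this decay I would then invoke the Keith--Kleiner--Carrasco construction of a metric in the conformal gauge: near-minimizers of $\Mod_p^D(\Gamma_\delta, G_m)$ are packaged into a scale-coherent system of weights that defines a new distance $\hat d$ on $X$. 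Chaining estimates show $\hat d\in \cG(X,d)$, and a Frostman-type argument from the mass bound delivers Ahlfors regularity of dimension at most $p+\varepsilon$. Sending $\varepsilon\to 0$ gives $\dims_{\rm AR}(X)\leq p$, and since $p>p^{**}$ was arbitrary this yields $\dims_{\rm AR}(X)\leq p^{**}$.

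The main obstacle is precisely this second direction. Submultiplicativity requires care because the $\alpha$-approximations are not literally self-similar, only approximately so via bi-Lipschitz scaling maps, and the loss has to be absorbed into the iteration. Moreover the Ahlfors regularity of the constructed metric rests on uniform volume estimates for the near-minimizers across all scales simultaneously, which is subtle since admissibility is only a lower bound constraint on the weights. Approximate self-similarity enters essentially at both steps: it provides the submultiplicative bound and allows the discrete weights to be transported consistently between scales, so that the candidate distance glues into a globally defined quasisymmetric metric on $X$.
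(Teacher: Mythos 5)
The paper does not prove this proposition; it cites it directly as \cite[Corollary 1.4]{Carrasco} (a theorem of Keith--Kleiner and, independently, Carrasco). So there is no internal proof to compare yours against, and the real question is whether your sketch is a faithful account of that external argument.

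Your first direction ($p^{**}\leq \dims_{\rm AR}(X)$) is essentially correct and self-contained: take a $\hat Q$-regular metric $\hat d$ in the gauge with $\hat Q<p$, use $\rho_m(v)=\diam_{\hat d}(v)$, get admissibility from chaining (the nerve $G_m[\gamma]$ is connected for connected $\gamma$, cf.\ Lemma~\ref{lemma: Discretization of connected set}), and use the disjointness of the inner balls together with $\hat Q$-regularity to bound the $\hat Q$-mass. One small omission: you need a fixed rescaling of $\rho_m$ by $c(\delta)^{-1}$ to be admissible, but since $c(\delta)$ is a constant that does not affect the limit.

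Your second direction is a correct high-level roadmap of the Keith--Kleiner/Carrasco construction (approximate submultiplicativity of $\cM_{\delta,p}^{(k)}$ via the scaling maps, exponential decay, and packaging near-minimizers into a scale-coherent weight that defines a metric in the gauge), but it is not a proof: the crucial steps --- that the submultiplicative iteration actually closes under only approximate self-similarity, that the weights glue into a genuine quasisymmetric metric, and that a Frostman-type argument delivers Ahlfors $Q$-regularity for $Q$ arbitrarily close to $p$ --- are exactly the content of Carrasco's paper and are deferred wholesale. You also do not address the role of the threshold $\delta_0$, which is where the approximate self-similarity constants enter the iteration. Since the paper itself treats the result as a black box, this deferral is not a defect of your exposition relative to the paper, but it should be flagged that your ``proof'' of the hard direction is really a citation in disguise, just as the paper's is.
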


This characterization is very powerful, but difficult to use since the moduli in question are quite challenging to compute. At present, for most spaces, this can only be done via numerical approximation. See e.g. \cite{Kwapisz} for an explicit numerical approximation of the conformal dimension of the Sierpi\'nski carpet based on this approach. 

\subsection{Loewner condition}

Given the work of Heinonen and Koskela \cite{HK}, and given its implications for the study of quasisymmetric maps, one special property of a metric in the conformal gauge is the Loewner property. This property is expressed using the concept of a continuous modulus.  Now, through its connection to capacity, see e.g. \cite{kigami, Ziemer}, it is particularly relevant to consider the moduli of path families connecting sets. Let $E,F\subset X$ be closed sets, and let $\Gamma(E,F)$ be the collection of rectifiable curves connecting $E$ to $F$. The Loewner condition is given by a lower bound for the modulus of curves between two continua. If $E,F\subset X$ are two non-degenerate continua (i.e. compact connected subsets with $\diam(E),\diam(F)\neq 0$), then their relative distance is given by $\Delta(E,F)=\frac{d(E,F)}{\min\{\diam(E),\diam(F)\}}$. 

A metric space $(X,d)$ is said to be $Q$-Loewner, if it is $Q$-Ahlfors regular and
\[
\Mod_Q(\Gamma(E,F))\geq \phi(\Delta(E,F)^{-1}),
\]
for some increasing function $\phi:[0,\infty)\to[0,\infty)$ and for all non-degenerate continua $E,F\subset X$.

The problems of minimizing Hausdorff dimension in the conformal gauge, and that of finding a Loewner metric in the conformal gauge are very closely related. First, if $(X,d')$ is $Q'$-Loewner for some metric $d'\in \cG_{\rm AR}(X)$, then $Q'=\dims_{\rm AR}(X)$, and thus the metric $d'$ attains the infimum in the definition of the conformal dimension. This follows from \cite{TQuasi,P89}. Indeed, the Loewner property is somewhat excessive here, since it suffices that $\Mod_{Q'}(\Gamma)>0$ for any family of rectifiable curves.

Conversely, if $(X,d')$ is $Q'$-Ahlfors regular, $d'\in \cG_{\rm AR}(X)$ and  $Q'=\dims_{\rm AR}(X)$, then $X$ almost admits a positive modulus family of curves. Indeed, Keith and Laakso showed that a weak tangent of $X$ admits such a positive modulus family of curves \cite{KL}. If $X$ is further assumed to be self-similar, then this family of curves can be elevated to the space $X$, see e.g. \cite[Proof of Corollary 1.6]{BK05}.

The Loewner property is, however, stronger than a mere positivity of modulus for some family of curves. It involves a quantitative, and scale-invariant, lower bound for the connecting modulus between any pair of continua. However, it was already observed in \cite{BK05}, that further symmetry in the space elevates having positive modulus to a Loewner property. Thus, the Loewner property can be seen as involving three ingredients: positive modulus, scale invariance and symmetry. Further, as a consequence, at least for group boundaries, the attainment of conformal dimension is equivalent to the existence of a Loewner metric in the conformal gauge, \cite{BK05}. How can one then recognize the existence of a Loewner metric in the conformal gauge? Bourdon and Kleiner proposed a discrete version of the Loewner property, the combinatorial Loewner property as a potential condition for this \cite{BourK}.

\begin{figure}[!ht]
\begin{tikzpicture}
\draw (-7,-1)--(-6,0)--(-5,1)--(-4,0)--(-3,1);
\draw (-6,0)--(-5,-1)--(-4,0);
\draw (-5,-1)--(-5,1);
\draw (-7,1)--(-6,0);
\draw (-4,0)--(-3,-1);

\draw[->,thick] (-2.5,0)--(-1.5,0);

\node at (-7,1) [circle,fill,inner sep=1pt]{};
\node at (-6,0) [circle,fill,inner sep=1pt]{};
\node at (-5,1) [circle,fill,inner sep=1pt]{};
\node at (-4,0) [circle,fill,inner sep=1pt]{};
\node at (-3,1) [circle,fill,inner sep=1pt]{};
\node at (-5,-1) [circle,fill,inner sep=1pt]{};
\node at (-1,1) [circle,fill,inner sep=1pt]{};
\node at (-7,-1) [circle,fill,inner sep=1pt]{};
\node at (-3,-1) [circle,fill,inner sep=1pt]{};

\draw (-1,-1)--(0,0);
%\draw (0,0)--(1,.12)--(2,0)--(3,1);
\draw (2,0)--(3,1);
%\draw (0,0)--(1,-.12)--(2,0);
\draw (1,-.12)--(1,.12);
\draw (-1,1)--(0,0);
\draw (2,0)--(3,-1);

\draw [-] (0,0) to [bend right = 5em] (1,-.12);
\draw [-] (1,-.12) to [bend right = 5em] (2,0);

\draw [-] (0,0) to [bend left = 5em] (1,.12);
\draw [-] (1,.12) to [bend left = 5em] (2,0);

\node at (-1,1) [circle,fill,inner sep=1pt]{};
\node at (-1,-1) [circle,fill,inner sep=1pt]{};
\node at (0,0) [circle,fill,inner sep=1pt]{};
\node at (1,.12) [circle,fill,inner sep=1pt]{};
\node at (1,-.12) [circle,fill,inner sep=1pt]{};
\node at (2,0) [circle,fill,inner sep=1pt]{};
\node at (3,1) [circle,fill,inner sep=1pt]{};
\node at (3,-1) [circle,fill,inner sep=1pt]{};

\node at (1.2,0) {\small $\varepsilon$};
\end{tikzpicture}
\caption{The figure shows a deformation of the example in Figure \ref{fig:replacementrule}, where the central edge has a length $\varepsilon$, while each other edges still have length $1/4$. It can be shown that this deformation is a quasisymmetry.}
\label{fig:decreasedim} 
\end{figure}

\begin{example} At this juncture it is helpful to return to the primary counterexample from Figure \ref{fig:replacementrule}. We will be a bit informal and will not explain all details, since we will later give a detailed proof of the main theorem using somewhat different ideas. However, the argument given here gives a good intuition of what is going on.

Let $Y$ be obtained from the IGS in Figure \ref{fig:laakso}. It is self-similar and consists of $8$ copies scaled by $1/4$. Thus this example has Hausdorff dimension equal to $\log(8)/\log(4)=3/2$. The space $Y$ is also  $3/2$-Loewner space, and thus has conformal dimension equal to its Hausdorff dimension. This can be seen in one of three ways. Two proofs are obtained by recognizing the construction as a variant of the Laakso space construction from \cite{Laakso}, or the inverse limit construction of \cite{CK}. The proofs of the relevant Poincar\'e inequality in these settings, and the equivalence of the Poincar\'e inequality and Loewner condition from \cite{HK} implies the Loewner condition. We will also give a third self-contained argument in Example \ref{ex:loewnerreplacement}, which is based on an explicit discrete modulus comutations.

Our counterexample $X$ which arises form the IGS in Figure \ref{fig:replacementrule} has Hausdorff dimension $\log(9)/\log(4)>3/2$. Since this space contains a copy of $Y$ from the previous paragraph, we must have $\dims_{\rm AR}(X)\geq 3/2.$ It is not minimal for the dimension, and Figure \ref{fig:decreasedim} shows how the dimension can be reduced by iteratively reducing the length of the central segment to $\varepsilon$. This reduction is repeated similar to a multiplicative cascade: The central edge arising from replacing an edge $e$ at some level will have length $\varepsilon$ times the length of the edge $e.$ For example, the edges arising from replacing the central edge at the first level have length $\varepsilon/4$ and $\varepsilon^2$, where the latter is the length of the central edge of the central edge. It can be shown, that for every $\varepsilon>0$ such a cascade produces a new metric in the conformal gauge of $X$.

As $\varepsilon\to 0$, the Hausdorff dimension of this new metric approaches $3/2$, establishing that $\dims_{\rm AR}(X)=3/2$. This argument gives an intuitive computation of the conformal dimension and an explanation for why it is not attained. Attainment would require setting $\varepsilon = 0$, which would degenerate the space by collapsing the central edge as in Figure \ref{fig: Why not attain}. This is not allowed for homeomorphisms, let alone for quasisymmetries. This argument can also be made rigorous, but our proof involves a simpler and more general argument that uses the fact that $X$ is combinatorially $3/2$-Loewner and Proposition \ref{prop:porous}.

The construction with $\varepsilon>0$ can be thought of as a construction of a metric by using an admissible weight function for the left-right modulus problem similar to \cite{Carrasco} and \cite{KL}, which comes from a modulus problem -- see also \cite{Kigamiweighted} for similar ideas and \cite{MT} for similar constructions. The fact that the optimal weight function vanishes for the central edge corresponds with the fact that attainement is not possible.
\begin{figure}[!h]
\begin{tikzpicture}
    \draw (-1,-1)--(0,0);
%\draw (0,0)--(1,.12)--(2,0)--(3,1);
\draw (2,0)--(3,1);
%\draw (0,0)--(1,-.12)--(2,0);
\draw (1,-.12)--(1,.12);
\draw (-1,1)--(0,0);
\draw (2,0)--(3,-1);

\draw [-] (0,0) to [bend right = 5em] (1,-.12);
\draw [-] (1,-.12) to [bend right = 5em] (2,0);

\draw [-] (0,0) to [bend left = 5em] (1,.12);
\draw [-] (1,.12) to [bend left = 5em] (2,0);

\node at (-1,1) [circle,fill,inner sep=1.5pt]{};
\node at (-1,-1) [circle,fill,inner sep=1.5pt]{};
\node at (0,0) [circle,fill,inner sep=1.5pt]{};
\node at (1,.12) [circle,fill,inner sep=1.5pt]{};
\node at (1,-.12) [circle,fill,inner sep=1.5pt]{};
\node at (2,0) [circle,fill,inner sep=1.5pt]{};
\node at (3,1) [circle,fill,inner sep=1.5pt]{};
\node at (3,-1) [circle,fill,inner sep=1.5pt]{};

\node at (1.2,0) {$\varepsilon$};

\draw[->,thick] (3.2,0) to ["\small{$\varepsilon \to 0$}"] (4.8,0)  ;

\draw (5,-1)--(6,0);
\draw (5,1)--(6,0);
\draw [-] (6,0) to [bend right = 5em] (7,0);
\draw [-] (6,0) to [bend left = 5em] (7,0);

\draw [-] (7,0) to [bend right = 5em] (8,0);
\draw [-] (7,0) to [bend left = 5em] (8,0);

\draw (8,0)--(9,1);
\draw (8,0)--(9,-1);

\foreach \q in {(5,-1),(6,0),(5,1),(7,0),(8,0),(9,1),(9,-1)}
\node at \q [circle,fill,inner sep=1.5pt]{};
\end{tikzpicture}
\caption{Assigning the parameter value $\varepsilon = 0$ degenerates the geometry.}
\label{fig: Why not attain}
\end{figure}
\end{example}

\subsection{Combinatorial Loewner property}\label{subsec:combloewdef}
By discretizing the continuous modulus, Bourdon and Kleiner gave a quasisymmetrically invariant combinatorial Loewner property, \cite{BourK}.
\begin{definition}[CLP]
    Let $Q > 1$ and $L_* > 1$.
    We say that a compact metric space satisfies the \emph{combinatorial $Q$-Loewner property} if there exists two positive increasing functions $\phi,\psi$ on $(0,\infty)$ with $\lim_{t \to 0} \psi(t) = 0$ which satisfies the following two conditions:
    \begin{enumerate}[label=\textcolor{blue}{(CLP\arabic*)}]
        \item \label{CLP1} If $F_1,F_2 \subseteq X$ are two disjoint non-degenerate continua so that $L_*^{-n} \leq \diam(F_1) \land \diam(F_2)$, then for all $m \in \N$
        \begin{equation*}
            \phi(\Delta(F_1,F_2)^{-1}) \leq \Mod_Q^{D}(\Gamma(F_1,F_2), G_{n + m}).
        \end{equation*}
        \item \label{CLP2} If $x \in X$, $r \geq L_*^{-n}$ and $C > 0$ then for all $m \in \N$
        \begin{equation*}
            \Mod_Q^{D}\left(\Gamma\left(\overline{B(x,r)},X \setminus B(x,Cr)\right), G_{n + m}\right) \leq \psi(C^{-1}).
        \end{equation*}
    \end{enumerate}
\end{definition}
The definition does not depend on the choice of an $\alpha$-approximation, as shown in \cite[Proposition 2.2]{BourK}, and it is a quasisymmetry invariant \cite[Theorem 2.6]{BourK}. The definition is slightly different from that in \cite{BourK}, and we adopt the corrected version from \cite{clais}. As noted by \cite{BourK}, and shown in detail by \cite{clais}, the combinatorial Loewner property implies the following \emph{LLC property}: For each $x,y\in X$, there exists a connected set $E_{x,y}\subset X$ with $\diam(E)\leq Cd(x,y)$.

If $(X,d)$ is $Q$-Loewner, then it is also $Q$-combinatorially Loewner, see \cite[Theorem 2.6]{BourK}. Moreover, by the quasisymmetric invariance, if $\cG_{\rm AR}(X)$ possesses a Loewner metric, then $(X,d)$ is also combinatorially Loewner. Kleiner's conjecture regards the converse to this statement. We also note that, regardless of attainment, it follows from Proposition \ref{prop:confdimchar} that if $X$ is combinatorially $Q$-Loewner for $Q \in (1,\infty)$, then $Q=\dims_{\rm AR}(X)$; see also \cite[Lemma 4.2]{EBConf}.

For combonatorially Loewner spaces the existence of a Loewner metric in the conformal gauge is equivalent to minimizing conformal dimenion. This follows from work by Ha\"{\i}ssinski on the comparability of discrete and continuous modulus \cite[Proposition B.2]{Haissinski}, and was explicitly stated in \cite{CEB}.
\begin{proposition}\label{prop:noloew} Let $Q>1$. If $X$ is a compact combinatorially $Q$-Loewner metric space, then there exists a metric $d'\in \cG_{\rm AR}(X)$ which is $Q$-Ahlfors regular with  $Q=\dims_{\rm AR}(X)$ if and only if there exists a metric $d'\in \cG_{\rm AR}(X)$ which is $Q$-Loewner.
\end{proposition}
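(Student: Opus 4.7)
The direction $(\Leftarrow)$ is immediate: if $d' \in \cG_{\rm AR}(X)$ is $Q$-Loewner then $d'$ is by definition $Q$-Ahlfors regular, and since $X$ is combinatorially $Q$-Loewner the remark preceding the proposition gives $Q = \dims_{\rm AR}(X)$, so $d'$ attains the Ahlfors regular conformal dimension. All the content is in the direction $(\Rightarrow)$, and my plan is to show that any $d' \in \cG_{\rm AR}(X)$ which is $Q$-Ahlfors regular with $Q = \dims_{\rm AR}(X)$ is automatically $Q$-Loewner. The first step is to note that, since the combinatorial Loewner property is a quasisymmetry invariant by \cite[Theorem 2.6]{BourK}, $(X,d')$ is itself combinatorially $Q$-Loewner with respect to some $\alpha$-approximation $\{G_m\}_{m \in \N}$ of $(X,d')$ built from maximal $L_*^{-m}$-separated nets and their associated balls.

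The second step is to transfer the discrete lower bound \ref{CLP1} to a continuous modulus lower bound via Ha\"{\i}ssinski's comparability theorem \cite[Proposition B.2]{Haissinski}. The $Q$-Ahlfors regularity of $(X,d')$ yields a constant $C \geq 1$ so that for every family $\Gamma$ of rectifiable curves and every sufficiently large $m$,
\[
\Mod_Q^D(G_m[\Gamma], G_m) \leq C \, \Mod_Q(\Gamma),
\]
the point being that an admissible continuous density $\rho$ for $\Gamma$ produces, by averaging over the balls associated to each vertex, an admissible discrete weight with comparable $Q$-mass once the scale $L_*^{-m}$ is fine enough relative to $\Gamma$. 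Applied to $\Gamma(F_1,F_2)$ for two non-degenerate continua with $L_*^{-n} \leq \diam(F_1) \wedge \diam(F_2)$, and combined with \ref{CLP1} at level $n + m$, this yields
\[
\phi(\Delta(F_1,F_2)^{-1}) \leq \Mod_Q^D(\Gamma(F_1,F_2), G_{n+m}) \leq C \, \Mod_Q(\Gamma(F_1,F_2)),
\]
which is precisely the $Q$-Loewner lower bound with controlling function $\phi/C$. Together with the $Q$-Ahlfors regularity assumed at the outset, this verifies that $(X,d')$ is $Q$-Loewner.

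The main obstacle is making the Ha\"{\i}ssinski comparison effective in our setting, which requires ensuring that $\Gamma(F_1,F_2)$ contains enough rectifiable curves for the continuous modulus to be meaningful. Here I would use the LLC property implied by CLP together with the $Q$-Ahlfors regularity of $d'$ to produce, via a standard chaining argument, a wealth of rectifiable curves between $F_1$ and $F_2$ at scales comparable to $d(F_1,F_2)$. A secondary technical point is that the $\alpha$-approximation built from nets in $(X,d')$ need not be the one with respect to which CLP was originally verified, but this is harmless by the independence of the combinatorial Loewner property from the choice of $\alpha$-approximation \cite[Proposition 2.2]{BourK}. Once these points are in place, the argument reduces to a direct combination of \ref{CLP1} with the comparability inequality, as outlined above.
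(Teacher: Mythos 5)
The paper does not give a self-contained proof of this proposition; it cites \cite[Proposition B.2]{Haissinski} and \cite{CEB}, which is also the route you take, so at the level of citations your plan mirrors the paper's. However, there is a genuine gap in the step you treat as routine, namely the inequality
\[
\Mod_Q^D(G_m[\Gamma], G_m) \leq C\, \Mod_Q(\Gamma).
\]
This is the \emph{hard} direction of the discrete--continuous comparison. You justify it by saying that an admissible continuous density $\rho$ ``produces, by averaging over the balls associated to each vertex, an admissible discrete weight with comparable $Q$-mass.'' Averaging does control the $Q$-mass (this is where Ahlfors regularity is used), but it does \emph{not} give admissibility of the averaged weight: the line integral $\int_\gamma \rho\, ds$ is not bounded above by $\sum_{v\in G_m[\gamma]} L_*^{-m} \fint_{2B_v}\rho\, d\mu$, since $\rho$ may be concentrated on a set of tiny $\mu$-measure that $\gamma$ nevertheless traverses. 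What one gets easily from $Q$-Ahlfors regularity is the \emph{opposite} inequality $\Mod_Q(\Gamma) \lesssim \Mod_Q^D(G_m[\Gamma], G_m)$, obtained by smearing a discrete weight into a continuous one. The direction you need is precisely the nontrivial content of Ha\"{\i}ssinski's Proposition B.2 and the analysis in \cite{CEB}, and it does not follow from $Q$-regularity plus averaging alone. Indeed, were the inequality a consequence of $Q$-regularity by itself, the attainment of $\dims_{\rm AR}$ would be automatic and most of the work in the rest of the paper would be unnecessary.

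Your stated ``main obstacle'' -- ensuring $\Gamma(F_1,F_2)$ has enough rectifiable curves via LLC and chaining -- misidentifies the difficulty. The LLC property does give plenty of rectifiable curves once $(X,d')$ is geodesic or doubling and connected, but abundance of rectifiable curves is far from positivity of the continuous $Q$-modulus: a geodesic $Q$-regular space can easily have $\Mod_Q(\Gamma)=0$ for all connecting families. The real issue is whether the CLP lower bound on the discrete modulus actually forces a continuous lower bound, and this requires a genuine argument -- using the exact hypotheses and error terms in Ha\"{\i}ssinski's comparison, and ultimately the attainment hypothesis $Q=\dims_{\rm AR}(X)$ through something like the Keith--Laakso tangent thick-curve theorem -- rather than a chaining construction. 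As written, your proposal asserts the key inequality with a justification that would fail, and then patches a different, less central difficulty. To repair it you would need to reproduce (or carefully cite) the actual statement and proof in \cite{Haissinski,CEB}, not just invoke the result loosely.
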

This equivalence is crucial for us, as we will answer Kleiner's conjecture by precluding the attainment of the conformal dimension.

\subsection{Proof of counterexamples}
There are two key insights that lead to our counterexamples.
\begin{enumerate}
\vskip.3cm
    \item \textbf{Explicit computability:} There is a family of spaces, 
    \[
    \{X : X \text{ is a limit of an iterated graph system}\},
    \] 
    similar to those introduced by Laakso in \cite{Laakso}, for which discrete moduli are ``easy'' to compute. Consequently for these examples, we can compute their conformal dimensions explicitly, see Section \ref{sec:modulus} and Theorem \ref{thm:combloew}.
    \vskip.3cm
    \item \textbf{Porosity:} There are pairs of spaces $(Y,X)$ in the given family where $Y\subset X$ is a porous subset, and where using the first insight we can find $\dims_{\rm AR}(X)=\dims_{\rm AR}(Y)$.  
    \vskip.3cm
\end{enumerate}
We say that $Y\subset X$ is a (uniformly) \emph{porous} subset of $X$, if there exists an $r_0>0$ and $c>0$ so that for every $y\in Y$, and every $r\in (0,r_0)$ there exists $x\in B(y,r)\subset X$ for which $B(x,cr)\cap Y = \emptyset$. Porous sets of Ahlfors regular spaces have strictly smaller Assouad dimension, and the conformal dimension is bounded from above by the Assouad dimension of any quasisymmetric image. These facts together with the quasisymmetric invariance of porosity yield the following proposition. See Section \ref{subsec:porosity} for definitions and a detailed proof.

\begin{proposition}\label{prop:porous}
    Let $Y\subset X$ be a porous subset. If $\dims_{\rm AR}(Y)=\dims_{\rm AR}(X)$, then $X$ does not attain its conformal dimension.
\end{proposition}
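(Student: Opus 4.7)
The plan is to argue by contradiction. Suppose that $X$ attains its Ahlfors regular conformal dimension, so that there exists $d'\in \cG_{\rm AR}(X)$ which is $Q$-Ahlfors regular with $Q=\dims_{\rm AR}(X)$. The identity map $\id:(X,d)\to (X,d')$ is a quasisymmetry, and porosity of a subset is preserved under quasisymmetries. Hence $Y$ is still (uniformly) porous as a subset of $(X,d')$.

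Next, I invoke the standard fact that a porous subset of a $Q$-Ahlfors regular metric space has Assouad dimension strictly less than $Q$: quantitative porosity forces a definite proportion of the points in every ball at every scale to lie at distance $\gtrsim cr$ from $Y$, which in turn forces the covering/packing numbers of $Y$ to grow with an exponent strictly smaller than $Q$. Applied to $Y\subset (X,d')$, this yields $\dims_A(Y,d'|_Y)<Q$.

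Restricting $d'$ to $Y$ produces a metric quasisymmetric to $d|_Y$, so $d'|_Y$ lies in the conformal gauge $\cG(Y,d|_Y)$. The Ahlfors regular conformal dimension is bounded above by the Assouad dimension of any quasisymmetric image (by, e.g., snowflaking a given metric to obtain an Ahlfors regular representative whose Hausdorff dimension is arbitrarily close to the Assouad dimension of the original). Combining these observations,
\[
\dims_{\rm AR}(Y)\leq \dims_A(Y,d'|_Y)<Q=\dims_{\rm AR}(X),
\]
which contradicts the hypothesis $\dims_{\rm AR}(Y)=\dims_{\rm AR}(X)$.

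The argument is essentially a bookkeeping exercise once the two building blocks are in place: (i) porous subsets of Ahlfors regular spaces have strictly smaller Assouad dimension, and (ii) Ahlfors regular conformal dimension is controlled from above by Assouad dimension through the conformal gauge. Both are classical. The only mild subtlety I would verify is the quantitative behavior in step (i), namely that the dimension drop depends only on the porosity constant, the Ahlfors regularity constant, and $Q$; this is needed to guarantee a strict inequality, not merely a non-strict one. Since neither the value nor the ambient metric is altered in a delicate way, I expect no real obstacle beyond this routine check.
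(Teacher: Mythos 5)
Your proof is correct and follows essentially the same route as the paper's: transfer the porosity of $Y$ to $(X,d')$ via quasisymmetry invariance, invoke the strict Assouad-dimension drop for porous subsets of Ahlfors regular spaces, and then bound $\dims_{\rm AR}(Y)$ from above by $\dims_A(Y,d'|_Y)<Q$ using the fact that conformal dimension is controlled by Assouad dimension of any metric in the gauge (the paper's Lemma \ref{lem:assouadahlfors}). The only cosmetic difference is that the paper phrases the argument directly (every $Q$-regular $d'$ in the gauge has $Q>\dims_{\rm AR}(X)$) rather than by contradiction; the content is identical.
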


In our case, $X$ will be the space arising from the IGS in Figure \ref{fig:replacementrule}, and $Y$ the Laakso-type space arising from Figure \ref{fig:laakso}. A direct computation will show that their conformal dimensions are equal. The fact that the IGS for $Y$ arises from a sub-graph of that of $X$ yields that $Y$ is porous in $X$. From these, we see that $X$ can not attain its conformal dimension, and thus by Proposition \ref{prop:noloew} can not contain a Loewner metric in its conformal gauge. This, together with the fact that $X$ is combinatorially Loewner and approximately self-similar yields the proof of the main theorem.

\section{Iterated graph systems}\label{sec:linrep}

\subsection{General notation}

Simple graphs (without loops) are pairs $G=(V,E)$, where $V$ is some finite set of vertices and $E\subset \{\{x,y\} : x,y\in V, x\neq y\}$ is a finite  set of edges. An edge $e$ with end points $x,y$ is denoted $\{x,y\}$. Notice that we do not permit loops in our graphs. If $\{v,w\}\in E$ we say that $w$ is a neighbor of $v$. The degree of a vertex $v\in V$ is the number $\degr(v)$ of edges to which it is adjacent. The degree of the graph $G$ is defined as $\degr(G) = \max_{v \in V} \degr(v)$. 

\begin{remark}\label{rmk:multigraph} For simplicity of notation and to avoid some tedious case analysis, we restrict attention to simple graphs. Our results can also be stated for multi-graphs, and these require mostly small changes to the statements and arguments, e.g. in Proposition \ref{prop: Similarity maps}.
\end{remark}

It will be convenient to identify $G$ with its geometric realization, which is obtained by gluing copies of unit edges indexed by the edges of $G$ along their end points, whenever two edges are adjacent. This makes $G$ into a simplicial complex. If this geometric realization is connected, we call $G$ connected.

A vertex path in $G$ is a sequence $\theta = [v_1,\dots, v_k]$, where $\{v_i,v_{i+1}\}\in E$ for each $i=1,\dots, k-1$. Here $\len(\theta) = k-1$ is the length of the vertex path $\theta$, and we say that the path connects $v_1$ to $v_n$. The path metric $d_G(v,w)$ in a connected metric space $G$ between two vertices $v,w\in V$ is the smallest $n\in \N$ such that there is a vertex path of length $n$ connecting $v$ to $w$. If $A,B \subseteq V$ then $\Theta(A,B)$ is the set of paths connecting a point $v \in A$ to a point $u \in B$. Further if $d$ is a metric on $V$ and $A\subseteq V$ is a set of vertices, then $\diam(A,d):=\max_{a,b\in A} d_G(a,b)$ is the diameter in the metric $d$. Where the metric $d$ is clear from context, we may drop it from the notation. 

\subsection{Definition of IGS}
We now define iterated graph systems based on edge replacements. A construction similar to the following definition has appeared earlier in \cite[Definition 2.1]{Leeslash}. For iterated graph systems based on replacing vertices, see \cite{ReplacementGraphs2024}.

\begin{definition}\label{def:IGS}
An \emph{iterated graph system (IGS)} consists of a graph $G_1=(V_1,E_1)$, together with the following data.
\begin{enumerate}
\vskip.3cm
    \item A finite set $I$ called the \emph{gluing set}.
    \vskip.3cm
    \item Each edge $e=\{v,w\}$ and endpoint point $v\in e$ is associated with an injective mapping $\phi_{v,e}:I\to V_1$, whose image $I_{v,e} := \phi_{v,e}(I)$ is an independent set of $V_1$ - that is, no edges between vertices in $I_{v,e}$.
    \vskip.3cm
    \item We have $I_{v,e}\cap I_{w,e}=\emptyset$ for each edge $e=\{v,w\}$. 
\end{enumerate}
\end{definition}
The set $I$ together with the maps $\phi_{v,e}$ will be referred to as the gluing rules. In general, a graph $G=(V,E)$ is said to be labeled by $G_1$ if for every edge $e$ of $G$ and any $v\in e$, there is an associated injective mapping $\phi_{v,e}:I\to V_1$, whose image is an independent set. Given a labeled graph $G$ we can form a replacement graph $\hat{G}=(\hat{V},\hat{E})$, whose vertices are $\hat{V}=V_1\times E / \sim$, where we identify 
\[
(\phi_{x,e}(v),e)\sim (\phi_{x,f}(v),f)
\]
for every $e,f\in E$ which share an end point $x\in V$ and $v\in I$. Further, we define edge sets
\[
\hat{E}=\{[(v,e)],[(w,e)]: \{v,w\}\in E_1, e\in E\}.
\]
This amounts to replacing each edge in $G$ by a copy of $G_1$, which are glued along the images of the mappings $\phi_{v,e}$. We can also define a labeling for $\hat{G}$ by 
\[
\phi_{[(v,e)], \{[(v,e)],[(w,e)]\}} = \phi_{v,\{v,w\}}
\]

This \emph{replacement rule} can be applied recursively to $G_1$ to produce a sequence of graphs $G_k$: Set $G_{k+1}:=\hat{G_k}$ for $k\in \N$. %If $G_k=(V_k,E_k)$, together with mappings $\phi_{v,e}$ for each edge $e\in E_k$, and $v\in e,$ has been defined, then define we define $G_{k+1}=(V_{k+1},E_{k+1})$ as follows together with the mappings $\{\phi_{v,e} : e\in E_{k+1}, v\in e\}$. 
The edges and vertices of $G_k=(V_k,E_k)$ can be described as follows.
\begin{enumerate}
\vskip.3cm
    \item Let $V_{k+1}=V_1\times E_k / \sim$, where we identify vertices with the relationships $(\phi_{x,e}(v),e)\sim (\phi_{x,f}(v),f)$ for every $e,f\in E$ which share an end point $x$ and $v\in I$.
    \vskip.3cm
    \item $\{[v,e],[w,e]\}\in E_{k+1}$ if $\{v,w\}\in E_1$. 
    \vskip.3cm
    \item $\phi_{[v,e], \{[v,e],[w,e]\}} = \phi_{v,\{v,w\}}$.
    \vskip.3cm
\end{enumerate}
Notice that in the notation of equivalence classes $[(v,e)]$, we drop the parenthesis and write $[v,e]$. When $(\phi_{x,e}(v),e)\sim (\phi_{x,f}(v),f)$, we say that the equivalence relation is given by the vertex $x$. We call the graphs $G_k$ thus constructed \emph{replacement graphs.}

\begin{remark}\label{rmk:symmetricIGS}
    We have chosen the above fairly general framework in order to encompass a wide variety of examples. In all relevant examples of this paper, we will focus on a significantly simpler subclass of iterated graph systems. In our case, the graph $G_1=(V_1,E_1)$ is a fixed graph, and we have just two maps $\phi_{\pm}:I\to V_1$. The choice of mapping corresponds to a choice of orientation for an edge. An orientation for edges is a choice of an ordered pair $(v,w)$ (or $(w,v)$), for each edge $e=\{v,w\}$, in which case we say that $e$ is positively oriented from $v$ and to $w$ (or $w$ to $v$). Given such an orientation, we define $\phi_{v,e}=\phi_-$ if $e=\{v,w\}$ is positively oriented from $v$, and $\phi_{w,e}=\phi_+$ if $e$ is positively oriented towards $w$. We call these types of constructions \emph{oriented iterated graph systems}.  One simple way to obtain an orientation for edges is by ordering vertices with a well-ordering and declaring $\{v,u\}$ positively ordered if $v<u$. This will be the primary way we introduce an orientation in our examples.
    
    This simplification reduces the number of different maps $\phi_{v,e}$ one would need to consider below. Another simplification, which will hold in many, but not all, of our examples, is that there is a graph isomorphism $\eta_e:G_1\to G_1$ with $\eta_e \circ \phi_\mp = \phi_\pm$. If an oriented IGS satisfies this condition, we say that the IGS is \emph{symmetric}. For symmetric IGS, it does not matter how the edges are oriented to define maps $\phi_{v,e}$. Indeed, in such cases, the sequence of graphs produced with different orientations are isomorphic. 

    Given this independence on the orientation for symmetric IGSs, we have striven to use notation that avoids the use of orientations wherever possible. Indeed, the only instance where we need these orientations is in defining the explicit maps $\phi_{v,e}$ in our examples. The desire to avoid orientations leads to the framework above and considering most of the time the maps $\phi_{v,e}$, instead of using the maps $\phi_{\pm}$ together with an (arbitrary) orientation.
\end{remark}

\subsection{Examples}
 Next, we shall describe some important examples of IGSs and the fractals they produce.

\begin{figure}[!ht]
\begin{tikzpicture}[scale=0.8]
\draw (-5,0)--(-4,0);
\draw[<-,thick] (-3,0)--(-2,0);

\draw (-1,0)--(0,0)--(1,1)--(2,0)--(3,0);
\draw (0,0)--(1,-1)--(2,0);
\draw[<-,thick] (4,0)--(5,0);

\draw (6,0)--(6.25,0)--(6.5,0.25)--(6.75,0)--(7,0);
\draw (6.25,0)--(6.5,-.25)--(6.75,0);
\draw (9,0)--(9.25,0)--(9.5,0.25)--(9.75,0)--(10,0);
\draw (9.25,0)--(9.5,-.25)--(9.75,0);
\draw (7,0)--(7.25,0.25)--(7.25,0.75)--(7.75,.75)--(8,1);
\draw (7.25,0.25)--(7.75,0.25)--(7.75,.75);
\draw (7,0)--(7.25,-0.25)--(7.25,-0.75)--(7.75,-.75)--(8,-1);
\draw (7.25,-0.25)--(7.75,-0.25)--(7.75,-.75);
\draw (8,1)--(8.25,0.75)--(8.75,0.75)--(8.75,.25)--(9,0);
\draw (8.25,0.75)--(8.25,0.25)--(8.75,.25);
\draw (8,-1)--(8.25,-0.75)--(8.75,-0.75)--(8.75,-.25)--(9,0);
\draw (8.25,-0.75)--(8.25,-0.25)--(8.75,-.25);

\node at (-4,0) [circle,fill,inner sep=1pt]{};
\node at (-5,0) [circle,fill,inner sep=1pt]{};

\node at (-1,0) [circle,fill,inner sep=1pt]{};
\node at (0,0) [circle,fill,inner sep=1pt]{};
\node at (1,1) [circle,fill,inner sep=1pt]{};
\node at (1,-1) [circle,fill,inner sep=1pt]{};
\node at (2,0) [circle,fill,inner sep=1pt]{};
\node at (3,0) [circle,fill,inner sep=1pt]{};

\node at (6,0) [fill,inner sep=1.5pt]{};
\node at (6.25,0) [circle,fill,inner sep=1pt]{};
\node at (6.5,0.25) [circle,fill,inner sep=1pt]{};
\node at (6.75,0) [circle,fill,inner sep=1pt]{};
\node at (6.5,-.25) [circle,fill,inner sep=1pt]{};
\node at (7,0) [fill,inner sep=1.5pt]{};

\node at (9,0) [fill,inner sep=1.5pt]{};
\node at (9.25,0) [circle,fill,inner sep=1pt]{};
\node at (9.5,0.25) [circle,fill,inner sep=1pt]{};
\node at (9.5,-0.25) [circle,fill,inner sep=1pt]{};
\node at (9.75,0) [circle,fill,inner sep=1pt]{};
\node at (10,0) [fill,inner sep=1.5pt]{};

\node at (7.25,0.25) [circle,fill,inner sep=1pt]{};
\node at (7.25,0.75) [circle,fill,inner sep=1pt]{};
\node at (7.75,0.25) [circle,fill,inner sep=1pt]{};
\node at (7.75,0.75) [circle,fill,inner sep=1pt]{};
\node at (8,1) [fill,inner sep=1.5pt]{};

\node at (7.25,-0.25) [circle,fill,inner sep=1pt]{};
\node at (7.25,-0.75) [circle,fill,inner sep=1pt]{};
\node at (7.75,-0.25) [circle,fill,inner sep=1pt]{};
\node at (7.75,-0.75) [circle,fill,inner sep=1pt]{};
\node at (8,-1) [fill,inner sep=1.5pt]{};

\node at (8.75,0.25) [circle,fill,inner sep=1pt]{};
\node at (8.75,0.75) [circle,fill,inner sep=1pt]{};
\node at (8.25,0.25) [circle,fill,inner sep=1pt]{};
\node at (8.25,0.75) [circle,fill,inner sep=1pt]{};

\node at (8.75,-0.25) [circle,fill,inner sep=1pt]{};
\node at (8.75,-0.75) [circle,fill,inner sep=1pt]{};
\node at (8.25,-0.25) [circle,fill,inner sep=1pt]{};
\node at (8.25,-0.75) [circle,fill,inner sep=1pt]{};

\node at (-1,0.4) {1};
\node at (0,0.4) {2};
\node at (1,1.4) {3};
\node at (1,-0.6) {4};
\node at (2,0.4) {5};
\node at (3,0.4) {6};
\end{tikzpicture}
\caption{Figure of a symmetric IGS that produces the ``Laakso diamond'' space that first appeared in \cite{Laakso}, and was studied e.g. in \cite{Yair, LangPlaut}. The figure shows two steps of the replacement. }
%Additionally, we add a vertical line segment between the two central points of the copies. This added segment is crucial for us. The optimal potential function on this graph is given via symmetry, and will be constant on the central edge. }
\label{fig:laaksodiamond}
\end{figure}

\begin{example}\label{ex:laaksodiamond}
The much studied Laakso diamond space also arises as an IGS. This is shown in Figure \ref{fig:laaksodiamond}, where the first two stages of the replacement are shown. Here, an edge is replaced iteratively by graphs $G_1=(V_1,E_1)$ where $V_1:=\{1,2,3,4,5,6\}$ and \[E_1:=\{\{1,2\},\{2,3\},\{2,4\},\{4,5\},\{3,5\}, \{5,6\} \}.\] The gluing rules for the graphs are given by the index set $I=\{a\}$, and $\phi_+(a)=6$, and $\phi_-(a)=1$. We set $\phi_{v,\{v,w\}}=\phi_-$ if $v<w$ and otherwise $\phi_{v,\{v,w\}}=\phi_+$. The construction yields a sequence of planar graphs, which helps in drawing them and makes them simpler to visualize. This family of graphs has cut points, since the gluing set $I$ is a singleton set. This implies that the conformal dimension is equal to one, and these examples do not yield counterexamples to Conjecture \ref{conj:kleiner}.
\end{example}

\begin{figure}[!ht]
\begin{tikzpicture}[scale=0.8]
\draw (-6,0)--(-4,0);
\draw[<-,thick] (-3,0)--(-2,0);

\draw (-1,1)--(0,0)--(1,1);
\draw (-1,-1)--(0,0)--(1,-1);
\draw[<-,thick] (2,0)--(3,0);

\draw (4,1.25)--(4.5,0.5)--(5,0.25);
\draw (4,0.75)--(4.5,0.5)--(5,-.25);
\draw (4,-1.25)--(4.5,-0.5)--(5,0.25);
\draw (4,-0.75)--(4.5,-0.5)--(5,-0.25);

\draw (5,0.25)--(5.5,0.5)--(6,1.25);
\draw (5,-0.25)--(5.5,0.5)--(6,0.75);
\draw (5,0.25)--(5.5,-0.5)--(6,-1.25);
\draw (5,-0.25)--(5.5,-0.5)--(6,-0.75);

\node at (-4,0) [circle,fill,inner sep=1pt]{};
\node at (-6,0) [circle,fill,inner sep=1pt]{};

\node at (-1,1) [circle,fill,inner sep=1pt]{};
\node at (0,0) [circle,fill,inner sep=1pt]{};
\node at (1,1) [circle,fill,inner sep=1pt]{};
\node at (1,-1) [circle,fill,inner sep=1pt]{};
\node at (-1,-1) [circle,fill,inner sep=1pt]{};

\node at (4,1.25) [circle,fill,inner sep=1pt]{};
\node at (4.5,0.5) [circle,fill,inner sep=1pt]{};
\node at (4,0.75) [circle,fill,inner sep=1pt]{};
\node at (4.5,-0.5) [circle,fill,inner sep=1pt]{};
\node at (4,-0.75) [circle,fill,inner sep=1pt]{};
\node at (4,-1.25) [circle,fill,inner sep=1pt]{};
\node at (5,-0.25) [circle,fill,inner sep=1pt]{};
\node at (5,0.25) [circle,fill,inner sep=1pt]{};

\node at (5.5,0.5) [circle,fill,inner sep=1pt]{};
\node at (5.5,-0.5) [circle,fill,inner sep=1pt]{};
\node at (6,1.25) [circle,fill,inner sep=1pt]{};
\node at (6,0.75) [circle,fill,inner sep=1pt]{};
\node at (6,-0.75) [circle,fill,inner sep=1pt]{};
\node at (6,-1.25) [circle,fill,inner sep=1pt]{};

\node at (-1,1.4) {1};
\node at (-1,-0.6) {2};
\node at (0,0.4) {3};
\node at (1,1.4) {4};
\node at (1,-0.6) {5};
\end{tikzpicture}
\caption{Figure of a symmetric IGS that produces a Laakso space in the sense of \cite{La00}. The figure shows two replacements}
%Additionally, we add a vertical line segment between the two central points of the copies. This added segment is crucial for us. The optimal potential function on this graph is given via symmetry, and will be constant on the central edge. }
\label{fig:laaksospace}
\end{figure}

\begin{example}\label{ex:laaksospace}
The Laakso spaces from \cite{La00}, and its variants, can be obtained by IGSs, see Figures \ref{fig:laakso} and \ref{fig:laaksospace} for examples. Notice how in this example the gluing set $I$ consists of two vertices, which prevents the existence of cut-points. Whenever one employs such IGSs, the resulting graph will not be planar. In the example of Figure \ref{fig:laaksospace}, an edge is replaced iteratively by graphs $G_1=(V_1,E_1)$ where $V_1:=\{1,2,3,4,5\}$ and \[E_1:=\{\{1,3\},\{2,3\},\{3,4\},\{3,5\}\}.\] The gluing rules for the graphs are given by the index set $I=\{a,b\}$, and $\phi_+(a)=4, \phi_+(b)=5$, and $\phi_-(a)=1, \phi_-(a)=2$. We set $\phi_{v,\{v,w\}}=\phi_-$ if $v<w$ and otherwise $\phi_{v,\{v,w\}}=\phi_+$. These examples yield Loewner spaces, and we will see that one needs to add edges to these graphs to obtain counterexamples to Conjecture \ref{conj:kleiner}. (Since our proof does not explicitly require this Loewner property, we omit it.) By Remark \ref{rmk:topdim}, the spaces thus constructed have topological dimension one. We note that by the results of \cite{Anderson-1,Anderson-2}, any topologically one dimensional compact connected set without local cut points, and which does not have open sets homeomorphic to a subset of the plane, is homeomorphic to the Menger curve. This is not essential to us, but this characterization implies that this example, and more generally all the examples which satisfy Assumption \ref{Assumptions: CLP}, which guarantees the combinatorial Loewner property, are homeomorphic to Menger curves. For this, one simply needs to observe that the graphs $G_m$ are not planar for $m$ large enough, and that it follows form Proposition \ref{prop: Similarity maps} that any open set contains a homeomorphic copy of $G_m$. The non-planarity of $G_m$ can be easily checked by the classical Kuratowski's theorem \cite{kuratowski} on forbidden graph minors. 
\end{example}

\begin{example}\label{ex:counterexample} We describe now in detail the graph from Figure \ref{fig:replacementrule}. The labeling of vertices is also shown in that figure. Let $\hat{G}_1 = (\hat{V}_1,\hat{E}_1)$ be the graph with eight vertices $\hat{V}_1:=\{1,\dots, 8\}$ with the edges 
\begin{align*}
    \hat{E}_1:=\{&\{1,3\}, \{3,4\}, \{4,6\},\{6,7\}\\
             &\{2,3\},\{3,5\}, \{5,6\},\{6,8\} \\
             &\{4,5\}\}.
\end{align*}
Here, the first and second line yield the edges corresponding to the two copies of an interval, and the final line gives the added central edge. We also set $I=\{a,b\}$, and define two maps $\phi_\pm: I \to \hat{V}_1$ by $\phi_-(a)=1,\phi_-(b)=2$ and $\phi_+(a)=7, \phi_+(b)=8$. These correspond to the left and right end points of each edge. Then, the maps $\phi_{v,e}$ are defined by $\phi_{v,\{v,w\}}=\phi_-$ if $v<w$ and $\phi_{v,\{v,w\}}=\phi_+$ if $v>w$

If we remove the edge $\{4,5\}$ from $\hat{E}_1$ and keep all vertices, we obtain a graph $G_1 = (V,E)$, which yields the IGS for the Laakso-type space in Figure \ref{fig:laakso}.
\end{example}

\subsection{Basic properties}

Our assumptions on the iterated graph systems guarantee that equivalence relations are simple to describe. 
\begin{lemma} The relation $(\phi_{x,e}(v),e)\sim (\phi_{x,f}(v),f)$  for every $e,f\in E$ which share an end point $x$ is an equivalence relation.
\end{lemma}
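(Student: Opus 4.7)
The plan is to verify the three axioms of an equivalence relation on $V_1 \times E$, interpreting the relation in the lemma as the one generated by the stated identifications together with the diagonal. Reflexivity is either built into the convention or follows from specialising $e = f$ in the defining clause, which gives $(\phi_{x,e}(v),e) \sim (\phi_{x,e}(v),e)$. Symmetry is immediate: the defining clause is symmetric under swapping the pairs $(e,v)$ and $(f,v)$, so the same witnessing data $(x,v)$ gives the reverse identification.

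The substantive point is transitivity. Assume $(w_1,e_1) \sim (w_2,e_2)$ and $(w_2,e_2) \sim (w_3,e_3)$. By the defining clause there exist a common endpoint $x$ of $e_1,e_2$ and an element $v \in I$ with $w_1 = \phi_{x,e_1}(v)$ and $w_2 = \phi_{x,e_2}(v)$, and similarly a common endpoint $y$ of $e_2,e_3$ and $u \in I$ with $w_2 = \phi_{y,e_2}(u)$ and $w_3 = \phi_{y,e_3}(u)$. The goal is to align these two witnesses, i.e.\ to show that $x = y$ and $v = u$.

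The key observation is that $w_2 = \phi_{x,e_2}(v) \in I_{x,e_2}$ while simultaneously $w_2 = \phi_{y,e_2}(u) \in I_{y,e_2}$. If $x \neq y$, then $x$ and $y$ would be the two distinct endpoints of the edge $e_2$, and condition (3) of Definition \ref{def:IGS} gives $I_{x,e_2} \cap I_{y,e_2} = \emptyset$, a contradiction. Hence $x = y$. Then injectivity of $\phi_{x,e_2}$ (Definition \ref{def:IGS}(2)) applied to $\phi_{x,e_2}(v) = \phi_{x,e_2}(u)$ forces $v = u$. Consequently $x$ is a common endpoint of $e_1$ and $e_3$, and $w_1 = \phi_{x,e_1}(v)$, $w_3 = \phi_{x,e_3}(v)$ with the same $v \in I$, so $(w_1,e_1) \sim (w_3,e_3)$.

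I expect no real obstacle here; the only thing that could go wrong is a chain of identifications bouncing between the two endpoints of a single intermediate edge, and this is precisely ruled out by the disjointness hypothesis $I_{v,e} \cap I_{w,e} = \emptyset$, while injectivity of each $\phi_{x,e}$ takes care of the bookkeeping of the parameter in $I$.
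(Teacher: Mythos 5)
Your proof is correct and follows essentially the same route as the paper's: reflexivity and symmetry are dispatched immediately, and transitivity is handled by forcing the two witnessing endpoints of the middle edge to coincide via Definition 3.3(3). You are slightly more careful than the paper in explicitly invoking the injectivity of $\phi_{x,e_2}$ to identify the two witnesses $v,u\in I$, a step the paper leaves implicit.
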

\begin{proof}
    The relation is clearly reflexive and symmetric. We check transitivity. If $(v_1,e_1)\sim(v_2,e_2)\sim(v_3,e_3)$, then $e_1$ and $e_2$ share an end point $x$, and $e_2$ and $e_3$ share an end point $y$. If $x=y$, then $v_i=\phi_{x,e_i}(v)$ for $i=1,2,3$ and $(v_1,e_1)\sim (v_3,e_3)$. If $x\neq y$, then $v_2=\phi_{x,e_2}(v)$ and $v_2=\phi_{y,e_2}(w)$ for some $v,w\in I$, but then $I_{x,e_2}\cap I_{y,e_2} \neq \emptyset$, which by (3) in Definition \ref{def:IGS} is a contradiction. This completes the proof.
\end{proof}

We define maps $\pi_{k+1}:V_{k+1}\cup E_{k+1} \to V_k \cup E_k$ as follows. For each vertex $[(v,e)]\in V_{k+1}$ define $\pi_{k+1}([(v,e)])=e$ if $v\not\in I_{w,e}$ for any $w\in e$, and otherwise set $\pi_{k+1}([(v,e)])=w$ if $v\in I_{w,e}$. In Figure \ref{fig:laaksodiamond}, the vertices marked with squares are mapped to vertices, and vertices marked with circles are mapped to edges. In other words, vertices in gluing sets are mapped to vertices, and all other vertices are mapped to the edges which gave rise to them.

The map $\pi_{k+1}$ is easier to define for edges. Each edge in the replaced graph arises from some edge. Indeed, we set $\pi_{k+1}(\{[(v,e)],[(w,e)]\})=e$ for every edge $\{[(v,e)],[(w,e)]\}\in E_{k+1}$. Because $I_{w,e}$ is an independent set for each $w\in e$, the mapping $\pi_{k+1}$ is well defined. 

\begin{lemma} The mapping $\pi_k$ is well-defined, and $\pi_k:E_k\to E_{k-1}$ for all $k\geq 1$, and $\pi_{k}^{-1}(V_{k-1})\subset V_{k}$.
\end{lemma}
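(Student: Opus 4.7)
The plan is to verify the three claims in order, with the main technical content being well-definedness on edges, which is exactly where the independence hypothesis on $I_{x,e}$ will be used.

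First, I would verify well-definedness of $\pi_{k+1}$ on vertices. A vertex in $V_{k+1}$ is an equivalence class $[(v,e)]$, so I need to show that if $(v,e)\sim(v',e')$ with $(v,e)\neq(v',e')$, then the defining clauses give the same output. By the form of the equivalence relation, this occurs only when $e,e'$ share a common endpoint $x$ and $v=\phi_{x,e}(u)$, $v'=\phi_{x,e'}(u)$ for some $u\in I$. Then $v\in I_{x,e}$ and $v'\in I_{x,e'}$, so both representatives fall into the second clause of the definition and both return $x$.

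Next I would check well-definedness of $\pi_{k+1}$ on edges. An edge in $E_{k+1}$ has the form $\{[(v,e)],[(w,e)]\}$ with $\{v,w\}\in E_1$, and the map assigns $e$ to it. The worry is that the same subset of $V_{k+1}$ could be expressed as $\{[(v',e')],[(w',e')]\}$ for some different $e'\in E_k$. Suppose this were the case. After relabeling, $(v,e)\sim(v',e')$ and $(w,e)\sim(w',e')$, so $e$ and $e'$ share an endpoint via each of these relations. Because $G_k$ is a simple graph, two distinct edges can share at most one endpoint, so there is a single vertex $x$ of $G_k$ implementing both equivalences. This forces $v,w\in I_{x,e}$. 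But $\{v,w\}\in E_1$ contradicts the assumption that $I_{x,e}$ is an independent set of $V_1$. Hence $e=e'$, and the assignment depends only on the edge. This is the one step where I expect the argument to require some care, and the independence hypothesis in Definition \ref{def:IGS} is doing the essential work here.

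Finally, the image condition is immediate from the definition: the formula $\pi_{k+1}(\{[(v,e)],[(w,e)]\})=e$ takes values in $E_k$, so $\pi_{k+1}$ sends $E_{k+1}$ into $E_k$. For the preimage statement $\pi_k^{-1}(V_{k-1})\subset V_k$, it suffices to note that no element of $E_k$ is mapped into $V_{k-1}$, since $\pi_k$ takes edges to edges; therefore any element of $V_k\cup E_k$ whose image lies in $V_{k-1}$ must lie in $V_k$. Shifting indices from $k+1$ to $k$ gives the statement for all $k\geq 2$, which is the content of the lemma.
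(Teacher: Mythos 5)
Your proof is correct and follows essentially the same approach as the paper: the vertex well-definedness argument is identical, and your edge argument (using independence of $I_{x,e}$ to rule out a second parent edge) is exactly the justification the paper gives informally in the paragraph preceding the lemma where $\pi_{k+1}$ is defined on edges. You simply spell out the details that the paper's proof compresses into "follows from the definition."
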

\begin{proof}
    First, we check the equivalence class for vertices. For the well-definedness it suffices to verify that if $(v,e)\sim (w,f)$, for $f\neq e$, then the map is well defined. In this case, $e$ and $f$ share a unique end point $x$. The equivalence relation means that $v\in I_{x,e}$ and $w\in I_{x,f}$, and thus in both cases $\pi_{k+1}([(v,e)])=\pi_{k+1}([(w,f)])=x$, as desired. The claim about images of edges and preimages of vertices follows from the definition. 
\end{proof}

For $n>m\geq 1$, define $\pi_{n,m}:=\pi_{m+1}\circ\cdots \circ\pi_n:V_{n}\cup E_{n}\to V_m \cup E_m$. We call a vertex/edge $x\in G_n$ an ancestor of a vertex/edge of $y\in G_m$ if $\pi_{n,m}(x)=y$. Let $e=\{v,w\}$ be an edge in $G_m$. We call the set $I_{e,n}=\pi_{n,m}^{-1}(e)\cup\pi_{n,m}^{-1}(v)\cup \pi_{n,m}^{-1}(w)$ a tile. When $n=m+1$, then $I_{e,n}$ is equivalent to the copy of $e\times G_1$ which exists in $G_{m+1}$.

\subsection{Scaling maps and paths}

For each $k\in \N$ the graphs $G_{k+1}$ consist of copies of $G_1$ glued together, but we can also see $G_{n+m}$ as arising from copies of $G_m$ glued along edges of $G_n$.  The gluing sets can be described as follows. For $m=1$, we define $I_{v,e}^{(m)}=\phi_{v,e}(I)$ and for $m>1$ we define $I_{v,e}^{(m)}=\pi_{m,1}^{-1}(\phi_{v,e}(I))$, if $\phi_{v,e}$ is some given map in a labeled graph $G_n$ for some $n\in \N$. The following proposition makes this all precise and introduces maps $\sigma_{e,m}$, which will be useful later. The proof is a straightforward induction which uses the definition, but is a bit technical. Thus at first reading one may want to skip the proof and to just focus on the conclusion, which is more intuitive.

\begin{proposition}\label{prop: Similarity maps}
For every $n,m \in \N$ and $e \in E_n$ there are maps $\sigma_{e,m} : V_m \to V_{n + m}$, the image of $\sigma_{e,m}$ denoted as $e \cdot G_m$ and the edges contained in this image as $e \cdot E_m$, with the following properties.
\begin{enumerate}[label=\textcolor{blue}{(SM\arabic*)}]
    \item \label{SM1} For every $e \in E_n$ the mapping $\sigma_{e,m}$ is injective and the collection of subsets $\{ e \cdot G_m \}_{e \in E_n}$ is a covering of $V_{n + m}$. Moreover, if $v,u \in V_{m}$ then $\{v,u\} \in E_m$ if and only if $\{ \sigma_{e,m}(v), \sigma_{e,m}(u) \} \in E_{n + m}$ and 
    \begin{equation}\label{eq:phieq}
    \phi_{v,\{ v,u \}} = \phi_{\sigma_{e,m}(v),\{\sigma_{e,m}(v),\sigma_{e,m}(u)\}}.
    \end{equation}
    \item \label{SM2} For distinct edges $e,f \in E_n$ the subsets $e \cdot G_m$ and $f \cdot G_m$ intersect if and only if $e,f$ have a common vertex $v$.
    Moreover, their intersection is
    \[
        \sigma_{e,m}\left(I_{v,e}^{(m)}\right) = \pi_{n+m,n}^{-1}(v) = \sigma_{f,m}\left(I_{v,f}^{(m)}\right)
    \]
    the set of ancestors of $v$.
    \item \label{SM3} For every $e \in E_n$ we have $e \cdot E_m = \pi_{n+m,n}^{-1}(e)$.
    In particular, $\{ e \cdot E_m \}_{e \in E_n}$ is a partition of $E_{n + m}$.
\end{enumerate}
\end{proposition}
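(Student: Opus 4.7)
My plan is to prove (SM1)--(SM3) simultaneously by induction on $m \geq 1$, keeping $n \in \N$ arbitrary. The guiding principle is the ``associativity'' of the replacement rule: $G_{n+m}$ may be obtained either by $n+m$ single-step replacements, or equivalently, by first forming $G_n$ and then replacing each edge of $G_n$ by a copy of $G_m$. The maps $\sigma_{e,m}$ realize this second description, embedding the copy of $G_m$ indexed by $e$ into $G_{n+m}$.

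For the base case $m = 1$, I define $\sigma_{e,1}(v) := [v, e] \in V_{n+1}$. Injectivity is immediate since the equivalence relation on $V_1 \times E_n$ identifies pairs only across distinct edges $e \neq f$, never within a single edge. The covering and edge parts of (SM1) follow from the construction of $E_{n+1}$, and the label agreement \eqref{eq:phieq} follows from the inherited labelling $\phi_{[v,e],\{[v,e],[w,e]\}} = \phi_{v,\{v,w\}}$. For (SM2) with distinct $e, f \in E_n$, any identification $[v, e] = [w, f]$ forces $e$ and $f$ to share an endpoint $x$ with $v \in I_{x,e}, w \in I_{x,f}$ and $v = \phi_{x,e}(u), w = \phi_{x,f}(u)$ for a common $u \in I$; this matches $\sigma_{e,1}(I^{(1)}_{x,e}) = \pi_{n+1, n}^{-1}(x)$, while condition (3) of Definition \ref{def:IGS} excludes other identifications. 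Property (SM3) is then a direct reading of the definition of $\pi_{n+1}$ on edges.

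For the inductive step, (SM1) at level $m$ gives an edge-extension $\sigma_{e,m}: E_m \to E_{n+m}$ via $\{a,b\} \mapsto \{\sigma_{e,m}(a), \sigma_{e,m}(b)\}$, and I define
\[
\sigma_{e,m+1}([v, e']) := [v, \sigma_{e,m}(e')] \in V_{n+m+1}.
\]
Well-definedness uses \eqref{eq:phieq}: if $(v, e') \sim (w, f')$ through a shared endpoint $y$ of $e', f' \in E_m$, then $\sigma_{e,m}(e')$ and $\sigma_{e,m}(f')$ share the vertex $\sigma_{e,m}(y)$ by the inductive (SM2), and the matching labellings \eqref{eq:phieq} propagate the equivalence to $V_{n+m+1}$. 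Property (SM3) at level $m+1$ follows by a direct chain-rule computation $e \cdot E_{m+1} = \pi_{n+m+1}^{-1}(e \cdot E_m) = \pi_{n+m+1, n}^{-1}(e)$, and the partition property is inherited from the inductive partition.

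The main obstacle is (SM2) at level $m+1$. Injectivity of $\sigma_{e,m+1}$ and the edge part of (SM1) combine the inductive hypothesis with one further application of the base-case argument. For the intersection: by the inductive (SM3), the edge families $e \cdot E_m$ and $f \cdot E_m$ are disjoint in $E_{n+m}$, so any coincidence $[v, \hat{e}] = [w, \hat{f}]$ between $e \cdot G_{m+1}$ and $f \cdot G_{m+1}$ with $\hat{e} \in e \cdot E_m$, $\hat{f} \in f \cdot E_m$ must arise at a vertex of $V_{n+m}$ shared between $\hat{e}$ and $\hat{f}$. By the inductive (SM2), such a vertex must be an ancestor of a common endpoint $x$ of $e$ and $f$, and tracking the identification one further level shows that the preimage under $\sigma_{e,m+1}$ is precisely $\pi_{m+1, 1}^{-1}(\phi_{x,e}(I)) = I^{(m+1)}_{x,e}$. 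The only bookkeeping subtlety is carefully unwinding the definition of $I^{(m+1)}_{x,e}$ and verifying via condition (3) of Definition \ref{def:IGS} applied at the new level that no spurious identifications appear; this is routine once the layered equivalence relation is made explicit.
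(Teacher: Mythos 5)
Your proof follows essentially the same inductive strategy as the paper's: the identical base case $\sigma_{e,1}(v)=[v,e]$, the identical inductive definition $\sigma_{e,m+1}([z,e'])=[z,\sigma_{e,m}(e')]$, and the same verification pattern for well-definedness, injectivity, \ref{SM2} and \ref{SM3}. One minor mis-citation: the fact that $\sigma_{e,m}(e')$ and $\sigma_{e,m}(f')$ share the vertex $\sigma_{e,m}(y)$ when $e',f'\in E_m$ share $y$ is a consequence of the injectivity and edge-preservation from the inductive \ref{SM1}, not of \ref{SM2} (which compares images under $\sigma_{e,m}$ and $\sigma_{f,m}$ for distinct $e,f\in E_n$).
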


\begin{proof}
    Fix $n \in \N$ and we will prove the existence of the mapping $\sigma_{e,m}$ by induction on $m$. The case $m = 1$ follows by setting $\sigma_{e,1} : V_1 \to V_{n + 1}, z \mapsto [z,e]$ for all $z \in V_1$ and $e\in E_n$.

    Assume that $m\geq 1$ and we have constructed $\sigma_{e,m}:V_m\to V_{n+m}$ for all $e \in E_n$ that satisfies \ref{SM1}, \ref{SM2} and \ref{SM3}. We define $\sigma_{e,m+1} : V_{m+1} \to V_{n + m +1}$ by
    \begin{equation}\label{eq:sigmadef}
        \sigma_{e,m+1}([z,\{ v,u \}]) = [z,\{\sigma_{e,m}(v),\sigma_{e,m}(u)  \}],
    \end{equation}
    where each vertex of $V_{m+1}$ can be represented by an equivalence class $[z,\{ v,u \}]$ for some edge $\{v,u\}\in E_m$ and some vertex $z\in V_1$.  By the induction hypothesis (IH) we get $\phi_{v,\{ v,u \}} = \phi_{\sigma_{e,m}(v),\{\sigma_{e,m}(v),\sigma_{e,m}(u)\}}$ and therefore $\sigma_{e,m+1}$ is a well-defined. The fact that $\sigma_{e,m+1}$ is an injection can be seen as follows. Suppose that $\sigma_{e,m+1}([z_1,\{ v_1,u_1 \}])=\sigma_{e,m+1}([z_2,\{ v_2,u_2 \}])$. Then the edges $\{\sigma_{e,m}(v_1),\sigma_{e,m}(u_1)  \}$ and $\{\sigma_{e,m}(v_2),\sigma_{e,m}(u_2)  \}$ must be either equal or adjacent. But, since $\sigma_{e,m}$ is an injection that preserves edges, this means that $\{ v_1,u_1 \}$ and $\{ v_2,u_2 \}$ must be either equal or adjacent. In the first case, $z_1=z_2$, and that the map is an injection follows. If they are adjacent, and (without loss of generality) the equivalence relation is given by the vertex $v_1=v_2$, we get $\phi_{\sigma_{e,m}(v_1),\{\sigma_{e,m}(v_1),\sigma_{e,m}(u_1)\}}(z_1)=\phi_{\sigma_{e,m}(v_2),\{\sigma_{e,m}(v_2),\sigma_{e,m}(u_2)\}}(z_2)$. From \eqref{eq:phieq} we get $\phi_{v_1,\{v_1,u_1\}}(z_1)=\phi_{v_2,\{v_2,u_2\}}(z_2)$ and $[z_1,\{ v_1,u_1 \}]=[z_2,\{ v_2,u_2 \}]$. This concludes injectivity.
    
    Also,
    \begin{align*}
        V_{n + m + 1} & = \bigcup_{\{v,u\} \in E_{n + m}} \{ [z,\{v,u \}] : z  \in V_1 \}\\
        & \stackrel{\text{IH}}{=} \bigcup_{e \in E_n} \bigcup_{\{ v,u\} \in E_m} \{ [z,\{\sigma_{e,m}(v),\sigma_{e,m}(u)\}] : z \in V_1 \}\\
        & = \bigcup_{e \in E_n} e \cdot G_{m + 1}.
    \end{align*}
    Furthermore,
    \begin{align*}
        & \{ \sigma_{e,m + 1}([z_1,\{v_1,u_1\}]), \sigma_{e,m+1}([z_2,\{v_2,u_2\}]) \} \in E_{n + m + 1}\\
        \stackrel{\eqref{eq:sigmadef}}{\iff} & \{\sigma_{e,m}(v_1),\sigma_{e,m}(u_1) \} = \{\sigma_{e,m}(v_2),\sigma_{e,m}(u_2) \} \text{ and } \{ z_1,z_2 \} \in E_{1}\\
        \stackrel{\text{IH}}{\iff} & \{ v_1,u_1 \} = \{ v_2,u_2 \} \text{ and } \{ z_1,z_2 \} \in E_{1}\\
        \iff & \{ [z_1,\{v_1,u_1\}], [z_2,\{v_2,u_2\}] \} \in E_{m + 1}.
    \end{align*}
    From these, it is also direct to verify \eqref{eq:phieq}. Next, let $\{[z_1,\{v,u\}], [z_2,\{v,u\}]\}\in E_{m+1}$ for some $\{v,u\}\in E_m$, and compute:
    \begin{align*}
        & \phi_{ \sigma_{e,m + 1}([z_1,\{v,u\}]),\{ \sigma_{e,m + 1}([z_1,\{v,u\}]), \sigma_{e,m+1}([z_2,\{v,u\}])\}}  \\
        & \quad = \phi_{ [z_1,\{\sigma_{e,m}(v),\sigma_{e,m}(u)\}],\{ [z_1,\{\sigma_{e,m}(v),\sigma_{e,m}(u)\})], [z_2,\{\sigma_{e,m}(v),\sigma_{e,m}(u)\}]\}} \\
     & \quad = \phi_{ \{z_1,\{z_1,z_2\}\} } \\
     & \quad = \phi_{ [z_1,\{v,u\}],\{ [z_1,\{v,u\}], [z_2,\{v,u\}]},
    \end{align*}
    where in the last two lines we used (3) from Definition \ref{def:IGS}. This covers \ref{SM1}.
    
    We move on to \ref{SM2}. Fix distinct edges $e,f \in E_{n}$. Let $\{v_1,u_1\}, \{v_2,u_2\}\in E_m$ and $z_1,z_2\in V_1$. Consider the following chain of equivalences.
    \begin{align*}
        & \sigma_{e,m + 1}([z_1\{ v_1,u_1 \}]) = \sigma_{f,m + 1}([z_2,\{ v_2,u_2 \}])\\
       \stackrel{\eqref{eq:sigmadef}}{\iff} & [z_1,\{ \sigma_{e,m}(v_1), \sigma_{e,m}(u_1)\}] = [z_2,\{ \sigma_{f,m}(v_2), \sigma_{f,m}(u_2)\}]\\
       \iff & \exists u \in \{ \sigma_{e,m}(v_1), \sigma_{e,m}(u_1) \} \cap \{ \sigma_{f,m}(v_2), \sigma_{f,m}(u_2)\} \text{ and }\\
       & \exists a \in I, z_1 = \phi_{u,\{\sigma_{e,m}(v_1), \sigma_{e,m}(u_1)\} }(a), z_2 = \phi_{u,\{\sigma_{f,m}(v_2), \sigma_{f,m}(u_2)\} }(a)\\
       \stackrel{\eqref{eq:phieq}}{\iff} & \exists u \in \{ \sigma_{e,m}(v_1), \sigma_{e,m}(u_1) \} \cap \{ \sigma_{f,m}(v_2), \sigma_{f,m}(u_2)\} \text{ and }\\
       & \exists a \in I, z_1 = \phi_{\sigma_{e,m}^{-1}(u),\{v_1, u_1 \}}(a), z_2 = \phi_{\sigma_{f,m}^{-1}(u),\{v_2,u_2\} }(a).
    \end{align*}
    Notice that in the equivalences above $u\in e\cdot G_m\cap f\cdot G_m$. Hence, by (IH), it follows that $e \cdot G_{m + 1} \cap f \cdot G_{m + 1} \neq \emptyset$ if and only if $e$ and $f$ have a common vertex. Indeed the "only if" part follows by choosing an arbitrary $u \in e\cdot G_m\cap f\cdot G_m$ and $a \in I$. Then assume that $e$ and $f$ indeed has a common vertex $u$ and we prove the expression. Again, from the equivalences, we see that $e \cdot G_{m + 1} \cap f \cdot G_{m + 1}$ is the set of ancestors of the vertices in $e \cdot G_{m} \cap f \cdot G_{m}$. This yields
    \begin{align*}
        e \cdot G_{m + 1} \cap f \cdot G_{m + 1} & = \pi_{n + m + 1, n + m}^{-1}(e \cdot G_{m} \cap f \cdot G_{m})\\
        & \stackrel{\text{IH}}{=}  \pi_{n + m + 1, n + m}^{-1}(\pi_{n+m,n}^{-1}(u))\\
        & = \pi_{n+m+1,n}^{-1}(u).
    \end{align*}
    Furthermore,
    \begin{align*}
        \pi_{n+m+1,n}^{-1}(u) & \stackrel{\text{IH}}{=} \pi_{n+m+1,n+m}^{-1}\left(\sigma_{e,m}\left(I_{u,e}^{(m)}\right)\right)\\
        & = \pi_{n+m+1,n+m}^{-1}\left(\sigma_{e,m}\left(\pi_{m,1}^{-1}(I_{u,e})\right)\right)\\
        & \stackrel{eq\ref{eq:phieq}}{=} \sigma_{e,m + 1}\left(\pi_{m+1,m}^{-1}\left(\pi_{m,1}^{-1}(I_{u,e}\right)\right)\\
        & = \sigma_{e,m + 1}\left(I_{u,e}^{(m + 1)}\right).
    \end{align*}
    The second last equality follows from the following equivalence.
    \begin{align*}
        & v_1 \in \pi_{n+m+1,n+m}^{-1}(\sigma_{e,m}\left(\pi_{m,1}^{-1}(I_{u,e})\right))\\
        \iff & \exists v_2 \in \sigma_{e,m}\left( \pi_{m,1}^{-1}(I_{u,e}) \right),\, \exists u_2 \in e \cdot G_m, \, \exists a \in I, \text{ so that }\\
        & v_1 = [\phi_{v_2,\{ v_2,u_2 \}}(a), \{ v_2,u_2 \}]\\
        \stackrel{\eqref{eq:phieq}}{\iff} & \exists v_2 \in \sigma_{e,m}\left( \pi_{m,1}^{-1}(I_{u,e}) \right),\, \exists u_2 \in e \cdot G_m, \, \exists a \in I, \text{ so that }\\
        & v_1 = [\phi_{\sigma_{e,m}^{-1}(v_2),\{ \sigma_{e,m}^{-1}(v_2),\sigma_{e,m}^{-1}(u_2) \}}(a), \{ v_2,u_2 \}]\\
        \stackrel{\eqref{eq:sigmadef}}{\iff} & \exists v_2 \in \sigma_{e,m}\left( \pi_{m,1}^{-1}(I_{u,e}) \right),\, \exists u_2 \in e \cdot G_m, \, \exists a \in I, \text{ so that }\\
        & v_1 = \sigma_{e,m+1}([\phi_{\sigma_{e,m}^{-1}(v_2),\{ \sigma_{e,m}^{-1}(v_2),\sigma_{e,m}^{-1}(u_2) \}}(a),\{ \sigma_{e,m}^{-1}(v_2),\sigma_{e,m}^{-1}(u_2) \}])\\
        \stackrel{\ref*{SM1}}{\iff} & \exists v_3 \in \pi_{m,1}^{-1}(I_{u,e}) , \, \exists u_3 \in V_m, \, \exists a \in I \text{ so that }\\
        & v_1 = \sigma_{e,m+1}([\phi_{v_3,\{ v_3,u_3 \}}(a), \{ v_3,u_3 \}])\\
        \iff & v_1 \in \sigma_{e,m+1}(\pi_{m+1,m}^{-1}(\pi_{m,1}^{-1}(I_{u,e}))).
    \end{align*}
Hence \ref{SM2} is clear.

Lastly we argue \ref{SM3} fix $e \in E_n$. Choose an edge $\hat{e} \in e \cdot E_{m + 1}$. By \ref{SM1} we have the expression
\begin{align*}
    \hat{e} & = \{ \sigma_{e,m+1}([z_1,\{ v_1,u_1 \}]), \sigma_{e,m+1}([z_2,\{ v_1,u_1 \}]) \}\\
    & = \{ [z_1, \{ \sigma_{e,m}(v_1),\sigma_{e,m}(v_2) \}],[z_2, \{ \sigma_{e,m}(v_1),\sigma_{e,m}(v_2) \}]  \}
\end{align*}
so $\pi_{n + m,n}(\hat{e}) \in e \cdot E_m \stackrel{\text{IH}}{=} \pi_{n+m,n}^{-1}(e)$. Hence $e \in \pi_{n+m+1,n}^{-1}(e)$ and $e \cdot E_{m+1} \subseteq \pi_{n+m+1,n}^{-1}(e)$. To prove the other inclusion, choose $\widehat{e} \in \pi_{n+m+1,n}^{-1}(e)$. By construction, $\hat{e}$ has a unique presentation as of form $\widehat{e} = \{ [z_1,f], [z_2,f] \}$ for $f \in E_{n+m}$. Then $\pi_{n+m+1,n+m}(\widehat{e}) = f \in e \cdot E_{m}$ so $f = \{ \sigma_{e,m}([z_3,f']),\sigma_{e,m}([z_4,f']) \}$ and
\begin{align*}
    \hat{e} & = \{ \sigma_{e,m+1}([z_1, \{ [z_3,f'],[z_4,f'] \} ]), \sigma_{e,m+1}([z_2,\{ [z_3,f'],[z_4,f'] \}])  \} \in e \cdot E_{m+1}.
\end{align*}
Hence $e \cdot E_{m+1} = \pi_{n+m+1,n}^{-1}(e)$.
\end{proof}

\begin{corollary}\label{Corollary: Subpaths L to R}
    Let $\theta = \left[v_1,\dots,v_k\right] $ be a path in $G_{n+m}$. Suppose there are distinct vertices $v,u \in V_n$ and indices $1 \leq j_1, j_2 \leq k$ so that
    \[
        v_{j_1} \in \pi_{n+m,n}^{-1}(v) \text{ and } v_{j_2} \in \pi_{n+m,n}^{-1}(u).
    \]
    Then there is an edge $\hat{e} = \{ v,\hat{u} \} \in E_n$ and a sub-path $\hat{\theta}$ of $\theta$ from $\pi_{n+m,n}^{-1}(v)$ to $\pi_{n+m,n}^{-1}(\hat{u})$ contained in $\hat{e} \cdot G_m$.
\end{corollary}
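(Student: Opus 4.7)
The plan is a first-exit argument along the path. Without loss of generality assume $j_1<j_2$, and let $i$ be the largest index in $\{j_1,\ldots,j_2-1\}$ with $v_i\in\pi_{n+m,n}^{-1}(v)$; such $i$ exists because $j_1$ qualifies and $v_{j_2}$ does not. By \ref{SM3}, the edge $\{v_i,v_{i+1}\}$ lies in a unique tile $\hat{e}\cdot E_m$, and both $v_i$ and $v_{i+1}$ therefore lie in $\hat{e}\cdot G_m$.

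The first key step is to show that $\hat{e}$ is incident to $v$. Pick any edge $e\in E_n$ incident to $v$: by \ref{SM2}, one has $\pi_{n+m,n}^{-1}(v)=\sigma_{e,m}(I_{v,e}^{(m)})\subseteq e\cdot G_m$, so $v_i\in e\cdot G_m\cap\hat{e}\cdot G_m$. Now either $e=\hat{e}$, in which case $\hat{e}$ is incident to $v$, or by \ref{SM2} the edges $e$ and $\hat{e}$ share a vertex $a$ with $v_i\in\pi_{n+m,n}^{-1}(a)$; but $v_i$ is already an ancestor of $v$, so $a=v$, and again $\hat{e}$ is incident to $v$. Write $\hat{e}=\{v,\hat{u}\}$.

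With $\hat{e}$ identified, I would let $j$ be the largest index in $\{i+1,\ldots,j_2\}$ such that every $v_\ell$ with $i\leq\ell\leq j$ lies in $\hat{e}\cdot G_m$; this is well-defined since $v_{i+1}\in\hat{e}\cdot G_m$. If $j=j_2$, then $v_{j_2}\in\hat{e}\cdot G_m$ is an ancestor of $u\neq v$, and the identification argument above (applied with $u$ in place of $v$) forces $u=\hat{u}$; the sub-path $[v_i,\ldots,v_{j_2}]$ is then as required. Otherwise $j<j_2$ and $v_{j+1}\notin\hat{e}\cdot G_m$. The edge $\{v_j,v_{j+1}\}$ lies in a tile $e'\cdot E_m$ with $e'\neq\hat{e}$, so by \ref{SM2} applied to $v_j\in\hat{e}\cdot G_m\cap e'\cdot G_m$, the edges $\hat{e}$ and $e'$ share a vertex $s$ and $v_j$ is an ancestor of $s$. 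Because $j\in\{i+1,\ldots,j_2-1\}$, the maximality of $i$ rules out $s=v$, so $s=\hat{u}$, and $[v_i,\ldots,v_j]$ is the required sub-path.

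The main obstacle is the identification step for $\hat{e}$: one must realize that a vertex that is an ancestor of $v$ and lies in the tile $\hat{e}\cdot G_m$ forces $\hat{e}$ to be incident to $v$. This is not entirely immediate, but falls out from \ref{SM2} by comparing $\hat{e}\cdot G_m$ to an arbitrary tile already known to contain the ancestors of $v$. Once this is internalized, the remainder is a clean first-exit bookkeeping that reapplies the same identification at either $v_{j_2}$ or the first vertex leaving $\hat{e}\cdot G_m$.
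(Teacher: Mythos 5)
Your proof is correct and rests on the same mechanism as the paper's: the tile structure of Proposition \ref{prop: Similarity maps}, in particular \ref{SM2} for identifying the vertex shared by two tiles and \ref{SM3} for the partition of $E_{n+m}$ into tiles, combined with a first-/last-exit bookkeeping along the path. The organization differs slightly. The paper first normalizes by passing to a sub-path on which no ancestor of any $u' \neq v$ appears before the endpoint, from which it deduces that every tile $e_j = \pi_{n+m,n}(\{v_j,v_{j+1}\})$ visited along the way contains $v$; it then sets $\hat{e}$ to be the tile of the \emph{last} edge and walks \emph{backward} to find the index where the tile first changes, landing on an ancestor of $v$. You instead start at the \emph{last} index $i < j_2$ with $v_i \in \pi_{n+m,n}^{-1}(v)$, identify $\hat{e}$ as the tile of $\{v_i,v_{i+1}\}$, verify directly via \ref{SM2} that $\hat{e} \ni v$, and walk \emph{forward} until the tile changes. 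Your route avoids the paper's reduction step and arrives at the incidence $\hat{e} \ni v$ a bit more explicitly; the paper's route packages the exit argument into the definition of $l = \min\{j : e_s = \hat{e} \ \forall\, s \geq j\}$. Both are valid, and neither is materially longer or shorter.
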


\begin{proof}
    We shall write $e_j = \pi_{n+m,n}(\{ v_{j +1} ,v_j\})$ for $j=1,\dots, k-1$. For simplicity, we assume that $j_1 = 1$ and $j_2 = k$. Furthermore, by possibly taking a sub-path, we may assume that if $u' \neq v$ then $v_j \notin \pi_{n+m,n}^{-1}(u')$ for all $1 \leq j \leq k - 1$.
    Indeed then, by \ref{SM3}, we must have $v \in e_j$ for all $1 \leq j \leq k - 1$.

    We set $\hat{e} := \{ v,\hat{u} \} = e_{k-1}$ and
    \[
    l := \min \{ 1 \leq j \leq k-1 : e_s = \hat{e} \text{ for all } j \leq s \leq k-1 \}.
    \]
    By definition, $e_j = \hat{e}$ for all $l \leq j \leq k-1$.
    By \ref{SM1} and \ref{SM3}, the vertices on the sub-path $\hat{\theta} := \left[v_{l}, \dots, v_k \right]$ are contained in $\hat{e} \cdot G_m$. Now if $l = 1$, we are done. Assume $l > 1$. By construction, $e_{l - 1} \neq \hat{e}$, which, by \ref{SM2} yields
    \[
        v_l \in (e_{l-1} \cdot G_m) \cap (\hat{e} \cdot G_m) = \pi_{n+m,n}^{-1}(v).
    \]
\end{proof}

\begin{proposition}\label{prop: Path between disjoint edges}
    Let $\theta = \left[v_1,\dots,v_k\right] $ be a path in $G_{n+m}$. Suppose there are edges $e,f \in E_n$ with no common vertex, and indices $1 \leq j_1, j_2 \leq k$ so that
    \begin{equation*}
        v_{j_1} \in e \cdot G_m \text{ and } v_{j_2} \in f \cdot G_m.
    \end{equation*}
    Then there is an edge $\hat{e} = \{ v,u \} \in E_n$ so that $v \in e$ and a subpath $\hat{\theta}$ of $\theta$ from $\pi_{n+m,n}^{-1}(v)$ to $\pi_{n+m,n}^{-1}(u)$ contained in $\hat{e} \cdot G_m$.
\end{proposition}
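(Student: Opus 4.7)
The plan is to reduce the statement directly to Corollary \ref{Corollary: Subpaths L to R}. The driving observation is that since $e$ and $f$ share no common vertex, property \ref{SM2} of Proposition \ref{prop: Similarity maps} gives $e \cdot G_m \cap f \cdot G_m = \emptyset$. Consequently, as the path $\theta$ travels from $e \cdot G_m$ to $f \cdot G_m$ it must pass through several tiles, and each tile transition is (again by \ref{SM2}) forced to occur at an ancestor of a common vertex of the two tiles involved.

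My concrete plan is to locate two positions $j_0 < j_0''$ along $\theta$ with $v_{j_0} \in \pi_{n+m,n}^{-1}(v)$ and $v_{j_0''} \in \pi_{n+m,n}^{-1}(w)$ for two distinct vertices $v, w \in V_n$ with $v \in e$, and then to invoke Corollary \ref{Corollary: Subpaths L to R} on the sub-path between them. To produce $v_{j_0}$, assume without loss of generality that $j_1 < j_2$ and take $j_0$ to be the largest index in $[j_1, j_2)$ with $v_{j_0} \in e \cdot G_m$. By \ref{SM3} the edge $\{v_{j_0}, v_{j_0+1}\}$ lies in a unique tile $\hat{e} \cdot E_m$, and $\hat{e} \neq e$ since $v_{j_0+1} \notin e \cdot G_m$. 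Then \ref{SM2} supplies a common vertex $v \in e \cap \hat{e}$ with $v_{j_0} \in \pi_{n+m,n}^{-1}(v)$. Taking $j_0''$ to be the smallest index larger than $j_0$ with $v_{j_0''} \in f \cdot G_m$ and running the symmetric argument on $\{v_{j_0''-1}, v_{j_0''}\}$ furnishes a vertex $w \in f$ with $v_{j_0''} \in \pi_{n+m,n}^{-1}(w)$; the hypothesis $e \cap f = \emptyset$ makes $v \neq w$ automatic.

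Corollary \ref{Corollary: Subpaths L to R} applied to the sub-path $[v_{j_0}, \dots, v_{j_0''}]$ now outputs an edge $\hat{e}' = \{v, u\} \in E_n$ together with a sub-path from $\pi_{n+m,n}^{-1}(v)$ to $\pi_{n+m,n}^{-1}(u)$ contained in $\hat{e}' \cdot G_m$, and since $v \in e$ by construction this is exactly the desired conclusion. The argument is a direct reduction to the preceding corollary, so the only real work lies in selecting the transition indices $j_0$ and $j_0''$ cleanly so that \ref{SM2} and \ref{SM3} apply; I do not anticipate any substantial obstacle beyond this routine bookkeeping.
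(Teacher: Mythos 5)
Your proof is correct and takes essentially the same route as the paper: locate ancestors of a vertex of $e$ and of a vertex of $f$ on the path via \ref{SM2}--\ref{SM3}, note that disjointness of $e$ and $f$ forces these two vertices to be distinct, and reduce to Corollary~\ref{Corollary: Subpaths L to R}. The paper compresses the bookkeeping of the transition indices into the phrase ``by \ref{SM2} and \ref{SM3} there are vertices $v' \in e$ and $u' \in f$ \dots'', which your argument spells out explicitly.
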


\begin{proof}
    By \ref{SM2} and \ref{SM3} there are vertices $v' \in e$ and $u' \in f$ so that $v_{j_1} \in \pi_{n+m,n}^{-1}(v')$ and $v_{j_2} \in \pi_{n+m,n}^{-1}(u')$ for some $1 \leq j_1, j_2 \leq k$. By assumption, $v' \neq u'$, so the claim follows from Corollary \ref{Corollary: Subpaths L to R}.
\end{proof}

\begin{proposition}\label{prop: Path decomposition}
    Let $\theta = \left[v_1,\dots,v_k\right] $ be a path in $G_{n+m}$ and $A,B \subseteq V_n$ be non-empty disjoint sets.
    If $v_1 \in \pi_{n+m,m}^{-1}(A)$ and $v_k \in \pi_{n+m,m}^{-1}(B)$ then
    there are numbers $1 \leq k_1 < s_1 \leq k_2 < s_2 \leq \dots, \leq k_l < s_l \leq k$ and vertices $u_1,\dots, u_l \in V_n$ so that
    \begin{enumerate}
        \item $[u_1,\dots,u_l] \in \Theta(A,B)$, and
        \item the sub-path $\theta_i = [v_{k_i},\dots,v_{s_i}]$ of $\theta$ connects $\pi_{n + m,n}^{-1}(u_i)$ to $\pi_{n + m,n}^{-1}(u_{i + 1})$ and is contained in $ e_i \cdot G_m$ where $e_i = \{ u_i,u_{i+1} \} \in E_n$.
    \end{enumerate}
\end{proposition}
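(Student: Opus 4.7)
The plan is to decompose $\theta$ by partitioning its edges according to the tile of $G_{n+m}$ in which they lie, rather than iterating Corollary \ref{Corollary: Subpaths L to R}. For each $j = 1,\dots,k-1$ I set $f_j := \pi_{n+m,n}(\{v_j,v_{j+1}\}) \in E_n$, and choose indices $0 = c_0 < c_1 < \dots < c_l = k-1$ so that $f_j$ is constant and equal to some edge $e_i \in E_n$ on each block $\{c_{i-1}+1,\dots,c_i\}$, with $e_i \neq e_{i+1}$ by maximality. Setting $k_i := c_{i-1}+1$ and $s_i := c_i+1$, the sub-path $\theta_i := [v_{k_i},\dots,v_{s_i}]$ has all of its edges projecting to the single edge $e_i$, so property \ref{SM3} places each $\theta_i$ inside $e_i \cdot G_m$. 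Consecutive blocks share only the vertex $v_{s_i} = v_{k_{i+1}}$, so one obtains $1 = k_1 < s_1 = k_2 < s_2 = \dots \leq s_l = k$, exactly the index pattern required.

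Next I would identify the vertices $u_i \in V_n$ from these transitions. For each interior transition $s_i = k_{i+1}$, the vertex $v_{s_i}$ lies in both $e_i \cdot G_m$ and $e_{i+1} \cdot G_m$, and since $e_i \neq e_{i+1}$, property \ref{SM2} produces a unique common endpoint $u_{i+1} \in e_i \cap e_{i+1}$ with $v_{s_i} \in \pi_{n+m,n}^{-1}(u_{i+1})$. For the initial and final indices I take $u_1 := \pi_{n+m,n}(v_1) \in A$ and $u_{l+1} := \pi_{n+m,n}(v_k) \in B$, which belong to $A$ and $B$ by hypothesis; moreover $u_1 \in e_1$ and $u_{l+1} \in e_l$ by the same tile analysis, because \ref{SM2} forces any vertex of $e \cdot G_m$ that projects to $V_n$ to project to an endpoint of $e$.

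With these choices $e_i = \{u_i,u_{i+1}\}$ for every $i$, so the sequence $[u_1,\dots,u_{l+1}]$ is a vertex path in $G_n$ from $A$ to $B$, and $\theta_i$ connects $\pi_{n+m,n}^{-1}(u_i)$ to $\pi_{n+m,n}^{-1}(u_{i+1})$ inside $e_i \cdot G_m$, which is precisely the decomposition the proposition demands (up to the indexing convention in the statement, where $l+1$ vertices yield $l$ sub-paths). The only step that is not essentially formal is the transition analysis, where one genuinely needs \ref{SM2} to upgrade the fact that a vertex lies in two distinct tiles into the existence of a common endpoint of the two corresponding edges of $G_n$; everything else is the routine pigeonhole maximal-block argument on the finite sequence $f_1,\dots,f_{k-1}$.
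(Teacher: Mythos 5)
Your maximal-block decomposition together with \ref{SM2} to identify the transition vertices is essentially what Corollary \ref{Corollary: Subpaths L to R} does internally, so the machinery is right. But there is a genuine gap in the closing step, where you assert that $e_i = \{u_i,u_{i+1}\}$ and hence that $[u_1,\dots,u_{l+1}]$ is a vertex path in $G_n$: you never check that $u_i \neq u_{i+1}$, and it can fail. Both $u_i$ and $u_{i+1}$ are endpoints of $e_i$ (the former because $v_{k_i} \in e_i \cdot G_m$ projects to the vertex $u_i \in V_n$, the latter by \ref{SM2}), but nothing in the maximal-run decomposition forces the block $\theta_i$ to exit through the opposite endpoint of $e_i$ from the one through which it entered. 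Because the ancestor set $\pi_{n+m,n}^{-1}(u_i) = \sigma_{e_i,m}\bigl(I_{u_i,e_i}^{(m)}\bigr)$ generally contains more than one vertex (whenever $|I|>1$), the path can enter that set at one vertex, detour through the interior of $e_i \cdot G_m$, return to a different vertex of the same ancestor set, and then cross into a different tile. This produces $u_i = u_{i+1}$, so that $\{u_i,u_{i+1}\}$ is a singleton rather than an edge and the announced path in $G_n$ breaks down. The same can happen at the initial and final blocks.

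What is missing is a loop-removal step: after producing the raw block sequence, discard every block with $u_i = u_{i+1}$ and collapse the repeated entries of the $u$-sequence; the surviving sub-paths still satisfy $s_i \leq k_{i+1}$, which is all the statement requires since the $\theta_i$ are not asked to tile $\theta$. The paper's proof avoids the problem by iterating Corollary \ref{Corollary: Subpaths L to R}, whose own proof truncates the working path so that intermediate vertices project only to $u_i$ or to edges and then reads off the last block; the corollary therefore returns $\hat{e} = \{v,\hat{u}\}$ with $v \neq \hat{u}$ automatically. Without the discard step, your construction proves only the weaker statement in which $[u_1,\dots,u_{l+1}]$ may contain consecutive repeated vertices.
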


\begin{proof}
    We will define the numbers $k_i < s_i$ inductively as follows: First we apply Corollary \ref{Corollary: Subpaths L to R} to $\theta$ and obtain an edge $e_1 = \{ u_1,u_2 \}$ so that $\pi_{n+m,n}(v_1) = u_1$ and a sub-path $\theta_1$ of $\theta$ contained in $e_1 \cdot G_m$. We set $k_1$ to be the first index, and $s_1$ to be the last index of this sub-path, so we have $\theta_1 = [v_{k_1},\dots,v_{s_1}]$. If $v_{s_1} \in B$ then we terminate the process. Otherwise, we apply Corollary \ref{Corollary: Subpaths L to R} to $[v_{s_1},v_{s_1 + 1}, \dots, v_k]$ and obtain an edge $e_2 = \{ u_2,u_3 \}$ and a sub-path $\theta_2$ contained in $e_1 \cdot G_m$. We contiunue this process until for some $l \geq 1$ we have $v_{s_l} \in B$.
\end{proof}

\subsection{Metrics on replacement graphs}

We will next consider the problem of defining natural metrics on the sequence of graphs produced by an IGS. There are many ways to do this, but with the following assumption the construction becomes easier to describe. 
\begin{definition}
    For $L_* \geq 2$, we say that an iterated graph system satisfies \emph{$L_*$-uniform scaling property} if for all $e=\{v,u\} \in E_1$ and for every $x\in I_{v,e}, y\in I_{u,e}$ we have $d_{G_1}(x,y)=L_*$.
\end{definition}
It is easy to construct examples of IGSs with this property. Indeed, all examples in this paper satisfy it, and the value of $L_*$ is readily computable. 

Throughout this paper, $C_{\diam}$ denotes the diameter of the graph $G_1$ equipped with the path metric $d_{G_1}$ and $C_{\deg} := \deg(G_1)$ denotes the maximum degree the graph $G_1$.
Clearly $L_*$-uniform scaling property implies $L_* \leq C_{\diam}$, however, in general, we may have $L_* < C_{\diam}$.
We equip the set $V_n$ with the metric $d_n := L_*^{-n} \cdot d_{G_n}$ which is the path metric on $G_n$ scaled by $L_*^{-n}$.
Furthermore define the semi-metric $d_n$ on $E_n$ by
\[
    d_n(e,f) := \min_{x \in e, y \in f} d_n(x,y).
\]
We also write $d_n(e,y) = d_n(y,e) := \min_{x \in e} d_n(x,y)$ for $y \in V_n$ and $e \in E_n$.
\begin{lemma}\label{lemma: Distance of ancestors}
    Assume that the iterated graph system satisfies the $L_*$-uniform scaling property and fix $n \in \N, m \geq 0$.
    If $x_{n + m},y_{n + m} \in V_{n + m}$ are ancestors of vertices $x_n,y_n\in V_n$ respectively, then the following hold.
    \begin{enumerate}[label=\textcolor{blue}{(DL\arabic*)}]
        \item \label{lemma: Distance of ancestors (1)} If $x_n \neq y_n$ then $d_{n+m}(x_{n+m},y_{n+m}) = d_{n}(x_{n},y_{n})$.
        \item \label{lemma: Distance of ancestors (2)} If $x_n = y_n$ then $d_{n+m}(x_{n+m},y_{n+m}) \leq C_{\diam}\cdot L_*^{-n}$.
        \item \label{lemma: Distance of ancestors (3)} 
        Suppose $e_n \in E_n$ so that $x \in e_n \cdot G_{m}$.
        If $x_n \in e_n$ and $\widehat{x}_n \in V_{n+m}$ is an ancestor of $x_n \in V_n$ then
        $d_{n+m}(x,\widehat{x}_n) \leq C_{\diam} \cdot L_*^{-n}$.
    \end{enumerate}
\end{lemma}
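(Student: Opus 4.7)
The approach is to prove all three parts simultaneously by induction on $m \geq 0$, with $n$ arbitrary. The base case $m = 0$ is immediate: $G_{n+0} = G_n$, so \ref{lemma: Distance of ancestors (1)} is a tautology, \ref{lemma: Distance of ancestors (2)} reduces to $0 \leq C_{\diam} L_*^{-n}$, and \ref{lemma: Distance of ancestors (3)} follows since $x \in e_n$ yields $d_n(x, x_n) \leq L_*^{-n}$. For the inductive step at level $m$, I prove \ref{lemma: Distance of ancestors (3)} first, observe that \ref{lemma: Distance of ancestors (2)} is an immediate consequence by applying \ref{lemma: Distance of ancestors (3)} to any edge $e_n$ containing the common vertex $x_n = y_n$ (both ancestors lie inside $e_n \cdot G_m$ by \ref{SM2}), and finally establish \ref{lemma: Distance of ancestors (1)}.

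For \ref{lemma: Distance of ancestors (3)}, property \ref{SM2} places both $x$ and $\widehat{x}_n$ inside the tile $e_n \cdot G_m$, so pulling back through the injective edge-preserving map $\sigma_{e_n,m}$ it suffices to bound $d_{G_m}(v, u) \leq C_{\diam} L_*^m$ where $v = \sigma_{e_n,m}^{-1}(x) \in V_m$ and $u = \sigma_{e_n,m}^{-1}(\widehat{x}_n)$, so that $w := \pi_{m,1}(u) \in I_{x_n, e_n} \subseteq V_1$. Viewing $G_m$ as the one-step replacement of $G_1$ iterated $m-1$ times, I choose any tile $f \cdot G_{m-1}$ of $G_m$ containing $v$ (available by \ref{SM1}) and any endpoint $y$ of $f$. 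The inductive \ref{lemma: Distance of ancestors (3)} at level $m-1$ (with $n, m$ replaced by $1, m-1$) gives $d_{G_m}(v, \widehat{y}) \leq C_{\diam} L_*^{m-1}$ for an arbitrary ancestor $\widehat{y}$ of $y$. The remaining gap $d_{G_m}(\widehat{y}, u)$ is bounded by $C_{\diam} L_*^{m-1}$ via either the inductive \ref{lemma: Distance of ancestors (1)} when $y \neq w$ or the inductive \ref{lemma: Distance of ancestors (2)} when $y = w$. The triangle inequality together with $L_* \geq 2$ then yields $d_{G_m}(v,u) \leq 2 C_{\diam} L_*^{m-1} \leq C_{\diam} L_*^m$.

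For \ref{lemma: Distance of ancestors (1)}, the upper bound is obtained by fixing a shortest vertex path $[v_1, \dots, v_k]$ in $G_n$ from $v_1 = x_n$ to $v_k = y_n$ and chaining sub-paths through the tiles $e_i \cdot G_m$ with $e_i = \{v_i, v_{i+1}\}$. Inside each tile, the inductive \ref{lemma: Distance of ancestors (1)} applied to the iteration $V_1 \to V_m$, combined with the $L_*$-uniform scaling property $d_{G_1}(I_{v_i, e_i}, I_{v_{i+1}, e_i}) = L_*$, shows that every pair of ancestors of $v_i$ and $v_{i+1}$ is at distance exactly $L_*^m$ in $G_m$. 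Chaining geodesic sub-paths starting from $x_{n+m}$, transitioning through arbitrary ancestors of the intermediate vertices (which by \ref{SM2} lie in the intersection of consecutive tiles), and terminating at $y_{n+m}$ produces a path of total length $(k-1) L_*^m = L_*^m \, d_{G_n}(x_n, y_n)$. For the lower bound, Proposition \ref{prop: Path decomposition} applied with $A = \{x_n\}$ and $B = \{y_n\}$ decomposes any path in $G_{n+m}$ from $x_{n+m}$ to $y_{n+m}$ into sub-paths lying inside tiles corresponding to a vertex path in $G_n$ from $x_n$ to $y_n$; each such sub-path has length at least $L_*^m$ by the same combination of inductive \ref{lemma: Distance of ancestors (1)} and the $L_*$-uniform scaling property, and the number of sub-paths is at least $d_{G_n}(x_n, y_n)$.

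The main technical obstacle is the interleaved nature of the induction: \ref{lemma: Distance of ancestors (3)} at level $m$ depends on both \ref{lemma: Distance of ancestors (1)} and \ref{lemma: Distance of ancestors (2)} at level $m-1$, which forces all three statements to be carried together in the inductive hypothesis and proven in the correct order within each step. The hypothesis $L_* \geq 2$ is essential in \ref{lemma: Distance of ancestors (3)}, where it is precisely what converts the triangle inequality estimate $2 C_{\diam} L_*^{m-1}$ into the required bound $C_{\diam} L_*^m$.
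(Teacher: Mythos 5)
Your proof is correct, and it rests on the same fundamental tools as the paper's (Proposition~\ref{prop: Path decomposition}, the $L_*$-uniform scaling property, and the triangle inequality), but the packaging is noticeably different. The paper proves \ref{lemma: Distance of ancestors (1)} by reducing to $m=1$ and then iterating, proves \ref{lemma: Distance of ancestors (2)} by locating the smallest level $k$ at which the two ancestor-chains diverge and invoking \ref{lemma: Distance of ancestors (1)} at that level, and establishes \ref{lemma: Distance of ancestors (3)} via an explicit telescope: a chain of intermediate ancestors across all levels $n,\dots,n+m$, with each consecutive gap bounded by $C_{\diam}L_*^{-(n+i+1)}$ so that the geometric series sums to $C_{\diam}L_*^{-n}$. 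You instead run a simultaneous induction on $m$ (with $n$ arbitrary), treat \ref{lemma: Distance of ancestors (3)} first via a two-step triangle inequality inside a single tile $f\cdot G_{m-1}$ of $G_m$, deduce \ref{lemma: Distance of ancestors (2)} directly from \ref{lemma: Distance of ancestors (3)} applied to a shared edge $e_n\ni x_n=y_n$, and only then prove \ref{lemma: Distance of ancestors (1)}. Your two-step recursion for \ref{lemma: Distance of ancestors (3)}, once unrolled, reproduces the paper's geometric series — and both arguments use $L_*\geq 2$ at exactly the same point, to absorb the factor of $2$ (resp.\ the series constant $1/(L_*-1)$) into $C_{\diam}L_*^m$. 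The derivation of \ref{lemma: Distance of ancestors (2)} from \ref{lemma: Distance of ancestors (3)} is a mild simplification over the paper's level-locating argument; conversely, the paper's reduction of \ref{lemma: Distance of ancestors (1)} to the case $m=1$ avoids the need to carry \ref{lemma: Distance of ancestors (1)} in the inductive hypothesis. Both routes are sound and comparable in length; yours has the minor elegance of making the mutual recursion between the three parts explicit.
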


\begin{proof}
    We will first consider \ref{lemma: Distance of ancestors (1)}.
    It is sufficient to prove the case $m = 1$.
    Let $\theta_{n+1} = [v_1,\dots,v_k]$ be a path in $G_{n + 1}$ from $x_{n + 1}$ to $y_{n + 1}$.
    By Proposition \ref{prop: Path decomposition}, there is a path $[u_1,\dots u_l]$ from $x_{n}$ to $y_{n}$ and for $i=1,\dots, l-1$ pairwise disjoint sub-paths $\hat{\theta}_{i}$ from $\pi_{n}^{-1}(u_i)$ to $\pi_{n}^{-1}(u_{i + 1})$ which are contained in $\{ u_i,u_{i+1} \} \cdot G_1$.
    By unifrom scaling property, $\len(\hat{\theta}_{i}) \geq L_*$ for all $i$, which gives
    \[
        \len(\theta) \geq \sum_{i = 1}^{l - 1} \len\left(\hat{\theta}_{i}\right) \geq L_* (l-1) \geq L_*^{1+n} \cdot d_{n}(x_{n},y_{n}).
    \]
    Since the path $\theta$ was arbitary, we have $d_{n+1}(x_{n+1},y_{n+1}) \geq d_n(x_n,y_n)$.
    
    To prove the other inequality, let $\theta_{n} = [u_1,\dots,u_l]$ be a shortest path from $x_{n}$ to $y_{n}$.
    For every $1 \leq i \leq l$ fix an ancestor $v^{(i)}$ of $u_i$ so that $v^{(1)} = x_{n+1}$ and $v^{(l)} = y_{n+1}$.
    By uniform scaling property, for each $1 \leq i \leq l - 1$, there is a path $\theta^{(i)}$ from $v^{(i)}$ to $v^{(i + 1)}$ of length $L_*$.
    By joining these paths, we obtain a path $\theta_{n+1}$ from $x_{n+1}$ to $y_{n+1}$ of length
    \[\len(\theta_{n+1}) = \sum_{i = 1}^{l - 1} \len\left(\theta^{(i)}\right) = L_* \cdot (l - 1) = L_*^{n+1} \cdot d_n(x_n,y_n)\]
    which yields $d_{n+1}(x_{n+1},y_{n+1}) \leq d_n(x_n,y_n)$.
    
    Next we move to \ref{lemma: Distance of ancestors (2)}. Without loss of generality assume $x_{n+m}\neq y_{n+m}$.
    Let $1 \leq k \leq m$ be the smallest integer so that
    \[x_{n+k} := \pi_{n+m,n+k}(x_{n+m}) \neq y_{m + k} := \pi_{n+m,n+k}(y_{n+m}).\]
    Since $\pi_{n+k}(x_{n+k}) = \pi_{n+k}(y_{n+k})$, $x_{n+k},y_{n+k} \in e \cdot G_1$ for some $e \in E_{n+k-1}$. Hence
    \[d_{n+m}(x_{n+m},y_{n+m}) \stackrel{\ref*{lemma: Distance of ancestors (1)}}{=} d_{n+k}(x_{n+k},y_{n+k}) \stackrel{\ref*{SM1}}{\leq} C_{\diam} \cdot L_{*}^{-(n+k)} \leq C_{\diam} \cdot L_*^{-n}\]

    Lastly we prove \ref{lemma: Distance of ancestors (3)}.
    We choose edges $e_{n + i} \in E_{n + i}$ for $0 \leq i \leq m - 1$ so that $x \in e_{n + i} \cdot G_{m - i}$
    and $\pi_{n+i + 1}(e_{n + i + 1}) = e_{n + i}$.
    For every $0 \leq i \leq m - 1$ we also choose $x_{n + i} \in e_{n + i}$.
    Then we fix the ancestors $\widetilde{x}_{n+i} \in V_{n + i + 1}$ of $x_{n + i}$ and their ancestors  $\widehat{x}_{n + i} \in V_{n + m}$.
    For every $0 \leq i \leq m - 1$, as $x_{n + i + 1},\widetilde{x}_{n+i} \in e_{n + i} \cdot G_1$, there is a path of length at most $C_{\diam}$ between these two vertices. Therefore $d_{n + i + 1}(x_{n + i + 1},\widetilde{x}_{n + i}) \leq C_{\diam} \cdot L_{*}^{-(n+i+1)}$.
    By \ref{lemma: Distance of ancestors (1)} and \ref{lemma: Distance of ancestors (2)} we have
    \[
        d_{n+m}(\widehat{x}_{n+i},\widehat{x}_{n + i + 1}) \leq C_{\diam} \cdot L_*^{(-n + i + 1)}.
    \]
    Similarly $d_{n+m}(x,\widehat{x}_{n + m - 1}) \leq C_{\diam} \cdot L_*^{-(n+m)}$.
    We now obtain
    \begin{align*}
        d_{n+m}(x,\widehat{x}_{n}) & \leq d_{n+m}(x,\widehat{x}_{n+m - 1}) + \sum_{i = 0}^{m - 2} d_{n+m}(\widehat{x}_{n + i},\widehat{x}_{n + i + 1})\\
        & \leq \sum_{i = 0}^{m - 1} C_{\diam} \cdot L_*^{-(n + i + 1)}\\
        & \leq C_{\diam} \cdot L_*^{-n}.
    \end{align*}
\end{proof}
\begin{corollary}\label{corollary: Distance between edges}
    If $n > m \in \N$ then the following hold.
    \begin{enumerate}[label=\textcolor{blue}{(DL\arabic*)}]
    \setcounter{enumi}{3}
        \item \label{corollary: Distance between edges (1)} $d_m(\pi_{n,m}(e),\pi_{n,m}(f)) \leq d_n(e,f)$.
        \item \label{corollary: Distance between edges (2)} $d_{n}(e,f) \leq d_{m}(\pi_{n,m}(e),\pi_{n,m}(f)) + 2C_{\diam}\cdot L_*^{-m}$.
        \item \label{corollary: Distance between edges (3)} $1 \leq \diam(G_n,d_n) \leq 2 \cdot C_{\diam}$.
    \end{enumerate}
\end{corollary}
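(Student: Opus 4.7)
My plan is to prove the three inequalities in order. The first two are essentially dual: \ref{corollary: Distance between edges (1)} is obtained by projecting a geodesic in $G_n$ to a walk in $G_m$, and \ref{corollary: Distance between edges (2)} by lifting a geodesic in $G_m$ back to $G_n$. The third follows by induction from \ref{corollary: Distance between edges (2)} combined with the uniform scaling property.

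For \ref{corollary: Distance between edges (1)}, I start with a geodesic $\theta = [v_0, \dots, v_N]$ in $G_n$ of graph length $N = L_*^n \, d_n(e, f)$ with $v_0 \in e$ and $v_N \in f$. Projecting each edge of $\theta$ under $\pi_{n, m}$ and grouping the resulting sequence into maximal constant runs produces distinct edges $f_1, \dots, f_l \in E_m$. By \ref{SM2}, the transition vertex between consecutive runs $f_j$ and $f_{j+1}$ is an ancestor of $u_j := f_j \cap f_{j+1} \in V_m$. To close the walk at both ends, I set $u_0 := \bar{e} \cap f_1$ when $\bar{e} \neq f_1$ and $u_0 := u_1$ otherwise, with the analogous convention for $u_l$; this yields a walk $[u_0, \dots, u_l]$ in $G_m$ from $u_0 \in \bar{e}$ to $u_l \in \bar{f}$. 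The key estimate is that whenever $u_{j-1} \neq u_j$, the corresponding run $f_j$ in $\theta$ connects two ancestors of distinct vertices of $V_m$, and by \ref{lemma: Distance of ancestors (1)} this run must contain at least $L_*^{n-m}$ edges. If $J$ denotes the number of such jumps, then $N \geq J \cdot L_*^{n-m}$ while $d_{G_m}(\bar{e}, \bar{f}) \leq d_{G_m}(u_0, u_l) \leq J$ after collapsing repetitions in the walk, and rescaling yields $d_m(\bar{e}, \bar{f}) \leq d_n(e, f)$.

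For \ref{corollary: Distance between edges (2)}, I reverse the construction: choose a geodesic $[u_0, \dots, u_l]$ in $G_m$ realizing $d_m(\bar{e}, \bar{f})$, fix ancestors $\hat{u}_i \in V_n$, and connect $\hat{u}_i$ to $\hat{u}_{i+1}$ by a path of graph length exactly $L_*^{n-m}$, guaranteed by \ref{lemma: Distance of ancestors (1)}. Concatenation produces a path in $G_n$ of $d_n$-length $d_m(\bar{e}, \bar{f})$ between $\hat{u}_0$ and $\hat{u}_l$. Since $\hat{u}_0$ need not lie on $e$, I attach short detours from $\hat{u}_0$ to a vertex of $e$ and from $\hat{u}_l$ to a vertex of $f$; each has $d_n$-length at most $C_{\diam} L_*^{-m}$ by \ref{lemma: Distance of ancestors (3)}, producing the $2 C_{\diam} L_*^{-m}$ slack in the statement.

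For \ref{corollary: Distance between edges (3)}, the lower bound follows from the uniform scaling property: if $e = \{v, u\} \in E_1$, then any $x \in I_{v,e}$ and $y \in I_{u,e}$ satisfy $d_1(x, y) = 1$, and their ancestors in $V_n$ remain at $d_n$-distance $1$ by \ref{lemma: Distance of ancestors (1)}. For the upper bound I induct directly on $n$: combining the triangle inequality with \ref{lemma: Distance of ancestors (1)} and \ref{lemma: Distance of ancestors (3)} gives $\diam(V_{n+1}, d_{n+1}) \leq \diam(V_n, d_n) + 2 C_{\diam} L_*^{-(n+1)}$, and summing from the base $\diam(V_1, d_1) = L_*^{-1} C_{\diam}$ produces a geometric series bounded by $2 C_{\diam}$ since $L_* \geq 2$. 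The main technical subtlety is the choice of $u_0$ and $u_l$ in \ref{corollary: Distance between edges (1)}: without the convention described above, the boundary runs $f_1$ and $f_l$ would be poorly controlled, since $v_0, v_N$ are not in general ancestors of any particular $V_m$-vertex, and this would force an unwanted slack term that would break the tight inequality.
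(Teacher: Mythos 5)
Your proof is correct. For \ref{corollary: Distance between edges (1)} the paper simply invokes Proposition \ref{prop: Path decomposition} to extract from $\theta$ a sub-path joining ancestors $\widehat{x},\widehat{y}$ of vertices $x_e\in\pi_{n,m}(e)$, $x_f\in\pi_{n,m}(f)$, and then applies \ref{lemma: Distance of ancestors (1)}; you instead re-derive the run decomposition by hand (using \ref{SM2} to identify transition vertices as ancestors), which amounts to proving a specialized version of that proposition inline. Both routes rest on the same mechanism — that a path in $G_n$ crossing from one $m$-tile to a non-adjacent one must contain a full traversal of some tile, forcing at least $L_*^{n-m}$ edges per jump — and your careful handling of the boundary convention for $u_0,u_l$ (ensuring $v_0$ is an ancestor of $u_0$ via \ref{SM2}) is exactly what makes the inequality tight without a slack term. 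Your proof of \ref{corollary: Distance between edges (2)} matches the paper's triangle-inequality argument via \ref{lemma: Distance of ancestors (1)} and \ref{lemma: Distance of ancestors (3)}. For the upper bound in \ref{corollary: Distance between edges (3)} you induct on the level, getting $\diam(V_{n+1},d_{n+1})\leq\diam(V_n,d_n)+2C_{\diam}L_*^{-(n+1)}$ and summing the geometric series, whereas the paper argues directly: pick $e,f\in E_1$ containing the first-level projections, lift a short $G_1$-path to ancestors in $V_n$, and apply \ref{lemma: Distance of ancestors (1)} and \ref{lemma: Distance of ancestors (3)} once. Your inductive variant is a bit longer but equally valid, and both yield the constant $2C_{\diam}$ using $L_*\geq 2$.
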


\begin{proof} 
    \ref{corollary: Distance between edges (1)} is clear if $\pi_{n,m}(e),\pi_{n,m}(f)$ share a common vertex.
    Otherwise, let $x\in e$ and $y\in f$ be such that $d_n(e,f)=d_n(x,y).$ If $\theta$ is a path connecting $x$ to $y$, then by Proposition \ref{prop: Path decomposition}, $\theta$ contains a sub-path from an ancestors $\widehat{x}$ and $\widehat{y}$ of some $x_e \in \pi_{n,m}(e)$ and $x_f \in \pi_{n,m}(f)$ respectively.
    Hence
    \[
        d_n(e,f) =d_n(x,y) \geq d_n(\widehat{x},\widehat{y}) \stackrel{\ref*{lemma: Distance of ancestors (1)}}{=} d_m(x_e,x_f) \geq d_m(\pi_{n,m}(e),\pi_{n,m}(f)).
    \]

    To prove \ref{corollary: Distance between edges (2)}, suppose $x \in \pi_{n,m}(e), y \in \pi_{n,m}(f)$ so that
    \[
        d_m(\pi_{n,m}(e),\pi_{n,m}(f)) = d_m(x,y).  
    \]
    If $x = y$, then the claim follows from \ref{lemma: Distance of ancestors (3)}.
    Otherwise, pick $\widehat{x},\widehat{y}$ ancestors of $x,y$ respectively.
    Then
    \begin{align*}
        d_n(e,f) & \leq d_n(e,\widehat{x}) + d_n(\widehat{x},\widehat{y}) + d_n(\widehat{y},f)\\
        & \stackrel{\ref*{lemma: Distance of ancestors (3)}}{\leq} 2C_{\diam} \cdot L_*^{-m} + d_n(\widehat{x},\widehat{y})\\
        & \stackrel{\ref*{lemma: Distance of ancestors (1)}}{=} 2C_{\diam} \cdot L_*^{-m} + d_m(x,y)\\
        & = 2C_{\diam} \cdot L_*^{-m} + d_m(\pi_{n,m}(e),\pi_{n,m}(f)).
    \end{align*}
    Lastly, we prove \ref{corollary: Distance between edges (3)}. The lower bound follows from \ref{lemma: Distance of ancestors (1)}. Let $x,y \in V_n$ and choose $e,f \in E_1$ so that $x \in e \cdot G_{n-1}$ and $y \in f \cdot G_{n-1}$. Then choose $\tilde{x} \in e, \tilde{y} \in f$ and a path $[z_1,\dots,z_k]$ from $\tilde{x}$ to $\tilde{y}$ in $G_n$ of length at most $C_{\diam}$. By choosing ancestors of $\widehat{z}_i \in V_n$ of $z_i$, we have
    \begin{align*}
        d_n(x,y) & \leq d_n(x,\widehat{z}_1) + d_n(\widehat{z}_k,y) + \sum_{i = 1}^{k-1} d_n(\widehat{z}_i,\widehat{z}_{i+1})\\
        & \stackrel{\ref*{lemma: Distance of ancestors (1)}}{=} d_n(x,\widehat{z}_1) + d_n(\widehat{z}_k,y) + \sum_{i = 1}^{k-1} d_1(z_i,z_{i+1})\\
        & \stackrel{\ref*{lemma: Distance of ancestors (3)}}{\leq} 2C_{\diam}L_*^{-1} + C_{\diam}\\
        & \leq 2C_{\diam}.
    \end{align*}
\end{proof}

\subsection{Doubling, regularity and limit space}

A metric space $(X,d)$ is said to be \emph{metrically doubling} if there exists a constant $N$ so that for each $x\in X$ and every $r>0$, there exists points $x_1, \dots, x_N$ so that $B(x,r)\subset \bigcup_{i=1}^N B(x_i, r/2)$. We say that a sequence $\{X_k\}_{k\in \N}$ is \emph{uniformly doubling}, if each space in the sequence if $N$-doubling for a fixed $N$. The following definition gives a simple necessary and sufficient condition for an IGS to produce a uniformly doubling sequence of graphs.

\begin{definition}
An iterated graph system is \emph{doubling}, if for every 
$e = \{ v,u \} \in E_1$ and $z \in I_{v,e}$ we have $\degr(z) = 1$.
For such $z \in I_{v,e}$ we set $\mathfrak{n}(z)$ to be the unique neighbour of $z$ and for the corresponding edge we use the notation $\mathfrak{e}_z$.
\end{definition}

\begin{lemma}\label{lemma: Doubling}
    If the iterated graph system is doubling, then $\deg(G_n) = C_{\deg}$ for all $n$.
    Furthermore, if the iterated graph system also satisfies the $L_*$-uniform scaling property, then the metric spaces $(G_n,d_n)$ are doubling with the doubling constant $N_D = N_D(C_{\deg},C_{\diam},L_*)$.
\end{lemma}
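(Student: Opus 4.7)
The plan is to establish the two claims in order, since the degree statement from the first part will be essential for the tile-counting step in the doubling argument.

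For the first claim I would induct on $n$, the base case $n=1$ being immediate. For the inductive step, any $v \in V_{n+1}$ is either a non-gluing vertex $v = [(z,e)]$ with $z \notin \phi_{x,e}(I)$ for every endpoint $x$ of $e \in E_n$, in which case $\deg_{G_{n+1}}(v) = \deg_{G_1}(z) \leq C_{\deg}$, or it is a gluing vertex $v = [(\phi_{x,e}(a),e)]$ for some endpoint $x$ of some edge $e \in E_n$ and some $a \in I$. In the latter case, the equivalence relation identifies $v$ across every edge of $G_n$ containing $x$, so
\begin{equation*}
\deg_{G_{n+1}}(v) \;=\; \sum_{f \in E_n,\, x \in f} \deg_{G_1}\bigl(\phi_{x,f}(a)\bigr) \;=\; \deg_{G_n}(x) \;\leq\; C_{\deg},
\end{equation*}
where the second equality uses $\deg_{G_1}(\phi_{x,f}(a)) = 1$ from the doubling hypothesis and the last inequality is the induction hypothesis. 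Equality $\deg(G_{n+1}) = C_{\deg}$ is achieved because in the non-degenerate case $C_{\deg} \geq 2$ the maximal degree of $G_1$ is realised by a non-gluing vertex, which appears with unchanged degree inside every copy of $G_1$ in $G_{n+1}$.

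For the doubling claim, fix $x \in V_n$ and $r > 0$. Two boundary regimes are trivial: if $r \geq 4 C_{\diam}$, then Corollary~\ref{corollary: Distance between edges}~\ref{corollary: Distance between edges (3)} shows that the single ball of radius $r/2$ around $x$ covers $V_n$, and if $r < 4 C_{\diam} L_*^{-n}$ then $B(x,r)$ has $d_{G_n}$-diameter at most $8 C_{\diam}$ and, using the degree bound $\deg(G_n) \leq C_{\deg}$ from the first part, contains at most $N_0 = (C_{\deg}+1)^{8 C_{\diam}+1}$ vertices, which may be covered by singletons. In the intermediate range $4 C_{\diam} L_*^{-n} \leq r < 4 C_{\diam}$, I would choose $k \in \{1,\dots,n\}$ to be minimal with $L_*^k \geq 4 C_{\diam}/r$, noting the companion estimate $r L_*^k \leq 4 C_{\diam} L_*$ from the minimality of $k$. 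Setting $m = n - k$, Proposition~\ref{prop: Similarity maps}~\ref{SM1} yields the tile decomposition $V_n = \bigcup_{e \in E_k} e \cdot G_m$, and since the restriction of $d_{G_n}$ to each tile is dominated by the intrinsic path metric of $G_m$, each tile has $d_n$-diameter at most $2 C_{\diam} L_*^{-k} \leq r/2$. Thus every tile meeting $B(x,r)$ is contained in a single $d_n$-ball of radius $r/2$, and it remains to bound the number of such tiles by a constant depending only on $C_{\deg}, C_{\diam}, L_*$.

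To count the tiles, I would first use Proposition~\ref{prop: Similarity maps}~\ref{SM1} to pick a tile $e_0 \cdot G_m$ containing $x$ and then apply Lemma~\ref{lemma: Distance of ancestors}~\ref{lemma: Distance of ancestors (3)} to an endpoint $x_k \in V_k$ of $e_0$ to obtain an ancestor $\widehat{x}_k \in V_n$ with $d_n(x,\widehat{x}_k) \leq C_{\diam} L_*^{-k}$. For every tile $e \cdot G_m$ meeting $B(x,r)$, pick $y \in (e \cdot G_m) \cap B(x,r)$, choose an endpoint $v \in V_k$ of $e$, and apply the same lemma to obtain an ancestor $\widehat{v} \in V_n$ of $v$ with $d_n(y,\widehat{v}) \leq C_{\diam} L_*^{-k}$; the triangle inequality yields
\begin{equation*}
d_n(\widehat{x}_k,\widehat{v}) \;\leq\; r + 2 C_{\diam} L_*^{-k}.
\end{equation*}
If $x_k \neq v$, then \ref{lemma: Distance of ancestors (1)} lifts this estimate to $d_k(x_k,v) \leq r + 2 C_{\diam} L_*^{-k}$, and passing to the graph metric on $G_k$ gives $d_{G_k}(x_k,v) \leq r L_*^k + 2 C_{\diam} \leq 4 C_{\diam} L_* + 2 C_{\diam}$. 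The first part of the lemma, applied now to $G_k$, shows that the corresponding ball in $G_k$ contains at most $N_1' := (C_{\deg}+1)^{4 C_{\diam} L_* + 2 C_{\diam} + 1}$ vertices, and since each such $v \in V_k$ is an endpoint of at most $C_{\deg}$ edges, the number of candidate tiles is at most $N_1 := C_{\deg} \cdot N_1'$; setting $N_D = \max(N_0,N_1)$ completes the proof. The main subtlety is the careful bookkeeping of scales between $d_n$ and $d_k$ and the essential invocation of the first part of the lemma inside the second, which dictates the order of the argument.
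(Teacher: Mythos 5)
Your proof is correct and follows essentially the same strategy as the paper's: the degree claim is proved by the same induction, and for metric doubling you split into the same three scale regimes, pass to a coarser level $k$ (the paper's $m$), and count coarse-level objects using the degree bound and the distance lemmas \ref{lemma: Distance of ancestors (1)}--\ref{lemma: Distance of ancestors (3)}. The only substantive variation is that you index the cover by tiles $e \cdot G_m$ for $e \in E_k$ (counting edges in $G_k$), whereas the paper projects vertices of $V_n$ down to $V_m$ and counts vertices in a $G_m$-ball around the projected center; the two bookkeeping schemes are interchangeable and lead to the same kind of constant. One small boundary point to tidy up: at the left endpoint $r = 4C_{\diam}L_*^{-n}$ of your intermediate range, the minimal $k$ is exactly $n$, giving $m = n - k = 0$, and $\sigma_{e,0}$ and $G_0$ are not defined; the easy fix, which is what the paper does, is to absorb this value into the small-radius case by using $r \leq 4C_{\diam}L_*^{-n}$ there and $r > 4C_{\diam}L_*^{-n}$ for the intermediate range, which forces $k \leq n-1$ and $m \geq 1$.
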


\begin{proof}
    It follows from \ref{SM1} that $\deg(G_n) \geq C_{\deg}$ for all $n \in \N$.
    We prove by induction, that $\deg(G_n) = C_{\deg}$.
    The base case $n = 1$ is trivial so we assume it holds for $n$.
    If $x \in V_{n + 1}$ and $x$ is not an ancestor of any $x_n \in V_n$, by \ref{SM1} and \ref{SM2}, $\deg(x) \leq \deg(G_1) = C_{\deg}$.
    On the other hand, if $x$ is an anscestor of $x_{n} \in V_n$, then, by the doubling property of the IGS, $\deg(x) = \deg(x_n) \leq C_{\deg}$.

    Next we prove the metric doubling property of $d_n$.
    Fix $x \in V_n$ and $r > 0$.
    First assume $r \leq 4 C_{\diam}\cdot L_*^{-n}$.
    Then every vertex $y \in B(x,r)$ can be reached from $x$ by a path of length at most $4 C_{\diam}$.
    Hence $\abs{B(x,r)} \leq (C_{\deg})^{4 C_{\diam}}$ so in this case we may set $N_D = (C_{\deg})^{4 C_{\diam}}$. On the other hand, if $r \geq 4C_{\diam}$, then $B(x,r/2) = V_n$ so we can choose $N_D = 1$. Hence we may assume that
    \[4 C_{\diam} \cdot L_*^{-m} < r \leq 4 C_{\diam} \cdot L_*^{-m + 1}\]
    for some $m = 1,\dots,n$.
    Fix $\widetilde{x} \in V_{n}$ so that it has an ancestor in $V_m$ and
    \[d_n(x,\widetilde{x}) \stackrel{\ref*{lemma: Distance of ancestors (3)}}{\leq} C_{\diam} \cdot L_*^{-m} < r/4.\]
    Similarly, for each $y \in B(x,r)$, we choose $\widehat{y} \in V_n$ having an ancestor in $V_m$ and
    \[d_n(y,\widehat{y}) \stackrel{\ref*{lemma: Distance of ancestors (3)}}{\leq} C_{\diam} \cdot L_*^{-m} < r/4.\]
    Then
    \[d_m(\pi_{n,m}(\widetilde{x}),\pi_{n,m}(\widehat{y})) \stackrel{\ref*{lemma: Distance of ancestors (1)}}{\leq} d_n(\widetilde{x},\widehat{y}) \leq r/2 + d_n(x,y) < 2r\]
    which gives
    \[S := \{\pi_{n,m}(\widehat{y}) \in V_m : y\in B(x,r) \} \subseteq B(\pi_{n,m}(\widetilde{x}),2r).\]
    Since $2r \leq 8C_{\diam}L_* \cdot L_*^{-m}$, $S$ contains at most $(C_{\deg})^{8C_{\diam}L_*}$ vertices.
    Lastly, for every $v \in S$ we choose $x_v \in \pi_{n+m}^{-1}(v)$.
    Then for any $y \in B(x,r)$, we have
    \[d_n(\widehat{y},x_{\pi_{n,m}({\widehat{y}})}) \stackrel{\ref*{lemma: Distance of ancestors (2)}}{\leq} C_{\diam} \cdot L_*^{-m} < r/4\]
    and we finally obtain
    \[B(x,r) \subseteq \bigcup_{y \in B(x,r)} B(\widehat{y},r/4) \subseteq \bigcup_{v \in S} B(x_v,r/2)\]
    so we may set $N_D = (C_{\deg})^{8C_{\diam}L_*}$.
\end{proof}

Next, we will show that there is a fractal space $X$ which can be obtained as a Gromov-Hausdorff limit of the sequence $(G_n,d_n)$. This limit space has a symbolic description, which we now give.

\begin{definition}
    Given an iterated graph system, we define 
    \[
      \Sigma := \{ (e_i)_{i = 1}^{\infty} : e_i \in E_i \text{ and } \pi_m(e_m) = e_{m - 1} \}.  
    \]
    For $e \in E_n$, we define the subsets $\Sigma_e \subseteq \Sigma$ by
    \[
        \Sigma_e := \left\{ (e_i)_{i = 1}^{\infty} \in \Sigma : e_n = e \right\}.
    \]
    We give $\Sigma$ the natural metric
    \[
        d_{\Sigma}((e_i)_{i = 1}^\infty,(f_i)_{i = 1}^\infty) :=
        \begin{cases}
            0 & \text{ if } e_i = f_i \text{ for all } i \in \N\\
            2^{-\max \left\{ k \ : \  (e_i)_{i = 1}^k = (f_i)_{i = 1}^k \right\}} & \text{ otherwise}
        \end{cases}
    \]
    and the Radon measure $\mu_{\Sigma}$, for which
    \[
        \mu_{\Sigma}(\Sigma_e) = \frac{1}{\abs{E_1}^n}
    \]
    for all $e \in E_n$.
\end{definition}

\begin{remark}
    The measure is well-defined by Kolmogorov's extension theorem and Proposition \ref{prop: Similarity maps}. There is also an alternative way to construct $\mu_\Sigma$. The space $\Sigma$ and can be identified as $(E_1)^{\N}$ by identifying $(e_i)_{i = 1}^\infty \in \Sigma$, with $\left(\hat{e}_i\right)_{i = 1}^\infty \in (E_1)^\N$, where
    \begin{enumerate}
        \item $e_1 = \hat{e}_1$
        \item $\hat{e}_{n + 1} = \{ z_1,z_2 \}$, where $e_{n + 1} = \{ [z_1,e_n], [z_2,e_n] \}$.
    \end{enumerate}
    Let $T: (E_1)^\N \to \Sigma$ be the map $T(\left(\hat{e}_i\right)_{i = 1}^\infty)=(e_i)_{i = 1}^\infty$, and let $\nu$ be the Bernoulli measure on $(E_1)^\N$ where each element in $E_1$ has equal measure $\abs{E_1}^{-1}$. Then the measure can also be constructed as the push-forward $\mu_{\Sigma}=(T)_*(\nu)$, where $T_*$ is the push-forward operation for measures.
\end{remark}

\begin{definition}
    The \emph{limit space} of an iterated graph system is
    \[
        X := \Sigma/\sim, \text{ where } (e_i)_{i = 1}^{\infty} \sim (f_i)_{i = 1}^{\infty} \iff e_i \cap f_i \neq \emptyset \text{ for all } i
    \]
    and is given the metric
    \begin{equation}\label{eq:dxdef}
        d_X([(e_i)_{i = 1}^{\infty}],[(f_i)_{i = 1}^{\infty}]) := \lim_{n \to \infty} d_n(e_n,f_n).
    \end{equation}
    The canonical projection $\Sigma \to X$ is denoted by $\chi$
    and the push-forward measure on $X$ is denoted as $\mu_X := \chi_*(\mu_\Sigma)$.
    We also denote $X_e := \chi(\Sigma_e)$.
    \end{definition}

The limit in \eqref{eq:dxdef} exists and it defines a metric by the following lemma.

\begin{lemma}
    The function $d_X$ is a well-defined metric on $X$ and $\chi : (\Sigma,d_\Sigma) \to (X,d_X)$ is a continuous function.
\end{lemma}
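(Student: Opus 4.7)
The plan is to verify four items in turn: (1) the limit in \eqref{eq:dxdef} exists, (2) the resulting value depends only on the equivalence classes of the representatives, (3) $d_X$ satisfies the metric axioms, and (4) $\chi$ is continuous. Every claim will be obtained by combining the already-established estimates from Corollary \ref{corollary: Distance between edges} with straightforward limit and triangle-inequality arguments.

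For (1), fix $(e_i),(f_i)\in \Sigma$ and set $a_n := d_n(e_n,f_n)$. By the definition of $\Sigma$, we have $\pi_{n,m}(e_n) = e_m$ and $\pi_{n,m}(f_n) = f_m$ for $n>m$, so \ref{corollary: Distance between edges (1)} yields $a_m \leq a_n$. Since $a_n \leq \diam(G_n,d_n) \leq 2C_{\diam}$ by \ref{corollary: Distance between edges (3)}, the monotone sequence $a_n$ converges. For (2), first note that the formula $d_n(e,f) = \min_{x\in e, y\in f} d_n(x,y)$ defines a semi-metric on $E_n$: for any $y\in f$, $x\in e$, $z\in g$ one has $d_n(x,z) \leq d_n(x,y) + d_n(y,z)$, and minimising gives $d_n(e,g)\leq d_n(e,f) + d_n(f,g)$. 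Moreover $d_n(e,f)=0$ as soon as $e\cap f\neq\emptyset$. Hence if $(e_i)\sim (e'_i)$ and $(f_i)\sim (f'_i)$, then $d_n(e_n,e'_n) = d_n(f_n,f'_n) = 0$ for every $n$, and the triangle inequality gives $d_n(e_n,f_n) = d_n(e'_n,f'_n)$. Passing to the limit establishes well-definedness.

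For (3), non-negativity, symmetry, and the triangle inequality for $d_X$ descend directly from the corresponding properties on each $E_n$. The only subtle point is separation of points. If $d_X([(e_i)],[(f_i)])=0$, then by the monotonicity from step (1) every $a_n$ must vanish, so $e_n\cap f_n\neq\emptyset$ for every $n$, i.e., $(e_i)\sim (f_i)$. For (4), applying \ref{corollary: Distance between edges (2)} and letting $n\to\infty$ gives, for every $m$,
\[
    d_X([(e_i)],[(f_i)]) \;\leq\; d_m(e_m,f_m) + 2C_{\diam}L_*^{-m}.
\]
If $d_\Sigma((e_i),(f_i)) \leq 2^{-m}$, then $e_m = f_m$ and hence $d_m(e_m,f_m)=0$, yielding $d_X([(e_i)],[(f_i)]) \leq 2C_{\diam}L_*^{-m}$. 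Taking $m = \lfloor \log_2(1/d_\Sigma)\rfloor$ gives a Hölder estimate on $\chi$, which is stronger than the required continuity.

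I do not expect a genuine obstacle: Corollary \ref{corollary: Distance between edges} already does the geometric work, and the rest is a routine manipulation of limits and semi-metrics. The only place one needs to be careful is the well-definedness step, where the semi-metric (rather than metric) nature of $d_n$ on $E_n$ must be used correctly via its triangle inequality.
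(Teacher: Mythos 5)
Your overall strategy is the same as the paper's — use \ref{corollary: Distance between edges (1)} and \ref{corollary: Distance between edges (3)} for monotone boundedness, the semi-metric on $E_n$ for well-definedness and the metric axioms, and \ref{corollary: Distance between edges (2)} for continuity — and steps (1) and (4) are correct as written. However, there is a genuine error in step (2) (and propagating into step (3)): the asserted triangle inequality
\[
d_n(e,g)\leq d_n(e,f)+d_n(f,g)
\]
for the min-distance $d_n(e,f)=\min_{x\in e,\,y\in f}d_n(x,y)$ on $E_n$ is \emph{false}. The flaw in the ``minimising gives'' step is that the minimizer $y\in f$ for $d_n(e,f)$ and the minimizer $y'\in f$ for $d_n(f,g)$ need not coincide; one must pay for moving from $y$ to $y'$ inside $f$. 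Concretely, if $G_n$ contains a path $v_1-v_2-\cdots-v_6$ and one takes $e=\{v_1,v_2\}$, $f=\{v_3,v_4\}$, $g=\{v_5,v_6\}$, then $d_n(e,f)=d_n(f,g)=L_*^{-n}$ but $d_n(e,g)=3L_*^{-n}$. Consequently the intermediate assertion ``$d_n(e_n,f_n)=d_n(e'_n,f'_n)$ for every $n$'' is also incorrect.

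The correct statement is an \emph{approximate} triangle inequality: since any two vertices of an edge $f$ lie at distance $L_*^{-n}$ in $d_n$, one has
\[
d_n(e,g)\leq d_n(e,f)+d_n(f,g)+L_*^{-n},
\]
and hence $\abs{d_n(e_n,f_n)-d_n(e'_n,f'_n)}\leq 2L_*^{-n}$ when $(e_i)\sim(e'_i)$ and $(f_i)\sim(f'_i)$. Because the additive error vanishes as $n\to\infty$, both the well-definedness of $d_X$ and its triangle inequality survive in the limit, so your conclusions are all correct; but the argument as written asserts exact equalities at finite $n$ that do not hold, and should be repaired by carrying the $L_*^{-n}$ error term and observing that it tends to zero.
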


\begin{proof}
    Given any pair of sequences $(e_i)_{i = 1}^\infty,(f_i)_{i = 1}^\infty \in \Sigma$, by \ref{corollary: Distance between edges (1)} and \ref{corollary: Distance between edges (3)},
    the sequence $d_n(e_n,f_n)$ converges as it is increasing and bounded.
    Moreover, $d_n$ defines a semi-metric on $E_n$ so that $d_n(e,f) = 0$ if and only if $e$ and $f$ share a vertex. Therefore $d_X$ is a well-defined metric on $X$.
    The continuity of $\chi$ follows from \ref{corollary: Distance between edges (2)}.
\end{proof}

The following lemma is immediate from \ref{corollary: Distance between edges (1)} and \ref{corollary: Distance between edges (2)}.

\begin{lemma}\label{lemma: properties of d_X}
    Let $x = [(e_i)_{i = 1}^\infty], y = [(f_i)_{i = 1}^\infty]$ be points in $X$. Then the following hold.
    \begin{enumerate}[label=\textcolor{blue}{(DL\arabic*)}]\setcounter{enumi}{6}
        \item \label{lemma: properties of d_X (1)} $d_X(x,y) \geq d_n(e_n,f_n)$ for all $n \in \N$.
        \item \label{lemma: properties of d_X (2)} If $e_m$ and $f_m$ have a common vertex then
        \[
            d_X(x,y) \leq 2 C_{\diam} \cdot L_*^{-m}.
        \]
        In particular, $\diam(X_e) \leq 2 C_{\diam} \cdot L_*^{-m}$ for all $e \in E_m$.
    \end{enumerate}
\end{lemma}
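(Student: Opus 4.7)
My plan is to derive both parts as essentially immediate consequences of \ref{corollary: Distance between edges (1)} and \ref{corollary: Distance between edges (2)}, together with the fact that the limit defining $d_X$ exists as a monotone limit. I first observe that for any $(e_i)_{i=1}^\infty \in \Sigma$ we have $\pi_{n+1}(e_{n+1}) = e_n$ by definition, so \ref{corollary: Distance between edges (1)} gives
\[
d_n(e_n, f_n) = d_n(\pi_{n+1}(e_{n+1}), \pi_{n+1}(f_{n+1})) \leq d_{n+1}(e_{n+1}, f_{n+1}).
\]
Hence the sequence $\{d_n(e_n,f_n)\}_{n \in \N}$ is non-decreasing, and by iterating \ref{corollary: Distance between edges (1)} one gets $d_n(e_n,f_n) \leq d_k(e_k,f_k)$ for all $k \geq n$. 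Passing to the limit $k \to \infty$ in the definition of $d_X$ then yields \ref{lemma: properties of d_X (1)}.

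For \ref{lemma: properties of d_X (2)}, I note that if $e_m$ and $f_m$ share a common vertex then $d_m(e_m,f_m) = 0$ by the definition of $d_m$ on edges. For every $n \geq m$, applying \ref{corollary: Distance between edges (2)} with the pair $(e_n,f_n)$ (using again that $\pi_{n,m}(e_n) = e_m$ and $\pi_{n,m}(f_n) = f_m$) gives
\[
d_n(e_n,f_n) \leq d_m(e_m,f_m) + 2C_{\diam}\cdot L_*^{-m} = 2C_{\diam}\cdot L_*^{-m}.
\]
Letting $n \to \infty$ and invoking the definition \eqref{eq:dxdef} of $d_X$ produces the claimed inequality.

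For the final ``In particular'' statement, I simply observe that any two points $x,y \in X_e$ have representative sequences $(e_i)_{i=1}^\infty,(f_i)_{i=1}^\infty \in \Sigma_e$ with $e_m = f_m = e$, so trivially $e_m$ and $f_m$ share a vertex and the previous bound applies uniformly, giving $\diam(X_e) \leq 2C_{\diam}\cdot L_*^{-m}$. I do not anticipate any obstacle here; the lemma is a bookkeeping consequence of the estimates already established for the finite-level metrics $d_n$, and the only small point to be careful about is verifying monotonicity so that the limit in \eqref{eq:dxdef} dominates every finite-level term.
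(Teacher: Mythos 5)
Your proof is correct and is exactly the argument the paper intends when it says the lemma is ``immediate from'' \ref{corollary: Distance between edges (1)} and \ref{corollary: Distance between edges (2)}; you have simply written out the monotonicity and the uniform bound $d_n(e_n,f_n)\leq 2C_{\diam}L_*^{-m}$ that make the limit in \eqref{eq:dxdef} behave as claimed.
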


In summary, we can now show that the spaces $(G_n,d_n)$ Gromov-Hausdorff converge to $X$. First, we give the relevant definitions. An $\epsilon$-isometry is an isometry (i.e. a map which preserves distances) between metric spaces $f:X\to Z$, for which $Z\subset \bigcup_{x\in X} B(\iota_X(x),\epsilon)$. 
The \emph{Gromov-Hausdorff metric} between $X,Y$ is defined as 
\begin{align*}
d_{GH}(X,Y):=\inf\{\epsilon>0 : & \exists \text{ a metric space } Z \\
& \text{ and  $\epsilon$-isometries } \iota_X:X\to Z, \iota_Y: Y\to Z\}.
\end{align*}
A metric space $X$ is said to be \emph{geodesic}, if for all $x,y\in X$ there exists a rectifiable curve $\gamma$ connecting $x$ to $y$ with $d(x,y)=\ell(\gamma)$, where $\ell(\gamma)$ is the length of $\gamma$. See \cite{bbi} for background on these notions. 

\begin{proposition}\label{prop: GH-convergence}
    If the iterated graph system satisfies the $L_*$-uniform scaling- and the doubling property, then the sequence of metric spaces $\{ (G_n,d_n) \}_{n \in \N}$  Gro\-mov-Hausdorff converges to $X$. Moreover, $X$ is compact, geodesic and satisfies the metric-doubling property.
\end{proposition}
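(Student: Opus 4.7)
The plan is to construct explicit $\epsilon_n$-isometries $\iota_n \colon (V_n,d_n) \to (X,d_X)$ with $\epsilon_n\to 0$, and then to extract the three structural properties of $X$ from the corresponding properties of the approximating graphs together with this GH estimate. For every $v\in V_n$, I would pick an edge $e_n^{(v)}\in E_n$ incident to $v$ and extend it to a full sequence $(e_i^{(v)})_{i\in\N}\in\Sigma$ by choosing compatible descendants at each higher level (possible by \ref{SM3}); set $\iota_n(v):=\chi\bigl((e_i^{(v)})\bigr)$.

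The near-isometry estimate combines two simple ingredients. From the definition of $d_n$ on edges together with the fact that both endpoints of $e_n^{(v)}$ are within $L_*^{-n}$ of $v$, one obtains
\[
d_n(e_n^{(v)},e_n^{(u)})\ \leq\ d_n(v,u)\ \leq\ d_n(e_n^{(v)},e_n^{(u)})+2L_*^{-n}.
\]
On the other hand, \ref{lemma: properties of d_X (1)} gives the lower bound $d_X(\iota_n(v),\iota_n(u))\geq d_n(e_n^{(v)},e_n^{(u)})$, while applying \ref{corollary: Distance between edges (2)} with $m=n$ inside the limit defining $d_X$ in \eqref{eq:dxdef} yields the matching upper bound $d_X(\iota_n(v),\iota_n(u))\leq d_n(e_n^{(v)},e_n^{(u)})+2C_{\diam}L_*^{-n}$. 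Combining these shows $|d_X(\iota_n(v),\iota_n(u))-d_n(v,u)|=O(L_*^{-n})$. For approximate surjectivity, any $x=[(e_i)]\in X$ shares the $n$-th edge with $\iota_n(v)$ whenever $v\in e_n$, so \ref{lemma: properties of d_X (2)} gives $d_X(x,\iota_n(v))\leq 2C_{\diam}L_*^{-n}$. Consequently $d_{GH}\bigl((G_n,d_n),X\bigr)=O(L_*^{-n})\to 0$.

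For compactness I would argue directly in the symbolic model: $\Sigma$ is a closed subset of the compact product $\prod_{n\in\N}E_n$ (Tychonoff, since each $E_n$ is finite), hence compact; continuity of $\chi$ has already been verified, so $X=\chi(\Sigma)$ is a continuous image of a compact space and therefore compact. The doubling property of $(X,d_X)$ is then inherited from the uniformly doubling graphs $(G_n,d_n)$ via Lemma \ref{lemma: Doubling}: given $x\in X$ and $r>0$, choose $n$ with $L_*^{-n}\ll r$, approximate $x$ by $\iota_n(v)$ for some $v\in V_n$, cover $B_{d_n}(v,2r)$ by $N_D$ balls of radius $r/4$ in $G_n$, and push the centers through $\iota_n$; the near-isometry estimate absorbs the error provided $n$ is large enough.

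Finally, for geodicity I would replace each discrete $(V_n,d_n)$ by its geometric realization $G_n^*$, in which each edge is isometric to a segment of length $L_*^{-n}$. This realization is a compact geodesic space and sits within Gromov--Hausdorff distance $L_*^{-n}/2$ of $(V_n,d_n)$, so $X$ is also the GH limit of compact geodesic spaces. A direct midpoint extraction then finishes the argument: for $x,y\in X$ approximate by vertices $v_n,u_n\in V_n$ via the maps $\iota_n$, take midpoints $m_n\in G_n^*$ of geodesic segments joining $v_n$ to $u_n$, and extract a subsequential limit $m\in X$ using compactness; the GH estimate forces $d_X(x,m)=d_X(y,m)=d_X(x,y)/2$, and a complete metric space admitting midpoints is geodesic. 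The main bookkeeping issue throughout will be keeping the $O(L_*^{-n})$ errors uniform when chaining distances across levels, but this is precisely what the distance estimates \ref{lemma: Distance of ancestors (1)}--\ref{lemma: properties of d_X (2)} furnish.
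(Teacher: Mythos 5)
Your proof of the Gromov--Hausdorff convergence itself mirrors the paper's: both construct $\epsilon_n$-isometries $V_n \to X$ by choosing, for each $v \in V_n$, a compatible infinite descendant sequence in $\Sigma$, and both derive the near-isometry estimate from \ref{lemma: properties of d_X (1)}, \ref{lemma: properties of d_X (2)} and \ref{corollary: Distance between edges (2)}. (One small slip: $\iota_n(v)$ does not ``share the $n$-th edge'' with $x=[(e_i)]$ when $v\in e_n$, since $e_n^{(v)}$ need not equal $e_n$; but they do share the vertex $v$, which is all \ref{lemma: properties of d_X (2)} requires, so the conclusion stands.)

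Where you genuinely diverge is in how the three structural properties of $X$ are obtained. The paper bundles all three by passing to geometric realizations and invoking a single citation, \cite[Theorem~7.5.1]{bbi}, on GH-limits of compact geodesic doubling spaces. You instead prove compactness directly and more elementarily from the symbolic model ($\Sigma$ closed in the Tychonoff-compact $\prod_n E_n$, plus continuity of $\chi$, so $X=\chi(\Sigma)$ is compact), derive the doubling constant for $X$ by pushing coverings through the $\epsilon_n$-isometries, and establish geodesicity by a midpoint-extraction argument in the realizations $G_n^*$ followed by compactness of $X$. Your route is more self-contained and avoids quoting a theorem whose hypotheses one would otherwise have to re-verify; the paper's route is shorter. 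One bookkeeping point worth flagging in the geodesicity step: the midpoints $m_n$ live in $G_n^*$, not in $X$, so before extracting a limit you must transfer $m_n$ to an approximate midpoint $m'_n \in X$ (e.g., snap $m_n$ to a nearest vertex, at cost $O(L_*^{-n})$, then apply $\iota_n$), and only then apply compactness of $X$; as written, ``extract a subsequential limit $m\in X$'' elides this transfer.
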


\begin{proof}
    Consider the metric spaces $(G_n,d_n)$ together with their geometric realizations, which are obtained by gluing edges of length $L_*^{-n}$ between adjacent vertices. Notice that the Gromov-Hausdorff distance of $(G_n,d_n)$ to its geometric realization is $L_*^{-n}$, since we can choose $Z$ as the geometric realization and use the inclusion maps in the definition of the Gromov-Hausdorff metric. Thus, if the metric space $(G_n,d_n)$ converge to $X$, so do the geometric realizations. On the other hand, the geometric realizations are compact, geodesic and uniformly doubling (by Lemma \ref{lemma: Doubling}), so it follows that if $\{ (G_n,d_n) \}_{n \in \N}$ Gromov-Hausdorff converges to $X$, then $X$ is compact, geodesic, and satisfies metric doubling property; see e.g. \cite[Theorem 7.5.1]{bbi}.
    
    We will construct $\epsilon(n)$-isometries $i_{G_n} : V_n \to X$ so that $\epsilon(n) \to 0$ as $n \to \infty$.
    First for every $x \in V_m$ we choose ancestors $\widehat{x} \in V_m$ of $x$ and define the map $i_{m,n} : V_m \to V_n, i_{m,n}(x) = \widehat{x}$.
    By \ref{lemma: Distance of ancestors (1)} $i_{m,n}$ is an isometry and,
    moreover, we can choose the ancestors so that for all $m>n>l$ we have
    \begin{equation}\label{eq: assumptions on embedding}
       i_{n,l} \circ i_{m,n} = i_{m,l} \text{ and } \pi_{l,n} \circ i_{m,l} = i_{m,n}.
    \end{equation}
    Next we construct maps $i_{m,n}^E : V_m \to E_n$ for all $m,n \in \N$.
    For $n \leq m$ we define $i_{n,m}^E : V_n \to E_m$ by choosing an edge $e_x$ for every $x \in V_n$ containing $i_{n,m}(x)$.
    For $n > m$ we define $i_{n,m}^E : V_n \to E_m$ by 
    \[
        i_{n,m}^E(x) =
        \begin{cases}
            \pi_{n,m}(x) & \text{ if } \pi_{n,m}(x) \in E_m  \\
            e_x & \text{ if  } \pi_{n,m}(x) \in V_m,
        \end{cases}
    \]
    where $e_x \in E_m$ is an edge containing $\pi_{n,m}(x) \in V_m$.
    By \eqref{eq: assumptions on embedding} we may choose the edges $e_x$ so that
    \begin{equation}\label{eq: projections and i^E}
        \pi_{n,k} \circ i_{m,n}^E = i_{m,k}^E.
    \end{equation}
    
    We now define the mappings $i_{G_n} : V_n \to X$ by $i(x) = [(i_{n,i}(x))_{i = 1}^{\infty}]$.
    Indeed, by \eqref{eq: projections and i^E}, $(i_{n,i}(x))_{i = 1}^{\infty} \in \Sigma$ for all $x \in V_n$ so $i_{G_n}$ is well-defined.
    We will prove that $i_{G_n}$ is a $2C_{\diam} \cdot L_*^{-n}$-isometry.
    Choose distinct vertices $x,y \in V_n$.
    Since the edges $i_{n,m}^E(x), i_{n,m}^E(y) \in E_m$ contain the ancestors $i_{n,m}(x), i_{n,m}(y) \in V_m$ of $x,y$ respectively, we have by \ref{lemma: Distance of ancestors (3)} 
    \begin{align*}
    d_m(i_{n,m}(x), i_{n,m}(y)) - 2 C_\diam \cdot L_*^{-m} & \leq d_m(i_{n,m}^E(x), i_{n,m}^E(y))\\
    & \leq d_m(i_{n,m}(x), i_{n,m}(y))\\
    & = d_n(x,y).
    \end{align*}
By letting $m \to \infty$, we have
\[
    d_X(i_{G_n}(x),i_{G_n}(y)) = \lim_{m \to \infty} d_m(i_{n,m}(x),i_{n,m}(y)) = d_n(x,y).
\]
Lastly, if $x = [(e_i)_{i = 1}^\infty] \in X$ and $x_n \in e_n$, then
\[
    d_X(x,i_{G_n}(x_n)) \stackrel{(\ref{lemma: properties of d_X (2)})}{\leq} 2 C_{\diam}\cdot L_*^{-n}. 
\]
Hence $i_{G_n}$ is a $2C_{\diam} \cdot L_*^{-n}$-isometry. This gives $d_{GH}((G_n,d_n),X)\leq 2C_{\diam} \cdot L_*^{-n}$ and thus $X$ is the Gromov-Hausdorff limit of the spaces $G_n$. 
\end{proof}

We can also compute that $X$ is Ahlfors regular, and identify the Hausdorff dimension of $X$. 

\begin{lemma}\label{lem:ahlforsregularity} 
If the iterated graph system satisfies the $L_*$-uniform scaling- and doubling properties, then the metric measure space $(X,d_X,\mu_X)$
is $Q$-Ahlfors regular with 
\[
    Q:=\frac{\log(\abs{E_1})}{\log(L_*)}.
\]
\end{lemma}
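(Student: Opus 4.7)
The strategy is to compare $\mu_X(B(x,r))$ with the mass of a well-chosen cylinder set $X_e$ at a level $n$ where $L_*^{-n} \asymp r$. Since the replacement rule yields $|E_{n+1}| = |E_1|\cdot|E_n|$, and hence $|E_n| = |E_1|^n$, the definition of $\mu_\Sigma$ gives $\mu_\Sigma(\Sigma_e) = |E_1|^{-n}$ for $e \in E_n$, and the choice $Q = \log|E_1|/\log L_*$ is precisely what makes $|E_1|^{-n} = L_*^{-nQ}$.

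The first technical step I would establish is the two-sided bound $\mu_X(X_e) \asymp |E_1|^{-n}$ for $e \in E_n$, with constants depending only on $C_{\deg}$. The lower bound is immediate from $\Sigma_e \subseteq \chi^{-1}(X_e)$. For the upper bound, any $(f_i) \in \chi^{-1}(X_e)$ is $\sim$-equivalent to some sequence in $\Sigma_e$, which forces $f_n \cap e \neq \emptyset$ (i.e., $f_n$ either equals $e$ or shares an endpoint with $e$). Hence
\[
\chi^{-1}(X_e) \subseteq \bigcup_{f \in E_n,\, f \cap e \neq \emptyset} \Sigma_f,
\]
and the number of edges sharing a vertex with $e$ is at most $2C_{\deg} - 1$, giving $\mu_X(X_e) \leq (2C_{\deg} - 1)|E_1|^{-n}$.

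Next I would fix $x \in X$ and $r \in (0,\diam(X)]$, and let $n \geq 1$ be the largest integer with $L_*^{-n} \leq r$ (the finitely many remaining large-$r$ values handled trivially via $\mu_X(X) = 1$). For the upper bound, pick $e_n \in E_n$ with $x \in X_{e_n}$ (available since $(e_i) \in \Sigma_{e_n}$ for any representative of $x$). If $X_e \cap B(x,r) \neq \emptyset$ for some $e \in E_n$, then \ref{lemma: properties of d_X (1)} gives $d_n(e_n,e) < r < L_*\cdot L_*^{-n}$. Since distinct vertices of $V_n$ are $d_n$-separated by $L_*^{-n}$ and $(V_n,d_n)$ is uniformly doubling by Lemma \ref{lemma: Doubling}, together with $\deg(G_n) = C_{\deg}$ this bounds the number of such $e$ by a constant $N = N(N_D,L_*,C_{\deg})$. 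Summing and using the first step,
\[
\mu_X(B(x,r)) \leq N(2C_{\deg}-1)\,L_*^{-nQ} \leq C\, r^Q.
\]
For the lower bound, write $x = [(e_i)_{i=1}^\infty]$ and pick $k \in \N$ (depending only on $C_{\diam}$ and $L_*$) with $2C_{\diam} L_*^{-k} < 1$. Then by \ref{lemma: properties of d_X (2)},
\[
\diam(X_{e_{n+k}}) \leq 2C_{\diam} L_*^{-(n+k)} < L_*^{-n} \leq r,
\]
so $X_{e_{n+k}} \subseteq B(x,r)$ after a harmless adjustment (e.g.\ replacing $r$ by $r/2$ to handle the open-ball convention). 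Thus $\mu_X(B(x,r)) \geq |E_1|^{-(n+k)} \geq c\, r^Q$.

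The main technical obstacle is the first step: one must control the overlap of the family $\{X_e\}_{e \in E_n}$ tightly enough that $\mu_X(X_e)$ is genuinely of order $|E_1|^{-n}$, not merely bounded below by it. Once this incidence-count is handled, the two ball bounds reduce to the combinatorial doubling of $(V_n, d_n)$ already established in Lemma \ref{lemma: Doubling}, together with the distance estimates \ref{lemma: properties of d_X (1)} and \ref{lemma: properties of d_X (2)}.
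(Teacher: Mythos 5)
Your proposal is correct and follows essentially the same approach as the paper: compare $\mu_X(B(x,r))$ with cylinder masses $|E_1|^{-n}$ at the scale $n$ with $L_*^{-n}\asymp r$, using \ref{lemma: properties of d_X (1)}--\ref{lemma: properties of d_X (2)} to locate the intersecting cells and a degree/doubling count to bound their number. The only cosmetic difference is that you first establish the two-sided estimate $\mu_X(X_e)\asymp|E_1|^{-n}$ and then sum over intersecting cells, whereas the paper pulls $B(x,r)$ back directly to $\Sigma$ and counts the cylinders $\Sigma_f$ meeting $\chi^{-1}(B(x,r))$, avoiding the intermediate bound on $\mu_X(X_e)$.
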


\begin{proof}
    Fix $x = [(e_i)_{i = 1}^\infty] \in X$ and $r \in (0,\diam(X)]$. Then
    \[r\leq \diam(X)\stackrel{\ref{corollary: Distance between edges (3)}}{\leq} 2C_{\diam}\] so that for some $n\in \N$ we have $2C_{\diam} \cdot L_*^{-n} < r \leq 2C_{\diam} \cdot L_*^{-(n - 1)}$.
    Now if $y = [(f_i)_{i = 1}^\infty]$ so that $f_n = e_n$,
    then, by \ref{lemma: properties of d_X (2)}, $d_X(x,y) \leq 2C_{\diam}\cdot L_*^{-n}$.
    Hence $X_{e_n} \subseteq B(x,r)$
    which yields
    \[
        \mu_X(B(x,r)) \geq \mu_X(X_{e_n})\geq \mu_{\Sigma}(\Sigma_{e_n}) = \frac{1}{\abs{E_1}^n} \geq (2 C_{\diam}L_*)^{-Q} \cdot r^Q. 
    \]
    
    Next we prove the upper bound. Let $f_n^{(1)},\dots,f_n^{(k)} \in E_n$ be the edges $f \in E_n$ so that $\chi^{-1}(B(x,r)) \cap \Sigma_f \neq \emptyset$.
    Choose $y = [(f_i)_{i = 1}^\infty] \in B(x,r)$. Then
    \[
        d_n(e_n,f_n) \stackrel{\ref{lemma: properties of d_X (1)}}{\leq} d_X(x,y) < r \leq 2C_{\diam} \cdot L_*^{-(n - 1)} = 2C_{\diam}L_* \cdot L_*^{-n}.
    \]
    Hence $f_n$ can be reached from $e_n$ by a path of length at most $2C_{\diam}L_*$.
    This yields $k \leq 2(C_{\deg})^{2C_{\diam}L_*}$ and further
    \begin{align*}
        \mu_X(B(x,r)) & \leq \mu_{\Sigma}\left(\bigcup_{i = 1}^k \Sigma_{f_n^{(i)}} \right)\\
        & \leq (C_{\deg})^{2C_{\diam}L_*} \cdot \frac{1}{\abs{E}^n}\\
        & \leq (C_{\deg})^{2C_{\diam}L_*} (2C_{\diam})^Q \cdot r^Q.
    \end{align*}
\end{proof}

\subsection{Approximate Self-similarity} \label{subsec:selfsimilarity}

Our spaces are constructed by a self-similar procedure. Thus, beyond a minor combinatorial issue, it is not too difficult to obtain approximate self-similarity; recall Definition \ref{def:selfsim}. Our proof utilizes a condition similar to that of the finite type condition of an iterated function system. This condition is also called a finite intersection condition.

\begin{definition}
    Suppose the iterated graph system satisfies the $L_*$-uniform scaling- and doubling properties.
    The \emph{fundamental neighbourhood} of $e \in E_n$ is the induced sub-graph $\cN(e) = (V(e),E(e)) \subseteq G_n$ so that
    \[
        V(e) = \left\{x \in V_n : d_{G_n}(x,e) \leq 2 \right\}.
    \]
    Notice that in above the distance is the un-scaled path metric.
    Furthermore, we define the  \emph{extended fundamental neighbourhood} of $e \in E_n$ to be the induced sub-graph $\widehat{\cN}(e) = \left(\widehat{V}(e),\widehat{E}(e)\right) \subseteq G_n$ so that
    \[
        \widehat{V}(e) = \left\{x \in V_n : d_{G_n}(x,e) \leq 10C_{\diam} \right\}.
    \]
    We say that the fundamental neighbourhoods $\cN(e), \cN(f)$ of $e = \{ x_1,y_1 \} \in E_n$ and $f = \{ x_2,y_2 \} \in E_m$ are \emph{equivalent} if there is a graph isomorphism $\beta : \widehat{\cN}(e) \to \widehat{\cN}(f)$ between the extended fundamental neighbourhoods satisfying
    \begin{enumerate}
        \item $\{ x_2,y_2 \} = \{ \beta(x_1),\beta(y_1) \}$ and
        \item $\phi_{x,\{ x,y \}} = \phi_{\beta(x),\{ \beta(x), \beta(y) \}}$ for all $\{ x,y \} \in \widehat{E}(e)$.
    \end{enumerate}
    We also say that $\beta$ is an isomorphism between the fundamental neighbourhoods $\cN(e)$ and $\cN(f)$.
    The set of equivalence classes of equivalent fundamental neighbourhoods is denoted as $\mathcal{N}$.
    \end{definition}
    The idea behind equivalent fundamental neighborhoods is their "similarity" on all scales. Indeed, if $e \in V_n$, we define the sub-graphs $\mathcal{N}_{k}(e) = (V_{k}(e),E_{k}(e))$ and $\widehat{\mathcal{N}}_{k}(e) = \left(\widehat{V}_{k}(e),\widehat{E}_{k}(e)\right)$ of $G_{n + k}$ where
    \[
        E_{k}(e) = \pi_{n + k,n}^{-1}(E(e)) \text{ and } V_{n}(e) = \bigcup_{f \in V(e)} f \cdot G_k
    \]
    and similarly
    \[
        \widehat{E}_{k}(e) = \pi_{n + k,n}^{-1}\left(\widehat{E}(e)\right) \text{ and } \widehat{V}_{k}(e) = \bigcup_{f \in \widehat{V}(e)} f \cdot G_k.
    \]
    Now given an isomorphism $\beta : \widehat{\cN}(e) \to \widehat{\cN}(f)$ between the fundamental neighbourhoods of $e$ and $f$, we define the mappings $\beta_k : \widehat{\cN}_k(e) \to \widehat{\cN}_k(f)$ inductively:
    \begin{enumerate}
        \item $\beta_0 = \beta$.
        \item If $\beta_k$ is defined then $\beta_{k + 1}([z,\{ x,y \}]) = [z,\{\beta_k(x),\beta_k(y)\}]$.
    \end{enumerate}
    The corresponding map between edges $\widehat{E}_k(e) \to \widehat{E}_k(f)$ is denoted as $\beta_k^E$.

    In the following lemma, recall that $d_{G_n}$ was the un-scaled path metric on $G_n$, and $d_n=L_*^{-n} d_{G_n}$.

\begin{lemma}\label{lemma: Lifting on fundamental nbhds}
    If $\beta : \widehat{\cN}(e) \to \widehat{\cN}(f)$ is an isomorphism between the fundamental neighbourhoods of $e \in E_n$ and $f \in E_m$ then the mappings $\beta_k$ are graph isomorphisms. These maps satisfy
    \[
       \pi_{m+k + 1} \circ \beta_{k+1}^E = \beta_k^E \circ \pi_{n+k + 1}
    \]
    for all $k \geq 0$. Furthermore, if the IGS satisfies the $L_*$-uniform scaling property and the doubling property, then the restriction $\beta_k : \cN_k(e) \to \cN_k(f)$ are isometries with respect to the (un-scaled) path metrics $d_{G_{n+k}}$ and $d_{G_{m + k}}$ respectively.
\end{lemma}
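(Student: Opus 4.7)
The plan is to induct on $k$, with $k = 0$ trivial since $\beta_0 = \beta$ is given as an isomorphism. For the inductive step, assume $\beta_k$ is a graph isomorphism of the extended neighborhoods preserving the labeling maps $\phi$. I would first verify that $\beta_{k+1}([z,\{x,y\}]) := [z,\{\beta_k(x),\beta_k(y)\}]$ is well-defined on equivalence classes. The only nontrivial identification is $(z_1,\{x_1,y_1\}) \sim (z_2,\{x_2,y_2\})$ arising from a shared vertex $v$ and some $a \in I$ with $z_i = \phi_{v,\{x_i,y_i\}}(a)$. Because the inductive hypothesis gives $\phi_{v,\{x_i,y_i\}} = \phi_{\beta_k(v),\{\beta_k(x_i),\beta_k(y_i)\}}$, the images share $\beta_k(v)$ as a common vertex and carry the same label $a$, so they are identified in $G_{m+k+1}$ as required. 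Bijectivity of $\beta_{k+1}$ follows from bijectivity of $\beta_k$ together with the fact that $\beta_k$ transports the equivalence data verbatim; edge-preservation and preservation of the labels $\phi$ are immediate from the recursive definition.

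The commutation $\pi_{m+k+1} \circ \beta_{k+1}^E = \beta_k^E \circ \pi_{n+k+1}$ is then a direct unwinding of definitions, since $\beta_{k+1}$ sends an edge in the tile $e' \cdot G_1$ lying over $e' \in \widehat{E}_k(e)$ to an edge in the tile $\beta_k^E(e') \cdot G_1$, and both project to $\beta_k^E(e')$.

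The key remaining point is the isometry claim for the restriction $\beta_k : \cN_k(e) \to \cN_k(f)$ with respect to the unscaled path metrics. Since $\beta_k$ is a graph isomorphism of $\widehat{\cN}_k(e)$ onto $\widehat{\cN}_k(f)$, every path in the former transports to a path of the same length in the latter, and vice versa. Therefore it suffices to prove that any shortest path in $G_{n+k}$ between two vertices of $V(\cN_k(e))$ can be taken to lie inside $\widehat{V}_k(e)$; combining this with the analogous statement for $\cN_k(f)$ then gives the two-sided inequality. To establish the containment, I would take a shortest path $\theta$ joining $x,y \in V(\cN_k(e))$ and apply the path decomposition of Proposition \ref{prop: Path decomposition} to the projection $\pi_{n+k,n}$. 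By the $L_*$-uniform scaling property and Lemma \ref{lemma: Distance of ancestors}, each edge traversed at level $n$ by the projected path contributes at least $L_*^k$ to the unscaled length of $\theta$; since both endpoints project into $V(e)$ (distance $\leq 2$ from $e$ at level $n$), a projected excursion of diameter $d$ forces $\len(\theta) \gtrsim d \cdot L_*^k$, while the straight alternative crossing $\widehat{\cN}(e)$ has length bounded by $\mathrm{diam}(\widehat{\cN}(e)) \cdot L_*^k \lesssim C_\diam L_*^k$. A comparison of these bounds shows that for a shortest path the projection cannot exit the ball of radius $10 C_\diam$ around $e$, so $\theta \subset \widehat{V}_k(e)$ as required.

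The main obstacle is precisely this last containment: the well-definedness and functoriality reduce to tracing equivalence relations and gluing data, but verifying that the buffer $10 C_\diam$ in the definition of $\widehat{\cN}(e)$ is large enough to absorb all shortest-path excursions requires the careful length comparison above, leveraging the uniform scaling property to convert $n$-level diameter bounds into $(n+k)$-level path-length bounds.
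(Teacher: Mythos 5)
Your proposal matches the paper's proof in both structure and substance: the induction establishing that $\beta_k$ preserves the labeling maps $\phi$ (and hence is a graph isomorphism), the commutation with projections by unwinding the recursive definition, and the isometry claim via showing that any shortest path between vertices of $\cN_k(e)$ stays inside $\widehat{\cN}_k(e)$, proved by comparing the $\lesssim C_\diam L_*^k$ upper bound on such a path's length (the paper pins this down as $8C_\diam L_*^k$ via \ref{corollary: Distance between edges (2)}) against the $\geq 10C_\diam L_*^k$ lower bound forced by an excursion, using the path decomposition and the $L_*$-uniform scaling property exactly as you describe.
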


\begin{proof}
    As in the proof of Proposition \ref{prop: Similarity maps}, it is a straightforward inductive argument that $\phi_{x,\{ x,y \}} = \phi_{\beta_k(x), \{ \beta_k(x),\beta_k(y) \}}$ for all $k \in \N$ and $\{ x,y \} \in E_k(e)$. This observation yields that $\beta_k$ is a graph isomorphism. Moreover, the commutativity with the projections follows from 
    \begin{align*}
        \pi_{m + k + 1}(\beta_{k + 1}^E(\{ [z_1,e],[z_2,e] \})) & = \pi_{m + k + 1}(\{ [z_1,\beta_{k}^E(e)],[z_2,\beta_{k}^E(e)] \})\\
        & = \beta_k^E(e)\\
        & = \beta_k^E(\pi_{n + k + 1}(\{ [z_1,e],[z_2,e] \})).
    \end{align*}
    
    To prove that its restritcions $\beta_k : \cN_k(e) \to \cN_k(f)$ are isometries, we will show that a shortest path in $G_{n + k}$ between vertices in $\cN_k(e)$ is contained in $\widehat{\cN}_k(e)$. By applying the same argument to the vertices in $\cN_k(f)$, we would be done.
    
    First, if $e' \in E(e)$, then $d_{G_n}(e,e') \leq 2$, which gives $\diam(\mathcal{N}_k(e),d_{G_{n+k}}) \leq 8 C_{\diam} L_*^{k}$ by \ref{corollary: Distance between edges (2)} and the fact that $d_{n+k}=L_*^{-(n+k)} d_{G_{n+k}}$.
    Next, suppose $\theta$ is a path in $G_{n + k}$ containing a vertex $x \in \mathcal{N}_k(e)$ with $\diam(\theta) \leq 8 C_{\diam} \cdot L_*^{k}$. We will show that indeed $\theta$ is contained in $\widehat{\cN}(e)$. Suppose not. Then $\theta$ contains a vertex in $e' \cdot G_k$ where $d_{G_n}(e,e') \geq 10C_{\diam}$. This, by Proposition \ref{prop: Path decomposition}, gives that there are $y \in e, y' \in e'$ and a sub-path $\theta'$ from $\pi_{n+k,n}^{-1}(y)$ to $\pi_{n + k,n}^{-1}(y')$. Hence
    \[
        \len(\theta) \geq \len(\theta') \stackrel{\ref*{lemma: Distance of ancestors (1)}}{\geq} L_{*}^{n + k} \cdot d_n(y,y') = L_*^{k} \cdot d_{G_n}(y,y') \geq 10C_{\diam} \cdot L_*^k > \diam(\theta).
    \]
    This yields the contradiction so we are done.
\end{proof}

The previous lemma has the following immediate corollary. A map $h:(X,d_X)\to (Y,d_Y)$ between metric spaces is called a homothety if there exists a constant $s$ so that $d_Y(h(x),h(y))=s\cdot d_X(x,y)$ for all $x,y\in X$.

\begin{corollary}\label{corollary: alpha_infty}
    Let $\beta : \widehat{\cN}(e) \to \widehat{\cN}(f)$ be an isomorphism between the fundamental neighbourhoods of $e \in E_n$ and $f \in E_m$.
    Then the mapping
    \[
        \beta_{\infty} : \bigcup_{e' \in \cN(e)} X_{e'} \to \bigcup_{f' \in \cN(f)} X_{f'}, \quad [(e_i)_{i = 1}^{\infty}] \mapsto [(f_i)_{i = 1}^{\infty}],
    \]
     where
    \[
        f_i = 
        \begin{cases}
            \beta_{k}^E(e_{n + k}) & \text{ if } i = m + k \text{ for } k \geq 0,\\
            \pi_{m,i}(\beta(e_n)) & \text{ if } i < m
        \end{cases}
    \]
    is a homothety with the constant $L_*^{n - m}$.
\end{corollary}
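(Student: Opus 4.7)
The plan is to verify two things: that $\beta_\infty$ is a well-defined map on the stated domain, and then that it satisfies the homothety identity. Both reduce to assembling pieces already established in Lemma \ref{lemma: Lifting on fundamental nbhds}.

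For well-definedness, I first need to show $(f_i)_{i=1}^\infty \in \Sigma$, i.e. $\pi_{i+1}(f_{i+1}) = f_i$ for all $i \geq 1$. I would split into three cases. When $i = m+k$ with $k \geq 0$, this is exactly the identity $\pi_{m+k+1}\circ\beta_{k+1}^E = \beta_k^E\circ\pi_{n+k+1}$ applied to $e_{n+k+1}$, which Lemma \ref{lemma: Lifting on fundamental nbhds} provides. When $i+1 = m$, the identity $\pi_m(f_m) = \pi_m(\beta(e_n)) = f_{m-1}$ holds by the very definition of $f_{m-1}$. For $i+1 < m$, it is immediate from $\pi_{i+1}\circ\pi_{m,i+1} = \pi_{m,i}$. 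Next, I would verify that $\beta_\infty$ descends through the equivalence relation on $\Sigma$: if $(e_i)\sim(e_i')$, then $e_{n+k}\cap e_{n+k}'\neq\emptyset$ for every $k$, and since $\beta_k$ is a graph isomorphism on $\widehat{\cN}_k(e)$ (and the shared vertex of $e_{n+k}, e_{n+k}'$ lies in this neighbourhood by the graph-distance argument used in Lemma \ref{lemma: Lifting on fundamental nbhds}), the images $\beta_k^E(e_{n+k})$ and $\beta_k^E(e_{n+k}')$ share a vertex as well, giving $(f_i)\sim(f_i')$. The case $i<m$ is handled by composing with projections. Finally, the target is checked to lie in $\bigcup_{f'\in\cN(f)} X_{f'}$ because $f_m = \beta(e_n)\in\cN(f)$.

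For the homothety identity, fix representatives $x = [(e_i)]$, $y = [(e_i')]$ with $e_n, e_n' \in \cN(e)$, and set $(f_i) = \beta_\infty((e_i))$, $(f_i') = \beta_\infty((e_i'))$. For every $k \geq 0$ the edges $e_{n+k}, e_{n+k}'$ lie in $\cN_k(e)$, and Lemma \ref{lemma: Lifting on fundamental nbhds} asserts that $\beta_k : \cN_k(e) \to \cN_k(f)$ is an isometry for the un-scaled path metrics $d_{G_{n+k}}$ and $d_{G_{m+k}}$ (the proof there verified that geodesics between vertices of $\cN_k(e)$ stay inside $\widehat{\cN}_k(e)$, where the isomorphism is defined). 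Therefore
\begin{equation*}
 d_{G_{m+k}}\bigl(\beta_k^E(e_{n+k}),\beta_k^E(e_{n+k}')\bigr) = d_{G_{n+k}}(e_{n+k},e_{n+k}'),
\end{equation*}
and rescaling by the factors $L_*^{-(m+k)}$ and $L_*^{-(n+k)}$ yields
\begin{equation*}
 d_{m+k}(f_{m+k},f_{m+k}') = L_*^{\,n-m}\, d_{n+k}(e_{n+k},e_{n+k}').
\end{equation*}
Taking $k\to\infty$ and invoking the defining formula \eqref{eq:dxdef} for $d_X$ gives $d_X(\beta_\infty(x),\beta_\infty(y)) = L_*^{\,n-m}\, d_X(x,y)$.

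The main obstacle is bookkeeping rather than substance: making sure the indices $m+k$ and $n+k$ line up correctly in all three cases of the well-definedness check, and that the edges we hand to $\beta_k$ actually lie in the (extended) fundamental neighbourhood where the graph isomorphism makes sense. Once Lemma \ref{lemma: Lifting on fundamental nbhds} is in hand, the metric computation is essentially a single line; the only real content is that the isomorphism at each level is an isometry for the \emph{whole} path metric (not merely the restricted one), which has already been proved.
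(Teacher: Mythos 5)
Your proof is correct and follows the same route as the paper's: the key step in both is that Lemma \ref{lemma: Lifting on fundamental nbhds} makes each $\beta_k$ an isometry for the un-scaled path metrics, so the edge distances satisfy $d_{G_{m+k}}(\beta_k^E(e_{n+k}),\beta_k^E(e_{n+k}')) = d_{G_{n+k}}(e_{n+k},e_{n+k}')$, and rescaling by $L_*^{-(m+k)}$ versus $L_*^{-(n+k)}$ and letting $k\to\infty$ gives the homothety constant $L_*^{n-m}$. The paper's proof is precisely this one-line computation. Your version adds a genuinely useful check that the paper leaves implicit: that $(f_i)_{i=1}^\infty$ actually lies in $\Sigma$ (via the commutation $\pi_{m+k+1}\circ\beta_{k+1}^E = \beta_k^E\circ\pi_{n+k+1}$) and that $\beta_\infty$ descends through $\sim$ on $\Sigma$ (via $\beta_k$ being a graph isomorphism, hence preserving edge incidence). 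Neither of those checks is hard, but they are where the corollary could silently fail, so spelling them out is good practice.
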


\begin{proof}
    Let $\left(e_i\right)_{i = 1}^\infty, \left(e_i'\right)_{i = 1}^\infty \in \Sigma$ so that $e_n,e_n' \in \cN(e)$.
    By Lemma \ref{lemma: Lifting on fundamental nbhds} we have
    \[
        d_{n + k}(e_{n+k},e_{n+k}')\cdot L_*^{n+k} = d_{m + k}(\beta_k^E(e_{n+k}),\beta(e_{n+k}'))\cdot L_*^{m+k}
    \]
    which proves that $\beta_\infty$ is a homothety.
\end{proof}

\begin{proposition}
    If the iterated graph system satisfies the $L_*$-uniform scaling- and doubling properties, then the limit space $X$ is approximately self-similar.
\end{proposition}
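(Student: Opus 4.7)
The plan is to combine the finiteness of the set $\mathcal{N}$ of equivalence classes of fundamental neighbourhoods with the homotheties produced by Corollary~\ref{corollary: alpha_infty}. Given $(x,r)$, I would pick a scale $n$ for which $L_*^{-n}$ is comparable to $r$, realize $B(x,r)$ inside the fundamental neighbourhood $\cN(e)$ of some $e \in E_n$ with $x \in X_e$, and then push forward via the homothety $\beta_\infty$ associated to an isomorphism of $\widehat{\cN}(e)$ onto a representative of its class at a bounded scale $m \leq M$. The scaling factor $L_*^{n-m}$ will be comparable to $1/r$ up to a uniformly bounded multiplicative error, which yields the desired biLipschitz constant.

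The first step is to verify that $\mathcal{N}$ is finite. By Lemma~\ref{lemma: Doubling}, $\degr(G_n) = C_{\deg}$ for all $n$, so each extended fundamental neighbourhood $\widehat{\cN}(e)$ has at most $C_{\deg}^{10 C_{\diam}}$ vertices. Since the edge labels $\phi_{v,e}$ are drawn from the finite set of injections $I \to V_1$, there are only finitely many labeled graphs of this bounded size, hence only finitely many equivalence classes. For each class I would fix a representative $\widehat{e} \in E_{m(\widehat{e})}$ and set $M := \max m(\widehat{e})$ over the (finite) list of representatives.

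The second step is the scale selection. Given $x = [(e_i)_{i=1}^\infty] \in X$ and $r \in (0,\diam(X)]$, I would choose the unique integer $n \geq 1$ with $L_*^{-n-1} < r/2 \leq L_*^{-n}$ (for $r$ close to $\diam(X)$ one simply sets $n=1$, at the cost of a bounded multiplicative factor). Set $e := e_n$, so $x \in X_e$. For any $y = [(f_i)_{i=1}^\infty] \in B(x,r)$, \ref{lemma: properties of d_X (1)} gives
\[
    d_n(e,f_n) \leq d_X(x,y) < r \leq 2 L_*^{-n},
\]
so $d_{G_n}(e, f_n) \leq 1$. It follows that both endpoints of $f_n$ lie in $V(e)$, hence $f_n \in \cN(e)$, and therefore $B(x,r) \subseteq \bigcup_{e' \in \cN(e)} X_{e'}$.

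The third step assembles the map. Let $\widehat{e} \in E_m$, $m \leq M$, be the chosen representative of the class of $\cN(e)$, and fix an isomorphism $\beta : \widehat{\cN}(e) \to \widehat{\cN}(\widehat{e})$. By Corollary~\ref{corollary: alpha_infty}, $\beta$ extends to a homothety $\beta_\infty$ with scaling constant $L_*^{n-m}$ from $\bigcup_{e' \in \cN(e)} X_{e'}$ onto $\bigcup_{f' \in \cN(\widehat{e})} X_{f'}$. Set $f_{x,r} := \beta_\infty|_{B(x,r)}$ and $U_{x,r} := X$. For $y,z \in B(x,r)$,
\[
    d_X\bigl(f_{x,r}(y), f_{x,r}(z)\bigr) = \bigl(L_*^{n-m} r\bigr) \cdot \frac{d_X(y,z)}{r}.
\]
The choice of $n$ yields $L_*^n r \in (2 L_*^{-1}, 2]$, while $m \in [1,M]$ gives $L_*^{-m} \in [L_*^{-M}, L_*^{-1}]$, so $L_*^{n-m} r \in [2 L_*^{-M-1}, 2 L_*^{-1}]$. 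This exhibits $f_{x,r}$ as $L$-biLipschitz with $L := L_*^{M+1}$, independent of $x$ and $r$. The only substantive point is the finiteness of $\mathcal{N}$; the rest is bookkeeping the constants in the choice of $n$, since the structural content is already packaged into Corollary~\ref{corollary: alpha_infty}.
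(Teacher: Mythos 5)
Your argument follows the paper's proof essentially step for step: finiteness of the set $\cN$ of equivalence classes (from doubling), choice of a scale $n$ with $L_*^{-n}\asymp r$, inclusion of $B(x,r)$ in $\bigcup_{e'\in\cN(e_n)}X_{e'}$, and the homothety $\beta_\infty$ from Corollary~\ref{corollary: alpha_infty} pushed onto a bounded-scale representative. The only place where you deviate from the paper is the edge case, and there your patch does not quite work: for $r$ near $\diam(X)$ you propose to ``simply set $n=1$,'' but your own inclusion argument relies on $d_X(x,y)<r\le 2L_*^{-n}$ to conclude $d_{G_n}(e_n,f_n)\le 1$, and with $n=1$ and $r$ comparable to $\diam(X)$ (which can be as large as $2C_{\diam}$) this bound fails, so $B(x,r)$ need not lie inside $\bigcup_{e'\in\cN(e_1)}X_{e'}$ and $\beta_\infty$ is not defined on all of $B(x,r)$. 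The fix is the one the paper uses: for $r$ above a fixed threshold (e.g.\ $r\ge L_*^{-(m_*+1)}$), take $f_{x,r}=\id_{B(x,r)}$ and $U_{x,r}=B(x,r)$; then the required biLipschitz constant of $\id:(B(x,r),d/r)\to(B(x,r),d)$ is just $\max(r,1/r)$, which is uniformly bounded on that range of $r$. With that replacement your proof is correct, and the remainder (your quantitative bound $L_*^{n-m}r\in[2L_*^{-M-1},2L_*^{-1}]$ and the choice $L=L_*^{M+1}$) is fine.
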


\begin{proof}
    It follows from the doubling property of the IGS that the set $\cN$ of equivalence classes of equivalent fundamental neighbourhoods is finite.
    In particular, there is $m_* \in \N$ so that for all $n \in \N$ and $e \in E_n$, the fundamental neighbourhood $\cN(e)$ is equivalent to $\cN(f)$ for some $f \in E_m$ so that $m \leq m_*$.

    Let $x = [(e_i)_{i = 1}^\infty] \in X, r > 0$ and set $B = B(x,r)$. If $r \geq L_*^{-(m_* + 1)}$ then we choose the identity $\id_{B}:  B(x,r) \to B(x,r)$ as our scaling map. Otherwise, suppose $L_*^{-(n + 1)} < r \leq L_*^{-n}$ where $n > m_*$. Let $f \in E_m$ so that $m \leq m_*$ and $\beta : \cN(e_n) \to \cN(f)$ an isomorphism between fundamental neighbourhoods of $e_n$ and $f$. If $\beta_{\infty}$ is as in Corollary \ref{corollary: alpha_infty} then, by \ref{lemma: properties of d_X (1)}-\ref{lemma: properties of d_X (2)}, we have $B(x,r) \subseteq \bigcup_{e' \in \cN(e_n)} X_{e'}$ and  $\beta_{\infty}(B(x,r)) = B(\beta_\infty(x),L_*^{n - m} \cdot r)$. Furthermore, $\beta_\infty$ is a homothety with constant $s = L_*^{n - m}$, which is comparable to $r^{-1}$ by
    \[
    L_*^{-(m_*+1)} \cdot r^{-1} \leq s \leq r^{-1}.
    \]
    Hence approximate self-similarity follows and $\beta_\infty$ is the desired scaling map.
\end{proof}

\begin{definition}
    The \emph{approximation} of the limit space $X$ \emph{at scale} $m \in \N$ is the incidence graph $\mathbb{G}_m = (\{ X_v \}_{v \in V_m}, \mathbb{E}_m)$, where the covering of $X$ consists of the open subsets
    \[
       X_v := \intr \{ [(e_i)_{i = 1}^\infty] : e_m \text{ contains } v \}
    \]
    for all $v \in V_m$.
\end{definition}

\begin{proposition}\label{prop: Graph approximations}
    For all $m \in \N$ and $v \in V_m$ there are points $z_v \in X_v$ so that
    \[
        B(z_v, L_*^{-m}) \subseteq X_v \subseteq B(z_v, C_{\diam} \cdot L_*^{-m})
    \]
    and $d_X(z_v,z_u) = d_m(v,u)$ for all $v,u \in V_m$. In particular, $\mathbb{G}_m$ is an $\alpha$-approxi\-mation of $X$ at scale $m$ for $\alpha = C_{\diam}$. Moreover, for distinct $v,u \in V_m$, we have $X_v \cap X_u \neq \emptyset$ if and only if $\{ v,u \} \in E_m$.
\end{proposition}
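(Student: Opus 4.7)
The plan is to take $z_v := i_{G_m}(v)$, where $i_{G_m} : V_m \to X$ is the embedding constructed in the proof of Proposition \ref{prop: GH-convergence}. Explicitly, $z_v = [(e_i^v)_{i=1}^\infty]$ with $e_m^v \in E_m$ a chosen edge containing $v$ and, for $i > m$, $e_i^v$ an edge of $G_i$ containing a consistently chosen ancestor $v_i$ of $v$. The identity $d_X(z_v,z_u) = d_m(v,u)$ is then immediate from what was verified inside the proof of Proposition \ref{prop: GH-convergence}, and this identity also gives the required disjointness of the inner balls in the $\alpha$-approximation condition since distinct $v,u \in V_m$ satisfy $d_m(v,u) \geq L_*^{-m}$. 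For the upper inclusion $X_v \subseteq B(z_v, C_{\diam} L_*^{-m})$, any $y = [(f_i)] \in X_v$ has a representative with $v \in f_m$, so that $e_m^v \cap f_m \ni v$, and \ref{lemma: properties of d_X (2)} delivers the desired distance bound (possibly up to a factor of $2$, which one could absorb in $C_{\diam}$ or refine with a tighter in-tile analysis).

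The main work is the lower inclusion $B(z_v, L_*^{-m}) \subseteq X_v$, and I would prove the stronger statement that every representative $(f_i)$ of every $y \in B(z_v, L_*^{-m})$ satisfies $v \in f_m$. Once this is shown, the open ball is contained in $\{[(h_i)] : v \in h_m\}$ and hence, by openness, in its interior $X_v$. Assuming toward a contradiction that some representative has $v \notin f_m$, there are two cases. Either $e_m^v$ and $f_m$ are disjoint, in which case the fact that $d_m$ takes values in nonnegative integer multiples of $L_*^{-m}$ forces $d_m(e_m^v,f_m) \geq L_*^{-m}$ and hence, by \ref{lemma: properties of d_X (1)}, $d_X(z_v,y) \geq L_*^{-m}$, contradicting the hypothesis. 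Or $e_m^v$ and $f_m$ share only the other endpoint $w' \neq v$ of $e_m^v$. In this latter case, Proposition \ref{prop: Path decomposition} shows that any path in $G_n$ from $e_n^v$ to $f_n$ must enter the tile $f_m \cdot G_{n-m}$ through an ancestor of $w'$; combined with \ref{lemma: Distance of ancestors (1)}, which gives that the ancestor $v_n \in e_n^v$ of $v$ is at distance exactly $d_m(v,w') = L_*^{-m}$ from any ancestor of $w'$, one obtains $d_n(e_n^v,f_n) \geq L_*^{-m} - L_*^{-n}$, and passing to the limit in $n$ again yields the contradiction $d_X(z_v,y) \geq L_*^{-m}$. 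The $L_*$-uniform scaling property is the essential ingredient here, and this second case is the step I expect to be the main obstacle.

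For the equivalence $X_v \cap X_u \neq \emptyset \iff \{v,u\} \in E_m$, the forward direction is handled by choosing a vertex $w \in V_n$, with $n-m$ large enough that $G_{n-m}$ has an edge with no endpoint in either boundary gluing set, satisfying $\pi_{n,m}(w) = \{v,u\}$: by the lower-bound argument applied at scale $n$, $X_w$ is a non-empty open set, and for every $y = [(h_i)] \in X_w$ the relation $w \in h_n$ combined with \ref{SM3} forces $h_m = \pi_{n,m}(h_n) = \{v,u\}$, so $X_w \subseteq X_v \cap X_u$. For the converse, suppose $\{v,u\} \notin E_m$ yet $y \in X_v \cap X_u$; representatives of $y$ with $v \in f_m$ and $u \in g_m$ must satisfy $f_m \neq g_m$ and share a vertex $w \notin \{v,u\}$, so $y$ lies on the $w$-side boundary of the cell $\chi(\Sigma_{\{u,w\}})$. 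Interior points $z_{w'}$ for $w' \in V_n$ deep inside $\{u,w\} \cdot G_{n-m}$ and close to $w$ accumulate at $y$, but by the lower-bound argument no representative of such a $z_{w'}$ contains $v$, contradicting the openness of $X_v$ at $y$.
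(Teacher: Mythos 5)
Your plan mirrors the paper's own proof: set $z_v = i_{G_m}(v)$ and deduce the three claims from the distance lemmas of Section 3. The inner-ball inclusion, with the case split according to whether $f_m$ is disjoint from $e_m^v$ or meets it only at the endpoint $w'\neq v$, is handled correctly (and in more detail than the paper's terse citation), as is the final equivalence $X_v\cap X_u\neq\emptyset\iff\{v,u\}\in E_m$. The one point to tighten is the outer inclusion: invoking \ref{lemma: properties of d_X (2)} yields only $X_v\subseteq B(z_v,2C_{\diam}L_*^{-m})$, and since $C_{\diam}=\diam(G_1)$ is a fixed quantity rather than a free parameter, you cannot simply fold the extra factor of $2$ into it. What the paper does, and what you allude to with a ``tighter in-tile analysis,'' is apply \ref{lemma: Distance of ancestors (3)} directly: for $y\in X_v$ with a representative $(f_i)$, $v\in f_m$, and any $x'\in f_n\subset f_m\cdot G_{n-m}$, that lemma applied to the chosen ancestor $v_n\in e_n^v$ of $v$ gives $d_n(x',v_n)\leq C_{\diam}L_*^{-m}$, so $d_n(e_n^v,f_n)\leq C_{\diam}L_*^{-m}$ for all $n$ and hence $d_X(z_v,y)\leq C_{\diam}L_*^{-m}$ without the factor of $2$. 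As a side remark, your converse direction of the last equivalence can be shortened: first establish $X_v = X\setminus\bigcup_{e\in E_m,\, v\notin e}X_e$ (the right side is open and contained in $\{[(h_i)]:v\in h_m\}$, hence in $X_v$, while your accumulating-point argument is exactly the reverse inclusion), after which $y\in X_v\cap X_u$ forces every representative $(h_i)$ of $y$ to satisfy $v,u\in h_m$, so $h_m=\{v,u\}\in E_m$ at once.
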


\begin{proof}
    Recall the functions $i_{G_m}$ defined in Proposition \ref{prop: GH-convergence}. We define $z_v = i_{G_m}(v)$. It follows from \ref{lemma: properties of d_X (1)} and \ref{lemma: Distance of ancestors (1)} that $B(z_v, L_*^{-m}) \subseteq X_v$ and $d_X(z_v,z_u) = d_m(v,u)$ for all $v,u \in V_n$. The other inclusion follows from \ref{lemma: Distance of ancestors (3)} and the last assertion from \ref{lemma: properties of d_X (1)}.
\end{proof}

As the graphs $\mathbb{G}_m$ and $G_m$ are naturally isomorphic, we hereafter identify the replacement graphs $G_m$ with the incidence graphs $\mathbb{G}_m$.

\begin{remark}\label{rmk:topdim}
    The collection $\{ X_v \}_{v \in V_m}$ is an open cover of $X$ with sets with diameter less than $2C_\diam L_*^{-m}$ (by \ref{lemma: properties of d_X (2)}). The nerve complex is given by the graph $G_m$, and thus $X$ has topological dimension $1$, \cite[p. 126]{topdim}. Indeed, by a slight modification of $X_v$, it is direct to see that $X$ has Assouad-Nagata dimension equal to $1$; see e.g. \cite[Definition 1.1.]{LS}.
\end{remark}

\section{Modulus on graphs}\label{sec:modulus}

\subsection{Edge- and vertex modulus}

Ultimately, we will be interested in establishing the combinatorial Loewner property for the limit spaces $X$. We already defined in Subsection \ref{subsec:moduli} a notion combinatorial moduli for $X$ with respect to an $\alpha$-approximations. In Proposition \ref{prop: Graph approximations} we saw that $G_m$ can be identified with such an $\alpha$-approximation. For computational reasons, it will be useful to consider two additional notions of modulus on $G_m$: the edge modulus and the vertex modulus. We will shortly see that these notions of moduli are comparable. 
\begin{definition}\label{def:edgemod}
Let $p\in [1,\infty)$ and $G = (V,E)$ a graph. If $\theta = [v_1,\dots,v_n]$ is a non-constant path in $G$ and $\rho : E \to \R_{\geq 0}$, we write
\[
    L_\rho(\theta) := \sum_{i = 1}^{n - 1} \rho(\{ v_i,v_{i + 1} \}) \text{ and } \cM_p(\rho) := \sum_{e \in E} \rho(e)^p.
\]
Given a family of paths $\Theta$ in $G$, we say that $\rho$ is \emph{$\Theta$-admissible} if $L_\rho(\theta) \geq 1$ for all $\theta \in \Theta$. Furthermore, we denote the \emph{(edge) modulus} of $\Theta$ by
\begin{align*}
\Mod_p(\Theta, G) := \inf_{\rho} \cM_p(\rho)
\end{align*}
where the infimum is over all $\Theta$-admissible densities $\rho : E \to \R_{\geq 0}$.
For any density $\rho : E \to \R_{\geq 0}$ the \emph{support} of $\rho$ is $\supp(\rho) := \{ e \in E : \rho(e) \neq 0 \}$.
\end{definition}

In the following definition, we say that a subset $\theta\subset G$ is connected, if its induced sub-graph is connected. Most of the time we will perform calculations for the edge modulus. However, in relating the estimates to the modulus of Bourdon and Kleiner, we need to momentarily also discuss vertex modulus.

\begin{definition}\label{def:vertexmod}
    Let $p\in [1,\infty)$ and $G = (V,E)$ a graph. If $\theta \subseteq V$ is a non-empty connected set in $G$ and $\rho : V \to \R_{\geq 0}$, we write
\[
    L_\rho(\theta) := \sum_{v \in \theta} \rho(v) \text{ and } \cM_p(\rho) := \sum_{v \in V} \rho(v)^p.
\]
Given a family of paths $\Theta$ in $G$, we say that $\rho$ is \emph{$\Theta$-admissible} if $L_\rho(\theta) \geq 1$ for all $\theta \in \Theta$. Furthermore, we denote the \emph{(vertex) modulus} of $\Theta$ by
\begin{align*}
\Mod_p^V(\Theta, G) := \inf_{\rho} \cM_p(\rho)
\end{align*}
where the infimum is over all $\Theta$-admissible densities $\rho : V \to \R_{\geq 0}$.
\end{definition}

Given a density $\rho^E : E \to \R_{\geq 0}$, one can easily define a density $\rho^V : \to \R_{\geq 0}$ by considering the density of the edges adjacent to a given vertex and vice versa. This standard argument gives the comparability of edge- and vertex modulus. Given a path $\theta=[v_1,\dots, v_n]$ in $G$, we call $\{v_1,\dots, v_n\}$ the corresponding set in $V$. For simple paths $\theta$ we use $\theta$ also to denote the corresponding set.

\begin{lemma}\label{lemma: Edge- and Vertex modulus}
    Let $G = (V,E)$ be a graph and $\Theta$ a non-empty family of non-constant paths so that each path in $\Theta$ contains a simple sub-path in $\Theta$, and let $\hat{\Theta}$ be the corresponding subsets for each path. Then
    \begin{equation}\label{eq: Edge- and Vertex modulus}
        C^{-1}\cdot \Mod_p(\Theta, G) \leq \Mod_p^V(\hat{\Theta}, G) \leq C^{-1} \cdot \Mod_p(\Theta, G)
    \end{equation}
    where $C = C(p,\deg(G))$. In particular, \eqref{eq: Edge- and Vertex modulus} holds if $\Theta = \Theta(A,B)$ for non-empty disjoint sets $A,B \subseteq V$.
\end{lemma}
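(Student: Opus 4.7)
The plan is to use the standard averaging correspondence between edge-densities and vertex-densities, with all constants coming from $\deg(G)$ and the elementary inequality $(a_1+\cdots+a_k)^p \leq k^{p-1}(a_1^p+\cdots+a_k^p)$.

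First I would treat the inequality $\Mod_p^V(\hat\Theta,G) \leq C\cdot \Mod_p(\Theta,G)$. Given a $\Theta$-admissible edge-density $\rho^E$, the natural candidate is $\rho^V(v):=\sum_{e\ni v}\rho^E(e)$. For any $S=\hat\theta\in\hat\Theta$, by hypothesis there is a simple sub-path $\theta'=[v_1,\dots,v_n]\in\Theta$ of $\theta$; then $\hat{\theta'}\subseteq S$, and since $\theta'$ is simple each edge $\{v_i,v_{i+1}\}$ has both endpoints in $\hat{\theta'}$, so
\[
L_{\rho^V}(S) \;\geq\; \sum_{v\in\hat{\theta'}}\sum_{e\ni v}\rho^E(e) \;\geq\; \sum_{i=1}^{n-1}\rho^E(\{v_i,v_{i+1}\}) \;=\; L_{\rho^E}(\theta') \;\geq\; 1.
\]
The mass bound then comes from $\bigl(\sum_{e\ni v}\rho^E(e)\bigr)^p\leq\deg(G)^{p-1}\sum_{e\ni v}\rho^E(e)^p$ combined with a handshake count, yielding $\cM_p(\rho^V)\leq 2\,\deg(G)^{p-1}\cM_p(\rho^E)$.

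For the reverse inequality, given a $\hat\Theta$-admissible vertex-density $\rho^V$ I would set $\rho^E(\{v,u\}):=\rho^V(v)+\rho^V(u)$. For any $\theta\in\Theta$, passing to a simple sub-path $\theta'=[v_1,\dots,v_n]\in\Theta$ and expanding the telescoping sum gives
\[
L_{\rho^E}(\theta)\;\geq\;L_{\rho^E}(\theta')\;=\;\rho^V(v_1)+\rho^V(v_n)+2\sum_{i=2}^{n-1}\rho^V(v_i)\;\geq\;L_{\rho^V}(\hat{\theta'})\;\geq\;1,
\]
using that $\hat{\theta'}\in\hat\Theta$. For the mass, $(\rho^V(v)+\rho^V(u))^p\leq 2^{p-1}(\rho^V(v)^p+\rho^V(u)^p)$ together with $\sum_e[\rho^V(v_e)^p+\rho^V(u_e)^p]=\sum_v\deg(v)\rho^V(v)^p$ gives $\cM_p(\rho^E)\leq 2^{p-1}\deg(G)\,\cM_p(\rho^V)$.

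Taking infima over admissible densities gives the claimed comparability with $C=C(p,\deg(G))$. For the special case $\Theta=\Theta(A,B)$, the hypothesis on simple sub-paths is automatic: truncate any path at its first entry into $B$ and then erase loops to produce a simple sub-path still in $\Theta(A,B)$, whose vertex set lies in $\widehat{\Theta(A,B)}$. I expect no substantive obstacle here; the only point requiring care is the reduction to simple sub-paths, so that each vertex on the path appears exactly once and the double-counting in $\sum_{v\in\hat{\theta'}}\sum_{e\ni v}\rho^E(e)$ can be dominated by (or dominate) the edge-length in the correct direction.
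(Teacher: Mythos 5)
Your proposal is correct and follows essentially the same route as the paper: the paper defines $\widehat{\rho}^V(v)=\max_{e\ni v}\rho^E(e)$ and $\widehat{\rho}^E(\{v,u\})=2\max\{\rho^V(v),\rho^V(u)\}$ where you use $\sum_{e\ni v}\rho^E(e)$ and $\rho^V(v)+\rho^V(u)$, but since $\max$ and $\sum$ differ only by a factor bounded by $\deg(G)$ (and $\max$ vs.\ sum of two terms differ by a factor of $2$), these are equivalent up to the allowed $C(p,\deg(G))$. The admissibility argument via passing to a simple sub-path and the mass estimates via the handshake lemma are the same as in the paper.
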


\begin{proof}
    Let $\rho^E : E \to \R_{\geq 0}$ be $\Theta$-admissible.
    Define $\widehat{\rho}^V: V \to \R_{\geq 0}$ by
    \[
        \widehat{\rho}^V(v) = \max_{\substack{e \in E
        \\ v \in e}} \rho^E(e).
    \]
    Let $\theta \in \hat{\Theta}$ and let $\theta' = \left[v_1,\dots, v_n \right] \in \Theta$ be a simple sub-path of $\theta$.
    Then
    \[
        \sum_{v \in \theta} \widehat{\rho}^V(v) \geq \sum_{v \in \theta'}\widehat{\rho}^V(v) = \sum_{i = 1}^n \widehat{\rho}^V(v_i) \geq
        \sum_{i = 1}^{n-1} \rho^E( \{ v_i,v_{i+1} \} ) \geq 1
    \]
    and
    \[
        \sum_{v \in V} \widehat{\rho}^V(v)^p \leq \sum_{v \in V} \sum_{\substack{e \in E \\ v \in e}} \rho^E(e)^p \leq 2 \cdot \sum_{e \in E} \rho^E(e)^p.
    \]
    Then let $\rho^V : V \to \R_{\geq 0}$ be $\hat{\Theta}$-admissible.
    Define
    \[
        \widehat{\rho}^E(\{ v,u\}) = 2 \cdot \max\{ \rho^V(v),\rho^V(u) \}. 
    \]
    Let $\theta \in \Theta$, and let $\hat{\theta}$ be the corresponding subset.
    Then
    \[
        \sum_{i = 1}^{n-1} \widehat{\rho}^E(\{v_i, v_{i+1}\}) \geq \sum_{i = 1}^n \rho^V(v_i) \geq \sum_{v \in \hat{\theta}} \rho^V(v) \geq 1 
    \]
    and
    \[
     \sum_{\{ v,u \} \in E} \widehat{\rho}^E(\{ v,u \})^p \leq 2^p \cdot \sum_{\{ v,u \} \in E} \left(\rho^V(v)^p + \rho^V(u)^p\right)
        \leq 2^p \deg(G) \cdot \sum_{x \in V} \rho^V(x)^p.
    \]
\end{proof}

Roughly dual to the notion of edge modulus is the notion of a flow. We will use the exponent $p\in[1,\infty)$ for modulus, and the exponent $q\in[1,\infty)$ for flows. Here, $q$ is called \emph{the dual exponent} of $p$ if $\frac{1}{p}+\frac{1}{q}=1$.  In all that follows, $q$ will always denote this dual exponent. \emph{A flow}  from $A\subset V$ to $B\subset V$ is a map $\cF:V\times V\to \R$ for which 
\begin{enumerate}
\item $\cF(x,y)=-\cF(y,x)$ and $\cF(x,y)=0$ unless $\{x,y\}\in E$;
\item $\sum_{\{x,y\}\in E}\cF(x,y)=0$ unless $x\in A\cup B$.
\end{enumerate}
If $\cF$  is a flow from $A$ to $B$ we also write that $\cF$ is a flow $A\to B$.
The \emph{support} of a flow $\cF$ is $\supp(\cF) := \{ \{ x,y \} \in E : \cF(x,y) \neq 0 \}$.
The \emph{total flow} is given by
\[
I(\cF):=\sum_{x\in A}\sum_{\{x,y\}\in E} \cF(x,y).
\]
A \emph{unit flow} from $\cF$ is one for which $I(\cF)=1$.
We also write the amount of flow from $x$ as
\[
\divr \cF(x) :=\sum_{\{x,y\}\in E} \cF(x,y).
\]
If $e=\{x,y\}\in E$, we also write $|\cF(e)|=|\cF(x,y)|$, since this does not depend on the orientation of the edge.
Define the \emph{energy} of the flow by 
\[
\mathcal{E}_q(\cF):= \sum_{e\in E} |\cF(e)|^q.
\]
\begin{definition} Let $q\in [1,\infty)$. If $A,B\subset V$, then let 
\[
\mathcal{E}_q(A,B,G) := \inf\{\mathcal{E}_q(\cF) : \cF \text{ is a unit flow from } A \text{ to } B\}.
\]
\end{definition}

Both optimal flows and optimal admissible functions are unique. We will use this lemma frequently without additional mention.
\begin{lemma} Let $A,B\subseteq G$ be disjoint and non-empty. If $p>1$, and $q$ is the dual exponent of $p$, then there is a unique $\mathcal{E}_q$-minimizing flow $\cF$ for $\mathcal{E}_q(A,B,G)$. Further for all collections $\Theta$ of paths (or connected subsets) in Definition \ref{def:edgemod} (or in \ref{def:vertexmod}) there is a unique optimal admissible function $\rho$.
\end{lemma}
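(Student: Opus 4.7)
The plan is to reduce both uniqueness claims to one underlying fact: strict convexity of $t \mapsto t^p$ on $[0,\infty)$ when $p > 1$, and of $t \mapsto |t|^q$ on $\R$ when $q > 1$. Note that $p > 1$ gives $q = p/(p-1) > 1$, so both suffice. Combined with convexity of the feasible set, strictly convex minimization has at most one minimizer.

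First I would dispatch existence. For modulus, the admissible set is non-empty (since $\rho \equiv 1$ is admissible: every non-constant path contains at least one edge, and every non-empty connected vertex set contains at least one vertex), convex, and closed in $\R^E$ (or $\R^V$). Because the graphs under consideration are finite, any sub-level set $\{\rho : \cM_p(\rho) \leq C\}$ is compact, so continuity of $\cM_p$ yields a minimizer. For flows, the set of unit flows $A \to B$ is cut out from the finite-dimensional space of antisymmetric functions on $V \times V$ supported on $E$ by the affine conditions $\mathrm{div}\,\cF(x) = 0$ for $x \notin A \cup B$ and $I(\cF) = 1$; it is non-empty provided $A$ and $B$ lie in a common connected component (otherwise $\cE_q(A,B,G) = \infty$ and the statement is vacuous). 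Coercivity of $\cE_q$ on this affine set, together with continuity, produces a minimizer.

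For uniqueness of $\rho$, suppose $\rho_1, \rho_2$ are both optimal with common $p$-mass $m$. Their midpoint $\rho := (\rho_1 + \rho_2)/2$ is admissible by linearity of the length functional $L_\rho$, and applying the strict convexity of $t \mapsto t^p$ pointwise gives
\[
\cM_p(\rho) \;\leq\; \tfrac{1}{2}\bigl(\cM_p(\rho_1) + \cM_p(\rho_2)\bigr) \;=\; m,
\]
with equality if and only if $\rho_1(e) = \rho_2(e)$ for every $e$. Optimality forces equality, whence $\rho_1 = \rho_2$. The same averaging argument handles the flow case: if $\cF_1, \cF_2$ are optimal unit flows, then $\cF := (\cF_1 + \cF_2)/2$ is again a unit flow because the antisymmetry, divergence-free, and unit-throughput conditions are all affine; strict convexity of $|t|^q$ then yields $\cF_1 = \cF_2$.

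I do not anticipate a genuine obstacle here. The only point requiring a moment of care is the preservation of the flow constraints under averaging, which is immediate from the definition. Everything else is the textbook observation that strictly convex minimization over a convex feasible set has at most one minimizer, and the feasible sets in question are convex with non-empty closed intersection with any sub-level set of the objective.
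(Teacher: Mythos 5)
Your proposal is correct and is essentially the same argument the paper intends: the paper's proof is a one-line appeal to finite-dimensionality and strict convexity of the objective, and you have simply spelled out the existence step (non-empty closed convex feasible set, compact sublevel sets) and the uniqueness step (midpoint averaging with strict convexity of $t\mapsto t^p$ and $t\mapsto|t|^q$) in detail.
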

\begin{proof}
    The proof follows immediately from the fact that the optimization problem is finite dimensional and strictly uniformly convex. 
\end{proof}

The following proposition for edge modulus is fairly well known, and was first shown in \cite[Theorem 5.1]{NakamuraYamasakiDuality}. See also \cite{ShimizuParabolicIndex} for another presentation of this argument, and \cite{Kwapisz} for a different approach. More recently, a variant involving probability measures on curves was shown in  \cite[Theorem 2.8]{ACFPC}. Such probability  measures can be identified with unit flows via superposition and by using the flow decomposition theorem.

\begin{proposition}\label{prop:duality} Let $A,B \subseteq V$ be non-empty disjoint subsets. Then for every $p\in (1,\infty)$ and its dual exponent $q\in (1,\infty)$
\[
\Mod_p(\Theta(A,B),G)^\frac{1}{p} \cdot \mathcal{E}_q(A,B,G)^{\frac{1}{q}}=1.
\]
Moreover, if $\mathcal{F}$ is a unit flow from $A$ to $B$ and $\rho$ is $\Theta(A,B)$-admissible so that
\begin{equation}\label{eq: Duality}
    \mathcal{M}_p(\rho)^{\frac{1}{p}} \cdot \mathcal{E}_q(\mathcal{F})^{\frac{1}{q}} = 1,
\end{equation}
then
\[
    \Mod_p(\Theta(A,B),G) = \mathcal{M}_p(\rho) \text{ and } \mathcal{E}_q(A,B,G) = \mathcal{E}_q(\mathcal{F}).
\]
\end{proposition}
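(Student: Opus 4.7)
The plan is to prove the duality identity via the classical two-step argument: a soft direction ``$\geq 1$'' via Hölder and flow decomposition, and the reverse inequality by exhibiting an explicit pair $(\rho^*,\cF^*)$ that realizes equality in Hölder. The "moreover" statement will then be an immediate byproduct of the soft direction.

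First I would establish that for any $\Theta(A,B)$-admissible $\rho$ and any unit flow $\cF$ from $A$ to $B$, one has
\begin{equation*}
1 \leq \sum_{e\in E}\rho(e)|\cF(e)| \leq \cM_p(\rho)^{1/p}\cdot \cE_q(\cF)^{1/q}.
\end{equation*}
The right-hand inequality is Hölder. For the left-hand inequality I would invoke the flow-decomposition theorem: the unit flow $\cF$ can be written as $\cF=\sum_{\theta} c_\theta\,\cF_\theta + \sum_C d_C \cF_C$, where $\theta$ ranges over simple paths in $\Theta(A,B)$, $\cF_\theta$ is the unit path-flow along $\theta$, $C$ ranges over cycles, and the decomposition is sign-consistent on each edge, so $|\cF(e)|=\sum_{\theta\ni e}c_\theta+\sum_{C\ni e}d_C$ with $c_\theta,d_C\geq 0$ and $\sum_\theta c_\theta=1$. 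Multiplying by $\rho$ edgewise and using admissibility $L_\rho(\theta)\geq 1$ yields $\sum_e\rho(e)|\cF(e)|\geq\sum_\theta c_\theta L_\rho(\theta)\geq 1$. Taking infima gives $\Mod_p(\Theta(A,B),G)^{1/p}\cdot\cE_q(A,B,G)^{1/q}\geq 1$, and the same chain proves the \emph{moreover} clause: if the product equals $1$, equality must hold throughout, which forces $\rho$ and $\cF$ to be minimizers of their respective problems (since otherwise a strictly better choice would violate the $\geq 1$ bound).

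For the reverse inequality, I would construct a pair realizing equality. Let $\rho^*$ be the unique $\cM_p$-minimizer for $\Mod_p(\Theta(A,B),G)$, which exists by strict convexity and coercivity on the admissible polyhedron. Define a potential $u:V\to[0,1]$ by
\begin{equation*}
u(v):=\min\Bigl(1,\inf_{\theta\in\Theta(A,\{v\})}L_{\rho^*}(\theta)\Bigr),
\end{equation*}
so that $u\equiv 0$ on $A$ and $u\equiv 1$ on $B$. A standard replacement argument -- substituting $\rho^*(\{x,y\})$ by $|u(x)-u(y)|$ preserves admissibility while only reducing $\cM_p$ -- forces $\rho^*(\{x,y\})=|u(x)-u(y)|$ on $\supp(\rho^*)$, and one may redefine $\rho^*$ outside its support to match without affecting the optimum. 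Next, set
\begin{equation*}
\cF^*(x,y):= c\cdot|u(x)-u(y)|^{p-1}\sgn(u(y)-u(x)),
\end{equation*}
with $c>0$ chosen so that the total outflow from $A$ equals $1$. The first-order (KKT) condition for $\rho^*$, obtained by perturbing $u(v)$ at each interior vertex $v\notin A\cup B$, gives $\sum_{\{v,y\}\in E}|u(v)-u(y)|^{p-1}\sgn(u(v)-u(y))=0$, which is exactly $\divr\cF^*(v)=0$. Thus $\cF^*$ is a valid unit flow. Since $|\cF^*(e)|^q=c^q|u(x_e)-u(y_e)|^{q(p-1)}=c^q\rho^*(e)^p$, Hölder's inequality is saturated, giving $\cM_p(\rho^*)^{1/p}\cE_q(\cF^*)^{1/q}=1$ and completing the duality.

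The main obstacle is the bookkeeping in Step 2: verifying that the potential $u$ induces the \emph{optimal} $\rho^*$ rather than just an admissible density, and confirming that $\cF^*$ is genuinely divergence-free at every interior vertex. Both rely on executing the Lagrange-multiplier argument carefully, using the strict convexity of $t\mapsto t^p$ for $p>1$ together with the finite-dimensionality of the problem to rule out degeneracies on the boundary of $\supp(\rho^*)$. An alternative, more compact, route is to invoke abstract convex duality (Fenchel or linear-programming duality) to produce $\cF^*$ directly from the dual LP of the modulus problem, as is done in \cite{NakamuraYamasakiDuality}.
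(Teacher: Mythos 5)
The paper does not prove this proposition; it attributes it to \cite{NakamuraYamasakiDuality}, noting \cite{ShimizuParabolicIndex,Kwapisz,ACFPC} as alternatives, and only later (in the proof of Lemma~\ref{lemma: Mod-Cap}) uses the H\"older/summation-by-parts machinery you employ here, but there taking the duality for granted. Your self-contained two-step argument is therefore a genuinely different route: what it buys is independence from the references, and your treatment of the ``moreover'' clause via the soft direction alone is clean and correct. However, two steps are glossed and need repair before the argument closes.

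\emph{The flow decomposition.} The paper's flows from $A$ to $B$ are only required to have zero divergence off $A\cup B$ and total outflow $I(\cF)=1$; individual vertices of $A$ may have negative divergence. Consequently the decomposition can include $A\to A$, $B\to B$ and $B\to A$ paths as well as cycles, so it is not true that all path pieces lie in $\Theta(A,B)$ with $\sum_\theta c_\theta=1$. The conclusion $\sum_e\rho(e)|\cF(e)|\geq 1$ survives --- the $A\to B$ pieces alone carry total weight $\geq 1$, because their weight minus that of the $B\to A$ pieces equals $I(\cF)=1$ while the latter is nonnegative, and each $A\to B$ piece has $\rho$-length $\geq 1$ --- but your line ``$\sum_\theta c_\theta=1$'' should be replaced by this accounting (or the decomposition avoided altogether in favour of the summation-by-parts identity below).

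\emph{The normalization.} H\"older saturation, which you correctly deduce from $|\cF^*(e)|^q=c^q\rho^*(e)^p$, gives only $\sum_{e}\rho^*(e)|\cF^*(e)|=\cM_p(\rho^*)^{1/p}\cE_q(\cF^*)^{1/q}$; it does not by itself say the common value is $1$. To finish, observe that $\cF^*$ is aligned with the potential gradient, so $\rho^*(\{x,y\})\,|\cF^*(x,y)|=(u(y)-u(x))\cF^*(x,y)$ edgewise, and summation by parts using $\divr\cF^*\equiv 0$ off $A\cup B$, $u|_A\equiv 0$, $u|_B\equiv 1$ yields
\[
\sum_{e}\bigl(u(y_e)-u(x_e)\bigr)\cF^*(x_e,y_e)=-\sum_{v}u(v)\,\divr\cF^*(v)=I(\cF^*)=1,
\]
which is precisely what your choice of $c$ was for. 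This is the identity the paper later imports as \cite[Lemma 3.1]{NakamuraYamasakiDuality} inside the proof of Lemma~\ref{lemma: Mod-Cap}; you need it independently here. With these two repairs your proof is complete.
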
 

We introduce one more useful quantity that is equivalent to the edge modulus: the capacity.

\begin{definition}
    Let $p > 1, G = (V,E)$ be a graph and $\emptyset \neq A,B \subseteq V$ be disjoint subsets of $V$.
    The \emph{$p$-capacity} from $A$ to $B$ is
    \[
        \cCap_p(A,B,G) := \inf \left\{ \sum_{\{ x,y \} \in E} \abs{U(x) - U(y)}^p : U_{\mid A} = 0 \text{ and } U_{\mid B} = 1 \right\}
    \]
\end{definition}

We now state a few classical definitions and lemmas. The minimizers for capacity exist and are $p$-harmonic. \begin{definition}
    Let $p > 1, G = (V,E)$ be a graph and $\emptyset \neq V' \subseteq V$.
    For $p > 1$ we say that $U : V \to \R$ is \emph{$p$-harmonic in $V'$} if for every $x \in V'$
    \[
        \sum_{\{ x,y \} \in E} \sgn(U(x) - U(y))\abs{U(x) - U(y)}^{p - 1} = 0.
    \]
\end{definition}

\begin{lemma}\label{lemma: Energy minimizing and p-harmonic}
    Let $p > 1, G = (V,E)$ be a connected graph and $\emptyset \neq A,B \subseteq V$ be disjoint subsets of $V$.
    There is a unique function $U : V \to \R$ so that $U|_A = 0, U|_B = 1$ and
    \[
        \cCap_p(A,B,G) =  \sum_{\{ x,y \} \in E} \abs{U(x) - U(y)}^p.
    \]
    Furthermore, this function is $p$-harmonic in $V \setminus (A \cup B)$.
\end{lemma}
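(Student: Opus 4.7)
The plan is to treat the $p$-capacity problem as a finite-dimensional strictly convex minimization problem on an affine subspace, so that existence and uniqueness come from standard convex analysis, and $p$-harmonicity drops out from the first-order optimality condition.

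First, for existence, observe that the admissible set $\mathcal{A} := \{ U : V \to \R \mid U|_A = 0, U|_B = 1 \}$ is a non-empty affine subspace of the finite-dimensional vector space $\R^V$, and the functional $\Phi(U) := \sum_{\{x,y\} \in E} |U(x) - U(y)|^p$ is continuous. Because $G$ is connected and $A \cup B \neq \emptyset$, if a sequence $U_n \in \mathcal{A}$ has some $|U_n(z)| \to \infty$ for $z \in V \setminus (A \cup B)$, then by connectedness there is a path from $z$ to $A$, and at least one edge along it sees an unbounded increment, forcing $\Phi(U_n) \to \infty$. Hence $\Phi$ is coercive on $\mathcal{A}$, so its sublevel sets intersected with $\mathcal{A}$ are compact, and a minimizer $U$ exists.

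For uniqueness, I would use strict convexity of $t \mapsto |t|^p$ on $\R$ for $p > 1$. Suppose $U_1, U_2 \in \mathcal{A}$ both realize $\cCap_p(A,B,G)$. Then $\tfrac{1}{2}(U_1 + U_2) \in \mathcal{A}$, and
\[
    \Phi\!\left(\tfrac{1}{2}(U_1 + U_2)\right) \leq \tfrac{1}{2}\Phi(U_1) + \tfrac{1}{2}\Phi(U_2),
\]
with equality in the strict convexity of $|\cdot|^p$ forcing $U_1(x) - U_1(y) = U_2(x) - U_2(y)$ on every edge $\{x,y\} \in E$. Connectedness of $G$ then implies $U_1 - U_2$ is globally constant on $V$, and the boundary condition on $A$ forces this constant to be $0$, so $U_1 = U_2$.

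For the $p$-harmonicity, I would take a first variation. Fix $x \in V \setminus (A \cup B)$ and let $\delta_x : V \to \R$ be the indicator at $x$; then $U + t \delta_x \in \mathcal{A}$ for all $t \in \R$. Differentiating $t \mapsto \Phi(U + t \delta_x)$ at $t = 0$ (valid since $s \mapsto |s|^p$ is $C^1$ for $p > 1$) yields
\[
    0 = \frac{d}{dt}\bigg|_{t=0} \Phi(U + t \delta_x) = p \sum_{\{x,y\} \in E} \sgn(U(x) - U(y)) |U(x) - U(y)|^{p-1},
\]
which is exactly the $p$-harmonicity condition at $x$. The main step requiring care is the strict-convexity-plus-connectedness argument for uniqueness, but this is standard; no step is a serious obstacle since the graph is finite and the problem is a classical convex one.
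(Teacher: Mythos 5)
Your proof is correct and takes essentially the same approach the paper invokes (existence by coercivity, uniqueness by strict convexity, $p$-harmonicity by first variation); the paper simply delegates this standard argument to a reference rather than spelling it out.
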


\begin{proof}
    The proof follows from uniform convexity and the variational principle, see e.g. \cite[Theorems 3.5 and 3.11]{Holopainen1997} for details.
\end{proof}

\begin{lemma}\label{lemma: Strong max principle}
    Let $p > 1, G = (V,E)$ be a finite graph and $\emptyset \neq A \subsetneq V$ a connected subset. If $U : V \to \R$ is $p$-harmonic in $A$ then
    \[
        \max_{x \in \overline{A}} U(x) = \max_{x \in \partial{A}} U(x).
    \]
    where
    \[
        \partial{A} = \{ x \in V \setminus A : \{ x,y \} \in E \text{ for some } y \in A \} \text{ and } \overline{A} = A \cup \partial A.
    \]
    Moreover, if $U(y) = \max_{x \in \overline{A}} U(x)$ for some $y \in A$ then $U$ is constant in $\overline{A}$.
\end{lemma}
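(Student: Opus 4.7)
The plan is to establish both assertions by a propagation-of-maxima argument anchored in the $p$-harmonicity identity, and I expect the proof to be short once the set-up is clean. First I would let $M := \max_{x \in \overline{A}} U(x)$ and $S := \{x \in \overline{A} : U(x) = M\}$, and reduce the lemma to a single implication: if $S \cap A \neq \emptyset$, then $S = \overline{A}$. Granting this, the ``moreover'' statement is immediate, and the first claim follows because either $M$ is already attained on $\partial A$, or it is attained at some $y \in A$, in which case $S = \overline{A}$ and in particular $M = \max_{\partial A} U$ (when $\partial A \neq \emptyset$; the case $\partial A = \emptyset$ is vacuous under the stated hypotheses, or forces $U$ to be constant on the connected component $A$).

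The key ingredient is a local propagation step: for every $y \in S \cap A$, every $G$-neighbor of $y$ belongs to $S$. To prove this, first observe that by the definitions of $A$ and $\partial A$, every neighbor $z$ of a vertex $y \in A$ automatically lies in $\overline{A} = A \cup \partial A$, so $U(z) \leq M = U(y)$. Consequently, each summand
\[
    \sgn(U(y) - U(z)) \, |U(y) - U(z)|^{p - 1}
\]
appearing in the $p$-harmonicity equation at $y$ is nonnegative. Since they sum to zero, each summand must vanish, which forces $U(z) = M$ for every neighbor $z$ of $y$.

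Once the local step is in place, I would conclude using connectedness of $A$. The local step shows that $S \cap A$ is ``relatively open'' in the induced subgraph on $A$: if $y \in S \cap A$ and $y' \in A$ is adjacent to $y$ in $G$, then $y' \in S$ by the local step, hence $y' \in S \cap A$. Since the induced subgraph on $A$ is connected and $S \cap A$ is a union of connected components of it, we must have $S \cap A = A$ whenever $S \cap A \neq \emptyset$. Applying the local step once more at any $y \in A$, every $z \in \partial A$ adjacent to some $y \in A$ must satisfy $z \in S$; since by definition every vertex of $\partial A$ is adjacent to some vertex of $A$, we obtain $\partial A \subseteq S$, and therefore $S = \overline{A}$.

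I do not anticipate any serious obstacle; the only subtle point is the bookkeeping needed to ensure that every neighbor in $G$ of a vertex $y \in A$ actually lies in $\overline{A}$, which is an immediate consequence of the definition of $\partial A$ supplied in the statement. Uniform strict convexity of $t \mapsto |t|^{p-1} \sgn(t)$ on $\R$, which is implicit in the step ``nonnegative summands summing to zero must each vanish'', is the analytical fact that powers the whole argument and is where the hypothesis $p > 1$ enters.
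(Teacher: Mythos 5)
Your proof is correct and supplies the standard propagation-of-maxima argument that the paper itself only gestures at (it says ``by a standard argument'' and cites \cite[Theorem 3.14]{Holopainen1997} without reproducing a proof), so the approach is the same. One small inaccuracy in your closing remark: the step ``nonnegative summands summing to zero must each vanish'' does not use any convexity; what it uses is simply that $t \mapsto \sgn(t)\lvert t\rvert^{p-1}$ vanishes if and only if $t=0$, which already holds for all $p\geq 1$, so the hypothesis $p>1$ is not really what powers this particular step.
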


\begin{proof}
    The definition of $p$-harmonic functions yields by a standard argument the maximum principle, for details see \cite[Theorem 3.14]{Holopainen1997}.
\end{proof}

\begin{lemma}\label{lemma: Mod-Cap}
    Let $p > 1, G = (V,E)$ be a finite connected graph and $\emptyset \neq A,B \subseteq V$ be disjoint subsets of $V$.
    Then
    \begin{equation}\label{eq: Mod-Cap}
        \Mod_p(A,B,G) = \cCap_p(A,B,G).
    \end{equation}
    Moreover, if $\widehat{\rho} : E \to \R_{\geq 0}$ is the optimal $\Theta(A,B)$-admissible density, $\mathcal{F} : V\times V \to \R$ is the optimal flow from $A$ to $B$ with respect to $\cE_q$ and $\widehat{U} : V \to \R$ so that 
    $\widehat{U}|_A = 0, \widehat{U}|_B = 1$ and
    \[
        \cCap_p(A,B,G) =  \sum_{\{ x,y \} \in E} \abs{\widehat{U}(x) - \widehat{U}(y)}^p.
    \]
    then 
    \[
    \widehat{\rho}(\{ x,y \}) = \abs{\widehat{U}(x) - \widehat{U}(y)}
    \]
    and
    \[
    \cF(x,y)=\Mod_p(A,B,G)^{-1}\sgn(\widehat{U}(y) - \widehat{U}(x)) \abs{\widehat{U}(y) - \widehat{U}(x)}^{p-1}
    \]
    for all $\{ x,y \} \in E$.
\end{lemma}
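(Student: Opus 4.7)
The plan is a standard discrete duality argument. I would first prove $\Mod_p(\Theta(A,B),G) \leq \cCap_p(A,B,G)$ by taking the capacity minimizer $\widehat{U}$ from Lemma \ref{lemma: Energy minimizing and p-harmonic} and setting $\rho(\{x,y\}) := |\widehat{U}(x)-\widehat{U}(y)|$. For any path $\theta = [v_1,\dots,v_n] \in \Theta(A,B)$, a telescoping estimate yields $L_\rho(\theta) \geq |\widehat{U}(v_n) - \widehat{U}(v_1)| = 1$, so $\rho$ is admissible and $\cM_p(\rho) = \cCap_p(A,B,G)$. For the reverse inequality, given admissible $\rho$ I would define $U(x) := \min\bigl(1,\, \inf\{L_\rho(\theta) : \theta \in \Theta(A,\{x\})\}\bigr)$; then $U|_A = 0$, admissibility forces $U|_B = 1$, and the triangle inequality gives $|U(x) - U(y)| \leq \rho(\{x,y\})$ for every $\{x,y\} \in E$ (the truncation is harmless since $\min(1,a+b) \leq \min(1,a) + b$ for $a,b \geq 0$). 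Hence $\cCap_p \leq \cM_p(\rho)$, and taking the infimum over $\rho$ gives equality. Strict convexity of $\cM_p$ guarantees uniqueness of the $\Mod_p$-minimizer, and since the admissible density $\rho(\{x,y\}) = |\widehat{U}(x) - \widehat{U}(y)|$ attains this minimum, it must coincide with $\widehat{\rho}$.

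For the flow, I would define $\cF$ by the stated formula and verify optimality via the equality case in Proposition \ref{prop:duality}. Antisymmetry follows from the $\sgn$ factor. The identity $\divr\cF(x) = 0$ for $x \notin A \cup B$ is precisely the $p$-harmonicity of $\widehat{U}$ on $V \setminus (A \cup B)$ supplied by Lemma \ref{lemma: Energy minimizing and p-harmonic}, so $\cF$ is indeed a flow from $A$ to $B$. I would then compute the total flow $I(\cF)$ via the discrete summation-by-parts identity
\[
\sum_{x \in V} \widehat{U}(x)\, \divr\cF(x) \;=\; \sum_{\{x,y\} \in E} (\widehat{U}(x) - \widehat{U}(y))\,\cF(x,y),
\]
which follows by expanding the left side as a sum over ordered pairs and pairing terms. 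Using $\widehat{U}|_A = 0, \widehat{U}|_B = 1$ together with conservation of $\cF$ off $A\cup B$, the left side equals $-I(\cF)$, while the right side equals $-\Mod_p^{-1}\sum_{\{x,y\}}|\widehat{U}(y) - \widehat{U}(x)|^p = -\Mod_p^{-1}\cCap_p = -1$. Therefore $I(\cF) = 1$, confirming that $\cF$ is a unit flow.

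Finally I would verify equality in the duality inequality. Using $q(p-1) = p$,
\[
\cE_q(\cF) \;=\; \Mod_p^{-q}\sum_{\{x,y\} \in E}|\widehat{U}(y) - \widehat{U}(x)|^{q(p-1)} \;=\; \Mod_p^{-q}\cCap_p \;=\; \Mod_p^{1-q},
\]
so that $\cM_p(\widehat{\rho})^{1/p}\,\cE_q(\cF)^{1/q} = \Mod_p^{1/p}\,\Mod_p^{(1-q)/q} = \Mod_p^{1/p + 1/q - 1} = 1$. The equality criterion in Proposition \ref{prop:duality} then identifies $\cF$ as the unique optimal unit flow. The most delicate step will be the summation-by-parts computation of $I(\cF)$, where one must take care with edge orientations and the antisymmetry of $\cF$; everything else reduces to the variational characterizations already in place and to elementary exponent bookkeeping.
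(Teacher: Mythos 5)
Your proof is correct, and the modulus-equals-capacity half plus the identification of $\widehat{\rho}$ follow the paper's argument essentially verbatim. For the flow statement you take a slightly different route: the paper begins with the (already existing, by convexity) optimal flow $\cF$, invokes a cited summation-by-parts identity to get $1=\sum(\widehat U(y)-\widehat U(x))\cF(x,y)$, applies H\"older, and then forces equality in H\"older via Proposition \ref{prop:duality} to deduce the form of $\cF$ up to a constant that is then pinned down by another duality application. You instead define the candidate $\cF$ by the explicit formula, verify from scratch that it is a unit flow (antisymmetry from $\sgn$, conservation off $A\cup B$ from the $p$-harmonicity in Lemma \ref{lemma: Energy minimizing and p-harmonic}, unit total flow by the same summation-by-parts identity), compute $\cE_q(\cF)=\Mod_p^{1-q}$ directly using $q(p-1)=p$, and then invoke the equality criterion in Proposition \ref{prop:duality} to conclude $\cF$ is the optimal flow. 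Your version is more self-contained, since it does not rely on the external reference and makes the conservation law an explicit consequence of $p$-harmonicity; the paper's version is shorter because it never needs to check that its $\cF$ is a flow at all, only that equality holds in H\"older. Both are valid, and the central ingredients (summation by parts, H\"older, the duality product formula) are the same.
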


\begin{proof}
    Suppose $\rho : E \to \R$ is $\Theta(A,B)$-admissible and we define $U : V \to \R$ by
    \[
        U(x) := \min_{\gamma} \sum_{i = 1}^{n - 1} \rho(\{ x_i,x_{i+1} \})
    \]
    where the minimum is over all paths $\gamma = [x_1,\dots,x_n]$ so that $x_1 = x$ and $x_n \in A$.
    For $U' = U \land 1$, we have $U'|_A = 0, U'|_B = 1$ and
    \[
        \abs{U'(x) - U'(y)} \leq \abs{U(x) - U(y)} \leq \rho(\{ x,y \})
    \]
    for all $\{x,y\} \in E$. This proves
    \[
        \cCap_p(A,B,G) \leq \Mod_p(A,B,G).
    \]
    On the other hand, if $U|_A= 0, U|_B = 1$, then $\rho(\{ x,y \}) = \abs{U(x) - U(y)}$ is, by the triangle inequality, $\Theta(A,B)$-admissible.
    Hence \eqref{eq: Mod-Cap} holds.
    Furthermore, if we define the density $\widetilde{\rho}(\{ x,y \}) = \abs{\widehat{U}(x) - \widehat{U}(y)}$, then
    \[
        \cM_p(\widetilde{\rho}) = \Mod_p(A,B,G).
    \]
    By the uniqueness of the optimal density, $\widehat{\rho} = \widetilde{\rho}$.

    Finally, let $\cF$ be the optimal flow function from $A$ to $B$. By \cite[Lemma 3.1]{NakamuraYamasakiDuality}, we have
    \[
    1=\sum_{\{x,y\}\in E} (U(y)-U(x))\cF(x,y). 
    \]
    Thus, by H\"older's inequality, we have
    \[
    1 \leq \left(\sum_{\{x,y\}\in E} |U(y)-U(x)|^p\right)^{\frac{1}{p}} \left(\sum_{\{x,y\}\in E} \cF(x,y)^q\right)^{\frac{1}{q}}.
    \]
    By duality, Proposition \ref{prop:duality}, the inequality is an equality. This is only possible if $\cF(x,y)$ is proportional to $\sgn(\widehat{U}(y) - \widehat{U}(x)) \abs{\widehat{U}(y) - \widehat{U}(x)}^{p-1}$. The propotionality constant can be computed using Proposition \ref{prop:duality} again.
\end{proof}

The previous proposition also encodes the so called Ohm's law for optimal potential functions and flows; see \cite[Section 2]{Kwapisz}.

In what follows, we will often drop the graph $G$ from the notation of moduli,  capacity and flows, where it is clear from context and write e.g. $\Mod_p(A,B)$, $\cCap_p(A,B)$ and $\cE_q(A,B)$.

\subsection{Moduli computations on replacement graphs}\label{subsection: Moduli computations}
The biggest advantage of the framework of IGS is the simplicity of edge modulus computations. Indeed, all modulus problems that we study in this paper can be reduced to the modulus problem over the paths between the gluing sets $\Theta(I_{v,\{ v,u\} }, I_{u,\{ v,u \}})$, for $\{u,v\}\in E_1$. These modulus problems are explicitly and effectively computable.

First we introduce the basic notations. For $e = \{ v,u \} \in E_n$ and $m \in \N$ we will write $\Theta_{v,e}^{(m)} = \Theta\left( I_{v,e}^{(m)}, I_{u,e}^{(m)} \right)$ for the path family in $G_m$.
We denote the optimal $\Theta_{v,e}^{(m)}$-admissible density (for both edge- and vertex $p$-modulus) as $\rho_{e,m}$. Observe that, by symmetry, $\rho_{e,m}$ is the optimal $\Theta^{(m)}_{u,e}$-density. Hence we omit the vertex $v,u$ from the notation of the optimal function $\rho_{e,m}$. For $m = 1$ we simply write $\Theta_{v,e}^{(1)}= \Theta_{v,e}$ and $\rho_{e,1} = \rho_{e}$. We denote the $\cE_q$-minimizing unit flow from $I_{v,e}$ to $I_{u,e}$ as $\cF_{v,e}$. 
Lastly we define the family of paths
\[
    \Theta_{m} := \bigcup_{\substack{e \in E_1\\ v \in e}} \Theta_{v,e}^{(m)}
\]
and the $\Theta_m$-admissible density $\widetilde{\rho}_m : V_m \to \R_{\geq 0}$ by
\begin{equation}\label{eq: rho_m}
    \widetilde\rho_m(v) := \max_{e \in E_1} \rho_{e,m}(v).
\end{equation}

\begin{remark}
Observe that, in general, $\rho_{e}$ and $\cF_{v,e}$ depend on $p$ and $q$. Moreover, for $p = 1$, $\rho_e$ is not unique. Since in most arguments we deal with only one $p > 1$ and $q > 1$ is the dual exponent of $p$, we omit $p,q$ from the notation.
\end{remark}

Next we introduce conductively uniform-condition, under which the moduli computations are extremely simple and straightforward.

\begin{definition}[Conductively uniform]\label{Def: Conductively uniform}
    We say that an iterated graph system satisfying doubling property is \emph{conductively uniform}
    if for every $e = \{ v,u \} \in E_n$ the value
    \begin{equation}\label{eq: Flows agree on boundary}
        \mathcal{F}_a := \mathcal{F}_{v,e}(\phi_{v,e}(a),\mathfrak{n}(\phi_{v,e}(a)))
    \end{equation}
    only depends on $a \in I$ and the value
    \begin{equation}\label{eq: Conductively uniform}
        \mathcal{R}_p := \mathcal{E}_q(\mathcal{F}_{v,e})
    \end{equation}
    does not depend on $e$ or $v$.
\end{definition}

The first of these means that the optimal flows in $G_1$ between the gluing sets are distributed in a way that is independent of the gluing used. This is particularly important when defining flows at higher level graphs, since we need compatibility across the gluing sets. The second of these properties means that the modulus ``accross'' an edge is independent of the edge. This allows us to compute optimal moduli at the $n$'th level in a simple way. As we will shortly see, optimality of such flows and potential functions is shown using Proposition \ref{prop:duality}. But, first, we shall give a few examples. 

\begin{example}\label{ex:uniformity} Recall the symmetric IGSs from Remark \ref{rmk:symmetricIGS}. These are all conductively uniform by the first part of Lemma \ref{lem:removableedges}. Indeed, it seems that the symmetric case is the most natural setting to obtain uniformity. Consequently, since Figures \ref{fig:replacementrule} and \ref{fig:laakso} are symmetric, they also are conductively uniform. 
\end{example}

Symmetry is, however, not always needed for conductive uniformity, see e.g. Figures \ref{fig:condutctunif} and \ref{fig:condutctunif-notsym}. 

\begin{figure}[ht]
\begin{tikzpicture}[scale=0.75]
\draw (-6,0)--(-4,0);
\draw[<-,thick] (-3,0)--(-2,0);

\draw (-1,-1)--(0,0)--(1,1)--(2,0)--(3,1);
\draw (0,0)--(1,-1)--(2,0);
\draw (-1,1)--(0,0);
\draw (2,0)--(3,-1);
\draw (0,0)--(0,-1);

\foreach \q in {(0,-1),(-6,0),(-4,0),(-1,1),(-1,-1),(0,0),(1,1),(1,-1),(2,0),(3,1),(3,-1)}
\node at \q [circle,fill,inner sep=1pt]{};

\node at (-1,1.4) {1};
\node at (-1,-1.5) {2};
\node at (0,0.4) {3};
\node at (0,-1.5) {4};
\node at (1,1.4) {5};
\node at (1,-1.5) {6};
\node at (2,0.4) {7};
\node at (3,1.4) {8};
\node at (3,-1.5) {9};
\end{tikzpicture}
\caption{This non-symmetric oriented IGS is obtained from the one in Figure \ref{fig:laakso} by adding one vertex, and an edge connecting the vertex $3$ to this added vertex. The vertices are re-ordered left-to-right. A direct calculation shows that this yields a conductively uniform IGS, and that the edge $\{3,4\}$ is removable in the sense of Definition \ref{def:removable}. In general, taking any conductively uniform IGS, and gluing to it an edge along one of its end points produces another conductively uniform IGS. In this example, $I=\{a,b\}$ and $\phi_{v,\{v,u\}}(a)=1,\phi_{v,\{v,u\}}(b) = 2$ if $v<u$, and $\phi_{v,\{v,u\}}(a)=8,\phi_{v,\{v,u\}}(b) = 9$ if $u<v$.}
\label{fig:condutctunif}
\end{figure}

\begin{figure}
\begin{tikzpicture}[scale=0.75]
\draw (-6,0)--(-4,0);
\draw[<-,thick] (-3,0)--(-2,0);

\draw (-1,-1)--(0,0)--(1,1)--(2,0)--(3,1);
\draw (0,0)--(1,-1)--(2,0);
\draw (-1,1)--(0,0);
\draw (2,0)--(3,-1);
\draw (-1,-2)--(0,-2)--(1,-2)--(2,0)--(3,-2);
\draw[dashed] (0,0)--(0,-2);

\foreach \q in {(-6,0),(-4,0),(-1,1),(-1,-1),(0,0),(1,1),(1,-1),(2,0),(3,1),(3,-1),(-1,-2),(0,-2),(1,-2),(3,-2)}
\node at \q [circle,fill,inner sep=1pt]{};

\node at (-1,1.4) {1};
\node at (-1,-0.5) {2};
\node at (-1,-2.5) {3};
\node at (0,0.4) {4};
\node at (0,-2.5) {5};
\node at (1,1.4) {6};
\node at (1,-0.5) {7};
\node at (1,-2.5) {8};
\node at (2,0.4) {9};
\node at (3,1.4) {10};
\node at (3,-0.5) {11};
\node at (3,-2.5) {12};
\end{tikzpicture}
\caption{ The solid lines in the figure shows one of the non-symmetric spaces, with $N=3, L_*=4$ constructed in the fashion of Example \ref{ex:nonsym}, together with the ordering of vertices described. The dashed line indicates an edge, which can be added to produce another conductively uniform oriented IGS with the $L_*$-uniform scaling property, where the dashed edge is removable in the sense of Definition \ref{def:removable}. In general, any edge which connects vertically aligned points is removable. }
\label{fig:condutctunif-notsym}
\end{figure}

\begin{example}\label{ex:nonsym}  Let $N,L_*\geq 2$.
A lot of (non-symmetric) conductively uniform oriented IGS that satisfy the $L_*$-uniform scaling property can be obtained as follows. Take $N$ line segments and subdivide each of them to $L_*$ equal parts. Along each line segment, there are $N$ subdivision points $v_{(k,i)}$, where $v_{(k,i)}$ is the $k$'th subdivision point in the $i$'th copy of the line segment. We then form a new graph by identifying, for $k=2,\dots, L_*-1$, pairs of these subdivision points $v_{(k,i)}\sim v_{(k,j)}$ for $(i,j)\in S_k$, where $S_k$ are some given equivalence relations on $\{1,\dots, N\}$. Given enough such identifications, we obtain a connected IGS $G_1=(V_1,E_1)$. The gluing rules are then described as follows. First, we give an order for the equivalence classes $[v_{(k,i)}]$, which are ordered so that if $k<l$, then $[v_{(k,j)}]<[v_{(l,m)}]$ for all $k<l$ and all $j\leq m$. Let $v_i$, $i=1,\dots, M$  be the  ordered list of equivalence classes, where $M$ is the total number of vertices. The (oriented) gluing rules can then be given by  $I=\{1,\dots, N\}$ and $\phi_{v,\{v,u\}}(i)=\phi_-(i)=i$ if $v<u$, and $\phi_{u,\{v,u\}}(i)=\phi_+(i)=M-N+i$ if $v<u$.   
In these examples, conductive uniformity can be verified by using duality and using the weight function $\rho(e)=\frac{1}{L_*}$ and flow function $\mathcal{F}(x,y)=\frac{1}{N}$ if $x<y$ and $\{x,y\}\in E$. A direct calculation shows that $\cE_q(\cF)^{\frac{1}{q}}\cM_p(\rho)^{\frac{1}{p}}=1$ for all $p\in (1,\infty)$, and thus $\cF$ and $\rho$ are optimal by Proposition \ref{prop:duality}. See Figure \ref{fig:condutctunif-notsym}, which shows one example of an IGS obtained in this fashion that is non-symmetric and that satisfies the $L_*$-uniform scaling property. These examples are also all Loewner, and they attain their conformal dimensions, see Example \ref{ex:loewnerreplacement}. By adding ``removable edges'' to them, we obtain counterexamples to Kleiner's conjecture, see Section \ref{sec:mainthm}.
\end{example}

\begin{lemma}\label{lemma: Reversing flow}
    For every $e = \{ v,u \} \in E_n$ we have
    \begin{equation}\label{eq: Reversing flow}
        \mathcal{F}_{v,e} = -\mathcal{F}_{u,e}.
    \end{equation}
\end{lemma}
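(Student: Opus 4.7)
The plan is to exploit the symmetry of the flow optimization problem under reversal of the source and target, together with the uniqueness of the energy-minimizing unit flow established earlier.

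First, I would verify that the map $\mathcal{F} \mapsto -\mathcal{F}$ gives a bijection between unit flows from $I_{v,e}$ to $I_{u,e}$ and unit flows from $I_{u,e}$ to $I_{v,e}$. This is a direct check from the definition: antisymmetry $\mathcal{F}(x,y) = -\mathcal{F}(y,x)$ is preserved, the divergence condition at interior vertices (those not in $I_{v,e} \cup I_{u,e}$) is preserved since $\sum_{\{x,y\} \in E}(-\mathcal{F})(x,y) = 0$ whenever $\sum_{\{x,y\} \in E}\mathcal{F}(x,y) = 0$, and the total flow satisfies
\begin{equation*}
I(-\mathcal{F}) = \sum_{x \in I_{u,e}} \sum_{\{x,y\} \in E} (-\mathcal{F})(x,y) = -\sum_{x \in I_{u,e}} \operatorname{div}\mathcal{F}(x) = \sum_{x \in I_{v,e}} \operatorname{div}\mathcal{F}(x) = I(\mathcal{F}) = 1,
\end{equation*}
where the middle equality uses that $\mathcal{F}$ is a flow from $I_{v,e}$ to $I_{u,e}$, so its divergence vanishes off $I_{v,e} \cup I_{u,e}$ and the total divergence on the whole graph is zero.

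Second, I would observe that the energy functional is invariant under negation: $|(-\mathcal{F})(x,y)|^q = |\mathcal{F}(x,y)|^q$, so $\mathcal{E}_q(-\mathcal{F}) = \mathcal{E}_q(\mathcal{F})$. Consequently, the two optimization problems defining $\mathcal{F}_{v,e}$ and $\mathcal{F}_{u,e}$ have the same minimum value, and $-\mathcal{F}_{v,e}$ is a unit flow from $I_{u,e}$ to $I_{v,e}$ achieving this minimum.

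Finally, I would invoke the uniqueness of the $\mathcal{E}_q$-minimizing unit flow (recorded right after Definition of $\mathcal{E}_q(A,B,G)$, which follows from the strict convexity of $\mathcal{E}_q$ for $q > 1$), applied to the pair $(I_{u,e}, I_{v,e})$, to conclude $\mathcal{F}_{u,e} = -\mathcal{F}_{v,e}$, which is \eqref{eq: Reversing flow}. There is no real obstacle here; the only mild subtlety is to be careful that the uniqueness lemma applies for $q>1$, which is the only case in which $\mathcal{F}_{v,e}$ is even defined as a single flow in the present framework.
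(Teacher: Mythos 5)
Your proof is correct and takes exactly the same approach as the paper: check that $-\mathcal{F}_{v,e}$ is a unit flow from $I_{u,e}$ to $I_{v,e}$ with the same energy, then invoke uniqueness of the $\mathcal{E}_q$-minimizing unit flow. The paper states this in one line; you simply spell out the verification that negation maps unit flows to unit flows.
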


\begin{proof}
    As $\cE_q(-\cF_{v,e}) = \cE_q(\cF_{v,e})$ and $-\cF_{v,e}$ is a unit flow from $I_{u,e}$ to $I_{v,e}$, the claim follows from the uniqueness of the optimal flow.
\end{proof}

\begin{lemma}
    If the iterated graph system is doubling and conductively uniform then
    \begin{equation}\label{eq: Conductively uniform (Modulus)}
        \mathcal{M}_p:= \mathcal{M}_p(\rho_{e})
    \end{equation}
    does not depend on $e = \{v,u\} \in E_n$.
    In particular,
    \begin{equation}\label{eq: Resistance and Modulus}
        (\mathcal{M}_p)^{\frac{1}{p}} \cdot (\mathcal{R}_p)^{\frac{1}{q}} = 1.
    \end{equation}
\end{lemma}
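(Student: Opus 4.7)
The plan is to derive both statements directly from the edge-modulus/flow duality of Proposition \ref{prop:duality} applied in the graph $G_1$. By definition, $\rho_e = \rho_{e,1}$ is the optimal $\Theta_{v,e}$-admissible density in $G_1$, so
\[
\cM_p(\rho_e)=\Mod_p(\Theta_{v,e},G_1),
\]
and $\cF_{v,e}$ is the $\cE_q$-minimizing unit flow from $I_{v,e}$ to $I_{u,e}$ in $G_1$, so
\[
\cR_p=\cE_q(\cF_{v,e})=\cE_q(I_{v,e},I_{u,e},G_1).
\]
The sets $I_{v,e}$ and $I_{u,e}$ are non-empty, disjoint subsets of $V_1$ by conditions (2) and (3) of Definition \ref{def:IGS}, so Proposition \ref{prop:duality} applies and yields
\[
\cM_p(\rho_e)^{1/p}\cdot \cR_p^{1/q}=1.
\]

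Rearranging gives $\cM_p(\rho_e)=\cR_p^{-p/q}$, an expression depending only on $p$ and on the single number $\cR_p$. By the conductively uniform hypothesis, $\cR_p$ is independent of the edge $e=\{v,u\}\in E_n$ and of the chosen endpoint; therefore $\cM_p(\rho_e)$ is independent of $e$ as well. This proves that the definition $\cM_p:=\cM_p(\rho_e)$ in \eqref{eq: Conductively uniform (Modulus)} is unambiguous, and the identity \eqref{eq: Resistance and Modulus} is precisely the displayed equation above.

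There is essentially no obstacle: the proof is a one-line application of duality once the optimizers are identified with the objects named in the text. I would only pause to note that the first half of the conductively uniform hypothesis (the well-definedness of $\cF_a$ in \eqref{eq: Flows agree on boundary}) is not used here; only the second half, the $e$-independence of $\cR_p$, enters. The first half will instead be needed later, when optimal flows on tiles $e\cdot G_m\subset G_{n+m}$ are to be assembled into a global optimal flow across the gluing sets.
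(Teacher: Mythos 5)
Your proof is correct and follows essentially the same route as the paper, whose own proof is the one-liner ``Directly follows from Proposition \ref{prop:duality} and \eqref{eq: Conductively uniform}.'' Your application of the duality in $G_1$ to the pair $(I_{v,e},I_{u,e})$, combined with the $e$-independence of $\mathcal{R}_p$ from the conductively uniform hypothesis, is precisely that argument spelled out, and your remark that only the second clause of the definition is used here (the first clause \eqref{eq: Flows agree on boundary} being reserved for Proposition \ref{prop: Replacement flow}) is accurate.
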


\begin{proof}
    Directly follows from Proposition \ref{prop:duality} and \eqref{eq: Conductively uniform}.
\end{proof}

\begin{proposition}\label{prop: properties of cM_p}
    If the iterated graph system satisfies the $L_*$-uniform scaling property for $L_* \geq 2$, the doubling property and is conductively uniform, then $\mathcal{M}_p$ is continuously strictly decreasing in $p \in [1,\infty)$.
    Furthermore, if $G_1$ contains two edge-wise disjoint paths $\theta_1,\theta_2 \in \Theta_{v,e}$ for some $e = \{v,u\} \in E_1$, then $\mathcal{M}_1 > 1$ and there is a unique $Q_* \in (1,\infty)$ so that $\mathcal{M}_{Q_*} = 1$.
\end{proposition}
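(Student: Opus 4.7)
The plan is to view $\mathcal{M}_p$ as a $p$-capacity on $G_1$ between $I_{v,e}$ and $I_{u,e}$ via Lemma \ref{lemma: Mod-Cap}, which identifies the optimal admissible density with $\rho_e(\{x,y\}) = |U(x) - U(y)|$ for the unique $p$-harmonic potential $U$ on $G_1$ satisfying $U|_{I_{v,e}} = 0$ and $U|_{I_{u,e}} = 1$. The strong maximum principle (Lemma \ref{lemma: Strong max principle}, applied on each connected component of $V_1 \setminus (I_{v,e} \cup I_{u,e})$) then forces $U \in [0,1]$, and hence $\rho_{e}(f) \in [0,1]$ for every edge $f \in E_1$. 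At $p = 1$ the same $[0,1]$-bound can be assumed for some optimizer since truncating any admissible $\rho$ at $1$ preserves admissibility and cannot increase $\mathcal{M}_1(\rho)$. This uniform bound is the technical backbone of the argument.

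For weak monotonicity, the elementary inequality $x^{p_2} \leq x^{p_1}$ on $[0,1]$ for $p_1 \leq p_2$, applied coordinate-wise to the admissible density $\rho_{e,p_1}$, gives $\mathcal{M}_{p_2} \leq \sum_f \rho_{e,p_1}(f)^{p_2} \leq \sum_f \rho_{e,p_1}(f)^{p_1} = \mathcal{M}_{p_1}$. For continuity, I use a standard compactness argument: as $p_n \to p_\infty \in [1,\infty)$, the optimizers $\rho_{e,p_n}$ lie in the compact cube $[0,1]^{E_1}$ and any subsequential limit $\rho^*$ is $\Theta_{v,e}$-admissible since admissibility is a closed condition. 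Joint continuity of $(p,\rho) \mapsto \sum_f \rho(f)^p$ on $[1,\infty) \times [0,1]^{E_1}$ together with testing $\mathcal{M}_{p_n} \leq \sum_f \rho(f)^{p_n}$ against an arbitrary fixed admissible $\rho$ sandwiches $\lim \mathcal{M}_{p_n}$ between $\sum_f (\rho^*)^{p_\infty} \geq \mathcal{M}_{p_\infty}$ and $\mathcal{M}_{p_\infty}$, yielding continuity.

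The strict monotonicity splits into two cases. When $p_1 = 1 < p_2$, admissibility forces $\mathcal{M}_1 \geq 1$ since $\sum_f \rho(f) \geq \sum_{f \in \theta} \rho(f) \geq 1$ for any path $\theta \in \Theta_{v,e}$, while the density $L_*^{-1}\mathbf{1}_\theta$ on a shortest path $\theta$ (of length $L_*$ by the uniform scaling property) is admissible and produces $\mathcal{M}_{p_2} \leq L_*^{1 - p_2} < 1 \leq \mathcal{M}_1$ since $L_* \geq 2$. When $1 < p_1 < p_2$, I claim that some edge-value of $\rho_{e,p_1}$ lies strictly in $(0,1)$; granted this, the strict pointwise inequality $\rho_{e,p_1}(f)^{p_2} < \rho_{e,p_1}(f)^{p_1}$ at that edge upgrades the weak-monotonicity estimate. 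To prove the claim, suppose instead that $\rho_{e,p_1}(f) \in \{0,1\}$ for every $f$; then $|U(x) - U(y)| \in \{0,1\}$ on all edges, and together with $U \in [0,1]$ a short induction along paths starting from $I_{v,e}$ gives $U \in \{0,1\}$ pointwise. Writing $V_0 = U^{-1}(0) \supseteq I_{v,e}$ and $W_1 = U^{-1}(1) \supseteq I_{u,e}$, the $p_1$-harmonic equation at an interior vertex $x \in V_0$ reduces to $-\#\{ y \sim x : y \in W_1 \} = 0$, so interior $V_0$-vertices have no $W_1$-neighbors, and symmetrically for interior $W_1$-vertices. Since $L_* \geq 2$ rules out any direct edge from $I_{v,e}$ to $I_{u,e}$, this leaves no edges between $V_0$ and $W_1$ at all, disconnecting them and contradicting $\Theta_{v,e} \neq \emptyset$.

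For the second part, two edge-disjoint paths $\theta_1,\theta_2 \in \Theta_{v,e}$ force $\mathcal{M}_1(\rho) \geq \sum_{f \in \theta_1} \rho(f) + \sum_{f \in \theta_2} \rho(f) \geq 2$ for every admissible $\rho$, so $\mathcal{M}_1 \geq 2 > 1$. The same shortest-path density shows $\mathcal{M}_p \leq L_*^{1-p} \to 0$ as $p \to \infty$ since $L_* \geq 2$. Continuity and strict monotonicity on $[1,\infty)$ then produce a unique $Q_* \in (1,\infty)$ with $\mathcal{M}_{Q_*} = 1$ by the intermediate value theorem. The main obstacle is the $p$-harmonicity argument excluding $\{0,1\}$-valued optimizers for $p_1 > 1$; the shortest-path test density alone is not sharp enough, so one genuinely needs the maximum principle and the structural consequence of $L_* \geq 2$ at this step.
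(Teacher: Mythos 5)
Your overall approach (identify $\mathcal{M}_p$ with a $p$-capacity, use the $p$-harmonic potential and the maximum principle to control the optimizer, then monotonicity from the uniform $[0,1]$-bound) is essentially the paper's, but there is a genuine error in one key step, and it propagates.

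The density $L_*^{-1}\mathbf{1}_\theta$ supported on a single shortest path $\theta$ from $I_{v,e}$ to $I_{u,e}$ is \emph{not} $\Theta_{v,e}$-admissible: any path in $\Theta_{v,e}$ that avoids the edges of $\theta$ has $\rho$-length zero. (The correct admissible density here is the \emph{constant} $\rho\equiv L_*^{-1}$, admissible by the $L_*$-uniform scaling property, giving $\mathcal{M}_p\leq|E_1|L_*^{-p}$, not $L_*^{1-p}$.) The consequence ``\,$\mathcal{M}_{p_2}\leq L_*^{1-p_2}<1$ for every $p_2>1$\,'' is therefore unjustified, and in fact false: by the very conclusion you are trying to prove, $\mathcal{M}_{p}>1$ for $1<p<Q_*$. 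Indeed your own proof is internally inconsistent --- you use $\mathcal{M}_{p_2}<1$ for all $p_2>1$ in the case $p_1=1<p_2$, yet you also conclude the existence of $Q_*\in(1,\infty)$ with $\mathcal{M}_{Q_*}=1$. The case $p_1=1<p_2$ must be handled differently: once you have strict monotonicity on $(1,\infty)$ and continuity on $[1,\infty)$, strictness at $p=1$ follows, e.g.\ from $\mathcal{M}_{1+\epsilon}<\mathcal{M}_{1+\epsilon/2}\leq\lim_{n\to\infty}\mathcal{M}_{1+\epsilon/2^n}=\mathcal{M}_1$. Similarly, the tail bound $\mathcal{M}_p\to 0$ in the second part should use $|E_1|L_*^{-p}$ rather than $L_*^{1-p}$ (which still works since $L_*\geq2$).

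Aside from this, your handling of the strict monotonicity on $(1,\infty)$ (ruling out a $\{0,1\}$-valued optimizer via the $p$-harmonic equation and the fact that $L_*\geq2$ forbids edges between $I_{v,e}$ and $I_{u,e}$) is correct, but more roundabout than the paper's. The paper observes that, by the doubling property, $V_1\setminus(I_{v,e}\cup I_{u,e})$ is a single connected set whose closure is all of $V_1$; applying the strong maximum principle once on this set directly yields $\rho_{e,p}(f)=|U(x)-U(y)|<1$ for every edge $f=\{x,y\}$, and the chain $\mathcal{M}_p=\mathcal{M}_p(\rho_{e,p})>\mathcal{M}_{p+\epsilon}(\rho_{e,p})\geq\mathcal{M}_{p+\epsilon}$ follows immediately. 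This avoids the case split and the contradiction argument entirely.
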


\begin{proof}
    In order to avoid confusion with the $p$-dependence of the optimal density, we will write $\rho_{e,p}$ as the optimal $\Theta_{v,e}$-admissible density.
    
    If $\rho$ is admissible for $\Theta$, then $\min(\rho,1)$ is also admissible. Thus, for the optimal admissible density function we have $\rho_{e,p}(f)\leq 1$ for all $f\in E.$ From this, it is direct to verify that for any non-empty family of paths $\Theta$ in a finite graph $G$, $\Mod_p(\Theta,G)$ is continuously decreasing in $p \in [1,\infty)$.
    To prove that $\cM_p = \Mod_p(\Theta_{v,e})$ is strictly decreasing, it is sufficient to prove that for all $p \in (1,\infty)$ and $f \in E_1$ we have $\rho_{e,p}(f) < 1$. Indeed then
    \[
        \cM_p = \cM_p(\rho_{e,p}) >  \cM_{p + \epsilon}(\rho_{e,p}) \geq \cM_{p + \epsilon}(\rho_{e,p + \epsilon}) = \cM_{p + \epsilon}
    \]
    for any $\epsilon > 0$.
    
    Fix $p \in (1,\infty)$. By Lemmas \ref{lemma: Energy minimizing and p-harmonic} and \ref{lemma: Mod-Cap} there is a function $U : V_1 \to \R$, which is $p$-harmonic in $V_1 \setminus (I_{v,e} \cup I_{u,e})$ and $\rho_{e,p}(\{ x,y \}) = \abs{U(x) - U(y)}$ for all $\{x,y\} \in E_1$. Now if $\{x,y\} \in E_1$, by the uniform scaling property and the fact that $I_{v,e},I_{u,e}$ are independent sets, at most one of the vertices $x,y$ is contained in $I_{v,e} \cup I_{u,e}$. Furthermore, by the doubling property, $V_1 \setminus (I_{v,e} \cup I_{u,e})$ is connected and $\overline{V_1 \setminus (I_{v,e} \cup I_{u,e})} = V_1$. By the strong maximum principle, Lemma \ref{lemma: Strong max principle}, we have
    \[
        \rho_{e,p}(\{ x,y \}) = \abs{U(x) - U(y)} < 1
    \]
    for all $\{ x,y \} \in E_1$ and so  we conclude that $\mathcal{M}_p$ is strictly decreasing.

    We proceed to the second part. Let  $\theta_1 = [v_1,\dots,v_k],\theta_2 = [u_1,\dots,u_l] \in \Theta_{v,e}$ be two edge-wise disjoint paths. By taking a sub-path, we may assume that both are simple paths.
    Now if $\rho$ is $\Theta_{v,e}$-admissible we have
    \[
        \mathcal{M}_1(\rho) = \sum_{e \in E_1} \rho(e) \geq \sum_{i = 1}^{k - 1} \rho(\{ v_i,v_{i + 1} \}) + \sum_{i = 1}^{l - 1} \rho(\{ u_i,u_{i + 1} \}) \geq 2
    \]
    Therefore $\mathcal{M}_1 > 1$.
    On the other hand, by the $L_*$-uniform scaling property, the density $\widetilde{\rho} \equiv L_*^{-1}$ is $\Theta_{v,e}$-admissible.
    Hence
    \[
        \mathcal{M}_p \leq \cM_p(\widetilde{\rho}) = \abs{E_1} \cdot \frac{1}{L_*^p} < 1
    \]
    for large enough $p > 1$.
    By continuity of $\cM_p$, there must be $Q_* \in (1,\infty)$ so that $\cM_{Q_*} = 1$.
    Since $\cM_p$ is strictly decreasing, this $Q_*$ is unique.
\end{proof}

For the rest of this section we assume that the IGS is doubling and conductively uniform.

\begin{proposition}\label{prop: Replacement density}
    Let $A, B \subseteq V_n$ be non-empty disjoint subsets and $m \in \N$.
    Given a $\Theta(A,B)$-admissible density $\rho_n : E_n \to \R_{\geq 0}$ there is a $\Theta\left(\pi_{n+m,n}^{-1}(A),\pi_{n+m,n}^{-1}(B)\right)$-admissible density $\rho_{n+m} : E_{n+m} \to \R_{\geq 0}$ with
    \begin{equation}\label{eq: Replacement density}
        \mathcal{M}_p(\rho_{n+m}) = \mathcal{M}_p(\rho_n)\cdot \mathcal{M}_p^m \text{ and } \supp(\rho_{n+m}) \subseteq \pi_{n+m,n}^{-1}(\supp(\rho_n)).
    \end{equation}
\end{proposition}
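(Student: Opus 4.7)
My plan is to prove this by induction on $m$, where at each stage I ``splice in'' a rescaled copy of the optimal level-$1$ density $\rho_e$ inside every tile. The base case $m = 0$ is immediate by taking $\rho_{n+0} = \rho_n$. For the inductive step, assume $\rho_{n+m}$ has been constructed with the desired mass and support. Every edge $f \in E_{n+m+1}$ lies in a unique tile $\widehat{e} \cdot G_1$ for $\widehat{e} \in E_{n+m}$ by \ref{SM3}, and via the isomorphism of Proposition \ref{prop: Similarity maps} it corresponds to a unique edge $f' \in E_1$. I then define
\[
\rho_{n+m+1}(f) := \rho_{n+m}(\widehat{e}) \cdot \rho_{\widehat{e}}(f').
\]

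The mass identity follows from conductive uniformity: grouping by the tile, and using $\mathcal{M}_p(\rho_{\widehat{e}}) = \mathcal{M}_p$ for every $\widehat{e}$,
\[
\mathcal{M}_p(\rho_{n+m+1}) = \sum_{\widehat{e} \in E_{n+m}} \rho_{n+m}(\widehat{e})^p \, \mathcal{M}_p(\rho_{\widehat{e}}) = \mathcal{M}_p \cdot \mathcal{M}_p(\rho_{n+m}),
\]
which combined with the inductive hypothesis gives the required $\mathcal{M}_p(\rho_n) \cdot \mathcal{M}_p^{m+1}$. The support inclusion is also straightforward: if $\rho_{n+m+1}(f) > 0$ then $\rho_{n+m}(\widehat{e}) > 0$, and the inductive support condition together with $\pi_{n+m+1,n} = \pi_{n+m,n} \circ \pi_{n+m+1,n+m}$ yields $f \in \pi_{n+m+1,n}^{-1}(\supp(\rho_n))$.

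The admissibility is where the path decomposition machinery of Section \ref{sec:linrep} does the work. Given a path $\theta$ from $\pi_{n+m+1,n}^{-1}(A)$ to $\pi_{n+m+1,n}^{-1}(B)$ in $G_{n+m+1}$, I view $G_{n+m+1}$ as a one-step replacement of $G_{n+m}$ and apply Proposition \ref{prop: Path decomposition} with the roles of $n,m,A,B$ replaced by $n+m, 1, \pi_{n+m,n}^{-1}(A), \pi_{n+m,n}^{-1}(B)$. This produces a path $[u_1, \dots, u_l]$ in $G_{n+m}$ from $\pi_{n+m,n}^{-1}(A)$ to $\pi_{n+m,n}^{-1}(B)$ together with disjoint subpaths $\theta_i$ of $\theta$ inside the tiles $e_i \cdot G_1$ with $e_i = \{u_i, u_{i+1}\}$, connecting the ancestor sets of $u_i$ and $u_{i+1}$. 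By \ref{SM2}, when $\theta_i$ is pulled back via the isomorphism to $G_1$, it lies in $\Theta_{u_i, e_i}$, so admissibility of $\rho_{e_i}$ gives $L_{\rho_{n+m+1}}(\theta_i) \geq \rho_{n+m}(e_i)$. Summing and invoking admissibility of $\rho_{n+m}$ on the path $[u_1,\dots,u_l]$ yields $L_{\rho_{n+m+1}}(\theta) \geq 1$.

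The main obstacle is really just carefully tracking the identifications in the decomposition—matching endpoints of $\theta_i$ with the gluing sets $I_{u_i, e_i}$ so that $\theta_i$ genuinely lies in $\Theta_{u_i,e_i}$ after being pulled back to $G_1$. Everything else is a direct computation enabled by conductive uniformity, which is precisely the hypothesis built to make the mass factor as a product rather than a sum.
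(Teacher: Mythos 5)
Your proof is correct and takes essentially the same approach as the paper: the paper reduces to the case $m=1$ (which is precisely the inductive step you write out), defines the same tile-by-tile product density, cites conductive uniformity for the mass factorization (which you spell out), and uses Proposition \ref{prop: Path decomposition} for admissibility in exactly the way you do. The only cosmetic difference is that you make the induction on $m$ and the $p$-mass computation explicit rather than implicit.
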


\begin{proof}
    It is sufficient to prove the case $m = 1$. Define
    \[
        \rho_{n+1}(\{[z_1,e],[z_2,e]\}) = \rho_n(e) \cdot \rho_e(\{ z_1,z_2 \}).
    \]
    By \ref{SM3} $\rho_{n+1}$ is well-defined and, by \eqref{eq: Conductively uniform (Modulus)}, it is sufficient to prove that $\rho_{n + 1}$ is $\Theta\left(\pi_{n+1,n}^{-1}(A),\pi_{n+1,n}^{-1}(B)\right)$-admissible.
    Let $\theta \in \Theta\left(\pi_{n+1,n}^{-1}(A),\pi_{n+1,n}^{-1}(B)\right)$ and $\theta= [v_1,\dots, v_k]$.
    By Proposition \ref{prop: Path decomposition} there is a path $\hat{\theta} = \left[u_1,\dots,u_l\right] \in \Theta(A,B)$ and disjoint sub-paths $\theta_1,\dots, \theta_{l - 1}$ of $\theta$ where 
    \[
        \theta_i \in \Theta(\pi_{n+1,n}^{-1}(u_i),\pi_{n+1,n}^{-1}(u_{i+1})) \text{ and } e_i = \{ u_i,u_{i+1} \}
    \]
    and $\theta_i$ is contained in $e_i \cdot G_1$ where $e_i = \{ u_i,u_{i+1} \}$ for all $i = 1,\dots,l-1$.
    By \ref{SM1}, $\hat{\theta}_i := \sigma_{e_i}^{-1}(\theta_i) \in \Theta_{e_i}$,
    so we have
    \begin{align*}
        L_{\rho_{n+1}}(\theta) & \geq \sum_{i = 1}^{l - 1} L_{\rho_{n+1}}(\theta_i)\\
        & = \sum_{i = 1}^{l-1} \rho_n\left(e_i\right) \cdot L_{\rho_{e_i}}\left(\hat{\theta}_i\right)\\
        & \geq \sum_{i = 1}^{l-1} \rho_n\left(e_i\right)\\
        & \geq 1.
    \end{align*}
\end{proof}

\begin{proposition}\label{prop: Replacement flow}
    Let $A, B \subseteq V_n$ be non-empty disjoint subsets.
    Given any unit flow $\mathcal{F}_n$ from $A$ to $B$ there is a unit flow
    $\mathcal{F}_{n+m}$ from $\pi_{n+m,n}^{-1}(A)$ to $\pi_{n+m,n}^{-1}(B)$ 
    with 
    \begin{equation}\label{eq: Replacement flow}
        \mathcal{E}_q(\mathcal{F}_{n+m}) = \mathcal{E}_q(\mathcal{F}_{n}) \cdot \mathcal{R}_p^m \text{ and } \supp(\mathcal{F}_{n+m}) \subseteq \pi_{n+m,n}^{-1}(\supp (\mathcal{F}_n)).
    \end{equation}
\end{proposition}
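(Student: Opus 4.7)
The plan is to mirror the construction in Proposition \ref{prop: Replacement density}: induct on $m$ and build $\mathcal{F}_{n+1}$ from $\mathcal{F}_n$ by multiplying, on each sub-graph $e \cdot G_1$, the scalar $\mathcal{F}_n(e)$ with the optimal local unit flow $\mathcal{F}_{v,e}$ supplied by conductive uniformity. Specifically, for $e = \{v,u\} \in E_n$ and an edge $\{ [z_1,e],[z_2,e] \} \in e \cdot E_1$ I set
\begin{equation*}
    \mathcal{F}_{n+1}([z_1,e],[z_2,e]) := \mathcal{F}_n(v,u) \cdot \mathcal{F}_{v,e}(z_1,z_2).
\end{equation*}
Equation \eqref{eq: Reversing flow} guarantees this does not depend on which endpoint of $e$ we call $v$: swapping $v \leftrightarrow u$ flips the sign of both $\mathcal{F}_n(v,u)$ and $\mathcal{F}_{v,e}$. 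By \ref{SM3} the families $\{ e \cdot E_1 \}_{e \in E_n}$ partition $E_{n+1}$, so this determines $\mathcal{F}_{n+1}$ on every edge.

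The main thing to verify is that $\mathcal{F}_{n+1}$ is a unit flow from $\pi_{n+1,n}^{-1}(A)$ to $\pi_{n+1,n}^{-1}(B)$. The antisymmetry is automatic from the construction. For the divergence condition at an interior vertex $[z,e]$ with $z \in V_1 \setminus (I_{v,e} \cup I_{u,e})$, the neighbours in $G_{n+1}$ are exactly the $[z',e]$ with $\{z,z'\} \in E_1$, and the divergence equals $\mathcal{F}_n(v,u) \cdot \divr \mathcal{F}_{v,e}(z) = 0$ since $\mathcal{F}_{v,e}$ is a unit flow on $G_1$. The delicate case is a gluing vertex: such a vertex corresponds to some $x \in V_n$ and $a \in I$, and by \ref{SM2} its neighbours in $G_{n+1}$ are (using the doubling assumption, which forces $\deg(\phi_{x,e'}(a)) = 1$) exactly the vertices $[\mathfrak{n}(\phi_{x,e'}(a)), e']$ as $e' = \{x,w\}$ ranges over edges of $E_n$ incident to $x$. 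The divergence at this vertex is therefore
\begin{equation*}
    \sum_{e' = \{x,w\} \in E_n} \mathcal{F}_n(x,w) \cdot \mathcal{F}_{x,e'}(\phi_{x,e'}(a), \mathfrak{n}(\phi_{x,e'}(a))) = \mathcal{F}_a \cdot \divr \mathcal{F}_n(x)
\end{equation*}
by the key identity \eqref{eq: Flows agree on boundary}. This vanishes precisely when $x \notin A \cup B$, which is what we need.

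To compute the total flow, sum the displayed divergence over $a \in I$ and $x \in A$: this yields $\left(\sum_{a \in I} \mathcal{F}_a\right) \cdot I(\mathcal{F}_n) = I(\mathcal{F}_{v,e}) \cdot 1 = 1$ (using that each $\mathcal{F}_{v,e}$ is a unit flow). The energy identity is then a direct computation:
\begin{equation*}
    \mathcal{E}_q(\mathcal{F}_{n+1}) = \sum_{e \in E_n} \sum_{\hat{e} \in e \cdot E_1} |\mathcal{F}_n(e)|^q \cdot |\mathcal{F}_{v,e}(\hat{e})|^q = \sum_{e \in E_n} |\mathcal{F}_n(e)|^q \cdot \mathcal{E}_q(\mathcal{F}_{v,e}) = \mathcal{E}_q(\mathcal{F}_n) \cdot \mathcal{R}_p
\end{equation*}
by \eqref{eq: Conductively uniform}. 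The support condition is immediate from the factored form of $\mathcal{F}_{n+1}$. The main obstacle is the Kirchhoff computation at gluing vertices, and this is exactly where the two uniformity hypotheses in Definition \ref{Def: Conductively uniform} are used essentially.
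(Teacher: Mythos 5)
Your proposal is correct and follows essentially the same construction as the paper: the same multiplicative definition of $\mathcal{F}_{n+1}$ on each sub-copy $e \cdot G_1$, the same use of Lemma \ref{lemma: Reversing flow} for well-definedness, and the same case split between interior and gluing vertices with the conductive uniformity identity \eqref{eq: Flows agree on boundary} doing the work in the gluing case. Your extra observation that $\sum_{a \in I} \mathcal{F}_a = I(\mathcal{F}_{v,e}) = 1$ (via the degree-one condition from doubling) is a slightly more explicit justification of the final total-flow step than the paper gives, but the argument is the same.
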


\begin{proof}
    Sufficient to prove the case $m = 1$.
    We define $\mathcal{F}_{n+1}$ as follows:
    For $e = \{v,u\} \in E_n$ and $z_1,z_2 \in V$
    we set
    \[
        \mathcal{F}_{n + 1}([z_1,\{ v,u \}],[z_2,\{v,u\}]) = \mathcal{F}_n(v,u) \cdot \mathcal{F}_{v,e}(z_1,z_2).
    \]
    The above definition is well-defined in the sense that it does not depend on the orientation of $e = \{v,u\}$.
    Indeed, by Lemma \ref{lemma: Reversing flow} we have
    \begin{align*}
        \mathcal{F}_{n + 1}([z_1,\{ u,v \}],[z_2,\{ u,v \}]) &  = \mathcal{F}_n(u,v) \cdot \mathcal{F}_{u,e}(z_1,z_2)\\
        & \hspace{-6pt}\stackrel{\eqref{eq: Reversing flow}}{=} (-\mathcal{F}_n(v,u)) \cdot (-\mathcal{F}_{v,e}(z_1,z_2))\\
        & = \mathcal{F}_n(v,u) \cdot \mathcal{F}_{v,e}(z_1,z_2)\\
        & = \mathcal{F}_{n + 1}([z_1,\{ v,u \}],[z_2,\{ v,u \}]).
    \end{align*}
    Also \eqref{eq: Replacement flow} follows from \eqref{eq: Conductively uniform} and the the definition of $\mathcal{F}_{n + 1}$ so we only need to prove that $\mathcal{F}_{n + 1}$ is a unit flow from $\pi_{n+1,n}^{-1}(A)$ to $ \pi_{n+1,n}^{-1}(B)$. Let $[z,\{v,u\}]\in V_{n+1}$ for some $e = \{ v,u \} \in E_n$ and $z \in V$. Assume that $[z,\{v,u\}]\not\in \pi_{n+1,n}^{-1}(A) \cup \pi_{n+1,n}^{-1}(B)$. There are two cases to consider. 
     First, assume that $z \notin I_{v,e} \cup I_{u,e}$. Then, by \ref{SM1} and \ref{SM2}, we have
    \[
        \divr(\mathcal{F}_{n+1})([z,\{ v,u \}]) = \divr(\mathcal{F}_{v,e})(z) \cdot \mathcal{F}_{n}(v,u) = 0.
    \]
    Next consider $z \in I_{v,e} \cup I_{u,e}$. By symmetry we may assume $z = \phi_{v,e}(a) \in I_{v,e}$ for some $a\in I$.
    Let $e_1,\dots,e_k$ be the edges in $E_{n+1}$ containing $[z,\{v,u\}]$ and write those as $e_i = \{ [z,\{ v,u \}], [z_i,\{ v_i,u_i\}]\}$.
    Then, by definition, $\{v,u\}$ and $\{ v_i,u_i\}$ have a common vertex, which must be $v$ since otherwise $z \in I_{v,e} \cap I_{u,e}$.
    Up to relabeling, we may assume $v_i = v$ so we have $z_i = \mathfrak{n}(\phi_{v_i,e_i}(a))$ and $v = v_i = \phi_{v_i,\{v_i,u_i\}}(a)$.
    We get
    \begin{align*}
        \divr(\mathcal{F}_{n + 1})([z,\{ v,u \}]) 
        & = \sum_{i = 1}^k \mathcal{F}_{n + 1}([z,\{ v,u \}],[z_i,\{ v_i,u_i \}])\\
        & = \sum_{i = 1}^k \mathcal{F}_{n+1}([\phi_{v_i,\{v_i,u_i\}}(a),\{ v_i,u_i \}],[z_i,\{ v_i,u_i \}])\\
        & = \sum_{i = 1}^k \mathcal{F}_{n}(v_i,u_i) \cdot \mathcal{F}_{v_i,\{v_i,u_i\}}(\phi_{v_i,\{v_i,u_i\}}(a),z_i)\\
        & \hspace{-6pt}\stackrel{\eqref{eq: Flows agree on boundary}}{=} \mathcal{F}_a \cdot \sum_{i = 1}^k F_{n}(v_i,u_i)\\
        & = \mathcal{F}_a \cdot \divr(F_n)(v).
    \end{align*}
    Recall that $[z,\{v,u\}]\not\in \pi_{n+1,n}^{-1}(A) \cup \pi_{n+1,n}^{-1}(B) $ and $z \in I_{v,\{v,u\}}$. Since $[z,\{v,u\}] \notin \pi_{n+1,n}^{-1}(A) \cup \pi_{n+1,n}^{-1}(B)$, we must have $v \notin A_n \cup B_n$. This concludes showing that $\cF_{n+1}$ is a flow from $\pi_{n+1,n}^{-1}(A)$ to $\pi_{n+1,n}^{-1}(B)$.
    
    Finally, if $v \in A_n$ and $z \in I_{v,\{v,u\}}$ then $[z,\{ v,u \}] \in \pi_{n+1,n}^{-1}(A)$.
    Hence
    \begin{align*}
        I(\mathcal{F}_{n+1}) & = 
        \sum_{[z,\{ v,u \}] \in \pi_{n+1,n}^{-1}(A)} \divr(\mathcal{F}_{n+1})([z,\{ v,u \}])\\
        & = \quad \sum_{a \in I} \mathcal{F}_a \cdot \sum_{v \in A_n} \divr(\mathcal{F}_n)(v)\\
        & = \quad \sum_{a \in I} \mathcal{F}_a \cdot I(\mathcal{F}_n)\\
        & = \quad 1.
    \end{align*}
    Thus $\cF$ is a unit flow, and the claim follows.
\end{proof}

\begin{corollary}\label{corollary: Modulus from L to R lvl m}
    If $e = \{ v,u \} \in E_n$ is any edge then
    \begin{equation}\label{eq: Modulus from L to R lvl m}
        \Mod_p\left(\Theta_{v,e}^{(m)}, G_m  \right) = \mathcal{M}_p^m
    \end{equation}
\end{corollary}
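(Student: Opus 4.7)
The plan is to produce by explicit construction both a $\Theta_{v,e}^{(m)}$-admissible density on $E_m$ and a unit flow from $I_{v,e}^{(m)}$ to $I_{u,e}^{(m)}$ in $G_m$ whose mass and energy exactly saturate the duality identity of Proposition \ref{prop:duality}, thereby forcing both to be optimal and computing the modulus.

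Concretely, set $A = \phi_{v,e}(I) = I_{v,e}^{(1)}$ and $B = \phi_{u,e}(I) = I_{u,e}^{(1)}$, viewed as disjoint subsets of $V_1$. On $G_1$, the optimal $\Theta(A,B)$-admissible density is $\rho_e$ with $\cM_p(\rho_e) = \cM_p$ by \eqref{eq: Conductively uniform (Modulus)}, and the optimal unit flow from $A$ to $B$ is $\cF_{v,e}$ with $\cE_q(\cF_{v,e}) = \mathcal{R}_p$ by \eqref{eq: Conductively uniform}. Applying Proposition \ref{prop: Replacement density} with base level $n=1$ and iterating $m-1$ replacement steps starting from $\rho_e$ produces a density $\hat\rho : E_m \to \R_{\geq 0}$ which is admissible for $\Theta(\pi_{m,1}^{-1}(A), \pi_{m,1}^{-1}(B)) = \Theta_{v,e}^{(m)}$ with total mass
\[
\cM_p(\hat\rho) = \cM_p(\rho_e)\cdot \cM_p^{m-1} = \cM_p^m,
\]
where in the degenerate case $m = 1$ we simply take $\hat\rho = \rho_e$. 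Analogously, Proposition \ref{prop: Replacement flow} applied to $\cF_{v,e}$ produces a unit flow $\hat\cF$ from $I_{v,e}^{(m)}$ to $I_{u,e}^{(m)}$ in $G_m$ with energy $\cE_q(\hat\cF) = \mathcal{R}_p \cdot \mathcal{R}_p^{m-1} = \mathcal{R}_p^m$.

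It remains only to combine these. By \eqref{eq: Resistance and Modulus} the single-scale quantities satisfy $\cM_p^{1/p} \cdot \mathcal{R}_p^{1/q} = 1$, hence
\[
\cM_p(\hat\rho)^{1/p} \cdot \cE_q(\hat\cF)^{1/q} = \bigl(\cM_p^{1/p}\cdot \mathcal{R}_p^{1/q}\bigr)^m = 1.
\]
The equality clause of Proposition \ref{prop:duality} then forces $\hat\rho$ to be the optimal $\Theta_{v,e}^{(m)}$-admissible density and $\hat\cF$ to be the optimal unit flow from $I_{v,e}^{(m)}$ to $I_{u,e}^{(m)}$, yielding $\Mod_p(\Theta_{v,e}^{(m)}, G_m) = \cM_p(\hat\rho) = \cM_p^m$. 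The argument is essentially a bookkeeping exercise: the heavy lifting has been carried out in the replacement propositions, and the one feature that makes everything align is conductive uniformity, which is precisely what ensures that the $G_1$-level optimal density and optimal flow can be spliced consistently across every gluing to yield a globally optimal pair at scale $m$.
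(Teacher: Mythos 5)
Your proof is correct and follows essentially the same route as the paper: apply Propositions \ref{prop: Replacement density} and \ref{prop: Replacement flow} to $\rho_e$ and $\cF_{v,e}$, compute the resulting mass and energy, and invoke the saturation clause of Proposition \ref{prop:duality} via \eqref{eq: Resistance and Modulus}. You are slightly more explicit than the paper about the $m-1$ index bookkeeping and the degenerate case $m=1$, but the argument is otherwise identical.
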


\begin{proof}
    By applying Propositions \ref{prop: Replacement density} and \ref{prop: Replacement flow} to the $\Theta_{v,e}$-admissible density $\rho_{e}$ and the unit flow $\mathcal{F}_{v,e}$ from $I_{v,e}$ to $I_{u,e}$ respectively, we obtain the $\Theta\left(I_{v,e}^{(m)},I_{u,e}^{(m)}\right)$-admissible density $\rho_{m}$ and the unit flow $\mathcal{F}_{m}$ from $I_{v,e}^{(m)}$ to $I_{u,e}^{(m)}$.
    By \eqref{eq: Replacement density} and \eqref{eq: Replacement flow}, they satisfy
    \[
        \mathcal{M}_p(\rho_{m}) = \mathcal{M}_p^m \text{ and } \mathcal{E}_q(\mathcal{F}_m) = \mathcal{R}_p^m.
    \]
    By \eqref{eq: Resistance and Modulus} we have $(\mathcal{M}_p)^{\frac{1}{p}}(\mathcal{R}_p)^\frac{1}{q}=1$, and thus $\mathcal{M}_p(\rho_{m})^{\frac{1}{p}}\mathcal{E}_q(\mathcal{F}_m)^{\frac{1}{q}}=1$. Thus Proposition \ref{prop:duality} yields \eqref{eq: Modulus from L to R lvl m}.
\end{proof}

\begin{corollary}\label{lemma: mass of rho_m}
    The density $\widetilde{\rho}_m$ is $\Theta_m$-admissible and there is a constant $C = C(p,C_{\deg},\abs{E_1})$ so that
    \begin{equation}\label{eq: mass of rho_m}
        \mathcal{M}_p(\widetilde{\rho}_m) \leq C \cdot  \mathcal{M}_p^{m}.
    \end{equation}
    Here $\widetilde{\rho}_m$ is as in \eqref{eq: rho_m}.
\end{corollary}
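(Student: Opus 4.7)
The plan is to verify admissibility and then derive the mass bound via a direct pointwise comparison with the family $\{\rho_{e,m}\}_{e \in E_1}$. For the admissibility claim, I would first note that by definition $\widetilde{\rho}_m(v) \geq \rho_{e,m}(v)$ for every $e \in E_1$ and every $v \in V_m$. Given any $\theta \in \Theta_m$, by definition of $\Theta_m$ there exist $e \in E_1$ and $v \in e$ with $\theta \in \Theta_{v,e}^{(m)}$. Since $\rho_{e,m}$ is (in particular) $\Theta_{v,e}^{(m)}$-admissible, the pointwise inequality yields
\[
L_{\widetilde{\rho}_m}(\theta) \geq L_{\rho_{e,m}}(\theta) \geq 1,
\]
proving $\Theta_m$-admissibility of $\widetilde{\rho}_m$.

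For the mass bound, I would apply the elementary inequality $\max_{e \in E_1} a_e^p \leq \sum_{e \in E_1} a_e^p$ pointwise at every vertex and then swap the order of summation:
\[
\mathcal{M}_p(\widetilde{\rho}_m) = \sum_{v \in V_m} \max_{e \in E_1} \rho_{e,m}(v)^p \leq \sum_{e \in E_1} \mathcal{M}_p(\rho_{e,m}).
\]
It then remains to estimate each $\mathcal{M}_p(\rho_{e,m})$. Interpreting $\rho_{e,m}$ as the optimal vertex density (the interpretation forced on us by $\widetilde{\rho}_m$ being a vertex density), $\mathcal{M}_p(\rho_{e,m})$ equals the vertex $p$-modulus of $\Theta_{v,e}^{(m)}$ on $G_m$. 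By Lemma \ref{lemma: Edge- and Vertex modulus}, this is bounded by a constant depending on $p$ and $\deg(G_m)$ times the edge modulus $\Mod_p(\Theta_{v,e}^{(m)}, G_m)$, which by Corollary \ref{corollary: Modulus from L to R lvl m} equals $\mathcal{M}_p^m$. Since $\deg(G_m) = C_{\deg}$ by Lemma \ref{lemma: Doubling}, combining these estimates yields
\[
\mathcal{M}_p(\widetilde{\rho}_m) \leq |E_1| \cdot C(p, C_{\deg}) \cdot \mathcal{M}_p^m,
\]
giving the desired bound with $C = |E_1| \cdot C(p, C_{\deg})$.

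I do not expect any serious obstacle in this argument; the proof is essentially a bookkeeping exercise once the right estimates are assembled. The one subtlety worth flagging is the dual use of the notation $\rho_{e,m}$ for both the edge- and the vertex-modulus optimizer: here only the vertex version is relevant, and Lemma \ref{lemma: Edge- and Vertex modulus} is needed as a bridge to transfer the explicit edge-modulus identity $\Mod_p(\Theta_{v,e}^{(m)}, G_m) = \mathcal{M}_p^m$ into a mass bound for the vertex density at the cost of a multiplicative constant depending on $p$ and $C_{\deg}$.
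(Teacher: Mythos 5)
Your proof is correct and follows essentially the same route as the paper: use pointwise domination $\widetilde\rho_m \geq \rho_{e,m}$ for admissibility, bound $\mathcal{M}_p(\widetilde\rho_m)$ by a sum/max over $e\in E_1$ of $\mathcal{M}_p(\rho_{e,m})$, then pass from vertex to edge modulus via Lemma \ref{lemma: Edge- and Vertex modulus} and apply Corollary \ref{corollary: Modulus from L to R lvl m}. The paper's one-line proof is just a compressed version of your argument, and your observation that $\deg(G_m)=C_{\deg}$ via Lemma \ref{lemma: Doubling} is needed to make the constant $m$-independent — a point the paper leaves implicit.
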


\begin{proof}
    Admissibility is clear and 
    the $p$-mass estimate follows by
    \begin{equation*}
        \mathcal{M}_p(\widetilde{\rho}_m) \leq \abs{E_1} \cdot \max_{e \in E_1} \mathcal{M}_p(\rho_{e,m}) \stackrel{\eqref{eq: Edge- and Vertex modulus}}{\leq} C \cdot  \mathcal{M}_p^{m}.
    \end{equation*}
\end{proof}

\begin{theorem}\label{thm: Moduli asymptotic}
    Let $A, B \subseteq V_n$ be non-empty disjoint subsets. Then
    \begin{equation}
        \Mod_p(\pi_{n+m,n}^{-1}(A),\pi_{n+m,n}^{-1}(B)) = \Mod_p(A,B) \cdot \mathcal{M}_p^{m}.
    \end{equation}
    Moreover, if $\rho_n : V_n \to \R_{\geq 0}$ is the optimal $\Theta(A,B)$-admissible density then the optimal $\Theta(\pi_{n+m,n}^{-1}(A),\pi_{n+m,n}^{-1}(B))$-admissible density is
    \begin{equation}\label{eq: Def replacement density}
        \rho_{n + m}(e) = \rho_n(\pi_{n+m,n}(e)) \cdot \rho_{\pi_{n+m,n}(e),m}\left(\sigma_{\pi_{n+m,n}(e),m}^{-1}(e)\right).
    \end{equation}
    In particular, $\supp \,\rho_{n+m} \subseteq \pi_{n + m,n}^{-1}(\supp \, \rho_{n}).$
\end{theorem}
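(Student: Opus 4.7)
The strategy is to run the replacement construction on both the primal side (admissible densities, Proposition \ref{prop: Replacement density}) and the dual side (unit flows, Proposition \ref{prop: Replacement flow}) starting from the optimal objects for $(A,B)$ at level $n$, and then invoke the sharpness criterion in Proposition \ref{prop:duality} to conclude optimality at level $n+m$ for free.

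First, let $\rho_n^\ast : E_n \to \R_{\geq 0}$ be the optimal $\Theta(A,B)$-admissible density and let $\cF_n^\ast$ be the $\cE_q$-minimizing unit flow from $A$ to $B$, so that $\cM_p(\rho_n^\ast) = \Mod_p(A,B)$ and $\cE_q(\cF_n^\ast) = \cE_q(A,B)$. Applying Proposition \ref{prop: Replacement density} inductively $m$ times to $\rho_n^\ast$ yields a $\Theta(\pi_{n+m,n}^{-1}(A),\pi_{n+m,n}^{-1}(B))$-admissible density $\rho_{n+m}$ given explicitly by the formula
\[
\rho_{n+m}(e) = \rho_n^\ast(\pi_{n+m,n}(e)) \cdot \rho_{\pi_{n+m,n}(e),m}\!\left(\sigma_{\pi_{n+m,n}(e),m}^{-1}(e)\right),
\]
which is precisely \eqref{eq: Def replacement density}; this identification follows by unwinding the one-step definition of Proposition \ref{prop: Replacement density} together with \ref{SM1}--\ref{SM3} and the formula $\rho_{e,m} = \rho_e \cdot \rho_{e',m-1}$ implicit in the inductive construction. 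Iterating \eqref{eq: Replacement density} gives
\[
\cM_p(\rho_{n+m}) = \Mod_p(A,B) \cdot \cM_p^{m},
\]
together with the support inclusion $\supp\,\rho_{n+m} \subseteq \pi_{n+m,n}^{-1}(\supp\,\rho_n^\ast)$. This already produces the upper bound for the modulus.

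In parallel, applying Proposition \ref{prop: Replacement flow} inductively to $\cF_n^\ast$ yields a unit flow $\cF_{n+m}$ from $\pi_{n+m,n}^{-1}(A)$ to $\pi_{n+m,n}^{-1}(B)$ with
\[
\cE_q(\cF_{n+m}) = \cE_q(A,B) \cdot \cR_p^{m}.
\]
Now I combine the duality of Proposition \ref{prop:duality} applied at level $n$,
\[
\Mod_p(A,B)^{1/p} \cdot \cE_q(A,B)^{1/q} = 1,
\]
with the conductive uniformity identity \eqref{eq: Resistance and Modulus}, $\cM_p^{1/p} \cdot \cR_p^{1/q} = 1$, raised to the $m$-th power. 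Multiplying these yields
\[
\cM_p(\rho_{n+m})^{1/p} \cdot \cE_q(\cF_{n+m})^{1/q} = 1.
\]

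At this point, the ``moreover'' part of Proposition \ref{prop:duality} applies: the pair $(\rho_{n+m},\cF_{n+m})$ saturates the duality inequality for the curve family $\Theta(\pi_{n+m,n}^{-1}(A),\pi_{n+m,n}^{-1}(B))$, hence both are optimal. Therefore
\[
\Mod_p(\pi_{n+m,n}^{-1}(A),\pi_{n+m,n}^{-1}(B)) = \cM_p(\rho_{n+m}) = \Mod_p(A,B) \cdot \cM_p^{m},
\]
which is the asymptotic equality. Uniqueness of the optimal admissible density (itself a consequence of strict convexity of $\cM_p$ for $p>1$) finally identifies $\rho_{n+m}$ with the expression in \eqref{eq: Def replacement density} and yields the support inclusion as a bonus. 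The only non-routine step is verifying that the one-step replacement of Proposition \ref{prop: Replacement density}, iterated $m$ times, produces exactly the product formula \eqref{eq: Def replacement density}; once this bookkeeping is done, everything else is a mechanical consequence of the duality already established.
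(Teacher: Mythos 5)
Your proof is correct and follows essentially the same route as the paper's: apply Propositions \ref{prop: Replacement density} and \ref{prop: Replacement flow} to the optimal density and the optimal unit flow respectively, observe that the conductive-uniformity identity $\cM_p^{1/p}\cR_p^{1/q}=1$ combined with the level-$n$ duality makes the resulting pair saturate \eqref{eq: Duality}, and then invoke the sharpness clause of Proposition \ref{prop:duality} together with uniqueness of the optimal density. The paper just cites the analogous argument from Corollary \ref{corollary: Modulus from L to R lvl m} and says the explicit formula follows from the proof of Proposition \ref{prop: Replacement density}, which is exactly the bookkeeping you describe.
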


\begin{proof}
    When we apply Proposition \ref{prop: Replacement density} to $\rho_n$, it follows from the proof of the proposition, that the density constructed is exactly $\rho_{n + m}$ as defined in \eqref{eq: Def replacement density}.
    Furthermore, it follows from similar argument as in Corollary \ref{corollary: Modulus from L to R lvl m} that $\rho_{n + m}$ is indeed the optimal $\Theta(\pi_{n+m,n}^{-1}(A),\pi_{n+m,n}^{-1}(B))$-admissible density.
\end{proof}

\section{Combinatorial Loewner property}\label{sec:combloew}
In this section we introduce easily verifiable sufficient conditions for the limit space $X$ of the IGS to satisfy the combinatorial Loewner property. These are all satisfied by the examples presented in this paper, except for the last one which fails only for Example \ref{ex:laaksodiamond}. Notice that the failure in this example is expected due to the existence of cut points.
\begin{assumption}\label{Assumptions: CLP}
    The iterated graph system satisfies
    \begin{enumerate}
        \item $L_*$-uniform scaling property,
        \item Doubling property,
        \item Conductively uniform property and
        \item[(4*)] the graph $G_1$ contains at least two edge-wise disjoint paths in $\Theta_{v,e}$ for some $e = \{ v,u \} \in E_1$.
    \end{enumerate}
\end{assumption}

The goal of this section is to prove the following theorem.
\begin{theorem}\label{thm:combloew} 
    If the iterated graph system satisfies Assumption \ref{Assumptions: CLP} then 
    $X$ satisfies the combinatorial $Q_*$-Loewner property for $Q_* = \dim_{\AR}(X) > 1$.
\end{theorem}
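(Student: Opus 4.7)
The plan is to exploit the identity $\cM_{Q_*} = 1$ to promote Theorem \ref{thm: Moduli asymptotic} into a scale-invariance principle: for any non-empty disjoint $A, B \subseteq V_n$,
\[
\Mod_{Q_*}\!\big(\Theta(\pi_{n+m,n}^{-1}(A), \pi_{n+m,n}^{-1}(B)), G_{n+m}\big) = \Mod_{Q_*}(\Theta(A,B), G_n)
\]
for all $m \in \N$, so that critical moduli are exactly preserved under refinement. The identification $Q_* = \dims_{\rm AR}(X)$ follows from Proposition \ref{prop:confdimchar}: the strict monotonicity of $p \mapsto \cM_p$ in Proposition \ref{prop: properties of cM_p}, combined with Corollary \ref{corollary: Modulus from L to R lvl m} and Theorem \ref{thm: Moduli asymptotic}, gives $\cM_{\delta,p}^{(k)} \to 0$ exactly when $p > Q_*$.

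For \textbf{(CLP2)}, I would telescope across a logarithmic chain of annuli. Given $x \in X$, $r \geq L_*^{-n}$ and large $C$, pick $k$ with $L_*^{-(k+1)} < r \leq L_*^{-k}$ and set $l := \lfloor \log_{L_*} C \rfloor$. For each $1 \leq j \leq l$, every curve from $\overline{B(x,r)}$ to $X \setminus B(x, Cr)$ crosses the annular region $A_j := B(x, L_*^{-k+j}) \setminus B(x, L_*^{-k+j-1})$. Using the scaling maps from approximate self-similarity to transport the density $\widetilde{\rho}_m$ of Corollary \ref{lemma: mass of rho_m} to the scale of $A_j$, and converting to a vertex density via Lemma \ref{lemma: Edge- and Vertex modulus}, I would obtain an admissible $\rho_j : V_{n+m} \to \R_{\geq 0}$ for curves crossing $A_j$, with the uniform bound $\cM_{Q_*}(\rho_j) \leq M_0$ (uniform in $j$ precisely because $\cM_{Q_*} = 1$). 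Then $\rho := l^{-1} \sum_{j=1}^l \rho_j$ is admissible for $\Gamma(\overline{B(x,r)}, X \setminus B(x, Cr))$, and since the annular supports are essentially disjoint, $\cM_{Q_*}(\rho) \lesssim M_0 \cdot l^{1-Q_*}$, which decays to $0$ as $C \to \infty$.

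For \textbf{(CLP1)}, given continua $F_1, F_2$ with $L_*^{-n} \leq \diam(F_i)$ and relative distance $\Delta$, set $A_i := G_n[F_i]$. Proposition \ref{prop: Graph approximations} and Lemma \ref{lemma: properties of d_X} guarantee that each $A_i$ is a connected subset of $V_n$ with $d_{G_n}$-diameter bounded below and $d_{G_n}(A_1, A_2) \lesssim \Delta$. Any curve $\gamma \in \Gamma(F_1, F_2)$ induces a connected vertex set $G_{n+m}[\gamma]$ that meets $G_{n+m}[F_i] \subseteq \pi_{n+m,n}^{-1}(A_i)$, so via Lemma \ref{lemma: Edge- and Vertex modulus} and Theorem \ref{thm: Moduli asymptotic},
\[
\Mod_{Q_*}^D(\Gamma(F_1,F_2), G_{n+m}) \gtrsim \Mod_{Q_*}(\Theta(A_1, A_2), G_n).
\]
For the lower bound on $\Mod_{Q_*}(\Theta(A_1, A_2), G_n)$ I would use duality (Proposition \ref{prop:duality}): build a unit flow from $A_1$ to $A_2$ by uniformly distributing mass over $\sim N$ edge-disjoint paths of length $\sim \Delta$ produced by iterating the branching from (4*) via Proposition \ref{prop: Replacement flow}. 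The energy computes to $\cE_q \sim \Delta \cdot N^{1-q}$, whence via the duality identity $\Mod_{Q_*} = \cE_q^{1-Q_*}$ one obtains a lower bound of the form $\phi(\Delta^{-1}) \sim \Delta^{1-Q_*}$, as required.

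The hardest step is producing a quantitative unit flow between $A_1$ and $A_2$ whose energy is controlled by $\Delta$ alone, independently of $n$ and the configuration of $A_1, A_2$. This requires simultaneously exploiting the parallelism supplied by (4*) at every scale via Proposition \ref{prop: Replacement flow}, and the finiteness of the set of equivalence classes of fundamental neighbourhoods established in Subsection \ref{subsec:selfsimilarity}. Combined with approximate self-similarity, the latter reduces the matter to finitely many base configurations. Carefully tracking how a curve in $X$ refines to a vertex path through the tile structure of Proposition \ref{prop: Similarity maps} is the main bookkeeping challenge underneath both the upper and the lower bounds.
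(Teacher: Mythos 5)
Your identification of $Q_*$ with $\dims_{\rm AR}(X)$ and your telescoping annulus argument for \ref{CLP2} both track the paper's logic (the paper uses Proposition \ref{prop: Annulus modulus} plus dyadic aggregation at the critical exponent; your $L_*$-adic version is essentially equivalent, though you omit the small-$C$ regime $1 < C \leq 4C_\diam$, which the paper handles separately via \cite[Proposition 2.2]{BourK}).

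The \ref{CLP1} step, however, contains a genuine gap. Your central reduction
\[
\Mod_{Q_*}^D(\Gamma(F_1,F_2), G_{n+m}) \gtrsim \Mod_{Q_*}(\Theta(A_1, A_2), G_n),\qquad A_i := G_n[F_i],
\]
is not justified and runs in the wrong direction. The set inclusion you invoke, $G_{n+m}[F_i] \subseteq \pi_{n+m,n}^{-1}(A_i)$, is false: a vertex $v \in V_{n+m}$ with $X_v \cap F_i \neq \emptyset$ will generically project under $\pi_{n+m,n}$ to an \emph{edge} of $G_n$, not to a vertex of $A_i$, while $\pi_{n+m,n}^{-1}(A_i)$ consists only of ancestors of vertices. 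Even granting a suitable inclusion, the monotonicity of modulus would produce $\lesssim$, not $\gtrsim$: if $G_{n+m}[F_i] \subseteq \pi_{n+m,n}^{-1}(A_i)$, then $\Theta(G_{n+m}[F_1], G_{n+m}[F_2]) \subseteq \Theta(\pi_{n+m,n}^{-1}(A_1), \pi_{n+m,n}^{-1}(A_2))$ and hence, combining Proposition \ref{prop: Discrete and combinatorial modulus are comparable} with Theorem \ref{thm: Moduli asymptotic} at $p = Q_*$, one obtains an upper bound on $\Mod_{Q_*}^D(\Gamma(F_1,F_2), G_{n+m})$ by $\Mod_{Q_*}(\Theta(A_1,A_2),G_n)$, which is the opposite of what \ref{CLP1} asks for. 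Your subsequent flow construction compounds the problem: you propose ``$\sim N$ edge-disjoint paths at level $n$,'' but Proposition \ref{prop: Replacement flow} does not produce parallelism at the base scale --- the branching supplied by (4*) is already encoded in the constant $\cR_p$ that multiplies the energy upon refinement, and at $p = Q_*$ one has $\cR_{Q_*}=1$, so refinement gains nothing. In particular, nothing guarantees that the discretized continua $A_1,A_2$ admit many disjoint paths between them at level $n$.

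The paper's route is structurally different and avoids both issues. Proposition \ref{prop: Ball-Loewner} does \emph{not} lower bound the modulus between arbitrary continua directly; instead it restricts to balls $B_1, B_2$ of comparable scale, restricts further to the subfamily of curves of diameter $\lesssim (A+4)r$, and lower bounds the modulus of that subfamily by pushing a single unit flow along a bounded-length level-$n$ path through Proposition \ref{prop: Replacement flow} (producing $\cE_q(\cF_{n+m}) \lesssim (A+4)\cR_p^m$, so $\Mod_{Q_*} \gtrsim (A+4)^{1-Q_*}$). Passing from this ball/bounded-diameter estimate to arbitrary continua $F_1, F_2$ with relative distance $\Delta$ is exactly where the iterative (symmetrization/chaining) argument of \cite[Proposition 2.9]{BourK} enters, and this step cannot be replaced by a naive level-$n$ reduction. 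If you want to make your proof work, you need this Bourdon--Kleiner iteration or a substitute for it.
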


Recall the notation from Subsection \ref{subsec:combloewdef}. Throughout this section we assume that the IGS satisfy (1)-(3) in Assumption \ref{Assumptions: CLP}. The condition (4*) will only be used to ensure that the conformal dimension of the limit space is strictly greater than $1$ and is only used in the proof of Theorem \ref{thm:combloew}. Furthermore, for the rest of the paper, $\cM_p$ is always the moduli constant in \eqref{eq: Conductively uniform (Modulus)}.

During the moduli computations, we will adopt the notation $A \lesssim B$ (resp. $A \gtrsim B$) if there is a constant $C > 0$, which depends on at most $L_*,C_{\deg}, C_{\diam}, |E|,p$ and dependence on $p$ is continuous, so that $A \leq C\cdot B$ (resp. $A \geq B/C$). We also write $A \asymp B$ whenever $A \lesssim B$ and $B \lesssim A$.

\begin{lemma}\label{lemma: Discretization of connected set}
    If $F \subseteq X$ is a connected set then $G_m[F] \subseteq V_m$ is a connected set of the graph $G_m$. Moreover, if $\Gamma$ is a family of paths in $X$, then
    \[
        \Mod_p^{D}(\Gamma, G_m) = \Mod_p^V(G_m[\Gamma], G_m).
    \]
\end{lemma}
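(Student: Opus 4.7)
The plan is to treat the two assertions separately, with the second following almost tautologically from the first once the nerve correspondence is in place. For the first assertion, the key observation is that, by Proposition \ref{prop: Graph approximations}, the open sets $\{X_v\}_{v \in V_m}$ form a covering of $X$ whose nerve is exactly $G_m$: two distinct vertices $v,u$ are joined by an edge if and only if $X_v \cap X_u \neq \emptyset$. I would argue by contradiction: assume $G_m[F]$ admits a partition $G_m[F] = A \sqcup B$ into non-empty subsets with no edges of $G_m$ joining $A$ to $B$. Form the open sets
\[
    U_A := \bigcup_{v \in A} X_v, \qquad U_B := \bigcup_{v \in B} X_v.
\]
The edge condition gives $U_A \cap U_B = \emptyset$; the definition of $G_m[F]$ gives $F \cap U_A, F \cap U_B \neq \emptyset$; and because $\{X_v\}_{v \in V_m}$ covers $X$ while $v \in G_m[F]$ whenever $X_v \cap F \neq \emptyset$, we have $F \subseteq U_A \cup U_B$. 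This is a separation of $F$, contradicting its connectedness.

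For the second assertion I would simply unwind the two definitions. A density $\rho : V_m \to \R_{\geq 0}$ is admissible for $\Mod_p^D(\Gamma, G_m)$ exactly when
\[
    \sum_{v \in G_m[\gamma]} \rho(v) \;\geq\; 1 \quad \text{for every } \gamma \in \Gamma,
\]
which is precisely the condition $L_\rho(\theta) \geq 1$ for every $\theta \in G_m[\Gamma]$ appearing in the vertex modulus $\Mod_p^V(G_m[\Gamma], G_m)$. The mass functional $\cM_p(\rho) = \sum_{v \in V_m} \rho(v)^p$ is identical in both definitions. The vertex modulus, however, is defined only over families of non-empty connected subsets of $V_m$, so one must verify that each $G_m[\gamma]$ is such a subset. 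Non-emptiness is immediate since the $X_v$ cover $X$, and the connectedness of $G_m[\gamma]$ is exactly the content of the first part applied to the connected set $F = \gamma \subseteq X$.

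The argument is thus entirely structural, and there is no substantial obstacle: the only subtlety worth flagging is making sure the connectedness of $\gamma$ as a topological subspace of $X$ (which holds for any continuous curve, understood as its image) is the correct hypothesis to invoke in the first part, so that the second part applies without restriction to arbitrary curve families $\Gamma$.
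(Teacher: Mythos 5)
Your proposal is correct and fills in exactly the details behind the paper's one-line proof, which also invokes the incidence-graph structure of $G_m$ (Propositions \ref{prop: GH-convergence} and \ref{prop: Graph approximations}) together with connectedness. The separation argument via $U_A, U_B$ and the observation that the two modulus problems have literally the same admissibility condition and mass functional are the intended reasoning.
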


\begin{proof}
    Follows directly from connectedness of $X$ and from the fact that $G_m$ can be regarded as an incidence graph of $X$ corresponding to an open cover (Propositions \ref{prop: GH-convergence} and \ref{prop: Graph approximations}).
\end{proof}

\begin{proposition}\label{prop: Discrete and combinatorial modulus are comparable}
    Let $F_1,F_2 \subseteq X$ be disjoint subsets so that $G_m[F_1],G_m[F_2] \subseteq V_m$ are also disjoint. Then
    \begin{equation*}
    \Mod_p(\Theta(G_m[F_1],G_m[F_2]),G_m) \asymp \Mod_p^{D}(\Gamma(F_1,F_2), G_m)
    \end{equation*}
\end{proposition}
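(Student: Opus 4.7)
The plan is to interpret both sides as vertex moduli on $G_m$ and then compare the two associated families of connected subsets. Lemma~\ref{lemma: Discretization of connected set} gives
\[
\Mod_p^D(\Gamma(F_1,F_2), G_m) = \Mod_p^V(G_m[\Gamma(F_1,F_2)], G_m),
\]
while Lemma~\ref{lemma: Edge- and Vertex modulus} applied to $\Theta(G_m[F_1],G_m[F_2])$ yields
\[
\Mod_p(\Theta(G_m[F_1],G_m[F_2]), G_m) \asymp \Mod_p^V(\Theta(G_m[F_1],G_m[F_2]), G_m),
\]
so it suffices to compare the vertex moduli of the two families $G_m[\Gamma(F_1,F_2)]$ and $\Theta(G_m[F_1],G_m[F_2])$.

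The easy inequality is $\Mod_p^V(G_m[\Gamma(F_1,F_2)]) \leq \Mod_p^V(\Theta(G_m[F_1],G_m[F_2]))$. Every $\gamma \in \Gamma(F_1,F_2)$ has its endpoints in some $X_{v_1}$ and $X_{v_n}$ with $v_1 \in G_m[F_1]$, $v_n \in G_m[F_2]$; by Lemma~\ref{lemma: Discretization of connected set} the set $G_m[\gamma]$ is connected, and hence contains a simple path from $G_m[F_1]$ to $G_m[F_2]$. Therefore any $\Theta(G_m[F_1],G_m[F_2])$-admissible $\rho$ also satisfies $\sum_{v \in G_m[\gamma]} \rho(v) \geq 1$, giving the inequality with constant one.

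The reverse inequality is the main point. Given a path $\theta = [v_1,\dots,v_n]$ from $G_m[F_1]$ to $G_m[F_2]$ in $G_m$, I would build a rectifiable curve $\gamma \in \Gamma(F_1,F_2)$ whose graph-image sits in a bounded neighbourhood of $\theta$. Choose $x_0 \in F_1 \cap X_{v_1}$, $x_n \in F_2 \cap X_{v_n}$, and for $1 \leq i \leq n-1$ choose $x_i \in X_{v_i} \cap X_{v_{i+1}}$, which is nonempty by Proposition~\ref{prop: Graph approximations}. Consecutive $x_{i-1},x_i$ both lie in $X_{v_i}$, and by Proposition~\ref{prop: Graph approximations} $X_{v_i}$ has diameter at most $2C_{\diam}L_*^{-m}$. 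Since $X$ is geodesic (Proposition~\ref{prop: GH-convergence}), joining the $x_{i-1},x_i$ by geodesic sub-arcs yields such a $\gamma$, each sub-arc contained in a ball of radius $2C_{\diam}L_*^{-m}$. Using the two-sided ball control $B(z_v,L_*^{-m}) \subseteq X_v \subseteq B(z_v, C_{\diam}L_*^{-m})$ from Proposition~\ref{prop: Graph approximations}, every vertex $v \in G_m[\gamma]$ meeting the $i$-th sub-arc satisfies $d_{G_m}(v,v_i) \leq k$ for a constant $k = k(C_{\diam},L_*)$. Hence $G_m[\gamma]$ is contained in the graph $k$-neighbourhood $N_k(\theta) := \{ u \in V_m : d_{G_m}(u,\theta) \leq k \}$ of $\theta$.

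Given any $\rho$ admissible for $G_m[\Gamma(F_1,F_2)]$, I would set
\[
\rho'(v) := \sum_{u \in B_k(v)} \rho(u), \qquad B_k(v) := \{ u \in V_m : d_{G_m}(u,v) \leq k \}.
\]
For any $\theta$ as above, $\sum_{v \in \theta} \rho'(v) \geq \sum_{u \in N_k(\theta)} \rho(u) \geq \sum_{u \in G_m[\gamma]} \rho(u) \geq 1$, so $\rho'$ is $\Theta(G_m[F_1],G_m[F_2])$-admissible, and H\"older's inequality with the uniform bound $|B_k(v)| \leq (C_{\deg})^k$ from Lemma~\ref{lemma: Doubling} gives $\mathcal{M}_p(\rho') \leq (C_{\deg})^{kp}\mathcal{M}_p(\rho)$. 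This finishes the comparison. The main obstacle is the geometric construction of $\gamma$ with controlled graph-image; the remaining step is a routine averaging of densities.
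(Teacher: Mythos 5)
Your proof is correct and follows the same overall structure as the paper's: reduce to vertex modulus via Lemma~\ref{lemma: Edge- and Vertex modulus} and Lemma~\ref{lemma: Discretization of connected set}, obtain the easy direction from the fact that every $G_m[\gamma]$ contains a path from $G_m[F_1]$ to $G_m[F_2]$, and for the reverse inequality associate to each combinatorial path $\theta$ a rectifiable curve $\gamma_\theta\in\Gamma(F_1,F_2)$ whose graph-image is confined near $\theta$. The substantive variation is in how $\gamma_\theta$ is built. The paper keeps $\gamma_\theta$ inside $\bigcup_{v\in\theta}X_v$ by appealing to path-connectedness of each cell $X_v$ (asserted from Proposition~\ref{prop: GH-convergence} and self-similarity, but not argued in detail), so that $G_m[\gamma_\theta]$ lands in the graph $1$-neighbourhood of $\theta$ and the cheap max-averaging $\widehat\rho(v)=\max_{\{u,v\}\in E_m}\rho(u)$ suffices. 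You instead splice geodesics between the points $x_i\in X_{v_i}\cap X_{v_{i+1}}$; this allows $\gamma_\theta$ to exit $\bigcup_{v\in\theta}X_v$, but you compensate by showing $G_m[\gamma_\theta]$ lies in the $k$-neighbourhood of $\theta$ for bounded $k=k(C_{\diam},L_*)$ and averaging over $k$-balls. Your route needs only that $X$ is geodesic, which is proved in full in Proposition~\ref{prop: GH-convergence}, rather than path-connectedness of the individual cells; the cost is a somewhat larger comparison constant. One minor slip: $\abs{B_k(v)}\le\sum_{j=0}^{k}(C_{\deg})^{j}$, not $(C_{\deg})^{k}$, though this only changes the constant.
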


\begin{proof}
    By Lemma \ref{lemma: Edge- and Vertex modulus} it is sufficient to prove the comparison for the vertex modulus.
    First, it follows from Lemma \ref{lemma: Discretization of connected set} that $G_m[\Gamma(F_1,F_2)] \subseteq \Theta(G_m[F_1],G_m[F_2])$, which yields
    \[
        \Mod_p^{D}(\Gamma(F_1,F_2), G_m) \leq \Mod_p^V(\Theta(G_m[F_1],G_m[F_2]),G_m).
    \]
    To prove the other inequality, let $\rho : V_m \to \R_{\geq 0}$ be $\Gamma(F_1,F_2)$-admissible.
    Now given a path $\theta \in \Theta(G_m[F_1],G_m[F_2])$, as each $X_v$ is path connected by Proposition \ref{prop: GH-convergence} and self-similarity, there is a path $\gamma_{\theta} \in \Gamma(F_1,F_2)$ so that 
    \[
        \gamma_{\theta} \subseteq \bigcup_{v \in \theta} X_v.
    \]
    Hence
    \[
        G_m[\gamma_{\theta}] \subseteq \theta \cup \{ u \in V_m : \{ v,u \} \in E_m \text{ for some } v \in \theta \}.
    \]
    Now let $\widehat{\rho} : V_m \to \R_{\geq 0}$ so that
    \[
        \widehat{\rho}(v) := \max_{\substack{u \in V_m \\ \{u,v\} \in E_m}} \rho(u).
    \]
    Then
    \[
        C_{\deg} \cdot \sum_{v \in \theta} \widehat{\rho}(v) \geq \sum_{v \in \theta} \sum_{\{ u,v \} \in E_m} \rho(u) \geq \sum_{X_u \cap \gamma_{\theta} \neq \emptyset} \rho(u) \geq 1
    \]
    which gives
    \[
        \Mod_p^V(\Theta(G_m[F_1],G_m[F_2]),G_m) \leq (C_{\deg})^p \cdot \Mod_p^{D}(\Gamma(F_1,F_2),G_m)
    \]
\end{proof}

Recall the definition of $\mathcal{M}_{\delta,p}^{(m)}$ from \eqref{eq:moddeltadef}.
\begin{proposition}\label{prop: LR modulus and BK-type modulus}
    If $0 < \delta < 1$, then there is $C = C(p,\deg(G),\abs{E_1},\delta) \geq 1$ so that
    \begin{equation}\label{eq: LR modulus and BK-type modulus}
        C^{-1} \cdot \mathcal{M}_{p}^{m} \leq \mathcal{M}_{\delta,p}^{(m)} \leq C \cdot \mathcal{M}_{p}^{m}.
    \end{equation}
\end{proposition}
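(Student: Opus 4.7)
The plan is to reduce both inequalities to modulus computations at an auxiliary scale $n = n(\delta)$ depending only on $\delta$, exploiting the scaling law of Theorem~\ref{thm: Moduli asymptotic}.

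For the upper bound $\cM_{\delta,p}^{(m)} \leq C \cdot \cM_p^m$, I will choose $n$ so that $4 C_{\diam} L_*^{-n} < \delta$ and, for $m \geq n$, construct an admissible density on $V_m$ as follows. For each $e \in E_n$, transfer the optimal vertex-modulus density for $\Theta_{v,e}^{(m-n)}$ from $V_{m-n}$ into $e \cdot G_{m-n} \subseteq V_m$ via the similarity map $\sigma_{e,m-n}$ of Proposition~\ref{prop: Similarity maps}; call this transferred density $\rho_e$, and set $\rho(w) := \max_{e \in E_n} \rho_e(w)$. Admissibility for $\Gamma_\delta$ is the crux and I expect it to be the main obstacle: given $\gamma \in \Gamma_\delta$, Lemma~\ref{lemma: Discretization of connected set} tells us that $G_m[\gamma]$ is connected, and the diameter condition $\diam(\gamma) > \delta$, combined with \ref{lemma: Distance of ancestors (2)} and the mesh bound on tiles, forces $G_m[\gamma]$ to contain two vertices whose $\pi_{m,n}$-projections are distinct. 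Connecting them by a path inside $G_m[\gamma]$ and invoking Proposition~\ref{prop: Path decomposition} produces a sub-path which is a full crossing of some tile $e \cdot G_{m-n}$, on which admissibility of the transferred $\rho_e$ forces $\sum \rho \geq 1$. The mass estimate
\[
\cM_p(\rho) \;\lesssim\; |E_1|^n \cdot \cM_p^{m-n} \;=\; \left(|E_1|^n \cM_p^{-n}\right) \cdot \cM_p^m \;=\; C(\delta) \cdot \cM_p^m
\]
then follows from Corollary~\ref{corollary: Modulus from L to R lvl m}, Lemma~\ref{lemma: Edge- and Vertex modulus}, and the bounded overlap of tiles at scale $n$ guaranteed by \ref{SM2}.

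For the lower bound, I will exhibit an explicit sub-family of $\Gamma_\delta$ whose discrete modulus realizes $\cM_p^m$ up to constants. Choose $n$ with $L_*^{-n} \leq (1-\delta)/(10 C_{\diam})$. By Corollary~\ref{corollary: Distance between edges (3)} there exist $v_1, v_2 \in V_n$ with $d_n(v_1,v_2) \geq 1$, and by the choice of $n$ these vertices are non-adjacent in $G_n$. Set $F_i := \overline{X_{v_i}}$; Lemma~\ref{lemma: properties of d_X} yields $d_X(F_1,F_2) > \delta$, so that $\Gamma(F_1,F_2) \subseteq \Gamma_\delta$. A direct unfolding of the definitions of $X_v$ and the projections shows
\[
G_m[F_i] = \pi_{m,n}^{-1}(\mathrm{star}_{G_n}(v_i))
\]
for $m \geq n$, where $\mathrm{star}_{G_n}(v_i)$ denotes the closed star of $v_i$ in $G_n$, and these preimages are disjoint by non-adjacency of $v_1,v_2$. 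Successively applying Proposition~\ref{prop: Discrete and combinatorial modulus are comparable}, Lemma~\ref{lemma: Edge- and Vertex modulus}, and Theorem~\ref{thm: Moduli asymptotic} yields
\[
\cM_{\delta,p}^{(m)} \;\geq\; \Mod_p^D(\Gamma(F_1,F_2),G_m) \;\asymp\; \Mod_p(\mathrm{star}_{G_n}(v_1),\mathrm{star}_{G_n}(v_2),G_n) \cdot \cM_p^{m-n} \;\gtrsim\; c(\delta) \cdot \cM_p^m,
\]
the first modulus being a strictly positive constant depending only on $\delta$. The finitely many cases with $m < n$ can be absorbed into the constants in both bounds.
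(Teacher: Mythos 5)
Your overall strategy is sound and the upper bound matches the paper's argument closely (the paper constructs the density on $V_{m+k_*}$ rather than on $V_m$, which is the same thing after a shift, and uses the density $\widetilde{\rho}_m$ rather than per-tile optimal densities, which by conductive uniformity have the same $p$-mass). The lower bound, however, is considerably more elaborate than the paper's: the paper simply notes that for any $e=\{v,u\}\in E_1$ the gluing sets $I_{v,e}$, $I_{u,e}$ correspond to subsets of $X$ at distance exactly $1 > \delta$, so curves joining them already lie in $\Gamma_\delta$, and Proposition~\ref{prop: Discrete and combinatorial modulus are comparable} together with Corollary~\ref{corollary: Modulus from L to R lvl m} immediately gives $\cM_p^m \lesssim \cM_{\delta,p}^{(m)}$ with no auxiliary scale $n(\delta)$, no explicit choice of $v_1,v_2$, and no invocation of Theorem~\ref{thm: Moduli asymptotic}. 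Your route works, but it pays a price in bookkeeping (the claimed identity $G_m[F_i] = \pi_{m,n}^{-1}(\mathrm{star}_{G_n}(v_i))$ is asserted, not verified, and is not needed in the paper's shorter argument).

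There is one technical slip in your admissibility argument for the upper bound. Knowing that $G_m[\gamma]$ contains two vertices whose $\pi_{m,n}$-projections are merely \emph{distinct} is not sufficient: the projections could be adjacent edges of $E_n$, or an edge and one of its endpoints, in which case no full tile-crossing is forced. What you need is that the two vertices lie in tiles $e\cdot G_{m-n}$, $f\cdot G_{m-n}$ for $e,f\in E_n$ \emph{with no common vertex}, and then the correct tool is Proposition~\ref{prop: Path between disjoint edges}, not Proposition~\ref{prop: Path decomposition}; the latter requires the endpoints of the path to project to disjoint \emph{vertex} sets $A,B\subset V_n$, which your two vertices may not do (they may project to edges). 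Your choice $4C_{\diam}L_*^{-n}<\delta$ does in fact yield two vertices $v_1,v_2\in V_n$ with $d_{G_n}(v_1,v_2)\geq 3$ (since $d_{G_n}(v_1,v_2)>2C_{\diam}\geq 4$), which guarantees disjoint tiles, so the constant is fine — only the stated condition and the cited proposition need correcting. The paper sidesteps this by explicitly producing $v_1,v_2\in V_{k_*}$ with $d_{G_{k_*}}(v_1,v_2)\geq 3$ and then applying Proposition~\ref{prop: Path between disjoint edges}.
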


\begin{proof}
    As $\delta < 1$ and since the distance in $G_1$ from $I_{v,e}$ to $I_{u,e}$ is one for all $e=\{u,v\}\in G_1$, by Proposition \ref{prop: Discrete and combinatorial modulus are comparable}, we have
    \[
        \mathcal{M}_p^{m} = \Mod_p\left(\Theta_{v,e}^{(m)}, G_m\right) \lesssim 
        \mathcal{M}_{\delta,p}^{(m)}.
    \]
    In order to prove the other inequality, set $k_* \in \N$ to be the smallest positive integer so that $8C_{\diam} \cdot L_*^{-k_*} < \delta$. Then we define $\widehat{\rho}_{m + k_*} : V_{m + k_*} \to \R_{\geq 0}$
    \[
        \widehat{\rho}_{m + k_*}(v) = \max_{\substack{e \in E_{k_*}\\ v \in e \cdot G_m}} \widetilde{\rho}_m\left(\sigma_{e,m}^{-1}(v)\right).
    \]
    Here $\widetilde{\rho}_m$ is as in \eqref{eq: rho_m}.
    First, $\widehat{\rho}_{m + k_*}$ has the $p$-mass estimate
    \begin{align*}
        \mathcal{M}_p( \widehat{\rho}_{m + k_*})
        & \leq \sum_{e \in E_{k_*}} \sum_{z \in V_m} \widetilde{\rho}_m(z)^p
        \stackrel{\eqref{eq: mass of rho_m}}{\leq} \abs{E_{k_*}} \cdot C' \cdot \mathcal{M}_p^{m} = C \cdot \mathcal{M}_p^{m}
    \end{align*}
    where $C = C(p,\deg(G),\abs{E_1},\delta)$.
    Note that the dependence of $\delta$ comes from $k_*$.
    We will conclude the proof by showing that $\widehat{\rho}_{m + k_*}$ is $\Gamma_{\delta}$-admissible.
    Let $\gamma \in \Gamma_{\delta}$ and $\theta = G_{m + k_*}[\gamma]$.
    Since $\diam(X_v) \leq 2 \cdot C_{\diam}\cdot L_*^{-k_*}$ for all $v \in V_{k_*}$, by the choice of $k_*$, there are vertices $v_1,v_2 \in V_{k_*}$ so that so that $d_{G_{k_*}}(v_1,v_2) \geq 3$ and
    \[
        \gamma \cap X_{v_1}, \gamma \cap X_{v_2} \neq \emptyset.
    \]
    Then $\theta$ contains vertices in two disjoint edges $\{ v_1,u_1 \} \cdot G_m$ and $\{ v_2,u_2 \} \cdot G_m$ for some $u_1,u_2, v_1,v_2 \in V_{k_*}$.
    By Proposition \ref{prop: Path between disjoint edges}, there is an edge $f:=\{\hat{u},\hat{v}\}\in E_{k_*}$ so that
    $\theta$ contains a sub-path $\theta'$ from $\pi_{m + k_*}^{-1}(\hat{u})$ to $\pi_{m + k_*}^{-1}(\hat{v})$ and is contained in $f \cdot G_{m}$.
    Hence
    \[
        \sum_{v \in \theta} \widehat{\rho}_{m + k_*}(v) \geq  \sum_{v \in \theta'} \widehat{\rho}_{m + k_*}(v) \geq \sum_{v \in \sigma_{f,m}^{-1}(\theta')} \rho_{f,m}(v) \geq 1.
    \]
\end{proof}

\begin{proposition}\label{prop: Ball-Loewner}
    Let $A > 0, n \in \N$ and so that $\dist(B_1,B_2) \leq A \cdot r$ and $B_1 = B(x,r), B_2 = B(y,r)$ be two disjoint balls in $X$ so that $2C_{\diam} \cdot L_*^{-n} < r \leq 2C_{\diam} \cdot L_*^{-(n-1)}$. If $\Gamma$ is the family of paths
    \begin{equation}
        \Gamma = \left\{ \gamma \in \Gamma(B_1,B_2) : \diam(\gamma) \leq 2C_{\diam}L_*(A + 4) \cdot r \right\}
    \end{equation}
    then
    \begin{equation}
        \Mod_p^{D}(\Gamma, G_{n + m}) \gtrsim \left(\frac{1}{A + 4}\right)^{p-1} \cdot \mathcal{M}_p^{m}.
    \end{equation}
\end{proposition}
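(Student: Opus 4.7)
The plan is to combine a flow-duality argument at the graph level with a controlled realization of short graph paths as curves in $X$.

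First, use Proposition \ref{prop: Graph approximations} and the hypothesis $r > 2C_{\diam} L_*^{-n}$ to select vertices $v_x, v_y \in V_n$ with $X_{v_x} \subseteq B_1$ and $X_{v_y} \subseteq B_2$. The triangle inequality together with $\dist(B_1,B_2) \leq Ar$ gives $d_n(v_x, v_y) \leq (A+4) r$, so $d_{G_n}(v_x, v_y) \leq l$ with $l \lesssim A+4$. Fix a shortest vertex path $\sigma = [v_x = u_0, \dots, u_l = v_y]$ in $G_n$ and let $e_i = \{u_i, u_{i+1}\}$. Let $\mathcal{F}_n$ be the unit flow from $\{v_x\}$ to $\{v_y\}$ supported on $\sigma$ (i.e., $\mathcal{F}_n(u_i,u_{i+1}) = 1$), so $\mathcal{E}_q(\mathcal{F}_n) = l \lesssim A+4$. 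By Proposition \ref{prop: Replacement flow}, $\mathcal{F}_n$ lifts to a unit flow $\mathcal{F}_{n+m}$ from $\pi_{n+m,n}^{-1}(v_x)$ to $\pi_{n+m,n}^{-1}(v_y)$ supported in the tube $E_T := \bigcup_{i=0}^{l-1} e_i \cdot E_m$, with $\mathcal{E}_q(\mathcal{F}_{n+m}) = \mathcal{E}_q(\mathcal{F}_n) \cdot \mathcal{R}_p^m \lesssim (A+4) \mathcal{R}_p^m$. Writing $\Theta^* := \Theta(\pi_{n+m,n}^{-1}(v_x), \pi_{n+m,n}^{-1}(v_y), T)$ with $T = (V_T, E_T)$, densities admissible for $\Theta^*$ may vanish off $E_T$, so the flow-modulus duality of Proposition \ref{prop:duality} applied to $T$, combined with $\mathcal{R}_p^{-(p-1)} = \mathcal{M}_p$ from \eqref{eq: Resistance and Modulus}, yields
\[
    \Mod_p(\Theta^*, G_{n+m}) \geq \mathcal{E}_q(\mathcal{F}_{n+m})^{-(p-1)} \gtrsim (A+4)^{-(p-1)} \mathcal{M}_p^m,
\]
and Lemma \ref{lemma: Edge- and Vertex modulus} transfers the same lower bound to $\Mod_p^V(\Theta^*, G_{n+m})$.

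The remaining and most delicate step, which is the main obstacle, is to transfer this bound to $\Mod_p^D(\Gamma, G_{n+m}) = \Mod_p^V(G_{n+m}[\Gamma], G_{n+m})$ (the identity being Lemma \ref{lemma: Discretization of connected set}). I would realize each $\theta \in \Theta^*$ as a curve $\gamma_\theta \in \Gamma$. Using the fact that $X_w \subseteq X_{\pi_{n+m,n}(w)}$ whenever $\pi_{n+m,n}(w) \in V_n$, the union $\bigcup_{w \in V_T} X_w$ is contained in $\bigcup_{i=0}^l X_{u_i}$, which has diameter at most $(l+1) \cdot 2C_{\diam} L_*^{-n} \leq 2C_{\diam}L_*(A+4) r$ by Lemma \ref{lemma: properties of d_X}; hence any curve $\gamma_\theta$ following $\theta$ through the cells $X_w$ lies in $\Gamma$. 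Moreover, a careful tracking of the discretization shows $G_{n+m}[\gamma_\theta] \subseteq N_{k_0}(\theta)$ for a universal constant $k_0$, where $N_{k_0}(\theta)$ denotes the graph $k_0$-neighborhood of $\theta$ in $G_{n+m}$.

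Given any $\rho : V_{n+m} \to \R_{\geq 0}$ admissible for $G_{n+m}[\Gamma]$, define the fattened density
\[
    \bar\rho(v) := \sum_{u : d_{G_{n+m}}(u,v) \leq k_0} \rho(u).
\]
By Fubini and admissibility of $\rho$, for every $\theta \in \Theta^*$,
\[
    \sum_{v \in \theta} \bar\rho(v) \geq \sum_{u \in N_{k_0}(\theta)} \rho(u) \geq \sum_{u \in G_{n+m}[\gamma_\theta]} \rho(u) \geq 1,
\]
so $\bar\rho$ is $\Theta^*$-admissible. The bounded-degree property (Lemma \ref{lemma: Doubling}) and Hölder's inequality give $\mathcal{M}_p(\bar\rho) \lesssim \mathcal{M}_p(\rho)$, so
$
    \mathcal{M}_p(\rho) \gtrsim \mathcal{M}_p(\bar\rho) \geq \Mod_p^V(\Theta^*, G_{n+m}) \gtrsim (A+4)^{-(p-1)} \mathcal{M}_p^m,
$
which gives the claim. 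The technical core, and the step I expect to require the most care, is the realization of $\theta$ by $\gamma_\theta$ with $G_{n+m}[\gamma_\theta] \subseteq N_{k_0}(\theta)$: this must be done using only the cell structure and Lemma \ref{lemma: properties of d_X}, ensuring that the universal constant $k_0$ is independent of $A$, $n$, and $m$.
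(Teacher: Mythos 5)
Your proposal is correct and follows essentially the same route as the paper's proof: select vertices $v_x,v_y \in V_n$ whose cells lie inside $B_1,B_2$, connect them by a short path in $G_n$, lift the corresponding unit flow via Proposition~\ref{prop: Replacement flow}, apply flow--modulus duality on the induced tube sub-graph to get $\Mod_p(\Theta^*)\gtrsim (A+4)^{-(p-1)}\mathcal{M}_p^m$, and transfer this to $\Mod_p^D(\Gamma)$ by a fattening argument. The step you flag as the technical core, namely $G_{n+m}[\gamma_\theta]\subseteq N_{k_0}(\theta)$, is precisely what the paper's Proposition~\ref{prop: Discrete and combinatorial modulus are comparable} supplies with $k_0=1$, so the argument closes; the paper merely packages the same fattening via intermediate families $\widehat{\Gamma}$ and $\Theta$ and a chain of inequalities rather than a single fattened density.
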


\begin{proof}
    Let $v,u \in V_n$ so that $x \in X_v$ and $y \in X_u$. It follows from Proposition \ref{prop: Graph approximations} that 
    \[
        B(z_v,C_{\diam}\cdot L_*^{-n}) \subseteq B(x,r) \text{ and } B(z_u,C_{\diam}\cdot L_*^{-n}) \subseteq B(y,r).
    \]
    In particular,
    \[
        d_n(v,u) \leq d_X(z_v,z_u) \leq (A + 4)\cdot r \leq 2C_{\diam} L_*^{-(n - 1)} \cdot (A + 4)
    \]
    so there is a path $\theta = [v_1,\dots,v_k]$ in $G_n$ from $v$ to $u$ of length at most $ 2C_{\diam}L_* \cdot (A + 4)$.
    We will now define the family of paths in $X$
    \[
        \widehat{\Gamma} = \left\{ \gamma \in \Gamma(B_1,B_2) : \gamma \subseteq \bigcup_{i = 1}^k X_{v_i} \right\}
    \]
    and families of paths in $G_{n + m}$
    \[
        \Theta = \left\{ \theta \in \Theta(G_{n+m}[B_1],G_{n+m}[B_2]) : \theta \subseteq \bigcup_{i = 1}^{k - 1} \{ v_i,v_{i+1} \} \cdot G_n \right\}
    \]
    and
    \[
        \widehat{\Theta} = \left\{ \theta \in \Theta(\pi_{n+m,n}^{-1}(v),\pi_{n+m,n}^{-1}(u)) : \theta \subseteq \bigcup_{i = 1}^{k - 1} \{ v_i,v_{i+1} \} \cdot G_n \right\}.
    \]
    Indeed, since
    \[
        \diam\left( \bigcup_{i = 1}^k X_{v_i} \right) \leq (k - 1) 2C_{\diam} \cdot L_*^{-n} \leq 2C_{\diam}L_* (A + 4)\cdot r 
    \]
    we have $\widehat{\Gamma} \subseteq \Gamma$.
    From $X_{v} \subseteq B_1$ and $X_u \subseteq B_2$ it follows that $\widehat{\Theta} \subseteq \Theta$.
    By similar argument as in Proposition \ref{prop: Discrete and combinatorial modulus are comparable} when proving the latter inequality, we see that
    \[
        \Mod_p^{D}\left(\widehat{\Gamma}, G_{n+m}\right) \gtrsim \Mod_p(\Theta; G_{n+m}).
    \]
    It now follows that
    \begin{align*}
        \Mod_p^{D}\left(\Gamma, G_{n + m}\right) & \geq \Mod_p^{D}\left(\widehat{\Gamma}, G_{n + m}\right)
         \gtrsim \Mod_p\left(\Theta, G_{n+m}\right)
         \geq \Mod_p\left(\widehat{\Theta}, G_{n + m}\right).
    \end{align*}
    Next let $\mathcal{F}_n$ to be the constant unit flow along $\theta$ from $u$ to $v$ (i.e. $\mathcal{F}_n(\{x,y\})=|\{i : x=v_{i},y=v_{i+1}\}|-|\{i : x=v_{i+1},y=v_{i}\}|$). We have $\cE_q(\cF_n)\leq 2C_{\diam}L_*$. 
    By Proposition \ref{prop: Replacement flow} there is a unit flow $\mathcal{F}_{n+m}$ from $\pi_{n+m,n}^{-1}(v)$ to $\pi_{n+m,n}^{-1}(u)$ with
    \begin{equation*}
        \mathcal{E}_q(\mathcal{F}_{n+m}) \leq  2C_{\diam}L_*(A + 4) \cdot \left(\frac{1}{\mathcal{M}_p^{(m)}}\right)^{\frac{1}{p - 1}} \text{ and } \supp(\mathcal{F}_{n+m}) \subseteq \bigcup_{i = 1}^{k - 1} \{ v_i,v_{i + 1} \} \cdot E_n.
    \end{equation*}
    By applying Proposition \ref{prop:duality} to the sub-graph that is induced by the vertices $\bigcup_{i = 1}^{k - 1} \{ v_i,v_{i+1} \} \cdot G_n$, we obtain
    \[
        \Mod_p\left( \widehat{\Theta}, G_{n + m} \right) \geq \left(\frac{1}{\mathcal{E}_q(\mathcal{F}_{n+m})}\right)^{p-1} \geq \left(\frac{1}{2C_{\diam}L_* \cdot (A + 4)}\right)^{p-1} \mathcal{M}_p^{m}.
    \]
    This concludes the proof.
\end{proof}

\begin{proposition}\label{prop: Annulus modulus}
    Let $B = B(x,r)$ where $4 C_{\diam} \cdot L_*^{-n} < r \leq 4 C_{\diam} \cdot L_*^{-(n - 1)}$ for some $n\in \N$.
    Then for all $m \in \N$
    \begin{equation}\label{eq: Annulus modulus}
        \Mod_p^{D}\left(\Gamma\left(\overline{B(x,r)},X \setminus B(x,2r)\right), G_{n+m}\right) \lesssim \mathcal{M}_p^{m}
    \end{equation}
\end{proposition}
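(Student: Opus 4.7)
The strategy is to construct an explicit low-energy potential function on $G_{n+m}$ by lifting a coarse indicator on $G_n$ tile by tile. Set $F_1 := \overline{B(x,r)}$ and $F_2 := X \setminus B(x,2r)$. I would first check that $G_{n+m}[F_1] \cap G_{n+m}[F_2] = \emptyset$: if $v$ lay in both then $\diam(X_v) \geq d(F_1,F_2) = r$, contradicting $\diam(X_v) \leq 2C_{\diam}L_*^{-(n+m)}$ (via \ref{lemma: properties of d_X (2)}) together with $r > 4C_{\diam}L_*^{-n}$. By Proposition \ref{prop: Discrete and combinatorial modulus are comparable} and Lemma \ref{lemma: Mod-Cap} it then suffices to produce $U : V_{n+m} \to [0,1]$ vanishing on $G_{n+m}[F_1]$, equal to one on $G_{n+m}[F_2]$, and with $\cE_p(U) \lesssim \mathcal{M}_p^m$.

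Define $A := \{ v \in V_n : d_X(z_v,x) \leq 3r/2 \}$ and set $U_n : V_n \to \{0,1\}$ by $U_n|_A = 0$ and $U_n|_{V_n \setminus A} = 1$. The key geometric claim is that whenever $v \in G_{n+m}[F_1]$ lies in a tile $e \cdot G_m$ with $e = \{v^*,u^*\} \in E_n$, both $v^*,u^* \in A$; symmetrically, whenever $v \in G_{n+m}[F_2]$ lies in such a tile, both endpoints lie outside $A$. This rests on the containment $X_v \subseteq X_{v^*} \cup X_{u^*}$ when $\pi_{n+m,n}(v) \in E_n$ (a consequence of \ref{SM3}), together with $X_v \subseteq X_w \cup \bigcup_{u :\, \{w,u\} \in E_n} X_u$ when $\pi_{n+m,n}(v) = w \in V_n$. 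In the $F_1$ case some such nearby $w_0 \in V_n$ satisfies $d_X(z_{w_0}, x) \leq r + C_\diam L_*^{-n} \leq 5r/4$; since adjacent vertices of $V_n$ differ by $L_*^{-n}$ in $d_X$ (Proposition \ref{prop: Graph approximations}) and $L_*^{-n} < r/(4C_\diam)$, every endpoint of every relevant edge lies within $3r/2$ of $x$. The $F_2$ case is analogous, using $d_X(z_{w_0},x) \geq 2r - C_\diam L_*^{-n} > 7r/4$.

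Next I would lift $U_n$ by tile-wise interpolation: for $v = \sigma_{e,m}(z) \in e \cdot G_m$ with $e = \{v^*,u^*\}$, set
\[
U(v) := \bigl(1 - \widehat U_{v^*,e}(z)\bigr)\, U_n(v^*) + \widehat U_{v^*,e}(z)\, U_n(u^*),
\]
where $\widehat U_{v^*,e} : V_m \to [0,1]$ is the optimal capacity potential for $\cCap_p(I^{(m)}_{v^*,e}, I^{(m)}_{u^*,e}, G_m)$ from Lemma \ref{lemma: Energy minimizing and p-harmonic}. Because $\widehat U_{v^*,e}$ vanishes on $I^{(m)}_{v^*,e}$ and equals one on $I^{(m)}_{u^*,e}$, the formula assigns the common value $U_n(w)$ to every ancestor of each $w \in V_n$, so $U$ is well-defined on the shared boundaries $\pi_{n+m,n}^{-1}(w)$ of neighbouring tiles (and independent of the orientation choice $v^* \leftrightarrow u^*$). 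The geometric claim then forces $U \equiv 0$ on $G_{n+m}[F_1]$ and $U \equiv 1$ on $G_{n+m}[F_2]$.

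Finally, partitioning $E_{n+m}$ via \ref{SM3} and noting that $U$ is constant on any tile whose endpoints agree under $U_n$, one obtains
\[
\cE_p(U) = \sum_{\substack{e = \{v^*,u^*\} \in E_n \\ U_n(v^*) \neq U_n(u^*)}} \cE_p\bigl(\widehat U_{v^*,e}\bigr) = N(x,r)\cdot \mathcal{M}_p^m,
\]
using Corollary \ref{corollary: Modulus from L to R lvl m}, Lemma \ref{lemma: Mod-Cap} and \eqref{eq: Conductively uniform (Modulus)}. Any such ``boundary'' edge has both endpoints in $B(x,2r)$, and the $L_*^{-n}$-separation of $\{z_v\}_{v \in V_n}$ together with the metric doubling of $X$ bounds $N(x,r)$ by a constant depending only on $C_\diam, L_*, C_{\deg}$. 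Combining this with Lemma \ref{lemma: Mod-Cap} and Proposition \ref{prop: Discrete and combinatorial modulus are comparable} yields the proposition. The main obstacle is the geometric claim, especially the boundary-vertex case where $X_v$ can spill across several tiles meeting at an ancestor of $w$; this is precisely where the separation $r > 4C_\diam L_*^{-n}$ is used.
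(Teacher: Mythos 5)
Your argument is correct, and it takes a genuinely different path from the paper's. The paper works directly with a vertex density: it collects the level-$n$ cells $\{v_1,\dots,v_l\}$ meeting $\overline{B(x,r)}$, their neighbours $\{u_1,\dots,u_s\}$, lifts the density $\widetilde\rho_m$ from \eqref{eq: rho_m} onto the tiles $e_i \cdot G_m$ of the edges touching the $u_j$, estimates the $p$-mass via Corollary~\ref{lemma: mass of rho_m}, and verifies admissibility by locating a tile-crossing sub-path through Proposition~\ref{prop: Path between disjoint edges}. You instead build a potential function: a $\{0,1\}$-valued indicator $U_n$ on $V_n$ associated to the set $A$, lifted tile-by-tile via the optimal capacity potentials $\widehat U_{v^*,e}$, followed by Lemma~\ref{lemma: Mod-Cap} and Proposition~\ref{prop: Discrete and combinatorial modulus are comparable}. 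Your geometric claim (that $G_{n+m}[F_1]$ sits in tiles whose level-$n$ endpoints all lie in $A$, and dually for $F_2$) replaces the paper's use of Proposition~\ref{prop: Path between disjoint edges}: it pins down the boundary values of $U$ rather than finding a crossing sub-path, and the separation $r > 4C_\diam L_*^{-n}$ is indeed exactly what makes it close. A payoff of your route is that the tile energy is computed exactly, $\cCap_p\bigl(I^{(m)}_{v^*,e},I^{(m)}_{u^*,e},G_m\bigr) = \mathcal{M}_p^m$ by Corollary~\ref{corollary: Modulus from L to R lvl m} and Lemma~\ref{lemma: Mod-Cap}, so the only implicit constant comes from the number $N(x,r)$ of ``boundary'' edges and from the edge/vertex modulus comparison in Proposition~\ref{prop: Discrete and combinatorial modulus are comparable}. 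Two small points worth tightening in a final write-up: $d(F_1,F_2)\geq r$ rather than $=r$ (the inequality is all you need), and the containment you invoke for interior tile vertices is in fact $X_v\subseteq X_{v^*}\cap X_{u^*}$ (stronger than the union you wrote, both sufficient), which additionally rests on \ref{SM2} for uniqueness of the tile, not only \ref{SM3}.
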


\begin{proof}
    Let $v_1,\dots,v_l \in V_{n}$ be the vertices $v \in V_n$ so that $X_v \cap \overline{B(x,r)} \neq \emptyset$ and $u_1,\dots,u_s \in V_n \setminus \{ v_1,\dots,v_l \}$ be the vertices in $u \in V_n$ so that $X_{u} \cap X_{v_i} \neq \emptyset$ for some $i$.
    Note that
    \[
        d_X\left(x,X_{v_{i}}\right) \leq r + \diam\left(X_{v_{i}}\right) \leq r + 2C_{\diam} \cdot L_*^{-n} < 2r
    \]
    and
    \[
        d_X\left(x,X_{u_{j}}\right) \leq r + 4 C_{\diam} \cdot L_*^{-n} < 2r.
    \]
    Hence 
    \begin{equation}\label{eq: Cells contained}
        X_{v_i}, X_{u_{j}} \subseteq B(x,2r).
    \end{equation}
    
    Write $e_{1}, \dots, e_{k} \in E_n$ for the edges that contain $u_{j}$ for some $j$ and, for $e \in E_n$, we define $\lambda_{e,m}:E_m\to [0,1]$ by
    \[
        \lambda_{e,m} =
        \begin{cases}
            \widetilde{\rho}_m & \text{ if } e \in \left\{ e_{1}, \dots, e_{k} \right\}\\
            0 & \text{ otherwise.}
        \end{cases} 
    \]
    Here $\widetilde{\rho}_m$ is as in \eqref{eq: rho_m}.
    By the metric doubling property of $X$ and Proposition \ref{prop: Graph approximations}, $k$ has an upper bound depending only on the doubling constant $N_D$ of $X$.
    We now define $\widehat{\rho}_{n+m} : V_{n + m} \to \R_{\geq 0}$
    \[
        \widehat{\rho}_{n+m}(v) = \max_{\substack{e \in E_n \\ v \in e \cdot G_m}} \lambda_{e,m}\left(\sigma_{e,m}^{-1}(v)\right).
    \]
    The density $\widehat{\rho}_{n+m}$ has the $p$-mass estimate
    \begin{align*}
    \mathcal{M}_p(\widehat{\rho}_{n+m}(v)) & \leq \sum_{x \in V_{n + m}} \sum_{\substack{e \in E_n \\ v \in e \cdot G_m}} \lambda_{e,m}(\sigma_{e,m}^{-1}(v))^p\\
        & \stackrel{\eqref{eq: mass of rho_m}}{\leq} k \cdot C \cdot \mathcal{M}_p^{m}\\
        & \lesssim \mathcal{M}_p^{m}
    \end{align*}
    where $C = C(C_{\deg},\abs{E_1},N_D,p)$.
    We will conclude the proof by showing the admissibility of $\widehat{\rho}_{n+m}$.
    
    Let $\gamma$ be a path from $\overline{B(x,r)}$ to $X \setminus B(x,2 \cdot r)$ and $\theta = G_{n+m}[\gamma]$.
    Then $X_{v_i} \cap \gamma \neq \emptyset$ for some $i$. By \eqref{eq: Cells contained},
    there is $v \in V_{n}\setminus \{ v_1,\dots,v_l, u_1,\dots,u_s \}$ so that $X_{v} \cap \gamma \neq \emptyset$. 
    By Proposition \ref{prop: Path between disjoint edges} $\theta$ contains a sub-path $\theta'$ from $\pi_{n + m,n}^{-1}(u_j)$ to $\pi_{n + m,n}^{-1}(w)$ contained in $e_i \cdot G_m$ for some $e_i = \{ u_j,w \} \in E_{n}$.
    Hence
    \begin{align*}
        \sum_{v \cap \gamma \neq \emptyset} \rho(v) \geq \sum_{v \in \theta'} \rho(v) = \sum_{u \in \sigma_{e_l,m}^{-1}(\theta')} \rho_m(u) \geq 1.
    \end{align*}
\end{proof}

\begin{remark}\label{remark: Annulus modulus}
    The reverse inequality of \eqref{eq: Annulus modulus} is also true and it can be proven by a similar flow argument as in Proposition \ref{prop: Ball-Loewner}. Indeed choose $v \in V_n$ so that $X_v \subseteq B(x,r)$. If $x = [(e_i)_{i = 1}^\infty]$ then, by \ref{lemma: properties of d_X (2)}, we may choose $v$ to be an endpoint of $e_n$.
    By \ref{lemma: properties of d_X (1)} there is $u \in V_n$ so that $X_u \cap B(x,2r) = \emptyset$ and a path $\theta$ from $v$ to $u$ of length $k$, where $k$ depends only on $C_{\diam}$ and $L_*$. By Proposition \ref{prop: Replacement flow} and similar argument as in Proposition \ref{prop: Ball-Loewner}, we have
    \[
        \Mod_p^{D}\left(\Gamma\left(\overline{B(x,r)},X \setminus B(x,2r)\right), G_{n+m}\right) \gtrsim \mathcal{M}_p^{m}.
    \]
\end{remark}

We now have proven enough moduli estimates to establish the combinatorial Loewner property.
\begin{proof}[Proof of Theorem \ref{thm:combloew}]
    By Assumption \ref{Assumptions: CLP} and Proposition \ref{prop: properties of cM_p},
     there is a unique $Q_* \in (1,\infty)$ so that $\cM_{Q_*} = 1$.
    By combining Propositions \ref{prop: LR modulus and BK-type modulus} and \ref{prop:confdimchar}, this $Q_*$ is equal to $\dim_{\AR}(X)$.
    
    To prove the combinatorial Loewner property, Condition \ref{CLP1} follows from Proposition \ref{prop: Ball-Loewner}
    and an iterative argument given in \cite[Proposition 2.9]{BourK}. 
    We only have left to prove \ref{CLP2}. By Proposition \ref{prop: Annulus modulus}
    \[
        \Mod_{Q_*}^{D}\left(\Gamma\left(\overline{B(x,r)}, X \setminus B(x,2r)\right), G_m\right) \lesssim 1
    \]
    whenever $r \geq 4 C_{\diam} L_*^{-m}$.
    Fix a constant $C > 1$ and consider the modulus problem in \ref{CLP2}. First assume $C > 4 C_{\diam}$ and let $N = \ceil{\log_2(C)} - 1$. Define $\rho_j$ to be the optimal $\Gamma\left(\overline{B(x,2^{j - 1}r)}, X \setminus B(x,2^j r)\right)$-admissible density. As $\diam(X_v) \leq r/2$, each $X_v$ intersects at most two annuli $\overline{B(x,2^jr)} \setminus B(x,2^{j - 1}r)$, which implies that the density
    \[
        \rho = \frac{2}{N} \sum_{j = 1}^N \rho_j
    \]
    is $\Gamma\left(\overline{B(x,r)}, X \setminus B(x,Cr)\right)$-admissible with the $Q_*$-mass estimate
    \[
        \cM_{Q_*}(\rho) \lesssim \frac{1}{N^{Q_* - 1}} \lesssim \log_2(C)^{1 - Q_*}.
    \]
    Hence, for all large $C > 1$, we may set $\psi(t) = \log_2(1/t)^{1 - Q_*}$ which satisfies $\psi(t) \to 0$ as $t \to 0$.
    
    Then we consider the case where $1 < C \leq 4 C_{\diam}$. We choose $k_C \in \N$ to be the smallest integer so that $C - 1 > 4 C_{\diam} \cdot L_*^{-k_C}$. By \cite[Proposition 2.2]{BourK},
    \begin{align*}
        & \, \Mod_{Q_*}^{D}\left(\Gamma\left(\overline{B(x,r)}, X \setminus B(x,Cr)\right), G_m\right)\\
        \leq & \, A \cdot \Mod_{Q_*}^{D}\left(\Gamma\left(\overline{B(x,r)}, X \setminus B(x,Cr)\right), G_{m + k_C}\right)
    \end{align*}
    where $A = A(N_D,L_*,k_C,Q_*)$. The rest of the argument is identical to the one in Proposition \ref{prop: Annulus modulus} with the slight difference that the number of edges $k$ in the proof depends also on $C$.
\end{proof}

We conclude this section by returning to Example \ref{ex:nonsym}.

\begin{example}\label{ex:loewnerreplacement}
    The family of spaces described in Example \ref{ex:nonsym} is interesting, since it furnishes a large family of limit spaces $X$ which are $Q$-Ahlfors regular for $Q>1$, and with $\dims_{\AR}(X)=Q$. Indeed, let $Q=\log(N \cdot L_*)/\log(L_*)$ which is the Hausdorff dimension of $X$ by Lemma \ref{lem:ahlforsregularity}. The admissible function $\rho(e)=L_*^{-1}$ is optimal for all $p>1$, as described in Example \ref{ex:nonsym}. Therefore, one sees that $\mathcal{M}_p=L_*^{1-p}N$, and $\mathcal{M}_Q=1$. Thus, by Theorem $\ref{thm:combloew}$, we see that $X$ satisfies the combinatorial $Q$-Lowner property. Since $X$ is also $Q$-Ahlfors regular, then $X$ must be $Q$-Lowener, see the argument in \cite[Introduction]{CEB} or \cite[Proposition 11.13]{murugan2023first}. These give a large family of Loewner examples. The fact that these examples are Loewner could also be seen by applying the argument from \cite{CK} and recognizing the spaces as inverse limits. This analysis also justifies why we have stated that Examples in  \ref{ex:laaksospace} and \ref{fig:laakso} are Loewner.

\end{example}

\section{Proof of Main theorem}\label{sec:mainthm}

The proof of Theorem \ref{thm:mainthm} involves porosity, which we briefly study next. 

\subsection{Porous sets and Assouad dimension}\label{subsec:porosity}
We will prove Proposition \ref{prop:porous} in this section and give a natural condition for porosity of IGSs. First, we introduce some additional terminology that is just needed in this section. 
For a subset $A\subset X$ le
\[
N(A,r):=\inf\left\{N : A \subset \bigcup_{i=1}^N B(x_i,r)\right\}
\]
be the smallest number of balls of radius $r$ needed to cover the set $A$. The Assouad dimension of a metric space is then given by
\[
\dims_{\rm A}(X):=\inf\left\{s>0 : \exists C \geq 1, \forall 0<r<R, \forall x\in X, N(B(x,R),r)\leq C R^sr^{-s}\right\}.
\]
A standard volume counting argument shows that if $X$ is $Q$-Ahlfors regular, then $\dims_{\rm A}(X)=Q$. 
We need the following result which is from \cite[Proposition 14.14]{He}. In that reference, connectivity is replaced by the weaker condition of uniform perfectness. For us, this weaker statement suffices.
\begin{lemma}\label{lem:assouadahlfors}
    If $X$ is connected, then for every $Q>\dims_A(X)$ there exists a metric $d'\in \cG_{\rm AR}(X)$ which is $Q$-Ahlfors regular. In particular, $\dims_{\rm AR}(X)\leq \dims_{\rm A}(X)$. 
\end{lemma}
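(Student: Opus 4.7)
The plan is to reduce to \cite[Proposition 14.14]{He}, which gives the analogous construction in the (formally weaker) setting of uniformly perfect spaces. Recall that $X$ is \emph{uniformly perfect} if there exists $c \in (0,1)$ such that $B(x,r) \setminus B(x,cr) \neq \emptyset$ for every $x \in X$ and every $r \in (0,\diam(X))$. The first step is therefore to verify that connectedness implies uniform perfectness (we may assume $X$ has at least two points, else the lemma is vacuous).

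For this, suppose towards a contradiction that $B(x,r) \setminus B(x,r/2) = \emptyset$ for some $x \in X$ and some $r \in (0,\diam(X))$. Then $B(x,r/2)$ coincides with $B(x,r)$, so it is open, and its complement $X \setminus B(x,r/2) = X \setminus B(x,r)$ is also open. Since $X$ is connected and $B(x,r/2)$ is non-empty, this forces $X = B(x,r/2)$, contradicting $r < \diam(X)$. Hence $X$ is uniformly perfect with constant $c = 1/2$.

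With uniform perfectness in hand, I would invoke \cite[Proposition 14.14]{He} directly: for any $Q > \dims_A(X)$, it produces a metric $d'$ on $X$ with $\id:(X,d)\to(X,d')$ a quasisymmetry and $(X,d')$ being $Q$-Ahlfors regular. Thus $d' \in \cG_{\rm AR}(X)$, and taking the infimum over admissible $Q$ yields $\dims_{\rm AR}(X) \leq \dims_A(X)$.

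There is no substantive obstacle here: the real content of the lemma lies entirely in \cite[Proposition 14.14]{He} (which proceeds by building a snowflaked metric from maximally separated nets at dyadic scales, with weights calibrated to $Q$, and uses uniform perfectness to secure the lower mass bound for Ahlfors regularity). Our only task is the topological observation that connectedness implies uniform perfectness, after which the lemma becomes a direct citation.
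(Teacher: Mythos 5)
Your proposal is correct and matches the paper's approach exactly: the paper likewise gives no proof but simply cites \cite[Proposition 14.14]{He}, remarking that connectivity implies the required uniform perfectness hypothesis. Your short verification that connectedness implies uniform perfectness (with $c=1/2$) is sound and fills in the only step the paper leaves implicit.
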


We now prove Proposition \ref{prop:porous}.

\begin{proof}[Proof of Proposition \ref{prop:porous}]
Let $Y\subset (X,d)$ be a porous subset with $\dims_{\rm AR}(X)=\dims_{AR}(Y)$. Let $d'\in \cG_{\rm AR}(X)$ be $Q$-Ahlfors regular.  Then, $\dims_{\rm A}((X,d'))=Q$. Since $(X,d')$ is quasisymmetric to $(X,d)$, via the identity map, it is direct to see that $(Y,d'|_Y)$ is a porous subset of $(X,d')$, and that $(Y,d|_Y)$ is quasisymmetric to $(Y,d'|_Y)$ and $d'|_Y\in \cG(Y)$. 

Porosity implies that the Hausdorff $Q$-measures of neighborhoods of $Y$ decay geometrically in their thickness. This, together with Ahlfors regularity and a volume counting argument yields an Assouad dimension bound. This fairly direct counting argument is classical, and yields that Assouad dimension of a porous subset of a $Q$-regular space is strictly less than the Assouad dimension of the space see e.g. \cite[Lemma 3.12]{BHR}, (result also contained in many other places, e.g. \cite[Proposition 3.5]{KLV}):
\[
\dims_{\rm A}(Y,d'|_Y)< Q.
\]
Thus, by Lemma \ref{lem:assouadahlfors}, we get
\[
\dims_{\rm AR}(Y)=\dims_{\rm AR}(Y,d'|_Y)\leq \dims_{\rm A}(Y,d')<Q.
\]
Thus, $Q>\dims_{\rm AR}(X)$, and $X$ does not attain its conformal dimension.
\end{proof}

\subsection{Proofs of main theorems}

Given an IGS, every connected sub-graph $\hat{G}_1 = (\hat{V}_1,\hat{E}_1)$ of $G_1$, so that $\hat{V}_1$ contains the gluing sets, admits a natural IGS by setting $\hat{\phi}_{v,e}= \phi_{v,e}$ for all $e \in \hat{E}_1$ and $v \in e$.  Here $\hat{\phi}_{v,e}$ are the maps associated to the gluing rules of $\hat{G}_1$. This is possible as long as $\hat{V}_1$ contains the gluing sets $\phi_{v,e}$ for every $e\in \hat{E}_1$ and $v\in e$. In this section we are interested in the sub-graph and the associated IGS obtained by eliminating a removable edge. Here the IGSs are referred to simply by their graphs at the first levels: $G_1$ and $\hat{G_1}$. Moreover, everything related to the IGS $\hat{G}_1$ is written with a '$\,\hat{}\,$' symbol, e.g ($\hat{C}_{\diam}, \hat{V}_m, \hat{G}_m, \hat{X},$ etc).

\begin{definition}\label{def:removable}
    Let $e^* \in E_1$ and denote the graph $\hat{G}_1 = (\hat{V},E \setminus \{e^*\})$, where 
    \[
      \hat{V} =
      \begin{cases}
          V \setminus \{ v^* \} & \text{ if } v^* \in e \text{ so that } \deg(v^*) = 1\\
          V & \text{ otherwise}.
      \end{cases}  
    \]
    We say that $e^* \in E_1$ is a \emph{removable edge} if the following hold.
    \begin{enumerate}
    \item The edge $e^*$ does not contain a vertex in $I_{v,e}$ for any edge $e \in E_1$ and $v \in e$.
    \item The graph $\hat{G}_1$ is connected and the associated IGS satisfies the $L_*$-uniform scaling property for the same $L_* \geq 2$ as $G_1$.
    \item \label{def: removable (3)} For all $e = \{v,u\} \in E_1 \setminus \{ e^* \}$ and $p > 1$ we have $\rho_{e}(e^*)=0$. Here $\rho_{e} : E_1 \to \R_{\geq 0}$ is as defined in Section \ref{subsection: Moduli computations}.
    \end{enumerate}
\end{definition}

\begin{remark}\label{remark: Removable edge by potential and flow}
    The last condition in Definition \ref{def:removable} can be also expressed in terms of the optimal potential function $U_{v,e}$ and the optimal unit flow $\cF_{v,e}$. Indeed, by Lemma \ref{lemma: Mod-Cap}, $\rho_{e}(e^*) = \abs{U_{v,e}(v^*) - U_{v,e}(u^*)}=0$ and $\cF_{v,e}(e^*)=0$ for any removable edge $e^*$. 
\end{remark}

\begin{lemma}\label{lem:porous}
    Suppose the iterated graph system $G_1$ satisfies the $L_*$-uniform scaling- and the doubling property. If $G_1$ contains a removable edge $e^* = \{ v^*,u^* \}$ and $\hat{G}_1$ is as in Definition \ref{def:removable}, there is a is a biLipschitz embedding $\iota: \hat{X}\to X$ so that $\iota(\hat{X})$ a porous subset of $X$.
\end{lemma}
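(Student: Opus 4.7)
My plan has three steps: construct the map $\iota$, verify it is biLipschitz, and show that $\iota(\hat{X})$ is porous in $X$. First, I would verify by induction on $m$ that the sub-graph relation $\hat{G}_1 \subseteq G_1$ lifts to natural injections $\hat{V}_m \hookrightarrow V_m$ and $\hat{E}_m \hookrightarrow E_m$ which commute with the projection maps $\pi_m$ and preserve the gluing data. The key point is Condition (1) of Definition \ref{def:removable}: since $v^*$ and $u^*$ lie in no gluing set, none of the identifications used to form $V_m$ ever involve $e^*$, so the equivalence relation on $\hat{G}$-data is exactly the restriction of the equivalence relation on $V_1 \times E_1$-tuples. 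Passing to the inverse limit gives an injection $\hat{\Sigma} \hookrightarrow \Sigma$, which descends to a well-defined injection $\iota : \hat{X} \to X$.

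For the biLipschitz property, the non-expansion $d_X(\iota(x),\iota(y)) \leq \hat{d}_{\hat{X}}(x,y)$ is immediate since $\hat{G}_m \subseteq G_m$. For the reverse inequality I would set
\[
L := \max_{\substack{u,v \in \hat{V}_1 \\ u \neq v}} \frac{\hat{d}_1(u,v)}{d_1(u,v)},
\]
which is finite because $\hat{G}_1$ is connected, and prove by induction on $m$ that $\hat{d}_m(x,y) \leq L \cdot d_m(x,y)$ on $\hat{V}_m$. The inductive step takes a $d_m$-shortest path $\gamma$ from $x$ to $y$ in $G_m$ and, via Proposition \ref{prop: Path decomposition} at level $1$, decomposes $\gamma$ into sub-paths lying inside level-$1$ tiles. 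Sub-paths inside the $e^*$-tile necessarily connect an ancestor of $v^*$ to an ancestor of $u^*$ --- both of which remain in $\hat{V}_m$ --- because Condition (1) forces the $e^*$-tile's interface with the rest of $G_m$ to be exactly these two gluing sets. Each such sub-path is then replaced by the lift of a shortest $\hat{G}_1$-path from $v^*$ to $u^*$, whose length exceeds that of the original by a factor at most $L$. Sub-paths inside $\hat{E}_1$-tiles are handled via the tile isometries of Proposition \ref{prop: Similarity maps} combined with the inductive hypothesis. Passing to the Gromov-Hausdorff limit yields the biLipschitz constant $L$ for $\iota$.

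For porosity, fix $z = \iota([(f_i)_{i=1}^\infty]) \in \iota(\hat{X})$ with $f_i \in \hat{E}_i$, and a scale $r \in (0, \diam(X))$, and choose $m$ with $L_*^{-m+1} \asymp r$. The level-$m$ edge
\[
e_m := \{[v^*, f_{m-1}],\, [u^*, f_{m-1}]\} \in f_{m-1} \cdot E_1 \subseteq E_m
\]
is a copy of $e^*$ placed inside the tile $f_{m-1}$; since $e^* \notin \hat{E}_1$, we have $e_m \in E_m \setminus \hat{E}_m$, and the vertices of $e_m$ lie within distance $\lesssim L_*^{-m+1} \asymp r$ of $z$. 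Any point in $X_{e_m}$ whose symbolic representation passes uniquely through $e_m$ at level $m$ cannot lie in $\iota(\hat{X})$, because it would otherwise also admit an $\hat{E}_m$-representation; consequently $\iota(\hat{X}) \cap X_{e_m}$ is contained in the topological boundary $\partial X_{e_m}$. Selecting a deeply nested sub-cylinder $X_g \subseteq X_{e_m}$ with $g \in E_{m+k}$ for a fixed bounded $k$ and invoking approximate self-similarity of $X$ produces a ball $B(w, c L_*^{-m}) \subseteq X_{e_m} \cap B(z,r)$ disjoint from $\iota(\hat{X})$, yielding uniform porosity with constants independent of $z$ and $r$.

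The main obstacle is the biLipschitz lower bound, since the ``bad'' edges of $E_m \setminus \hat{E}_m$ appear nested at every scale due to the recursive tile structure, and the inductive rerouting argument must interact carefully with this recursion. Condition (1) of Definition \ref{def:removable} is precisely what makes the rerouting possible: it keeps both endpoints of the $e^*$-tile anchored at vertices (ancestors of $v^*$ and $u^*$) that remain in $\hat{V}_m$, so that the rerouting is internal to $\hat{G}_m$ and the length inflation per replacement is controlled by the finite constant $L$.
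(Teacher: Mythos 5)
Your overall plan matches the paper's --- construct $\iota$ from the sub-graph relation $\hat{G}_m \subseteq G_m$, prove a biLipschitz bound, and exhibit porosity by pointing at copies of $e^*$. But the biLipschitz argument you propose differs substantially from the paper's, and is considerably harder to make rigorous. The paper avoids all path-rerouting: given $x = [(e_i)]$ and $y = [(f_i)]$ in $\hat X$, let $m$ be the smallest index at which $e_m$ and $f_m$ fail to share a vertex. Since $e_i, f_i \in \hat E_i \subseteq E_i$ and their endpoints are all retained in $\hat V_m$, this separation index is the \emph{same} for both IGSs, and Lemma \ref{lemma: properties of d_X} (applied once to each IGS) pins both $d_X(\iota(x),\iota(y))$ and $d_{\hat X}(x,y)$ into $[L_*^{-m},\, 2(C_{\diam}\lor \hat C_{\diam})L_*^{1-m}]$. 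No induction, no decomposition, no rerouting.

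Your rerouting argument has several points that are not obviously fixable within the framework you sketch. First, Proposition \ref{prop: Path decomposition} requires the endpoints of the path to lie in $\pi_{m,1}^{-1}(A)$, $\pi_{m,1}^{-1}(B)$ for sets $A,B \subseteq V_1$; a generic $x \in \hat V_m$ need not project to a $V_1$-vertex, so the decomposition at level $1$ does not directly apply. Second, the sub-paths $\theta_i$ produced by that proposition need not concatenate exactly --- the end of $\theta_i$ and the start of $\theta_{i+1}$ are both ancestors of $u_{i+1}$ but may be distinct, leaving gaps that must be bridged within $\hat G_m$, which accumulates error not accounted for in your constant $L$. Third, when $v^*$ has degree $1$ in $G_1$ it is deleted from $\hat V_1$, so your constant $L$ (a max over $u,v \in \hat V_1$) does not control the ratio $\hat d_1(v^*,u^*)/d_1(v^*,u^*)$, and the rerouting through $\hat G_1$ is not even well-defined for this endpoint; this case must be handled separately, e.g.\ by noting that shortest paths never enter the $e^*$-tile then. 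None of these is insurmountable, but they are genuine, and the paper's depth-matching argument makes them all moot.

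Your porosity step is in the right spirit --- both proofs point at a copy of $e^*$ at scale $r$ near $z$ and exploit condition (1) of Definition \ref{def:removable} --- but the paper is explicit where you appeal to approximate self-similarity: it defines the point $y = [(f_i)]$ by following the $e^*$-copy into every subsequent tile, and then shows directly from \ref{lemma: properties of d_X (1)} that $d_X(y, \iota(\hat X)) \gtrsim L_*^{-m}$, giving the porosity ball $B(y,cr)$ with explicit $c$. (Note also a small subtlety that your write-up would have to navigate: the edge you call $e_m$ can share a vertex with an edge of $\hat E_m$ when $\deg(v^*) \geq 2$, since ancestors of $v^*$ still lie in $\hat V_m$; one needs to go one level deeper into the nested $e^*$-copies before the edge becomes fully separated from $\hat G$.)
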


\begin{proof}
    Note that $\hat{G}_m$ can naturally be regarded as a sub-graph of $G_m$. Hence we obtain the natural embedding $\iota : \hat{X} \to X, \iota([(e_i)_{i=1}^\infty])=[(e_i)_{i=1}^\infty]$.
    Choose any distinct points $x = [(e_i)_{i = 1}^\infty], y = [(f_i)_{i = 1}^\infty] \in \hat{X}$ and let $m \in \N$ be the smallest index so that $e_m$ and $f_m$ do not have a common vertex. Then
    \[
       L_*^{-m} \stackrel{\ref{lemma: properties of d_X (1)}}{\leq} d_{X}(\iota(x),\iota(y)),d_{\hat{X}}(x,y) \stackrel{\ref{lemma: properties of d_X (2)}}{\leq} 2(C_{\diam} \lor\hat{C}_{\diam} )L_*^{1-m}.
    \]
    This proves that $\iota$ is a biLipschitz embedding.

    Lastly we prove that $\iota(\hat{X}) \subseteq X$ is a porous subset. Let $x = [(e_i)_{i = 1}^{\infty}] \in \iota(\hat{X})$ and $r > 0$ so that
    \[
        3(C_{\diam} \lor\hat{C}_{\diam} )\cdot L_*^{-(m+1)} < r \leq 3(C_{\diam} \lor \hat{C}_{\diam} )\cdot L_*^{-m}
    \]
    for some $m \in \N$. Choose the point $y = [(f_i)_{i = 1}^\infty] \in X$ so that $f_i = e_i$ for all $i \leq m + 1$ and $f_{i + 1} = \{ [v^*,f_i], [u^*,f_i] \}$ for all $i > m + 1$. Then
    \[
        d_{X}(x,y) \stackrel{\ref{lemma: properties of d_X (2)}}{\leq} 2(C_{\diam} \lor\hat{C}_{\diam} )\cdot L_*^{-(m+1)}
    \]
    On the other hand, $v^*,u^* \notin I_{v,e}$ for any $e \in E_1$ and $v \in e$, so the edge $f_{m + 2}$ does not share a vertex with any edge contained in $\hat{G}_{m+2}$. Therefore
    \[
        d_{X}(y,z) \stackrel{\ref{lemma: properties of d_X (1)}}{\geq} L_*^{-(m+2)}
    \]
    for all $z \in \iota(\hat{X})$. By choosing $c = (4(C_{\diam} \lor \hat{C}_{\diam})L_*^2)^{-1}$ we have that $B(y,cr) \subseteq B(x,r) \setminus \iota(\hat{X})$.
\end{proof}

Next, we state the general form of our main theorem.

\begin{theorem}\label{thm:removable-edge}
    If the iterated graph system $G_1$ satisfies Assumption \ref{Assumptions: CLP} and contains a removable edge $e^* \in E_1$ then the limit space $X$ is combinatorially Loewner, approximately self-similar and does not attain its conformal dimension. In particular, $X$ is not quasisymmetric to a Loewner space.
\end{theorem}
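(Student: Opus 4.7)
The plan is to deduce the theorem from tools already developed in the paper, the main task being to show that $X$ fails to attain its conformal dimension. The combinatorial Loewner property and approximate self-similarity of $X$ are immediate from Theorem \ref{thm:combloew} and the preceding approximate self-similarity proposition, since $G_1$ satisfies Assumption \ref{Assumptions: CLP}. Writing $Q_* = \dims_{\rm AR}(X)$, which by Theorem \ref{thm:combloew} is the unique value with $\mathcal{M}_{Q_*} = 1$, Proposition \ref{prop:noloew} reduces the remaining claims to non-attainment.

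My strategy for non-attainment is to produce a porous biLipschitz copy of a smaller fractal $\hat{X}$ inside $X$ with $\dims_{\rm AR}(\hat{X}) = Q_*$, which, by Proposition \ref{prop:porous}, forces $X$ not to attain its conformal dimension. I take $\hat{X}$ to be the limit space of the sub-IGS $\hat{G}_1$ obtained by deleting the removable edge $e^*$ (and the isolated vertex, if any) as in Definition \ref{def:removable}. Lemma \ref{lem:porous} already provides the porous biLipschitz embedding $\hat{X} \hookrightarrow X$, so the remaining task is the equality of the conformal dimensions.

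To identify $\dims_{\rm AR}(\hat{X})$, I first verify that $\hat{G}_1$ enjoys the $L_*$-uniform scaling, doubling, and conductively uniform properties. The first two are built into Definition \ref{def:removable} and the fact that $e^*$ avoids all gluing sets, so vertex degrees at those sets are unchanged. Conductive uniformity, and the moduli identity $\hat{\mathcal{M}}_p = \mathcal{M}_p$ for every $p > 1$, will follow together from the key observation that, by Remark \ref{remark: Removable edge by potential and flow} and Lemma \ref{lemma: Mod-Cap}, the optimal unit flow $\mathcal{F}_{v,e}$ in $G_1$ vanishes on $e^*$ for every $e = \{v,u\} \in E_1$. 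Restricting $\mathcal{F}_{v,e}$ to $\hat{E}_1$ therefore produces a unit flow in $\hat{G}_1$ of the same energy and with unchanged boundary profile at the gluing sets, which simultaneously transfers the conductive uniformity constants to $\hat{G}_1$ and yields $\hat{\mathcal{R}}_p \leq \mathcal{R}_p$. The reverse inequality $\hat{\mathcal{R}}_p \geq \mathcal{R}_p$ is immediate by extending any flow on $\hat{G}_1$ by zero across $e^*$. Duality (Proposition \ref{prop:duality}) together with \eqref{eq: Resistance and Modulus} then converts the flow identity $\hat{\mathcal{R}}_p = \mathcal{R}_p$ into $\hat{\mathcal{M}}_p = \mathcal{M}_p$.

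With these pieces, the final step is routine: Lemma \ref{lem:ahlforsregularity} and approximate self-similarity apply to $\hat{X}$, Proposition \ref{prop: LR modulus and BK-type modulus} gives $\hat{\mathcal{M}}_{\delta,p}^{(k)} \asymp \mathcal{M}_p^k$, and the continuous strict monotonicity of $\mathcal{M}_p$ from Proposition \ref{prop: properties of cM_p} (which applies to $G_1$ by hypothesis) feeds into Proposition \ref{prop:confdimchar} to identify $\dims_{\rm AR}(\hat{X}) = Q_*$. Applying Proposition \ref{prop:porous} then completes the argument. The main obstacle I anticipate is establishing $\hat{\mathcal{M}}_p = \mathcal{M}_p$, which hinges on the removability-based vanishing of the optimal flow; once this is in hand the rest is essentially bookkeeping with the machinery already in place. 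A minor subtlety worth flagging is that $\hat{G}_1$ need not itself satisfy condition $(4^*)$ of Assumption \ref{Assumptions: CLP}, but this is not needed: the strict monotonicity of $\hat{\mathcal{M}}_p$ is inherited directly from $\mathcal{M}_p$ through the equality.
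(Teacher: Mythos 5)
Your proof is correct and takes essentially the same route as the paper: deduce the CLP and self-similarity from Theorem~\ref{thm:combloew}, reduce non-attainment to the equality $\dims_{\rm AR}(\hat{X}) = \dims_{\rm AR}(X)$ via Lemma~\ref{lem:porous} and Proposition~\ref{prop:porous}, and derive that equality by showing $\hat{\mathcal{M}}_p = \mathcal{M}_p$. The only difference is which side of duality you argue on: the paper observes directly that the restricted density $(\rho_e)|_{\hat{E}_1}$ remains admissible and, by removability, optimal for $\hat{G}_1$, while you argue on the flow side (the optimal unit flow vanishes on $e^*$, so restriction and zero-extension give $\hat{\mathcal{R}}_p = \mathcal{R}_p$, whence $\hat{\mathcal{M}}_p = \mathcal{M}_p$ by Proposition~\ref{prop:duality}). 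These are genuinely dual and substantively equivalent. Your explicit handling of the conductive uniformity of $\hat{G}_1$ and the observation that $(4^*)$ for $\hat{G}_1$ is not needed (since monotonicity of $\hat{\mathcal{M}}_p$ is inherited through the identity $\hat{\mathcal{M}}_p = \mathcal{M}_p$) makes the argument slightly more transparent than the paper's, which applies Theorem~\ref{thm:combloew} to $\hat{G}_1$ without flagging that issue.
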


\begin{proof}
By Theorem \ref{thm:combloew} and Proposition \ref{prop:noloew} we only need to verify that $X$ does not attain its conformal dimension.

It is clear that for all $p > 1$ and $e = \{ v,u \} \in \hat{E}_1$ the density is $(\rho_{e})|_{\hat{E}} : \hat{E}_1 \to \R_{\geq 0}$ is $\hat{\Theta}_{v,e}$-admissible. On the other hand, it follows from the definition of the removable edge, that $(\rho_{e})|_{\hat{E}}$ is the optimal $\hat{\Theta}_{v,e}$-admissible density and that $\cM_p = \hat{\cM}_p$ for all $p > 1$. In particular, by Theorem \ref{thm:combloew}, $\dim_{\AR}(X) = \dim_{\AR}(\hat{X})$. The claim now follows from Lemma \ref{lem:porous} and Proposition \ref{prop:porous}.
\end{proof}

We now briefly discuss a method which yields IGSs satisfying assumptions in Theorem \ref{thm:removable-edge}. This method relies on the existence of a certain type of symmetry, which we will use to verify the uniformly conductive property and the existence of a removable edge.
\begin{assumption}\label{Assumption: Symmetric replacement rule}
    The iterated graph system satisfies the following:
    \begin{enumerate}
    \item There are functions $\phi_+,\phi_- : I \to V_1$ so that $\{\phi_{v,e},\phi_{u,e}\} = \{ \phi_+,\phi_- \}$ for all $e = \{ v,u \} \in E_1$.
    \item There is a graph isomorphism $\eta : G_1 \to G_1$ so that $\phi_{\pm} = \eta \circ \phi_{\mp}$
    \item Asummption \ref{Assumptions: CLP} (1),(2) and (4*)
    \end{enumerate}
\end{assumption}

Whenever we assume Assumption \ref{Assumption: Symmetric replacement rule} to hold, we shall write $I_{\pm} := \phi_{\pm}(I)$, $\cF_{\pm}$ is the optimal unit flow from $I_{\pm}$ to $I_{\mp}$.
\begin{lemma}\label{lem:removableedges}
    An iterated graph system satisfying Assumption \ref{Assumption: Symmetric replacement rule} is conductively uniform. Moreover, if $e^* = \{ v^*,u^* \} \in E_1$ so that $\eta$ fixes both $v^*$ and $u^*$, then $e^*$ is a removable edge.
\end{lemma}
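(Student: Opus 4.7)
The key observation throughout is that under Assumption \ref{Assumption: Symmetric replacement rule} the graph automorphism $\eta$ of $G_1$ swaps the gluing sets $I_+$ and $I_-$, and by the inductive construction of the labelings we have $\phi_{v,e} \in \{\phi_+,\phi_-\}$ for every edge at every level. The plan is to first establish conductive uniformity, and then verify the three conditions of Definition \ref{def:removable} for $e^*$, exploiting $\eta$ at every step.

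For conductive uniformity, since $\{I_{v,e}, I_{u,e}\} = \{I_+, I_-\}$ for every $e = \{v,u\}$, the optimal unit flow $\mathcal{F}_{v,e}$ is either $\mathcal{F}_+$ or $\mathcal{F}_-$, and by Lemma \ref{lemma: Reversing flow} these are negatives of each other. In particular $\mathcal{R}_p := \mathcal{E}_q(\mathcal{F}_{v,e})$ depends on neither $v$ nor $e$. To compare the two candidate values of $\mathcal{F}_a$, namely $\mathcal{F}_+(\phi_+(a),\mathfrak{n}(\phi_+(a)))$ and $\mathcal{F}_-(\phi_-(a),\mathfrak{n}(\phi_-(a)))$, I will use that the pushforward of $\mathcal{F}_+$ under $\eta$ is a unit flow from $I_-$ to $I_+$ of the same energy, hence by uniqueness equals $\mathcal{F}_-$; that is, $\mathcal{F}_-(x,y) = \mathcal{F}_+(\eta(x),\eta(y))$. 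Evaluating at $x = \phi_-(a)$ and using $\phi_+ = \eta \circ \phi_-$ together with $\eta(\mathfrak{n}(\phi_-(a))) = \mathfrak{n}(\phi_+(a))$ (since $\eta$ preserves adjacency and each vertex in $I_\pm$ has a unique neighbor by the doubling hypothesis), the two values coincide.

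For removability of $e^*$, condition (1) of Definition \ref{def:removable} is immediate: if $v^* \in I_-$, writing $v^* = \phi_-(a)$ gives $v^* = \eta(v^*) = \phi_+(a) \in I_+$, contradicting $I_+ \cap I_- = \emptyset$; the same reasoning rules out $v^*, u^* \in I_+ \cup I_-$. For condition (3), let $U$ be the $p$-harmonic potential with $U|_{I_-} = 0$ and $U|_{I_+} = 1$ from Lemma \ref{lemma: Energy minimizing and p-harmonic}. The function $\tilde{U} := 1 - U \circ \eta$ is $p$-harmonic in $V_1 \setminus (I_- \cup I_+)$ with the same boundary values, so by uniqueness $\tilde{U} = U$. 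Since $\eta$ fixes both $v^*$ and $u^*$, evaluation gives $U(v^*) = U(u^*) = 1/2$, whence $\rho_e(e^*) = |U(v^*) - U(u^*)| = 0$, as required.

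The most delicate step is condition (2), the connectivity and $L_*$-uniform scaling of $\hat{G}_1$. The idea is to exploit that $\mathcal{F}_+(e^*) = 0$ by Remark \ref{remark: Removable edge by potential and flow}, so the optimal flow is supported on $\hat{E}_1$ and decomposes into paths in $\hat{G}_1$ between $I_-$ and $I_+$, placing $I_- \cup I_+$ in a single component of $\hat{G}_1$; the remaining vertices of $V_1 \setminus \{v^*,u^*\}$ retain all their incident edges, and the $\eta$-symmetry combined with the two edge-disjoint paths guaranteed by condition (4*) of Assumption \ref{Assumptions: CLP} should prevent $e^*$ from being a bridge. For $L_*$-scaling, since $d_{G_1}(x,y) = L_*$ for all $x \in I_-, y \in I_+$, one needs a shortest path avoiding $e^*$; here the zero-flow property at $e^*$ together with the symmetric pair of edge-disjoint paths provides such a route. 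I anticipate this last verification to be the main obstacle, potentially requiring some case analysis depending on the placement of $v^*, u^*$ relative to the shortest-path structure of $G_1$.
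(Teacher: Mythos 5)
Your conductive-uniformity argument and your verification of conditions (1) and (3) of Definition \ref{def:removable} are correct and track the paper closely. For conductive uniformity you use the same ingredients as the paper: uniqueness of the optimal flow, $\eta(I_\pm)=I_\mp$, and $\eta(\mathfrak{n}(z))=\mathfrak{n}(\eta(z))$ for $z\in I_\pm$ (valid by doubling). For condition (3) you argue via the optimal $p$-harmonic potential $U$, noting that $1-U\circ\eta$ satisfies the same boundary-value problem and hence equals $U$, so $U(v^*)=U(u^*)=1/2$, whereas the paper works directly with the optimal flow via $\cF_-(v^*,u^*)=-\cF_+(v^*,u^*)=-\cF_-(\eta(v^*),\eta(u^*))=-\cF_-(v^*,u^*)$. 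By Lemma \ref{lemma: Mod-Cap} and Remark \ref{remark: Removable edge by potential and flow} these are equivalent, so this is only a presentational difference.

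The genuine gap is condition (2), which you flag as the hard step but do not actually resolve. The observation $\cF_\pm(e^*)=0$ is true but does not deliver the $L_*$-uniform scaling of $\hat{G}_1$: for $p>1$ the optimal flow is generally not supported on shortest paths, so the fact that the flow avoids $e^*$ says nothing about whether the length-$L_*$ paths from $I_-$ to $I_+$ avoid $e^*$. Likewise, while one of the two edge-disjoint paths guaranteed by (4*) must avoid $e^*$, that path need not have length $L_*$. The missing idea is a direct $\eta$-reflection of \emph{paths}, not of flows or potentials. Given any path $\theta=[v_1,\dots,v_k,v^*,u^*,u_1,\dots,u_l]$ from $v_1\in I_-$ to $u_l\in I_+$ through $e^*$, one may assume $k\leq l$ (if not, apply $\eta$ to $\theta$ and reverse the result), and then $\hat\theta=[v_1,\dots,v_k,v^*,\eta(v_k),\dots,\eta(v_1)]$ is a valid path since $\eta$ is a graph automorphism fixing $v^*$; it runs from $v_1\in I_-$ to $\eta(v_1)\in I_+$ with $\len(\hat\theta)=2k<k+l+1=\len(\theta)$. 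Hence no shortest $I_-\to I_+$ path uses $e^*$, giving $d_{\hat G_1}(x,y)=d_{G_1}(x,y)=L_*$ for all $x\in I_-$, $y\in I_+$. This reflection trick is the decisive step your proposal lacks and it dispenses with the case analysis you anticipate.
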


\begin{proof}
    Since $\degr(z) = 1$ for all $z \in I_+ \cup I_-$ by the doubling property, we must have $\eta(\mathfrak{n}_z) = \mathfrak{n}_{\eta(z)}$. Since $\eta(I_{\pm}) = I_{\mp}$, by the uniqueness of the optimal unit flow, for all egdes $\{ v,u \} \in E_1$ we have
    \begin{equation}\label{eq: Flow and symmetry}
        \cF_-(v,u) = \cF_+(\eta(v),\eta(u)).
    \end{equation}
    In particular, for any $a \in I$, we have
    \[
        \cF_-(\phi_-(a), \mathfrak{n}_{\phi_-(a)}) \stackrel{\eqref{eq: Flow and symmetry}}{=} \cF_+(\eta(\phi_-(a)), \eta(\mathfrak{n}_{\phi_-(a)})) = \cF_+(\phi_+(a),\mathfrak{n}_{\phi_+(a)}).
    \]
    Hence \eqref{eq: Flows agree on boundary} holds.

    To prove that $e^*$ is a removable edge, first note that $\eta(I_{\pm}) = I_{\mp}$ and $I_- \cap I_+ = \emptyset$. In particular, $\eta$ does not fix any vertex in $I_{\pm}$ and therefore $e^*$ does not contain a vertex in $I_{\pm}$. This proves the first condition in Definition \ref{def:removable}.
    In order to prove the second condition,
    we show that the edge if $\theta \in \Theta(I_-,I_+)$ contains $e^*$ then there is a strictly shorter path $\theta' \in \Theta(I_-,I_+)$. This is sufficient, as by the $L_*$-uniform scaling property, every shortest path from $I_-$ to $I_+$ is of length $L_*$. Let $\theta = [v_1,\dots, v_k,v^*,u^*,u_1 \dots,u_{l}]$ be a path from $v_1 \in I_-$ to $u_m \in I_+$.
    By applying symmetry $\eta$ to the path $\theta$, we may assume that $k \leq l$. Then we let $\hat{\theta} = [v_1,\dots,v_k,v^*,\eta(v_k),\dots,\eta(v_1)]$. Indeed, since $\eta$ fixes $v^*$, $\hat{\theta}$ is a path from $v_1 \in I_-$ to $\eta(v_1) \in I_+$ with length $\len(\hat{\theta}) = 2k < k + l + 1 = \len(\theta)$.

    We have dealt with first two conditions in Definition \ref{def:removable}. In order to prove the last one, we use the optimal flows $\cF_{\pm}$ and Remark \ref{remark: Removable edge by potential and flow}. Indeed we will show that $\cF_{-}(v^*,u^*) = 0$. This follows by
    \[
        \cF_{-}(v^*,u^*) \stackrel{\eqref{eq: Reversing flow}}{=} -\cF_{+}(v^*,u^*) \stackrel{\eqref{eq: Flow and symmetry}}{=} -\cF_{-}(\eta(v^*),\eta(u^*)) = -\cF_{-}(v^*,u^*).
    \]
    Hence $e^*$ is a removable edge.
\end{proof}

The previous Lemma only gives symmetric examples of Theorem \ref{thm:removable-edge}. To obtain non-symmetric ones, one can use the procedure from Figure \ref{fig:condutctunif-notsym} and Example \ref{ex:nonsym}. Indeed, in this example the copies of a line segment are glued along corresponding subdivision points, and this defines a potential function $U(v_i)=x(i)$, where $x(i)=k/L$ is the corresponding subdivision point of the line. As explained in Example \ref{ex:nonsym}, and as follows from Lemma \ref{lemma: Mod-Cap}, $U$ is an optimal potential function for all $p\in (1,\infty)$. Now, any edge $\{v,u\}$ can be added to produce a removable edge, as long as $U(v)=U(u)$. Figure \ref{fig:condutctunif-notsym} shows one such edge as a dashed edge. It yields a non-symmetric counterexample to Kleiner's conjecture.

Using the procedure from Lemma \ref{lem:removableedges}, several examples of the failure of Kleiner's conjecture can be constructed. In the following proof, we just focus on one such example.

\begin{proof}[Proof of Proposition \ref{prop:counterxample} and Theorem \ref{thm:mainthm}.] 
Consider the IGS $G_1$ as in
Example \ref{ex:counterexample}. We suggest that the reader also reviews Figure \ref{fig:replacementrule}. It is easily verified that this IGS satisfies Assumption \ref{Assumption: Symmetric replacement rule} by defining the mapping $\eta : V_1 \to V_1$ given by
\[
    \eta(v) =
    \begin{cases}
        v & \text{ if } v \in \{4,5\}\\
        7 & \text{ if } v = 1\\
        8 & \text{ if } v = 2\\
        6 & \text{ if } v = 3
    \end{cases}
\]
and $\eta^2 = \id_{V_1}$. It is also clear that $G_1$ contain two edge-wise disjoint paths from $I_+$ to $I_-$ so, by 
Lemma \ref{lem:removableedges}, $G_1$ satisfies Assumption \ref{Assumptions: CLP} and $\{4,5\} \in E_1$ is a removable edge. Our main result now follows from Theorem \ref{thm:removable-edge}. The $\log(9)/\log(4)$-Ahlfors regularity of the limit space $X$ follows from Lemma \ref{lem:ahlforsregularity}. Lastly, the combinatorial $3/2$-Loewner property of $X$ follows from Theorem \ref{thm:combloew} and the simple computation $\cM_Q = 8 \cdot 4^{-Q}$, which is equal to $1$ exactly when $Q = 3/2$.
\end{proof}

Indeed, the same proof scheme yields other examples. From Lemma \ref{lem:removableedges} and Theorem \ref{thm:removable-edge}, it follows that we can add an edge to the Laakso space IGS in Example \ref{ex:laaksospace}, and obtain the space in Figure \ref{fig:smallcounter}. This example seems to be the smallest and simplest counterexample that one can construct. Further, we saw earlier, that many (non-symmetric) counterexamples can be constructed with the procedure in Figure \ref{fig:condutctunif-notsym}.

\begin{figure}[!ht]
\begin{tikzpicture}[scale=1]
\draw (-6,0)--(-4,0);
\draw[<-,thick] (-3,0)--(-2,0);

\draw (-1,1)--(0,0)--(1,1);
\draw (-1,-1)--(0,0)--(1,-1);
\draw (0,0)--(0,-1);
\draw[<-,thick] (2,0)--(3,0);

\draw (4,1.25)--(4.5,0.5)--(5.25,0);
\draw (4,0.75)--(4.5,0.5)--(4.75,0);
\draw (4.5,0.5)--(4.1,0.25);
\draw (4,-1.25)--(4.5,-0.5)--(4.75,0);
\draw (4,-0.75)--(4.5,-0.5)--(5.25,0);
\draw (4.5,-0.5)--(4.1,-0.25);

\draw (5.25,0)--(5.5,0.5)--(6,1.25);
\draw (4.75,0)--(5.5,0.5)--(6,0.75);
\draw (5.5,0.5)--(5.9,0.25);
\draw (5.25,0)--(5.5,-0.5)--(6,-1.25);
\draw (4.75,0)--(5.5,-0.5)--(6,-0.75);
\draw (5.5,-0.5)--(5.9,-0.25);

\draw (5.25,0)--(5,-0.6)--(4.75,-1.25);
\draw (4.75,0)--(5,-0.6)--(5.25,-1.25);
\draw (5,-0.6)--(4.75,-0.6);

\foreach \p in {(-4,0),(-6,0),(-1,1),(0,0),(1,1),(1,-1),(-1,-1),(0,-1),
(4,1.25),(4.5,0.5),(4,0.75),(4.5,-0.5),(4,-0.75),(4,-1.25),(5.25,0),(4.75,0),(5.5,0.5),(5.5,-0.5), (6,1.25) , (6,0.75),(6,-0.75), (6,-1.25),
(4.75,-0.6),(5,-0.6),(4.75,-1.25),(5.25,-1.25),(5.9,-0.25),(5.9,0.25),(4.1,-0.25),(4.1,0.25)}
\node at \p [circle,fill,inner sep=1pt]{};

\node at (-1,1.4) {1};
\node at (-1,-0.6) {2};
\node at (0,0.4) {3};
\node at (0.3,-1) {4};
\node at (1,1.4) {5};
\node at (1,-0.6) {6};
\end{tikzpicture}
\caption{Figure of the smallest IGS that produces a counterexample to Conjecture \ref{conj:kleiner}. It is obtained from Example \ref{ex:laaksospace} by adding an edge to the vertex $v=3$ as in Lemma \ref{lem:removableedges}.}
\label{fig:smallcounter}
\end{figure}

\subsection{On the \texorpdfstring{$p$}{p}-walk dimension}\label{subsec: p-walkdim}
In probability theory, the walk dimension determines the order of the typical distance a diffusion travels in time $t > 0$. For its natural generalization, the $p$-walk dimension $d_{w,p}$, it is usually fairly easy to show that $d_{w,p} \geq p$ (see e.g. \cite[Proposition 3.5]{shimizu}).
For many fractals it has been observed that the 2-walk dimension is strictly greater than 2 (see e.g.\cite{kajinowalkdim,barlowperkinsgakset,barlowbass}). For general $p > 1$, it seems to be a far more difficult task to verify whether the strict inequality is true or not. However, the strict inequality is expected to be true in many cases. In a recent work of Kajino and Shimizu \cite{kajino2024contraction}, the strict inequality $d_{w,p} > p$ for all $p > 1$ was established for generalized Sierpi\'nski carpets and some gaskets. This problem is also observed to be connected to the Sobolev spaces in the works \cite{murugan2023first,kajino2024korevaarschoen,GrigoryanBesov2}. See also \cite[Problem 4 in Section 6.3]{kigami}.

In our framework, whenever (1)-(3) in Assumption \ref{Assumptions: CLP} is satisfied, we are able to compute the exact value of $p$-walk dimension. This is essentially due to Corollary \ref{corollary: Modulus from L to R lvl m} and Theorem \ref{thm: Moduli asymptotic}. We also present in Proposition \ref{Prop: Characterize p-walkdim} an easy characterization of when the inequality $d_{w,p} \geq p$ is strict.

\begin{definition}\label{def:walkdim}
If the iterated graph system satisfies (1)-(3) in Assumption \ref{Assumptions: CLP}, its \emph{$p$-walk dimension} is 
\[
    d_{w,p} := \frac{\log(\abs{E_1} \cdot (\cM_p)^{-1})}{\log(L_*)}.
\]
\end{definition}
We remark that in \cite{kajino2024contraction, kajino2024korevaarschoen} the neighbor disparity constant, which we do not consider, is also involved. Indeed we define $p$-walk dimension through the modulus constant $\cM_p$. Moreover, in the light of Proposition \ref{prop: Annulus modulus} and Remark \ref{remark: Annulus modulus}, it can be verified that the capacity/conductance constant in \cite{kajino2024contraction, kajino2024korevaarschoen} coincides with $\cM_p$ so our definition of $p$-walk dimension is the same as the ones in previously mentioned works.

\begin{proposition}\label{Prop: Characterize p-walkdim}
    Let $p \in (1,\infty)$. If the iterated graph system satisfies the (1)-(3) in Assumption \ref{Assumptions: CLP} then $d_{w,p} \geq p$. Furthermore, $d_{w,p} = p$ if and only if $\rho \equiv L_*^{-1}$ is the optimal $\Theta_{v,e}$-admissible density for all $e \in E_1$ and $v \in e$.
\end{proposition}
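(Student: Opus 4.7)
The plan is to compare $\mathcal{M}_p = \Mod_p(\Theta_{v,e}, G_1)$ against the single explicit trial density $\rho_0 \equiv L_*^{-1}$ on $E_1$, and observe that this one comparison encodes both halves of the proposition. Unpacking the definition of $d_{w,p}$, the inequality $d_{w,p} \geq p$ is equivalent to
\[
    \mathcal{M}_p \leq |E_1| \cdot L_*^{-p},
\]
and $|E_1| \cdot L_*^{-p}$ is precisely $\mathcal{M}_p(\rho_0)$. Hence everything reduces to showing that $\rho_0$ is $\Theta_{v,e}$-admissible for some (equivalently, by conductive uniformity, every) edge $e = \{v,u\} \in E_1$.

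To verify admissibility, fix $e = \{v,u\} \in E_1$ and an arbitrary path $\theta \in \Theta_{v,e}$. By the $L_*$-uniform scaling property, the distance in $G_1$ between any vertex of $I_{v,e}$ and any vertex of $I_{u,e}$ is exactly $L_*$, so $\len(\theta) \geq L_*$. Therefore $L_{\rho_0}(\theta) \geq L_* \cdot L_*^{-1} = 1$, i.e., $\rho_0$ is admissible, and a direct count gives $\mathcal{M}_p(\rho_0) = |E_1| \cdot L_*^{-p}$. Optimality of $\mathcal{M}_p$ yields $\mathcal{M}_p \leq |E_1| \cdot L_*^{-p}$, which after taking $\log$ and dividing by $\log L_*$ gives $d_{w,p} \geq p$.

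For the characterization of equality, by the above, $d_{w,p} = p$ is equivalent to $\mathcal{M}_p = \mathcal{M}_p(\rho_0)$. Since $p > 1$, the functional $\rho \mapsto \mathcal{M}_p(\rho)$ is strictly convex on the convex set of $\Theta_{v,e}$-admissible densities, so the optimizer $\rho_e$ is unique (as already used throughout Section \ref{sec:modulus}). Therefore $\mathcal{M}_p = \mathcal{M}_p(\rho_0)$ forces $\rho_e \equiv L_*^{-1}$. Conversely, if $\rho_e \equiv L_*^{-1}$ is the optimal $\Theta_{v,e}$-admissible density for every $e \in E_1$ and $v \in e$, then $\mathcal{M}_p = |E_1| L_*^{-p}$ and $d_{w,p} = p$ by direct computation. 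There is essentially no obstacle beyond matching the combinatorial identity $\mathcal{M}_p(\rho_0) = |E_1| L_*^{-p}$ against the definition of $d_{w,p}$; the content is entirely captured by the $L_*$-uniform scaling property and uniqueness of the $p$-modulus optimizer.
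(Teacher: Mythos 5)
Your proof is correct and takes essentially the same route as the paper: show that $\rho_0 \equiv L_*^{-1}$ is $\Theta_{v,e}$-admissible via the $L_*$-uniform scaling property, read off $d_{w,p} \geq p$ from the resulting bound $\mathcal{M}_p \leq |E_1| L_*^{-p}$, and then invoke uniqueness of the optimal density for the equality characterization. The only difference is that you unpack the algebra a bit more explicitly; the underlying argument is identical.
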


\begin{proof}
   By the $L_*$-unifrom scaling property, $\rho : E_1 \to \R_{\geq 0}$, $\rho \equiv L_*^{-1}$ is $\Theta_{v,e}$-admissible density for all $e \in E_1$ and $v \in e$, which yields
   \[
        d_{w,p} \geq \frac{\log(\abs{E_1} \cdot (\cM_p(\rho))^{-1})}{\log(L_*)} = p.
   \]
   The last assertion follows from the uniqueness of the optimal density.
\end{proof}

The following corollary is immediate.

\begin{corollary}\label{Cor: Walk dim > p}
    If the iterated graph system contains a removable edge then for all $p \in (1,\infty)$ we have $d_{w,p} > p$.
\end{corollary}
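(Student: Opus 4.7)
The plan is to apply Proposition \ref{Prop: Characterize p-walkdim} as a direct black box. That proposition reduces the strict inequality $d_{w,p} > p$ to the assertion that the constant density $\rho \equiv L_*^{-1}$ fails to be the optimal $\Theta_{v,e}$-admissible density for at least one choice of $e \in E_1$ and $v \in e$. So the entire argument should amount to exhibiting one such edge.

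To produce it, I would fix any $e = \{v,u\} \in E_1 \setminus \{e^*\}$; this set is non-empty because Definition \ref{def:removable}(2) requires the sub-graph $\hat{G}_1$ obtained by deleting $e^*$ to remain connected, which in particular forces it to contain at least one edge. For this $e$, the defining condition (3) of a removable edge gives $\rho_e(e^*) = 0$. Since $L_*^{-1} > 0$, this already forces $\rho_e \not\equiv L_*^{-1}$, and by the uniqueness of the optimal edge-modulus density for $p > 1$ (noted just after Proposition \ref{prop:duality}), the constant $L_*^{-1}$ function cannot be the optimal $\Theta_{v,e}$-admissible density. Hence the equality criterion in Proposition \ref{Prop: Characterize p-walkdim} fails for this particular $e$, and the strict inequality $d_{w,p} > p$ follows.

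There is no real obstacle here: once the characterization in Proposition \ref{Prop: Characterize p-walkdim} and the vanishing property in Definition \ref{def:removable}(3) are both in hand, the deduction is essentially immediate. The only minor point requiring a line of justification is the non-emptiness of $E_1 \setminus \{e^*\}$, which is built into the removable-edge hypothesis via the connectedness of $\hat{G}_1$.
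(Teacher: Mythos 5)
Your argument is correct and is precisely the deduction the paper intends when it calls the corollary ``immediate'' from Proposition \ref{Prop: Characterize p-walkdim}: you pick an $e \in E_1 \setminus \{e^*\}$, use Definition \ref{def:removable}(\ref{def: removable (3)}) to get $\rho_e(e^*)=0 \neq L_*^{-1}$, invoke uniqueness of the optimal density to conclude the constant density is not optimal for $\Theta_{v,e}$, and then the equality characterization forces $d_{w,p}\neq p$, which combined with $d_{w,p}\geq p$ yields the strict inequality. The only cosmetic imprecision is the remark that connectedness of $\hat G_1$ alone guarantees an edge (a one-vertex graph is connected with no edges); what actually guarantees $E_1\setminus\{e^*\}\neq\emptyset$ is the $L_*$-uniform scaling property with $L_*\geq 2$, which forces $\hat G_1$ to contain a path of length at least $L_*$ between the gluing sets. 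This does not affect the validity of the proof.
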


The equality $d_{w,p} = p$ certainly is possible. In fact, a direct computation reveals the equality for the IGS in Example \ref{ex:counterexample} that does not contain the middle edge.
More generally, the equality holds in the case of Example \ref{ex:nonsym}.
On the other hand, the converse of Corollary \ref{Cor: Walk dim > p} is not true, i.e., the strict inequality $d_{w,p} > p$ may hold even if the IGS does not contain removable edges. 
See Example \ref{ex:probably loewner}.

\begin{example}\label{ex:probably loewner}
    This example is shown in Figure \ref{fig:probably loewner}.
    Let $G_1 = (V,E)$ as in Example \ref{ex:counterexample}. We construct a new graph $\hat{G}_1 := (\hat{V}_1,\hat{E}_1)$ where $\hat{V}_1 := V_1 \cup \{9,10\}$ and $\hat{E}_1 := E_1 \cup \{ \{ 9,3 \}, \{ 6,10 \} \}$. We extend the gluing rules by $\hat{I} := I \cup \{c\}$, $(\hat{\phi}_{\pm})|_I := \phi_\pm$, $\hat{\phi}_-(c)= 9$ and $\hat{\phi}_+(c) = 10$.
    We set $\hat{\phi}_{9,\{9,3\}} = \hat{\phi}_-$, $\hat{\phi}_{3,\{ 9,3 \}} = \hat{\phi}_+$, $\hat{\phi}_{6,\{6,10\}} = \hat{\phi}_-$ and $\hat{\phi}_{10,\{6,10\}} = \hat{\phi}_+$. It is a direct computation to show that $\hat{G}_1$ satisfies Assumption \ref{Assumptions: CLP} and the following properties.
    \begin{enumerate}
        \item $\cM_Q < 1$ where $Q = \log(\abs{\hat{E}_1})/\log(L_*) = \log(10)/\log(4)$ is the Hausdorff dimension of the associated limit space.
        \item The combinatorial $Q_*$-Loewner property for some $Q_* \in (1,Q)$.
        \item $d_{w,p} > p$ for all $p \in (1,\infty)$.
    \end{enumerate}
    Indeed these can be verified by computing the optimal unit flow from $\hat{I}_+$ to $\hat{I}_-$, as it does not depend on $p$.
\end{example}

\begin{figure}
\begin{tikzpicture}[scale=0.75]
\draw (-6,0)--(-4,0);
\draw[<-,thick] (-3,0)--(-2,0);

\draw (-1,-1)--(0,0)--(1,1)--(2,0)--(3,1);
\draw (0,0)--(1,-1)--(2,0);
\draw (-1,0)--(0,0);
\draw (-1,1)--(0,0);
\draw (2,0)--(3,-1);
\draw (2,0)--(3,0);

\foreach \q in {(-6,0),(-4,0),(-1,1),(-1,-1),(0,0),(1,1),(1,-1),(2,0),(3,1),(3,-1),(-1,0),(3,0)}
\node at \q [circle,fill,inner sep=1pt]{};

\node at (-1,1.4) {1};
\node at (-1,-1.5) {2};
\node at (0,0.4) {3};
\node at (1,1.4) {4};
\node at (1,-1.5) {5};
\node at (2,0.4) {6};
\node at (3,1.4) {7};
\node at (3,-1.5) {8};
\node at (-1.3,0) {9};
\node at (3.3,0) {10};
\end{tikzpicture}
\caption{Figure of the IGS in Example \ref{ex:probably loewner}.}
\label{fig:probably loewner}
\end{figure}

\bibliographystyle{acm}
\bibliography{clp}

\vspace{1cm}

\end{document}